
\documentclass[11pt]{article}
\usepackage{graphicx}
\usepackage{amsmath}
\usepackage{amssymb}
\usepackage{theorem}
\usepackage{enumitem}
\usepackage{fancyhdr}

 \usepackage{hyperref}

\sloppy


  \pagestyle{plain}

\numberwithin{equation}{section}

\textheight=8.5in
\textwidth=6.0in
\addtolength{\oddsidemargin}{-.25in}

\newtheorem{thm}{Theorem}[section]
\newtheorem{lemma}[thm]{Lemma}
\newtheorem{prop}[thm]{Proposition}
\newtheorem{cor}[thm]{Corollary}
{\theorembodyfont{\rmfamily}
\newtheorem{defn}[thm]{Definition}
\newtheorem{example}[thm]{Example}

\newtheorem{rmk}[thm]{Remark}
}

\newcommand{\qed}{\hfill \mbox{\raggedright \rule{.07in}{.1in}}}
 
\newenvironment{proof}{\vspace{1ex}\noindent{\bf
Proof}\hspace{0.5em}}{\hfill\qed\vspace{1ex}}
\newenvironment{pfof}[1]{\vspace{1ex}\noindent{\bf Proof of
#1}\hspace{0.5em}}{\hfill\qed\vspace{1ex}}

\newcommand{\R}{{\mathbb R}}

\newcommand{\C}{{\mathbb C}}
\newcommand{\Z}{{\mathbb Z}}

\newcommand{\N}{{\mathbb N}}
\newcommand{\T}{{\mathbb T}}
\renewcommand{\H}{{\mathbb H}}

\newcommand{\overbar}[1]{\mkern 1.5mu\overline{\mkern-1.5mu#1\mkern-1.5mu}\mkern 1.5mu}

\newcommand{\overbarr}[1]{\mkern 3.5mu\overline{\mkern-3.5mu#1\mkern-0.5mu}\mkern 0.5mu}

\newcommand{\barH}{{\overbar{\H}}}

\newcommand{\intphi}{|\varphi|_1}
\newcommand{\bphi}{{\bar\varphi}}
\newcommand{\bmu}{\bar\mu}
\newcommand{\bY}{{\overbar Y}}
\newcommand{\tY}{{\widetilde Y}}
\newcommand{\bF}{\overbarr F}
\newcommand{\sumd}{{\textstyle\sum_d}}

\newcommand{\hJ}{{\widehat J}}
\newcommand{\hR}{{\widehat R}}
\newcommand{\hT}{{\widehat T}}
\newcommand{\hV}{{\widehat V}}
\newcommand{\hw}{{\widehat w}}

\newcommand{\cA}{{\mathcal A}}
\newcommand{\cB}{{\mathcal B}}
\newcommand{\cD}{{\mathcal D}}
\newcommand{\cF}{{\mathcal F}}
\newcommand{\cH}{{\mathcal H}}
\newcommand{\cR}{{\mathcal R}}
\newcommand{\cW}{{\mathcal W}}

\newcommand{\period}{{\mathcal L}}

\newcommand{\infd}{{\SMALL\inf_d}}
\newcommand{\infYj}{{\SMALL\inf_{Y_j}}}
\newcommand{\supYj}{{\SMALL\sup_{Y_j}}}
\newcommand{\infFjd}{{\SMALL\inf_{F^jd}}}

\newcommand{\eps}{\epsilon}
\newcommand{\dist}{\operatorname{dist}}

\newcommand{\diam}{\operatorname{diam}}
\newcommand{\supp}{\operatorname{supp}}
\newcommand{\spec}{\operatorname{spec}}
\renewcommand{\Re}{\operatorname{Re}}

\newcommand{\SMALL}{\textstyle}

\title{Superpolynomial and Polynomial Mixing for 
\\ Semiflows and Flows}

\author{
Ian Melbourne\thanks{
 Mathematics Institute, University of Warwick, Coventry, CV4 7AL, UK}}

\date{30 September 2017. Revised 3 September 2018}

\begin{document}

\maketitle

\begin{abstract}
We give a review of results on superpolynomial decay of correlations, and polynomial decay of correlations for nonuniformly expanding semiflows and nonuniformly hyperbolic flows.   A self-contained proof is given for semiflows.
Results for flows are stated without proof (the proofs are contained in separate joint work with B\'alint and Butterley).
Applications include intermittent solenoidal flows, suspended H\'enon attractors, Lorenz attractors and singular hyperbolic attractors, and various Lorentz gas models including the infinite horizon Lorentz gas.
\end{abstract}

\tableofcontents

\section{Introduction}

Let $(\Lambda,\mu_\Lambda)$ be a probability space.   Given a measure-preserving
flow $T_t:\Lambda\to \Lambda$ and observables $v,w\in L^2(\Lambda)$,
we define the correlation function
$\rho_{v,w}(t)=\int_\Lambda v\;w\circ T_t\,d\mu_\Lambda-\int_\Lambda v\,d\mu_\Lambda\int_\Lambda w\,d\mu_\Lambda$.
The flow is {\em mixing} if 
$\lim_{t\to\infty}\rho_{v,w}(t)=0$ for all $v,w\in L^2(\Lambda)$.

Of interest is the rate of decay of correlations, or rate of mixing, namely the rate at
which $\rho_{v,w}$ converges to zero.
For nontrivial mixing flows, the decay rate is
arbitrarily slow for $L^2$ observables.
Hence the aim is to establish decay rates under regularity hypotheses on the
flow $T_t$, the measure $\mu_\Lambda$, and the observables $v,w$.

Consider a mixing uniformly hyperbolic
(Axiom~A) basic set $\Lambda\subset M$ for a smooth flow $T_t:M\to M$ with
invariant probability measure 
$\mu_\Lambda$ that is an equilibrium state for a H\"older potential~\cite{BowenRuelle75}.   
Bowen \& Ruelle~\cite{BowenRuelle75} asked
whether $\Lambda$ is exponentially mixing
($\rho_{v,w}(t)=O(e^{-ct})$ for some $c>0$) for sufficiently
regular $v$, $w$.   (In the discrete time case, it is well-known that
mixing Axiom~A diffeomorphisms are exponentially mixing.)
However, Pollicott~\cite{Pollicott84} and Ruelle~\cite{Ruelle83} showed that mixing Axiom~A flows need 
not mix exponentially, and Pollicott~\cite{Pollicott85} 
showed that the decay rates could be arbitrarily slow.  

Building upon results of Chernov~\cite{Chernov98}, 
Dolgopyat~\cite{Dolgopyat98a} showed that geodesic flows on
compact surfaces of negative curvature are exponentially mixing.
Liverani~\cite{Liverani04} extended this
result to arbitrary dimensional geodesic flows in negative curvature and 
more generally to contact Anosov flows.   Subsequent developments include~\cite{AraujoM16,AGY06,BaladiDemersLiverani18,BaladiVallee05,BMMW17,Pollicott99,Stoyanov01,Tsujii10,Tsujiisub}.
However, exponential mixing remains poorly understood in general.
Two major open questions, even for Anosov flows, are
\begin{itemize}

\parskip = -4pt
\item 
Is exponential mixing typical in any reasonable sense?
\item Does mixing imply exponential mixing rate?
\end{itemize}
 Indeed, it is only recently that the first robust examples of exponentially mixing Axiom~A flows~\cite{ABV16} and Anosov flows~\cite{ButterleyWarsub} have been obtained.  

Dolgopyat~\cite{Dolgopyat98b} introduced the weaker notion of {\em rapid mixing}
(superpolynomial decay of correlations) where 
$\rho_{v,w}(t)=O(t^{-q})$ for sufficiently regular observables for any fixed $q\ge1$, 
and showed that rapid mixing is `prevalent' for Axiom~A flows:
it suffices that the flow contains two periodic solutions
with periods whose ratio is Diophantine.
In addition, Dolgopyat~\cite{Dolgopyat98a} showed that for Anosov flows,
joint nonintegrability of the stable and unstable foliations (an open and dense
condition by Brin~\cite{Brin75,Brin75b}) implies rapid mixing.   
Field~{\em et al.}~\cite{FMT07} introduced the notion of 
{\em good asymptotics} and used this to
prove that amongst $C^r$ Axiom~A flows, $r\ge2$, an
open and dense set of flows is rapid mixing.

The topic of this review article is the extension of the rapid mixing method to
the nonuniformly hyperbolic setting.
In~\cite{M07,M09}, we obtained results on rapid mixing and polynomial mixing for nonuniformly hyperbolic semiflows and flows, combining the rapid mixing method of Dolgopyat~\cite{Dolgopyat98b} with advances by Young~\cite{Young98,Young99} in the discrete time setting.
The purpose of this review article is twofold.
In Part I, we state the results in a way that incorporates applications that have arisen since~\cite{M07,M09}.
(This should simplify the exposition in future applications avoiding complicated referencing of the type in for example~\cite[Section~4]{AMV15}).  
In Part~II, we provide complete self-contained proofs for semiflows.  
The proofs for flows are given in a separate joint paper with
P\'eter B\'alint \& Oliver Butterley~\cite{BBMsub}.

In Subsections~\ref{sec:introrapid} and~\ref{sec:introslow} below, we summarize the applications discussed in~\cite{M07,M09} as well as subsequent applications to Lorentz gases with cusps and in flowers, higher-dimensional Lorentz gases, Lorenz attractors, and intermittent solenoidal flows.

The treatment here (and in~\cite{BBMsub}) differs in two main respects from~\cite{M07,M09}.  The differences apply equally to flows and semiflows so we just mention flows here.  
First, flows are viewed as suspensions over a uniformly hyperbolic map with a possibly unbounded roof function (rather than as suspensions over a uniformly or nonuniformly hyperbolic map with a bounded roof function).  
This simplifies the analysis since it suffices to consider twisted transfer operators with one complex parameter, whereas two complex parameters were required in~\cite{M07}.   A consequence of this is that
the number of periodic orbits in the Diophantine condition in Section~\ref{sec:Dio} reduces from~$4$ to~$3$.
(On the other hand, the methods in~\cite{M07,M09} can be applied also to toral extensions of nonuniformly expanding maps, see~\cite{BaladiHachemi08,MTtoralsub}, and the results here cannot.)
Second,~\cite{Dolgopyat98b,M07,M09} use analyticity properties of the Laplace transform of $\rho_{v,w}$ in the complex plane, whereas we use smoothness on the imaginary axis incorporating ideas developed more recently with Dalia Terhesiu, especially~\cite{MT17}.  One advantage is that Theorem~\ref{thm:rapid} on rapid mixing works for roof functions that lie in $L^p$ for all $p\ge1$, whereas~\cite{M07} requires an exponential tails condition.  Another advantage is that the estimate in Lemma~\ref{lem:trunc} is much cleaner than the corresponding estimate in~\cite{M09}.  (Also, the approach here leads to an estimate in Lemma~\ref{lem:largeb} that is much better than might have been expected.)

In addition, we resolve two issues that were overlooked in~\cite{M09}: 
\begin{itemize}

\parskip = -4pt
\item[(i)] The Dolgopyat argument focuses on controlling high frequencies in the Laplace transform of the correlation function $\rho_{v,w}$ but it is also necessary to deal with the singularity at $0$.  This requires extra arguments in the slowly mixing cases.
\item[(ii)] The hypotheses in~\cite{M09} for passing from semiflows to flows are too strong for the intended applications where there is often slow contraction along stable leaves.  
\end{itemize}

Issue~(i) is covered in the proofs for semiflows in Part~II of this article.
Regarding point~(ii), similar problems have arisen even for discrete time dynamical systems.
Various polynomial mixing results in~\cite{ChernovZhang05,ChernovZhang08,Markarian04} described in Subsection~\ref{sec:introslow}
rely on a result of~\cite{Young99} which is formulated only for noninvertible dynamical systems.  
The extra argument required for passing to 
invertible systems is due to Gou\"ezel~\cite{GouezelPC} based on work of~\cite{ChazottesGouezel12}, and can be found in~\cite[Appendix~B]{MT14} and~\cite[Theorem~2.10]{KKMapp}.
Resolving the corresponding problem for flows turns out to be more subtle (even formulating the correct hypotheses is difficult), and is completed in 
B\'alint {\em et al.}~\cite{BBMsub}.   

In the remainder of the introduction, we give an overview of rapid and polynomial mixing, as well as consequent statistical properties, for various classes of flows.

\subsection{Rapid mixing for nonuniformly hyperbolic flows}
\label{sec:introrapid}

Young~\cite{Young98} established exponential mixing
for a large class of nonuniformly hyperbolic maps 
modelled by Young towers with exponential tails,
including dispersing billiards and H\'enon-like maps~\cite{BenedicksYoung00}.
In~\cite{M07}, we extended the ideas of
Dolgopyat~\cite{Dolgopyat98b} to the corresponding class of
nonuniformly hyperbolic flows where there is a suitable Poincar\'e map 
modelled by a Young tower with exponential tails.
Roughly speaking, one of the main results described in this review article is that
\begin{quote}
{\em A `prevalent' set of nonuniformly hyperbolic flows with exponential tails are rapid mixing.}
\end{quote}
Rapid mixing is established for sufficiently regular observables,
and prevalence is understood in terms of a Diophantine condition on the periodic data and also in terms of the good asymptotics condition~\cite{FMT07} which is open and dense.

\vspace{-2ex}
\paragraph{Planar Lorentz gases}

A rich class of examples of three-dimensional nonuniform hyperbolic flows is provided by planar Lorentz gas models~\cite{Sinai70, ChernovMarkarian}.    
See~\cite{ChernovYoung00} for a survey of results about these models.
From now on, all Lorentz gases are planar unless stated otherwise.

In particular, the periodic Lorentz gas is a billiard flow on 
$\T^2\setminus\Omega$
where $\Omega$ is a disjoint union of convex regions with $C^3$ boundaries.
(The phase-space of the flow is three-dimensional; planar position and 
direction.)   The flow has a natural global cross-section
corresponding to collisions
and the Poincar\'e map is called the {\em billiard map}.
The Lorentz flow 
satisfies the {\em finite horizon} condition if the time between collisions is uniformly bounded.   
For the billiard map associated to the finite horizon Lorentz gas, Bunimovich, Sina{\u\i} \& Chernov~\cite{BunimovichSinaiChernov91} proved
stretched exponential mixing rates, 
and exponential mixing was established by Young~\cite{Young98}.
Chernov~\cite{Chernov99} extended Young's method to prove exponential mixing for billiard maps corresponding to (i) periodic Lorentz gases with infinite horizon, (ii) periodic Lorentz gases with external forcing, and (iii) Lorentz gases in bounded domains with sides that are convex inwards and corner points with positive angles.
(For the third case, billiards with corners, a technical condition in~\cite{Chernov99} was removed in~\cite{SimoiToth14}.)  For the corresponding flows, we have:

\begin{quote}
{\em Finite horizon planar periodic Lorentz flows, and planar Lorentz flows with
cusps or corner points, are rapid mixing.
A prevalent set of externally forced finite horizon planar periodic Lorentz flows
 are rapid mixing.}  
\end{quote}

Except for the flows with cusps, this is a consequence of~\cite{M07}. 
(When there is no external forcing, the contact structure ensures that
the prevalence assumption is unnecessary~\cite{M09}, see
Remark~\ref{rmk:contact}.)

The case of cusps is treated in~\cite{BM08}.  Here, the billiard map mixes only at the rate $O(n^{-1})$ so~\cite{M07} does not apply directly.  However, collisions within a cusp occur in quick succession so it is reasonable to expect much quicker, possibly even exponential, mixing in continuous time.  As shown in~\cite{BM08}, the billiard map on the `natural' cross-section can be replaced by a Poincar\'e map (with cross-section bounded away from the cusps) that is modelled by a Young tower with exponential tails in such a way that~\cite{M07} applies.
The same idea applies to a class of billiards called
Bunimovich flowers~\cite{Bunimovich73} (see also~\cite[Section~3.1]{BM08} for a  precise description).  It was shown in~\cite{ChernovZhang05} that the billiard map mixes at roughly the rate $O(n^{-2})$.  By~\cite{BM08}, 
\begin{quote}
{\em For Bunimovich flowers, the corresponding Lorentz flow is rapid mixing.}
\end{quote}

For the finite horizon planar periodic Lorentz flow,
Chernov~\cite{Chernov07} proved stretched exponential mixing, and
exponential mixing was finally achieved in the recent paper of
Baladi, Demers \& Liverani~\cite{BaladiDemersLiverani18}.  The methods of~\cite{BaladiDemersLiverani18,Chernov07} rely crucially on the contact structure.  It is reasonable to expect that these methods also apply to the Lorentz flows with corners and cusps, as well as the Bunimovich flower examples, though the technical details have not been verified at the time of writing.  
However, it seems unlikely that the result on prevalent rapid mixing for externally forced Lorentz flows~\cite{M07} will be improved upon in the near future --- the lack of contact structure and the irregularity of the foliations means that the current technology for proving (stretched) exponential mixing is insufficient.

Once again, we emphasize that the rapid mixing results above are proved for observables which
are smooth along the flow direction.  Unfortunately this excludes certain important physical observables  such as position and velocity.
In contrast, the results in~\cite{BaladiDemersLiverani18,Chernov07} cover all H\"older (and dynamically H\"older) observables, including position and velocity, but the methods apply less generally.

For higher-dimensional periodic billiards, there is a technical and unresolved problem concerning ``growth of complexity''~\cite{BCST03}.  It is conjectured that growth of complexity is typically bounded, but there are no examples where the growth is known to be subexponential  and there are counterexamples where the growth of complexity is exponential~\cite{BalintToth12}.  B\'alint \& T\'oth~\cite{BalintToth08} proved that this is the only obstruction, obtaining the following conditional result:
 For higher-dimensional Lorentz gases, if the growth of complexity is subexponential, then the billiard map is modelled by a Young tower with exponential tails and in particular is exponentially mixing.  Hence by~\cite{M07},

\begin{quote}
{\em Higher-dimensional periodic Lorentz gas flows with subexponential growth of complexity are rapid mixing.}
\end{quote}

\vspace{-3ex}
\paragraph{Flows near homoclinic tangencies}
Benedicks \& Carleson~\cite{BenedicksCarleson91} studied the H\'enon map
$T_{a,b}(x,y)=(1-ax^2+y,bx)$ and proved the existence of a strange attractor 
for a positive measure set of parameters $a,b$.  The attractor admits an
SRB measure~\cite{BenedicksYoung93} and is exponentially mixing
by Benedicks \& Young~\cite{BenedicksYoung00}.
Mora \& Viana~\cite{MoraViana93} showed that H\'enon-like attractors arise
for positive measure sets of parameters
in the unfoldings of homoclinic tangencies for surface diffeomorphisms
and this was extended to higher dimensions by~\cite{Viana93,DiazRochaViana96}.
By~\cite{M07},
\begin{quote}
{\em
Positive measure sets of flows near a homoclinic tangency are rapid mixing.}
\end{quote}
Again, (stretched) exponential mixing is beyond the current technology.

\vspace{-2ex}
\paragraph{Lorenz attractors and singular hyperbolic attractors}

The classical Lorenz attractor is exponentially mixing~\cite{AraujoM16},
but the proof relies on the smoothness of the stable foliation for the flow~\cite{AraujoM17,AMV15}.
General Lorenz attractors (and singular hyperbolic attractors) have a H\"older stable foliation~\cite{AraujoM17} but this foliation need not be smooth.
Building upon~\cite{APPV09}, it is shown in~\cite{AraujoMsub} that 
an open and dense set of Lorenz attractors (and codimension two singular hyperbolic attractors) are rapid mixing even when the stable foliation is not smooth.

\subsection{Statistical limit laws for time-one maps} 
\label{sec:stats}

In situations where we obtain rapid mixing for a nonuniformly hyperbolic flow, certain statistical properties such as the central limit theorem (CLT) and almost sure invariance principle (ASIP) hold for the time-one map of the flow.  (Such properties for the flow itself are much simpler since they are inherited from the statistical properties of the Poincar\'e map and no mixing properties are required~\cite{Ratner73,DenkerPhilipp84,MT04}.)

In particular, given any of the rapidly mixing flows described in 
Subsection~\ref{sec:introrapid}
and a mean zero observable $v:\Lambda\to\R$ sufficiently smooth in the flow direction, setting
$v_n=\sum_{j=0}^{n-1} v\circ T_j$, the following hold by~\cite{AMV15}: 
\begin{itemize}
\item[(a)]
(CLT)
$n^{-1/2}v_n\to_d N(0,\sigma^2)$ where
$\sigma^2=\lim_{n\to\infty}n^{-1}\int_\Lambda v_n^2\,d\mu_\Lambda
=\sum_{n=-\infty}^\infty \int_\Lambda v\,v\circ T_n\,d\mu_\Lambda$.
\item[(b)] (Nondegeneracy) If $\sigma^2=0$, then for every periodic orbit $q$ there exists
$t_q>0$ such that $\int_0^{t_q} v(T_tq)\,dt=0$.  (The value of $t_q$ is independent of $v$ but in general is not directly related to the period of $q$.  A formula for $t_q$ is given in Remark~\ref{rmk:tq}.)
\item[(c)] (ASIP) Passing to an enriched probability space, there exists a sequence $X_0,X_1,\ldots$ of iid normal random variables with mean $0$ and variance $\sigma^2$ such that 
$v_n=\sum_{j=0}^{n-1}X_j+O(n^{1/4}(\log n)^{1/2}(\log\log n)^{1/4})$ a.e.
\end{itemize}
The proof uses~\cite{CunyMerlevede15} to obtain the ASIP for time-one maps of rapidly mixing semiflows and then passes as in~\cite{MT02} from the semiflow to the flow.  See also~\cite{MT02} for the CLT.  
The ASIP has various consequences including the CLT and law of the iterated logarithm as well as their functional versions.  For a more complete list of consequences see~\cite{PhilippStout75}.

\subsection{Polynomial mixing for nonuniformly hyperbolic flows}
\label{sec:introslow}

Young~\cite{Young99} established polynomial mixing rates for a large class of nonuniformly expanding maps including intermittent maps of Pomeau-Manneville type~\cite{PomeauManneville80}.  
We consider the corresponding class of
nonuniformly hyperbolic flows with Poincar\'e map 
modelled by a polynomially mixing Young tower.
Roughly speaking, the second main result in this review article is that for $q>0$,
\begin{quote}
{\em Amongst nonuniformly hyperbolic flows for which the Poincar\'e map has mixing rate $O(n^{-q})$, a prevalent set have mixing rate $O(t^{-q})$.}
\end{quote}
Here, prevalent has the same meaning as in Subsection~\ref{sec:introrapid}.
This result applies in particular to intermittent solenoidal flows (Example~\ref{ex:LSVflow}).

\begin{rmk}  The statistical properties described in Subsection~\ref{sec:stats} for rapid mixing flows hold also for flows that decay at rate $O(t^{-q})$ for $q$ large enough.  The CLT requires only that $q>1$.  For the ASIP with error term
$O(n^{1/4}(\log n)^{1/2}(\log\log n)^{1/4})$,
it suffices that $q>2\sqrt2$ (see~\cite[Theorem~5.2]{AMV15}).
\end{rmk}

Again, planar Lorentz gases provide a rich source of examples.
For the billiard maps, Markarian~\cite{Markarian04} combined the work of~\cite{Chernov99, Young99} to prove the mixing rate $O(n^{-1})$ for Bunimovich stadia~\cite{Bunimovich79}, and the method was extended by Chernov \& Zhang~\cite{ChernovZhang05,ChernovZhang08} to obtain the rate $O(n^{-1})$ for semidispersing billiards (rectangular tables with at least one convex obstacle).   Our results apply to the corresponding flows (again the contact structure ensures that there is no prevalence
restriction).

It has been conjectured (especially for the infinite horizon Lorentz gas flow in~\cite{FriedmanMartin88,MatsuokaMartin97}) that the decay rate for the flow is $O(t^{-1})$.
(An elementary argument in~\cite{BalintGouezel06} shows that this rate is optimal; see~\cite[Proposition~9.14]{BBMsub}.)
The conjectured decay rate $O(t^{-1})$ is proved in~\cite{BBMsub}:
\begin{quote}
{\em 
Lorentz flows in Bunimovich stadia, semidispersing Lorentz flows, and infinite horizon planar periodic Lorentz gas flows have mixing rate $O(t^{-1})$.
}
\end{quote}

\paragraph{Notation}
We use the ``big $O$'' and $\ll$ notation interchangeably, writing $a_n=O(b_n)$ or $a_n\ll b_n$ if there is a constant $C>0$ such that
$a_n\le Cb_n$ for all $n\ge1$.  As usual, $a_n\sim b_n$ means that $\lim_{n\to\infty}a_n/b_n=1$.   There are various ``universal'' constants $C_1,\dots,C_5\ge1$ depending only on the flow that do not change throughout the article.

\part{Statement of results}

In this part of the review article, we describe in detail the main results on rapid and polynomial mixing for nonuniformly expanding semiflows and nonuniformly hyperbolic flows.
In Section~\ref{sec:susp}, we recall the notion of a suspension (semi)flow,
as well as associated notation.
Sections~\ref{sec:NUE} and~\ref{sec:NUH} contain the main results for
semiflows and flows respectively.
These rely on a technical condition, absence of approximate eigenfunctions (Definition~\ref{def:approx}).
In Section~\ref{sec:approx}, we discuss criteria ensuring that this condition holds.

\section{Preliminaries on suspensions}
\label{sec:susp}

It is standard to model flows and semiflows as suspensions.  Here, we review the basic definitions and notation that will be used throughout the review article.
For brevity we speak of semiflows, even when some of them are flows.

\vspace{-2ex}
\paragraph{Suspension semiflows in their own right}
Let $(Y,\mu)$ be a probability space and let $F:Y\to Y$ be a measure-preserving transformation.  
Let $\varphi:Y\to\R^+$ be an integrable roof function.
Define the suspension 
$Y^\varphi=\{(y,u)\in Y\times[0,\infty): u\in[0,\varphi(y)]\}/\sim$ where
$(y,\varphi(y))\sim(Fy,0)$.   The suspension 
semiflow $F_t:Y^\varphi\to Y^\varphi$ is given by $F_t(y,u)=(y,u+t)$ 
computed modulo identifications.
We obtain an $F_t$-invariant probability measure on
$Y^\varphi$ given by $\mu^\varphi=\mu\times{\rm Lebesgue}/\int_Y\varphi\,d\mu$.

\vspace{-2ex}
\paragraph{Suspension semiflows as models for ambient semiflows}
Let $T_t:\Lambda\to \Lambda$ be a semiflow defined on an ambient measurable space $\Lambda$.
(We assume that $(x,t)\mapsto T_tx$ is measurable from $\Lambda\times[0,\infty)\to \Lambda$.)
Let $Y\subset \Lambda$ be a measurable subset with probability measure $\mu$.
We suppose that $F:Y\to Y$ is a measure-preserving transformation and $\varphi:Y\to\R^+$ is 
an integrable function such that 
$T_{\varphi(y)}y=Fy$ for all $y\in Y$.
Then we can form the suspension semiflow $F_t:Y^\varphi\to Y^\varphi$ with 
invariant probability measure $\mu^\varphi$.

Define $\pi:Y^\varphi\to \Lambda$ by $\pi(y,u)=T_uy$.  This is well-defined since
$\pi(y,\varphi(y))=T_{\varphi(y)}y=Fy=\pi(Fy,0)$, and defines a measurable semiconjugacy from $F_t$ to $T_t$.  Hence the measure $\mu_\Lambda=\pi_*\mu^\varphi$ is a $T_t$-invariant probability measure on $\Lambda$.

Given observables $v,\,w\in L^2(\Lambda)$, define $\tilde v=v\circ\pi$, 
$\tilde w=w\circ\pi$.  Then $\tilde v,\,\tilde w\in L^2(Y^\varphi)$ and
\begin{align} \label{eq:M}
\int_\Lambda v\;w\circ T_t\,d\mu_\Lambda-\int_\Lambda v\,d\mu_\Lambda\int_\Lambda w\,d\mu_\Lambda
=\int_{Y^\varphi} \tilde v\;\tilde w\circ F_t\,d\mu^\varphi-\int_{Y^\varphi} \tilde v\,d\mu^\varphi\int_{Y^\varphi} \tilde w\,d\mu^\varphi.
\end{align}
Hence rates of mixing for the ambient semiflow $T_t$ and observables $v,\,w$ on $(\Lambda,\mu_\Lambda)$ reduces to
rates of mixing for the suspension semiflow $F_t$ and observables $\tilde v,\,\tilde w$ on $(Y^\varphi,\mu^\varphi)$.

\begin{rmk} For the systems considered in this article, the measure $\mu$ on $Y$ is ergodic by construction, and the form of $F$ ensures ergodicity of  $\mu^\varphi$ and hence $\mu_\Lambda$.
\end{rmk}

\begin{rmk}  \label{rmk:inf}
Throughout this review article we suppose that $\inf\varphi>0$, though
some of the results, particularly Theorems~\ref{thm:slow} and~\ref{thm:rapid} below, do not require this.
In any case, this condition is not very restrictive --- if $\inf\varphi=0$, then this can be circumvented either by shrinking the cross-section $Y$ or by waiting for a later return to $Y$ (see for example~\cite{BM08}).
\end{rmk}

\section{Mixing rates for nonuniformly expanding semiflows}
\label{sec:NUE}

In this section, we review results on rapid mixing and polynomial mixing for
nonuniformly expanding semiflows.  
In Subsection~\ref{sec:GM}, we define a class of Gibbs-Markov semiflows and state the main results for such semiflows.  
In Subsection~\ref{sec:ambient}, we consider nonuniformly expanding H\"older semiflows on an ambient metric space $M$, with 
H\"older observables, and show how these reduce to the situation in Subsection~\ref{sec:GM}.  As an application we consider intermittent semiflows. 
In Subsection~\ref{sec:dyn}, we generalise to 
dynamically H\"older semiflows and observables.

\subsection{Gibbs-Markov semiflows}
\label{sec:GM}

In this subsection, we consider a class of Gibbs-Markov semiflows built as suspensions over Gibbs-Markov maps.
Standard references for background material on Gibbs-Markov maps
are~\cite[Chapter~4]{Aaronson} and~\cite{AaronsonDenker01}.

Suppose that $(Y,\mu)$ is a probability space with
an at most countable measurable partition $\{Y_j,\,j\ge1\}$
and let $F:Y\to Y$ be a measure-preserving transformation.
For $\theta\in(0,1)$, define $d_\theta(y,y')=\theta^{s(y,y')}$ where the {\em separation time} $s(y,y')$ is the least integer $n\ge0$ such that $F^ny$ and $F^ny'$ lie in distinct partition elements in $\{Y_j\}$.
It is assumed that the partition $\{Y_j\}$ separates trajectories, so $s(y,y')=\infty$ if and only if $y=y'$.  Then $d_\theta$ is a metric, called a {\em symbolic metric}.

A function $v:Y\to\R$ is {\em $d_\theta$-Lipschitz} if
$|v|_\theta=\sup_{y\neq y'}|v(y)-v(y')|/d_\theta(y,y')$ is finite.
Let $\cF_\theta(Y)$ be the Banach space of Lipschitz functions with norm $\|v\|_\theta=|v|_\infty+|v|_\theta$.

More generally (and with a slight abuse of notation), we say that a function
$v:Y\to\R$ is {\em piecewise $d_\theta$-Lipschitz} if 
$|1_{Y_j}v|_\theta=\sup_{y,y'\in Y_j,\,y\neq y'}|v(y)-v(y')|/d_\theta(y,y')$
is finite for all $j$.  If in addition, $\sup_j|1_{Y_j}v|_\theta<\infty$ then we say that $v$ is {\em uniformly piecewise $d_\theta$-Lipschitz}.
Note that such a function $v$ is bounded on partition elements but need not be bounded on $Y$.

\begin{defn} \label{def:GM}
A measure-preserving transformation map $F:Y\to Y$ is called a {\em (full branch) Gibbs-Markov map}
if 
\begin{itemize}

\parskip=-2pt
\item $F|_{Y_j}:Y_j\to Y$ is a measurable bijection for each $j\ge1$, and
\item The potential function $p=\log\frac{d\mu}{d\mu\circ F}:Y\to\R$ is uniformly piecewise $d_\theta$-Lipschitz for some $\theta\in(0,1)$.
\end{itemize}
\end{defn}

\begin{defn} \label{def:inf}
A suspension semiflow $F_t:Y^\varphi\to Y^\varphi$ is called a {\em Gibbs-Markov semiflow} if 
there exist constants $C_1\ge1$, $\theta\in(0,1)$ such that
$F:Y\to Y$ is a Gibbs-Markov map, $\varphi:Y\to\R^+$ is an integrable roof function with $\inf\varphi>0$, and
\begin{align} \label{eq:inf}
|1_{Y_j}\varphi|_{\theta}\le C_1\infYj\varphi\quad\text{for all $j\ge1$}.
\end{align}
\end{defn}
It follows that $\supYj\varphi\le 2C_1\infYj\varphi$ for all $j\ge1$.
As mentioned in Remark~\ref{rmk:inf} the assumption $\inf\varphi>0$ is not really needed, though condition~\eqref{eq:inf} would need to be changed to
$|1_{Y_j}\varphi|_{\theta}\le C_1(\infYj\varphi+1)$
to avoid being too restrictive
(similarly, in the definition of $\cF_\theta(Y^\varphi)$ below),
and this would result in more complicated formulas throughout.

\subsubsection{Approximate eigenfunctions}

For $b\in\R$, we define the operators
\[
M_b:L^\infty(Y)\to L^\infty(Y), \qquad M_bv=e^{ib\varphi}v\circ F.
\]
Evidently, $L^\infty(Y)$ here denotes complex-valued essentially bounded measurable functions on~$Y$.  As is standard, we often pass to complexifications of various Banach spaces without comment, for example when discussing spectral properties.  In particular, the functions in $F_\theta(Y)$ in Definition~\ref{def:approx} below are complex-valued.

\begin{defn} \label{def:Z0}  A subset $Z_0\subset Y$ is a {\em finite subsystem} of $Y$
if $Z_0=\bigcap_{n\ge0} F^{-n}Z$ where $Z$ is the union of finitely many 
elements from the partition $\{Y_j\}$.
(Note that $F|_{Z_0}:Z_0\to Z_0$ is a full one-sided
shift on finitely many symbols.)
\end{defn}

\begin{defn} \label{def:approx}
We say that $M_b$ has {\em approximate eigenfunctions} on a subset 
$Z_0\subset Y$ 
if for any $\alpha_0>0$, there exist constants $\alpha,\xi>\alpha_0$ and $C>0$,
and sequences $|b_k|\to\infty$, 
$\psi_k\in [0,2\pi)$, $u_k\in \cF_\theta(Y)$ with $|u_k|\equiv1$
and $|u_k|_\theta\le C|b_k|$, such that 
setting $n_k=[\xi\ln |b_k|]$,
\begin{align}
\label{eq:approx}
|(M_{b_k}^{n_k}u_k)(y)-e^{i\psi_k}u_k(y)|\le C|b_k|^{-\alpha}
\quad\text{for all $y\in Z_0$, $k\ge1$.}
\end{align}
\end{defn}

\begin{rmk} \label{rmk:approx}
For brevity, the statement ``Assume absence of approximate eigenfunctions''
is the assumption that there exists at least one finite subsystem $Z_0$ such that $M_b$ does not have approximate eigenfunctions on $Z_0$.
\end{rmk}

\subsubsection{Observables on $Y^\varphi$}
\label{sec:obs}

Given $v:Y^\varphi\to\R$ and $\theta\in(0,1)$, we define
\[
|v|_\theta=\sup_{(y,u),(y',u)\in Y^\varphi,\,y\neq y'}\frac{|v(y,u)-v(y',u)|}{\varphi(y)d_\theta(y,y')}, \qquad \|v\|_\theta=|v|_\infty+|v|_\theta.
\]
Let $\cF_\theta(Y^\varphi)$ be the space of observables
$v:Y^\varphi\to\R$ with $\|v\|_\theta<\infty$.

Also, for $\eta\in(0,1]$, we define
\[
|v|_{\infty,\eta}=\sup_{(y,u),(y,u')\in Y^\varphi,\,u\neq u'}\frac{|v(y,u)-v(y,u')|}{|u-u'|^\eta}, \qquad \|v\|_{\theta,\eta}=\|v\|_\theta+|v|_{\infty,\eta}.
\]
(Here and elsewhere, $|u-u'|$ denotes absolute value, with $u,u'$ regarded as elements of $[0,\infty)$.)
Let $\cF_{\theta,\eta}(Y^\varphi)$ be the space of observables
$v:Y^\varphi\to\R$ with $\|v\|_{\theta,\eta}<\infty$.

We say that $w:Y^\varphi\to\R$ is {\em differentiable in the flow direction} if the limit
$\partial_tw=\lim_{t\to0}(w\circ F_t-w)/t$ exists pointwise.  Note
that $\partial_tw=\frac{\partial w}{\partial u}$
on the set $\{(y,u):y\in Y,\,0< u< \varphi(y)\}$.
For $m\ge0$, let $w$ be $m$-times differentiable in the flow direction
and define $|w|_{\infty,m}=\sum_{j=0}^m|\partial_t^jw|_\infty$.
Let $L^{\infty,m}(Y^\varphi)$ be the space of observables
$w:Y^\varphi\to\R$ with $|w|_{\infty,m}<\infty$.

\subsubsection{Decay of correlations}
We can now state the main results for Gibbs-Markov semiflows.

\begin{thm} \label{thm:slow}
Let $F_t:Y^\varphi\to Y^\varphi$ be a Gibbs-Markov semiflow
such that $\mu(\varphi>t)=O(t^{-\beta})$ for some $\beta>1$.
Assume absence of approximate eigenfunctions.

Then for any $\eta\in(0,1]$, there exists $m\ge1$, $\theta\in(0,1)$ and $C>0$ such that
\[
|\rho_{v,w}(t)|\le C\|v\|_{\theta,\eta}|w|_{\infty,m} \,t^{-(\beta-1)}
\quad\text{for all $v\in \cF_{\theta,\eta}(Y^\varphi)$, $w\in L^{\infty,m}(Y^\varphi)$, $t>1$}.
\]
\end{thm}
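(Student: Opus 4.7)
The plan is to follow the Laplace-transform approach sketched in the introduction (the variant using smoothness on the imaginary axis rather than analyticity in a strip). Let $R$ denote the transfer operator of the Gibbs-Markov base map $F$ on $\cF_\theta(Y)$ and write $\hat R(s)\phi=R(e^{-s\varphi}\phi)$ for its twisted version. After reducing to the suspension via~\eqref{eq:M}, a renewal computation expresses $\rho_{v,w}(t)$ as a Fourier integral along $\{s=ib\}$ of a kernel built from $(I-\hat R(ib))^{-1}$ paired with observables derived from $v$ and $w$. Inverting this Fourier integral reduces the problem to controlling the integrand on three frequency ranges.

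The first step is the behaviour of $\hat R(ib)$ near $b=0$, the singularity flagged as issue~(i) in the introduction. Spectral perturbation theory produces an isolated simple eigenvalue $\lambda(ib)$ with $\lambda(0)=1$, and the polynomial tail hypothesis $\mu(\varphi>t)=O(t^{-\beta})$ gives an expansion of Aaronson-Denker/Gou\"ezel type, roughly $1-\lambda(ib)\sim -ib\bar\varphi+c|b|^\beta$ (with logarithmic corrections when $\beta$ is an integer). A Karamata-style truncation argument then converts this order of singularity into the advertised decay rate $t^{-(\beta-1)}$ after inversion; the expected formalisation is the truncation lemma anticipated as Lemma~\ref{lem:trunc}.

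For medium $|b|$, quasi-compactness of $\hat R(ib)$ on $\cF_\theta(Y)$ gives a uniform resolvent bound. For large $|b|$ I would invoke the Dolgopyat argument: absence of approximate eigenfunctions on a finite subsystem $Z_0$ (Definition~\ref{def:approx}) is precisely what is needed to obtain an $L^2$-contraction of an iterate $\hat R(ib)^{n(b)}$ with $n(b)\asymp\log|b|$, and hence a polynomial-in-$|b|$ bound on $\|(I-\hat R(ib))^{-1}\|$ acting between suitable Banach spaces. The hypothesis $w\in L^{\infty,m}$ allows $m$-fold integration by parts in the flow direction, producing a factor of $|b|^{-m}$ on the $w$-side; for $m$ sufficiently large this beats the Dolgopyat polynomial growth, so that the high-frequency tail of the Fourier integral contributes only a negligible $O(t^{-(\beta-1)})$ error.

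The main obstacle will be the delicate interplay near $b=0$: combining the singular structure of $(I-\hat R(ib))^{-1}$ with the truncation/smoothing step and splitting the inverse Fourier integral into low, medium and high-frequency regions so that \emph{each} region loses at most the target $t^{-(\beta-1)}$. Since $v$ is only H\"older in the flow direction ($\eta\in(0,1]$), most of the $b$-decay must come from $w$; keeping track of how the symbolic Lipschitz regularity from $\cF_{\theta,\eta}(Y^\varphi)$ interacts with the twisted transfer operator estimates while preserving the sharp exponent $\beta-1$ will be the technical heart of the argument.
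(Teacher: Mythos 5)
Your sketch correctly identifies the skeleton of the paper's proof: reduce to the renewal kernel $(I-\hR(ib))^{-1}$, treat the singularity at $b=0$ via spectral perturbation of $\hR$, use the Dolgopyat estimate to control $(I-\hR(ib))^{-1}$ for $|b|$ large, and truncate the roof function so that the rapid-mixing machinery applies. But there is a gap at the heart of the low-frequency analysis: you never say what cancels the simple pole of $(1-\lambda(ib))^{-1}P(b)R\hV(ib)$ at $b=0$. Your expansion $1-\lambda(ib)\sim -ib\,\intphi+c|b|^\beta$ produces a genuine $b^{-1}$ singularity in the resolvent, and without a cancellation the inverse Fourier integral over the low-frequency regime diverges. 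The fix is that one works with $\int_{Y^\varphi}v\,d\mu^\varphi=0$ and then observes that $b^{-1}P(0)R\hV(ib)$ extends to $b=0$: this is Proposition~\ref{prop:v0}, where the inverse Fourier transform is computed explicitly as $g(t)=-\int_Y\int_0^{\varphi(y)}1_{\{\varphi(y)>t+u\}}v(y,u)\,du\,d\mu$, bounded by $|v|_\infty\int_Y 1_{\{\varphi>t\}}\varphi\,d\mu=O(t^{-(\beta-1)})$. This mean-zero cancellation is the technical heart, and it is not a consequence of the Aaronson--Denker expansion: indeed the paper deliberately avoids the $|b|^\beta$ expansion (which has logarithmic corrections at integer $\beta$) and instead derives $C^q$ estimates, $q\in(\beta-1,\beta)$, on $\tilde\lambda(b)^{-1}$, $\widetilde P(b)=b^{-1}(P(b)-P(0))$ and $R\hV(ib)$, combined by the convolution inequality (Proposition~\ref{prop:conv}).

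Your account of the role of truncation is also somewhat off. Replacing $\varphi$ by $\varphi\wedge N$ is not a Karamata/Tauberian device for converting a singularity order into a decay rate; it is a global regularisation so that the truncated roof is bounded, the truncated semiflow is rapid mixing by Theorem~\ref{thm:rapid}, and the Laplace transform is $C^\infty$ on $\barH$ (so contours can be moved to $i\R$, which is otherwise not justified). The sharp exponent $t^{-(\beta-1)}$ emerges from (a) uniform-in-$N$ estimates for the truncated quantities (Lemma~\ref{lem:trunc}), (b) the choice $N=[t]$, and (c) the truncation-error bound (Proposition~\ref{prop:trunc}). You are right that the large-$b$ decay must come from $w$ via $\kappa_m(b)=(1-\psi(b))(ib)^{-m}$ against the Dolgopyat polynomial bound; but near $b=0$ the decay comes entirely from the spectral decomposition and the cancellation described above, not from $w$-smoothness.
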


For rapid mixing,
we obtain a slightly stronger result where $v$ lies in $\cF_\theta(Y^\varphi)$
(instead of $\cF_{\theta,\eta}(Y^\varphi)$).
For convenience, we state the result on rapid mixing separately.

\begin{thm} \label{thm:rapid}
Let $F_t:Y^\varphi\to Y^\varphi$ be a Gibbs-Markov semiflow
such that $\varphi\in L^q(Y)$ for all $q\ge1$.
Assume absence of approximate eigenfunctions.

Then $F_t$ is rapid mixing:
for any $q\in\N$, there exists $m\ge1$, $\theta\in(0,1)$ and $C>0$ such that
\[
|\rho_{v,w}(t)|\le C\|v\|_\theta|w|_{\infty,m} \,t^{-q}
\quad\text{for all $v\in \cF_\theta(Y^\varphi)$, $w\in L^{\infty,m}(Y^\varphi)$, $t>1$}.
\]
\end{thm}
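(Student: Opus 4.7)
The proof will follow the Dolgopyat scheme via Laplace/Fourier transforms of the correlation function, adapted (following \cite{MT17}) to exploit smoothness on the imaginary axis rather than analyticity in a half-plane, with the absence of approximate eigenfunctions entering through a twisted transfer operator.

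First I would pass to the Laplace transform $\hat\rho_{v,w}(s)=\int_0^\infty e^{-st}\rho_{v,w}(t)\,dt$. Using the suspension structure, a standard computation expresses $\hat\rho_{v,w}(ib)$ in terms of the twisted transfer operator $\hat R(s):\cF_\theta(Y)\to\cF_\theta(Y)$ defined by $(\hat R(s)u)(y)=L(e^{-s\varphi}u)(y)$, where $L$ is the transfer operator of $F$. Schematically, after the standard manipulations, $\hat\rho_{v,w}(ib)=\langle \tilde w,(I-\hat R(ib))^{-1}A_v\rangle+E(ib)$, where $A_v$ is built from $v$ and $E(ib)$ is an explicit smooth boundary term. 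The goal is then to show that $\hat\rho_{v,w}$ (once the singular part is subtracted) has $q+2$ derivatives on $\R$ that are integrable, so that inverse Fourier transformation plus $q+2$ integrations by parts produces the bound $|\rho_{v,w}(t)|\ll t^{-q}$.

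For the high-frequency regime $|b|\ge b_0$, the absence of approximate eigenfunctions will be used to derive a Dolgopyat-type estimate of the form $\|(I-\hat R(ib))^{-1}\|_\theta\ll |b|^\kappa$ for some fixed $\kappa$; the argument is the standard contraction-on-cones estimate that rules out the resonances $\hat R(ib)^n u\approx e^{i\psi}u$ forbidden by Definition~\ref{def:approx}. Smoothness of $w$ in the flow direction is then what beats this polynomial loss: $m$ integrations by parts in $t$ contribute $|b|^{-m}|w|_{\infty,m}$. Taking $b$-derivatives of $\hat R(ib)$ brings down factors of $\varphi$, so $\hat R(\cdot)$ is $C^q$ as an operator-valued function provided $\varphi\in L^q(Y)$; this is where the hypothesis $\varphi\in L^q$ for all $q\ge 1$ enters, and crucially the method needs only finite smoothness rather than exponential tails. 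Choosing $m$ larger than $\kappa+q+2$ (plus the number of $b$-derivatives taken) renders the integrand $L^1$ in $b$ after the integration by parts in $t$.

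The main obstacle is the singularity of $(I-\hat R(ib))^{-1}$ at $b=0$, since $\hat R(0)=L$ has a simple leading eigenvalue $1$. To handle this (issue~(i) in the introduction), I would use Kato perturbation theory: for $b$ near $0$, $\hat R(ib)=\lambda(ib)P(ib)+Q(ib)$, where $\lambda(ib)$ is the analytic leading eigenvalue branch with $\lambda(0)=1$ and $\lambda'(0)=-i\bar\varphi$, $\bar\varphi=\int_Y\varphi\,d\mu$. The singular contribution to $\hat\rho_{v,w}(ib)$ is then an explicit constant times $(ib\bar\varphi)^{-1}$, and its residue matches exactly the constant term $\int\tilde v\,d\mu^\varphi\int \tilde w\,d\mu^\varphi$ subtracted in the definition of $\rho_{v,w}$. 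After this cancellation, the remaining function extends as a $C^{q+2}$ function through $0$, with derivatives controlled by Kato-type expansions of $\lambda(ib)$ and $P(ib)$ (this requires $\varphi\in L^{q+2}$, again available). On any annulus $b_0\le|b|\le b_1$ away from $0$ and $\infty$, quasi-compactness of $\hat R(ib)$ together with the absence of peripheral eigenvalues of modulus~$1$ away from $b=0$ (a consequence of the aperiodicity implied by Definition~\ref{def:GM} and absence of approximate eigenfunctions) gives uniform operator-norm bounds and hence smoothness estimates. Stitching together the three regimes via a smooth partition of unity and inverting the Fourier transform by repeated integration by parts yields the required bound $|\rho_{v,w}(t)|\le C\|v\|_\theta|w|_{\infty,m}t^{-q}$ for $m$ depending on $q,\kappa,\theta$.
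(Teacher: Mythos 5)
Your proposal follows essentially the same route as the paper's proof: Pollicott's formula reduces $\hat\rho_{v,w}$ to $(I-\hR(\cdot))^{-1}R\hV(\cdot)$ for the twisted transfer operator, absence of approximate eigenfunctions yields a Dolgopyat polynomial bound $\|(I-\hR(ib))^{-1}\|_\theta\ll|b|^\alpha$ that is then beaten by the $|b|^{-m}$ gained from $m$ flow-derivatives of $w$, $\varphi\in L^q$ for all $q$ gives $C^q$ $b$-regularity of $\hR$, and the apparent pole at $b=0$ disappears by spectral perturbation of the simple leading eigenvalue together with the mean-zero normalisation (your ``residue matches the subtracted constant'' is the same cancellation the paper carries out as $P(0)R\hV(0)=0$). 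The argument is correct in outline and agrees with the paper's Sections~\ref{sec:strat} and~\ref{sec:rapid}.
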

The proofs of Theorems~\ref{thm:slow} and~\ref{thm:rapid} are given in Sections~\ref{sec:slow} and~\ref{sec:rapid} respectively.

Theorem~\ref{thm:slow} is sharp.
Precise asymptotics and error rates hold for ``nicely'' supported observables:

\begin{thm}[ {\cite[Theorem~2.4(a)]{MT17}} ] \label{thm:MT}
Assume the setup of Theorem~\ref{thm:slow} and suppose further
that 
$\supp v,\,\supp w\in Y\times[0,\inf\varphi]$.   
For $\beta\in(1,2)$, set $s=2(\beta-1)$.  For $\beta\ge2$, choose any $s<\beta$.
Then 
\[
\rho_{v,w}(t)=\intphi^{-1}\int_{Y^\varphi}v\,d\mu^\varphi
\int_{Y^\varphi}w\,d\mu^\varphi\,\int_t^\infty \mu(\varphi>t')\,dt'+O(
\|v\|_\theta|w|_{\infty,m}\, t^{-s}).
\]

In particular, if $\int_t^\infty \mu(\varphi>t')\,dt'\sim ct^{-(\beta-1)}$ for some $c>0$, then
\[
\SMALL \rho_{v,w}(t)\sim c\intphi^{-1}\int_{Y^\varphi}v\,d\mu^\varphi
\int_{Y^\varphi}w\,d\mu^\varphi\,t^{-(\beta-1)}
\quad\text{as $t\to\infty$}.
\]

\vspace{-5ex}
\qed
\end{thm}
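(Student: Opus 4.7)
The plan is to analyze the Laplace transform $\hat\rho_{v,w}(z) = \int_0^\infty e^{-zt}\rho_{v,w}(t)\,dt$ on the closed right half-plane $\{\Re z \ge 0\}$ and recover $\rho_{v,w}(t)$ by Fourier inversion along the imaginary axis. Since $v$ and $w$ are supported in $Y\times[0,\inf\varphi]$, a direct computation in suspension coordinates yields a continuous-time operator renewal identity of the shape
\[
\hat\rho_{v,w}(z) = A_v(z)(I - \hR(z))^{-1}B_w(z) + H_{v,w}(z),
\]
where $\hR(z)g = R(e^{-z\varphi}g)$ is the twisted transfer operator of the Gibbs-Markov base map $F$, and $A_v$, $B_w$, $H_{v,w}$ are entire functions of $z$ built explicitly from $v$, $w$ and $\varphi$. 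The identity is most transparently derived by decomposing an orbit of $F_t$ according to its successive returns to $Y$, noting that the support hypothesis on $v,w$ forces the initial and terminal pieces to be elementary.

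The spectral analysis of $\hR(z)$ splits into two regimes. Near $z = 0$, $\hR(z)$ is a small perturbation of the Perron-Frobenius operator $\hR(0) = R$, which has a simple leading eigenvalue equal to $1$; perturbation theory gives a simple leading eigenvalue $\lambda(z)$ of $\hR(z)$ with $\lambda(0) = 1$. The tail hypothesis, combined with the identity $\int_Y(1-e^{-z\varphi})\,d\mu = z\int_0^\infty e^{-zt}\mu(\varphi > t)\,dt$ and Karamata's theorem, gives an expansion $1 - \lambda(z) = \intphi\, z + E_\beta(z)$ where $E_\beta$ is of order $z^\beta$ for $\beta \in (1,2)$ and absorbed into a smooth correction for larger $\beta$. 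Isolating the simple pole of $(I - \hR(z))^{-1}$ at $z = 0$, the Abelian--Tauberian correspondence between $z$-asymptotics at the origin and $t$-asymptotics at infinity identifies the main term $\intphi^{-1}(\int v\,d\mu^\varphi)(\int w\,d\mu^\varphi)\int_t^\infty \mu(\varphi > t')\,dt'$. Away from $z = 0$ on the imaginary axis, the assumed absence of approximate eigenfunctions supplies a Dolgopyat-type estimate $\|\hR(ib)^n\|_\theta \le C|b|^{\alpha_0}e^{-\kappa n}$ for some $\kappa > 0$, so that $(I - \hR(ib))^{-1}$ is at worst polynomially bounded in $|b|$.

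The inversion is then carried out along the imaginary axis. The $m$-fold differentiability of $w$ in the flow direction supplies, through integration by parts in $t$, a decay factor $(1+|b|)^{-m}$ that makes the inversion integral absolutely convergent when combined with the Dolgopyat bound. The main term is harvested from a small neighborhood of $b = 0$, while the remainder is bounded at rate $t^{-s}$ by a careful tradeoff between the H\"older regularity of $\hR(z)$ at $z = 0$, the polynomial growth of the resolvent at infinity, and the number of derivatives of $w$ available for integration by parts. The principal obstacle is exactly this last tradeoff: producing the sharp error exponent $s = 2(\beta-1)$ for $\beta \in (1,2)$ requires an effective $C^\beta$-type smoothness estimate for $(I - \hR(z))^{-1}$ near $z = 0$, which is delicate because $z \mapsto \hR(z)$ is itself only H\"older of order $\beta - 1$ there; this is precisely the operator-renewal refinement established in~\cite{MT17}.
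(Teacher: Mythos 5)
You are sketching the operator-renewal approach of~\cite{MT17}, which is indeed where this result is proved. Note, however, that the present paper does not give a proof of Theorem~\ref{thm:MT} at all: it is quoted directly from~\cite[Theorem~2.4(a)]{MT17}, as the bracketed attribution and the terminal $\square$ in the statement indicate, so there is no in-paper argument to compare against. Your outline correctly identifies the ingredients used there --- a continuous-time renewal identity of the form $\hat\rho_{v,w}(z)=A_v(z)(I-\hR(z))^{-1}B_w(z)+H_{v,w}(z)$ valid for compactly supported $v,w$; the spectral perturbation of $\hR(z)$ near $z=0$ and the Karamata-type expansion $1-\lambda(z)=\intphi\,z+O(z^\beta)$; the Dolgopyat-type resolvent bound $\|(I-\hR(ib))^{-1}\|_\theta=O(|b|^\alpha)$ away from $0$; and Fourier inversion with integration by parts in the flow direction. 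But as written, your argument defers its decisive step --- extracting the error exponent $s=2(\beta-1)$ for $\beta\in(1,2)$ from the limited smoothness of $(I-\hR(z))^{-1}$ near $z=0$ --- to~\cite{MT17}; this is an acknowledgment that the cited result is needed, not a proof of it.

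Two small inaccuracies worth flagging. First, the claimed uniform bound $\|\hR(ib)^n\|_\theta\le C|b|^{\alpha_0}e^{-\kappa n}$ with $\kappa>0$ independent of $b$ overstates what the Dolgopyat estimate actually gives: the correct statement (compare Lemma~\ref{lem:Dolg2}) is contraction by a factor $1-\eps(|b|+2)^{-\alpha}$ over blocks of $[\xi\ln(|b|+2)]$ iterates, so the geometric decay rate degenerates polynomially as $|b|\to\infty$. The resolvent bound you then quote, $\|(I-\hR(ib))^{-1}\|_\theta=O(|b|^\alpha)$, is nonetheless correct and is all that is used. Second, $z\mapsto\hR(z)$ is not ``only H\"older of order $\beta-1$'' near $z=0$; it is $C^q$ for every $q<\beta$ (Proposition~\ref{prop:Rslow}), i.e.\ $C^1$ with first derivative essentially $(\beta-1)$-H\"older when $\beta\in(1,2)$. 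The bottleneck is the regularity of the transfer function $(1-\lambda(z))^{-1}$, not of $\hR(z)$ itself.
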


We also mention a result which is the continuous time
analogue of~\cite[Theorem~1.3, last statement]{Gouezel04a}.

\begin{thm}[ {\cite[Theorem~2.4(b)]{MT17}} ] \label{thm:MT0}
Assume the setup of Theorem~\ref{thm:MT}.  If in addition
$\int_{Y^\varphi}v\,d\mu^\varphi=0$ or
$\int_{Y^\varphi}w\,d\mu^\varphi=0$, then
$|\rho_{v,w}(t)| \le C
\|v\|_\theta|w|_{\infty,m} \,t^{-(\beta-\eps)}$ for all $\eps>0$.~
\qed
\end{thm}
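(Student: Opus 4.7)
The plan is to follow the Laplace/Fourier-analytic argument underlying Theorem~\ref{thm:MT} but to push the expansion of the resolvent of the twisted transfer operator one order further, exploiting the cancellation produced by the mean-zero hypothesis. Without loss of generality I may assume $\supp v,\supp w\subset Y\times[0,\inf\varphi]$ (by a standard cutoff and smoothing in the flow direction that costs only constants in $\|v\|_\theta$ and $|w|_{\infty,m}$), and write $\hat\rho_{v,w}(s)=\int_0^\infty e^{-st}\rho_{v,w}(t)\,dt$ for $\Re s>0$, expressing it in terms of the twisted transfer operator $\hR(s)v=R(e^{-s\varphi}v)$ of the base map $F$, where $R$ denotes the Perron--Frobenius operator of $F$.

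Near $s=0$ on the imaginary axis, $(I-\hR(s))^{-1}$ admits an asymptotic expansion whose leading singularity (of order $|s|^{\beta-1}$, up to a logarithmic factor at integer $\beta$) is a rank-one operator whose range is spanned by the constant function and whose kernel is the hyperplane of mean-zero functions. Inserting this expansion into the formula for $\hat\rho_{v,w}(s)$ produces a leading singular term whose residue is exactly $\intphi^{-1}\int v\,d\mu^\varphi\int w\,d\mu^\varphi$; this is precisely the asymptotic identified in Theorem~\ref{thm:MT}, and it vanishes under our hypothesis.

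With the leading singularity eliminated, I would then analyse the next term in the resolvent expansion. Combining the remaining regular part of $\hR(s)$ with a further iteration of the operator renewal identity, the remaining singular content of $\hat\rho_{v,w}(s)$ is controlled by $|s|^{\beta-1-\varepsilon}$ times a factor that is regular in $s$; here the extra $\varepsilon$ of smoothness is bought using the flow-regularity $|w|_{\infty,m}$ via integration by parts in the roof coordinate (this turns one power of $|s|$ on the Laplace side into an extra decay factor on the time side), and using $\|v\|_\theta$ to control the action of the transfer operator. The continuous-time inversion argument developed for Theorem~\ref{thm:slow} -- built on smoothness of $\hat\rho_{v,w}$ on $i\R\setminus\{0\}$ together with local H\"older-$(\beta-\varepsilon)$ behaviour at $0$ -- then converts this finer behaviour at the origin into the time-domain estimate $|\rho_{v,w}(t)|\le C\|v\|_\theta|w|_{\infty,m}\,t^{-(\beta-\varepsilon)}$.

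The hardest part, in my view, is verifying that the mean-zero condition really annihilates \emph{every} contribution of order $|s|^{\beta-1}$, not merely the one coming from the rank-one top eigenprojection of $R$; cross-terms between the singular and regular components of $(I-\hR(s))^{-1}$ can in principle reintroduce a $|s|^{\beta-1}$-singularity with a coefficient that is not manifestly of the form $\int v\,d\mu^\varphi\int w\,d\mu^\varphi$. This requires a careful bookkeeping of how the hypothesis $\int v\,d\mu^\varphi=0$ (or the analogue for $w$) propagates through the resolvent expansion -- essentially the continuous-time analogue of the cancellation exploited in~\cite{Gouezel04a}. The loss encoded in the $\varepsilon$ (as opposed to a clean $t^{-\beta}$) is what one should expect from this bookkeeping, and from the fact that the second-order term in the resolvent expansion is only H\"older of every order $<\beta-1$ at the origin.
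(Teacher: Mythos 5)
The paper does not prove this statement: Theorem~\ref{thm:MT0} is stated with an immediate \qed and is simply imported from~\cite[Theorem~2.4(b)]{MT17}, as the theorem heading indicates. There is therefore no ``paper's own proof'' against which to check your argument; the continuous-time operator renewal analysis you are sketching lives entirely in the companion paper~\cite{MT17}, not here.

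On the merits of what you wrote, the overall shape is right -- and it matches the surrounding Section~\ref{sec:slow} machinery (Corollary~\ref{cor:poll}, the decomposition~\eqref{eq:decomp}, Propositions~\ref{prop:v0} and~\ref{prop:tildeR}--\ref{prop:tildeP}): expand $(I-\hR(s))^{-1}$ near $s=0$ into the rank-one eigenprojection piece $(1-\lambda(s))^{-1}P(s)$ plus a regular remainder, observe that the $t^{-(\beta-1)}$ asymptotic of Theorem~\ref{thm:MT} has coefficient $\int v\,d\mu^\varphi\int w\,d\mu^\varphi$ (traceable back to the $b^{-1}P(0)R\hV$ piece handled in Proposition~\ref{prop:v0}), and then argue that the mean-zero hypothesis kills it. Two remarks. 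First, the support condition $\supp v,\,\supp w\subset Y\times[0,\inf\varphi]$ is already a hypothesis inherited from Theorem~\ref{thm:MT}, so the ``WLOG by cutoff'' step is unnecessary -- and in fact that cutoff is not harmless, since it would generally destroy a mean-zero condition. Second, your sentence about the remaining singular content being ``controlled by $|s|^{\beta-1-\varepsilon}$ times a regular factor'' is not the correct bookkeeping: by the inversion criterion of Proposition~\ref{prop:fourier}, the decay rate $t^{-(\beta-\eps)}$ requires $\hat\rho_{v,w}$ to have $(\beta-\eps)$ orders of H\"older regularity on $i\R$, not a small power of $|s|$ in a prefactor. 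The actual mechanism is that, once the rank-one piece is cancelled, the residual pieces ($\tilde\lambda^{-1}$, $\widetilde P$, $\widetilde R$, etc.) are $C^{q}$ for every $q<\beta$, and convolution bookkeeping of the type in Proposition~\ref{prop:conv} converts this to $t^{-q}$ for every $q<\beta$. Your identification of the ``hardest part'' (showing the mean-zero condition annihilates every $|s|^{\beta-1}$ contribution, not just the manifest one) is the correct place to worry; this is exactly the ``first-order'' cancellation tracked in~\cite{Gouezel04a} and in~\cite{MT17}.
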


\subsection{Nonuniformly expanding semiflows on metric spaces}
\label{sec:ambient}

Let $T_t:M\to M$ be a semiflow defined on a
metric space $(M,d)$ with $\diam M\le 1$.  
Fix $\eta\in(0,1]$.

Given $v:M\to\R$, define ${|v|}_{C^\eta}=\sup_{x\neq x'}|v(x)-v(x')|/d(x,x')^\eta$ and ${\|v\|}_{C^\eta}=|v|_\infty+{|v|}_{C^\eta}$.
Let $C^\eta(M)=\{v:M\to\R:{\|v\|}_{C^\eta}<\infty\}$.
Also, define
${|v|}_{C^{0,\eta}}=\sup_{x\in M,\,t>0}|v(T_tx)-v(x)|/t^\eta$ and
let $C^{0,\eta}(M)=\{v:M\to\R:|v|_\infty+{|v|}_{C^{0,\eta}}<\infty\}$.
(Such observables are H\"older in the flow direction.)

We say that $w:M\to\R$ is differentiable in the flow direction if the limit
$\partial_tw=\lim_{t\to0}(w\circ T_t-w)/t$ exists pointwise.
Define $|w|_{\infty,m}=\sum_{j=0}^m|\partial_t^jw|_\infty$ and let
$L^{\infty,m}(M)=\{w:M\to\R:|w|_{\infty,m}<\infty\}$.

Let $X\subset M$ be a Borel subset and
define $C^\eta(X)$ using the metric $d$ restricted to $X$.
We suppose that 
$T_{h(x)}x\in X$ for all $x\in X$,
 where $h:X\to\R^+$ lies in $C^\eta(X)$ and $\inf h>0$.
In addition, we suppose that there exists $C>0$ such that
\begin{align} \label{eq:holder}
d(T_tx,T_tx')\le Cd(x,x')^\eta
\quad\text{for all $t\in[0,|h|_\infty]$, $x,x'\in M$.}
\end{align}

Define $f:X\to X$ by $fx=T_{h(x)}x$.
We assume that $f$ is modelled by a Young tower with polynomial tails~\cite{Young99}.  This means that there is a Borel subset $Y\subset X$ with 
finite Borel measure $\mu_0$ and a return time $\tau:Y\to\Z^+$ 
with $\mu_0(\tau>n)=O(n^{-\beta})$ for some $\beta>1$ such that
$Fy=f^{\tau(y)}y\in Y$ for almost all $y\in Y$.
Moreover, 
there is an at most countable measurable partition $\{Y_j\}$ such that
$\tau$ is constant on partition elements,
and constants $\lambda>1$, $C>0$, such that for all $j\ge1$, 
\begin{itemize}
\item[(1)] $F$ restricts to a (measure-theoretic) bijection from $Y_j$  onto $Y$.
\item[(2)] $d(Fy,Fy')\ge \lambda d(y,y')$ for all $y,y'\in Y_j$.
\item[(3)] $\zeta_0=\frac{d\mu_0}{d\mu_0\circ F}$
satisfies $|\log \zeta_0(y)-\log \zeta_0(y')|\le Cd(Fy,Fy')^\eta$ for all $y,y'\in Y_j$.
\item[(4)] $d(f^\ell y,f^\ell y')\le Cd(Fy,Fy')$
for all $0\le \ell\le \tau(y)$, $y,y'\in Y_j$.
\end{itemize}

A standard consequence of conditions (1)--(3) is that
there is a unique ergodic $F$-invariant absolutely continuous probability measure $\mu$ on $Y$.   Moreover, $d\mu/d\mu_0\in L^\infty(Y)$ and 
$F:Y\to Y$ is a Gibbs-Markov map.

Define the induced roof function $\varphi:Y\to\R^+$
by $\varphi(y)=\sum_{\ell=0}^{\tau(y)-1}h(f^\ell y)$.
Since $\varphi\le |h|_\infty\tau$, it follows that
$\mu(\varphi>t)=O(t^{-\beta})$ and in particular $\varphi$ is integrable.
We can now define the suspension semiflow $F_t:Y^\varphi\to Y^\varphi$ and ergodic $F_t$-invariant and $T_t$-invariant probability measures $\mu^\varphi$ and
$\mu_M=\pi_*\mu^\varphi$ as in Section~\ref{sec:susp}.

\begin{prop}    \label{prop:NUE}
Let $T_t:M\to M$ be a semiflow satisfying conditions (1)--(4) and~\eqref{eq:holder} with $h$ H\"older and $\inf h>0$.  Set $\theta=\lambda^{-\eta^2}$.  Then
\\[.75ex]
(a) $F_t:Y^\varphi\to Y^\varphi$ is a Gibbs-Markov semiflow.
\\[.75ex]
(b)  Observables $v\in C^\eta(M)\cap C^{0,\eta}(M)$ lift to observables
$\tilde v=v\circ\pi:Y^\varphi\to \R$ that lie in $\cF_{\theta,\eta}(Y^\varphi)$.
Moreover, there is a constant $C>0$ such that
$\|\tilde v\|_{\theta,\eta}\le C({\|v\|}_{C^\eta}+{\|v\|}_{C^{0,\eta}})$.
\end{prop}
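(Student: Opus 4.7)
The plan is to verify the two parts separately, relying on conditions (1)--(4) and~\eqref{eq:holder} together with the Gibbs-Markov structure already available for the induced map $F:Y\to Y$. Part~(a) amounts to checking Definition~\ref{def:inf}; the substance of part~(b) lies in bounding the symbolic seminorm $|\tilde v|_\theta$ by combining the spatial and flow-direction H\"older information on $v$.

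For part~(a), the Gibbs-Markov property of $F:Y\to Y$ is already stated to follow from~(1)--(3). Integrability of $\varphi$ follows from $\varphi\le|h|_\infty\tau$ and $\mu_0(\tau>n)=O(n^{-\beta})$, and $\inf\varphi\ge\inf h>0$. To verify~\eqref{eq:inf}, I telescope: for $y,y'\in Y_j$ (on which $\tau$ is constant),
\[
|\varphi(y)-\varphi(y')|\le\sum_{\ell=0}^{\tau(y)-1}|h|_{C^\eta}\,d(f^\ell y,f^\ell y')^\eta\le\tau(y)\,|h|_{C^\eta}C^\eta d(Fy,Fy')^\eta
\]
by condition~(4). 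Iterating (2) gives $d(Fy,Fy')\le\lambda^{-(s(y,y')-1)}$, and since $\eta\le1$ we have $\lambda^{-\eta s}\le\lambda^{-\eta^2 s}=\theta^s$, so $d(Fy,Fy')^\eta\le\lambda^\eta\theta^{s(y,y')}$. Using $\inf_{Y_j}\varphi\ge\tau(y)\inf h$, this yields~\eqref{eq:inf} with a suitable $C_1$.

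For part~(b), $|\tilde v|_\infty\le|v|_\infty$ is trivial, and $|\tilde v|_{\infty,\eta}\le|v|_{C^{0,\eta}}$ follows by writing $\tilde v(y,u)-\tilde v(y,u')=v(T_{u-u'}T_{u'}y)-v(T_{u'}y)$ and invoking the definition of $C^{0,\eta}(M)$. The main step is $|\tilde v|_\theta$. If $y,y'$ lie in different partition elements, $d_\theta(y,y')=1$ and the ratio is bounded trivially by $2|v|_\infty/\inf\varphi$. Otherwise $y,y'\in Y_j$; setting $\sigma_m(z)=\sum_{i=0}^{m-1}h(f^iz)$, I write $T_uy=T_{s_1}f^\ell y$ and $T_uy'=T_{s_2}f^{\ell'}y'$, with $\ell=\ell(y,u)$, $\ell'=\ell(y',u)$ the return counts attained by time $u$ and $s_1,s_2\in[0,|h|_\infty)$ the fractional parts. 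In the primary case $\ell=\ell'$, a triangle inequality through $T_{s_1}f^\ell y'$ splits the difference into a spatial and a temporal term. The spatial term is bounded using $v\in C^\eta$ together with~\eqref{eq:holder} (valid since $s_1<|h|_\infty$) and condition~(4), producing the double H\"older loss $d(Fy,Fy')^{\eta^2}\ll\theta^{s(y,y')}$ --- this is precisely what forces the choice $\theta=\lambda^{-\eta^2}$. The temporal term uses $v\in C^{0,\eta}$ and $|s_1-s_2|=|\sigma_\ell(y)-\sigma_\ell(y')|$, which telescopes as in part~(a); the resulting factor $\tau(y)^\eta\theta^{s(y,y')}$ is absorbed by $\varphi(y)\ge\tau(y)\inf h$ in the denominator of $|\tilde v|_\theta$, since $\eta\le1$ makes $\tau(y)^{\eta-1}$ bounded.

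The main obstacle is the residual case $\ell(y,u)\ne\ell(y',u)$, which arises when $u$ straddles misaligned returns of $y$ and $y'$ so that $(y,u)$ and $(y',u)$ sit across one or more gluings $(z,\varphi(z))\sim(Fz,0)$. I plan to handle this by inserting intermediate ambient points $f^{\ell_-+j}y'$ (one per skipped return, with $\ell_\pm=\max/\min(\ell,\ell')$) into a longer telescoping triangle inequality, pairing each successive $f^{\ell_-+j}y'$ with $f^{\ell_-+j}y$ via condition~(4) and~\eqref{eq:holder}, while the cumulative flow-time discrepancy is controlled by $|\sigma_{\ell_+}(y)-\sigma_{\ell_-}(y')|$ and telescopes as in the primary case. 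Since the number of intermediate points is at most $\tau(y)$, the extra factors of $\tau(y)$ are again absorbed by $\varphi(y)$. Summing all contributions yields $|\tilde v|_\theta\le C(\|v\|_{C^\eta}+\|v\|_{C^{0,\eta}})$, and combining the three seminorm bounds gives $\|\tilde v\|_{\theta,\eta}\le C(\|v\|_{C^\eta}+\|v\|_{C^{0,\eta}})$ as required.
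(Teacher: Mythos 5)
Your part~(a) argument and the ``primary case'' of part~(b) match the paper's proof essentially step for step: the same telescoping of $h_\ell$ via~(4), the same $\theta_1=\lambda^{-\eta}$ intermediate, and the same split at $T_rf^\ell y'$ into a spatial term (controlled by $C^\eta$, eq.~\eqref{eq:holder}, and (4), giving the $\eta^2$ loss) and a temporal term (controlled by $C^{0,\eta}$). The bound $|\tilde v|_{\infty,\eta}\le|v|_{C^{0,\eta}}$ and the trivial $s(y,y')=0$ case are also handled as in the paper.

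The gap is in what you call ``the main obstacle'', the residual case $\ell(y,u)\neq\ell(y',u)$. In fact there is no obstacle: decompose \emph{both} points at the smaller return count $\ell_-$, writing $T_uy=T_{r}f^{\ell_-}y$ with $r=u-h_{\ell_-}(y)\in[0,|h|_\infty]$ and $T_uy'=T_{r'}f^{\ell_-}y'$ with $r'=u-h_{\ell_-}(y')\ge0$. The spatial comparison $|v(T_rf^{\ell_-}y)-v(T_rf^{\ell_-}y')|$ only requires $r\le|h|_\infty$ (so eq.~\eqref{eq:holder} applies), and the flow-direction comparison $|v(T_rf^{\ell_-}y')-v(T_{r'}f^{\ell_-}y')|\le|v|_{C^{0,\eta}}|r-r'|^\eta$ does not need $r'$ to be small --- the $C^{0,\eta}$ seminorm is defined with a supremum over all $t>0$. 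Since $|r-r'|=|h_{\ell_-}(y)-h_{\ell_-}(y')|$, this is controlled exactly as in your primary case. So the paper never splits into cases at all; the two-step triangle inequality handles everything at once.

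By contrast, the telescoping you sketch --- inserting the intermediate points $f^{\ell_-+j}y'$ one per skipped return --- does not work as described: the flow step from $T_{s_1}f^{\ell_-+j}y'$ to the next return $f^{\ell_-+j+1}y'$ has length of order $h(f^{\ell_-+j}y')$, which is $\Theta(1)$ and not controlled by $d_\theta(y,y')$, so these intermediate differences do not telescope to anything small. Moreover, the quantity you name, $|\sigma_{\ell_+}(y)-\sigma_{\ell_-}(y')|$, is itself of order $(\ell_+-\ell_-)\cdot\inf h$, not a small quantity; the discrepancy that actually telescopes is $|\sigma_{\ell_-}(y)-\sigma_{\ell_-}(y')|$ (the same index on both arguments). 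So as written the residual-case argument would fail, though it is unnecessary once you notice that $r'$ need not lie in $[0,|h|_\infty)$.
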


\begin{proof}
Set $\theta_1=\lambda^{-\eta}$.  
Let $y,y'\in Y$ with separation time $n=s(y,y')$.
By (2),
\[
1\ge \diam Y\ge d(F^ny,F^ny')\ge \lambda^n d(y,y'),
\]
so $d(y,y')^\eta \le (\lambda^{-n})^\eta= \theta_1^n=d_{\theta_1}(y,y')$.

Define $h_\ell=\sum_{j=0}^{\ell-1}h\circ f^j$.
By (4), for $y,y'\in Y_j$, $\ell=0,\dots,\tau(y)-1$, 
\begin{align} \label{eq:h} \nonumber
|h_\ell(y) & -h_\ell(y')|  \le \sum_{j=0}^{\tau(y)-1}|h(f^j y)-h(f^j y')| 
 \le |h|_\eta \sum_{j=0}^{\tau(y)-1}d(f^j y,f^j y')^\eta 
\ll \tau(y) d(Fy,Fy')^\eta 
 \\ & \le \tau(y) d_{\theta_1}(Fy,Fy')
 \le \theta_1^{-1}(\inf h)^{-1} \infYj\varphi\, d_{\theta_1}(y,y')
\ll \infYj\varphi\, d_{\theta_1}(y,y').
\end{align}
Taking $\ell=\tau(y)$ in~\eqref{eq:h}, we obtain
$|\varphi(y)  -\varphi(y')|  \ll 
 \infYj\varphi\, d_{\theta}(y,y')$ proving part~(a).

Next, let $(y,u),\,(y',u)\in Y^\varphi$ with $s(y,y')\ge1$.
There exists $\ell,\ell'\in\{0,\dots,\tau(y)-2\}$ such that
\[
u\in[h_\ell(y),h_{\ell+1}(y)]\cap [h_{\ell'}(y'),h_{\ell'+1}(y')].
\]
Suppose without loss that $\ell\le\ell'$.
Then
\[
u=h_\ell(y)+r=h_\ell(y')+r',
\]
where $r\in[0,|h|_\infty]$ and
 $r'= u-h_\ell(y')\ge u- h_{\ell'}(y')\ge0$.
Note that $T_uy=T_rT_{h_\ell(y)}y=T_rf^\ell y$ and similarly
$T_uy'=T_{r'}f^\ell y'$.
Hence
$\tilde v(y,u)-\tilde v(y',u) = v(T_rf^\ell y)-v(T_{r'}f^\ell y')$.
By conditions~(4) and~\eqref{eq:holder},
\begin{align*}
|v(T_rf^\ell y)-v(T_rf^\ell y')| & \le {|v|}_{C^\eta}
 d(T_rf^\ell y,T_rf^\ell y')^\eta
\ll {|v|}_{C^\eta} d(f^\ell y,f^\ell y')^{\eta^2}
\\ & \ll {|v|}_{C^\eta} d(Fy,Fy')^{\eta^2} \le  \theta^{-1}{|v|}_{C^\eta} d_{\theta}(y,y'),
\end{align*}
and by~\eqref{eq:h},
\[
|v(T_rf^\ell y')-v(T_{r'}f^\ell y')| \le {|v|}_{C^{0,\eta}} |r-r'|^\eta
 ={|v|}_{C^{0,\eta}} |h_\ell(y)-h_\ell(y')|^\eta
 \ll {|v|}_{C^{0,\eta}} 
\varphi(y)d_{\theta}(y,y').
\]
Hence 
$|\tilde v(y,u)-\tilde v(y',u)|\ll 
({|v|}_{C^\eta}+ {|v|}_{C^{0,\eta}}) \varphi(y)d_{\theta}(y,y')$,
whenever $s(y,y')\ge1$.  For \mbox{$s(y,y')=0$}, we have the estimate
$|\tilde v(y,u)-\tilde v(y',u)|\le 2|v|_\infty=2|v|_\infty d_\theta(y,y')
\ll |v|_\infty \,\varphi(y) d_\theta(y,y')$, so in all cases we obtain
$|\tilde v(y,u)-\tilde v(y',u)|\ll ({\|v\|}_{C^\eta}+{|v|}_{C^{0,\eta}})
\varphi(y) d_\theta(y,y')$.
Also, 
\[
|\tilde v(y,u)-\tilde v(y,u')|
=|v(T_uy)-v(T_{u'}y)|\le {|v|}_{C^{0,\eta}} |u-u'|^{\eta}.
\]
Hence $\tilde v\in \cF_{\theta,\eta}(Y^\varphi)$
completing the proof of part~(b).
\end{proof}

It follows that the main results in Subsection~\ref{sec:GM} go over to semiflows $T_t:M\to M$ satisfying (1)--(4):  

\begin{thm} \label{thm:NUE}
Let $T_t:M\to M$ be a semiflow satisfying conditions (1)--(4) and~\eqref{eq:holder} with $h$ H\"older and $\inf h>0$.
Suppose that $\mu_0(\tau>t)=O(t^{-\beta})$ for some $\beta>1$.
Assume absence of approximate eigenfunctions for the corresponding
Gibbs-Markov semiflow $F_t:Y^\varphi\to Y^\varphi$.

Then there exists $m\ge1$ and $C>0$ such that
\[
|\rho_{v,w}(t)|\le C({\|v\|}_{C^\eta}+{|v|}_{C^{0,\eta}})|w|_{\infty,m} \,t^{-(\beta-1)},
\]
for all $v\in C^\eta(M)\cap C^{\eta,0}(M)$, $w\in L^{\infty,m}(M)$, $t>1$.
\end{thm}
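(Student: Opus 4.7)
The plan is to reduce Theorem~\ref{thm:NUE} directly to Theorem~\ref{thm:slow} by lifting from the ambient semiflow $T_t:M\to M$ to the Gibbs-Markov suspension semiflow $F_t:Y^\varphi\to Y^\varphi$ constructed in Subsection~\ref{sec:ambient}. Essentially all the work has been packaged into Proposition~\ref{prop:NUE}; what remains is to verify the tail hypothesis on the roof function, lift the observables, and pull the resulting correlation estimate back to $M$ via~\eqref{eq:M}.

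First I would check the hypotheses of Theorem~\ref{thm:slow} for $F_t$. Proposition~\ref{prop:NUE}(a) already asserts that $F_t$ is a Gibbs-Markov semiflow with $\theta=\lambda^{-\eta^2}$, and absence of approximate eigenfunctions for $F_t$ is built into the hypothesis of Theorem~\ref{thm:NUE}. For the tail estimate on the roof, the construction gives $\varphi\le |h|_\infty\tau$, while Subsection~\ref{sec:ambient} records that $d\mu/d\mu_0\in L^\infty(Y)$, so
\[
\mu(\varphi>t)\le \mu(\tau>t/|h|_\infty)\ll \mu_0(\tau>t/|h|_\infty)\ll t^{-\beta},
\]
which is exactly the polynomial tail required by Theorem~\ref{thm:slow}.

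Next I would lift the observables. Given $v\in C^\eta(M)\cap C^{0,\eta}(M)$ and $w\in L^{\infty,m}(M)$, set $\tilde v=v\circ\pi$ and $\tilde w=w\circ\pi$ on $Y^\varphi$. Proposition~\ref{prop:NUE}(b) gives $\tilde v\in\cF_{\theta,\eta}(Y^\varphi)$ with a norm comparison of the form $\|\tilde v\|_{\theta,\eta}\ll \|v\|_{C^\eta}+|v|_{C^{0,\eta}}$, which is what the proof of that proposition actually delivers (the $L^\infty$ contribution is absorbed using $|v|_\infty\le\|v\|_{C^\eta}$). For $\tilde w$, the identity $\tilde w(y,u)=w(T_uy)$ shows that flow differentiation commutes with $\pi$, so $\partial_t^j\tilde w(y,u)=(\partial_t^jw)(T_uy)$ and hence $|\tilde w|_{\infty,m}=|w|_{\infty,m}$, placing $\tilde w\in L^{\infty,m}(Y^\varphi)$.

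Applying Theorem~\ref{thm:slow} then yields $m\ge1$ and $C>0$ such that $|\rho_{\tilde v,\tilde w}(t)|\le C\|\tilde v\|_{\theta,\eta}|\tilde w|_{\infty,m}\,t^{-(\beta-1)}$ for $t>1$, and identity~\eqref{eq:M} taken with $\Lambda=M$ and $\mu_\Lambda=\mu_M$ gives $\rho_{v,w}(t)=\rho_{\tilde v,\tilde w}(t)$; combining the two completes the argument. I do not anticipate any real obstacle here: the reduction is essentially mechanical once Proposition~\ref{prop:NUE} and the tower construction of Subsection~\ref{sec:ambient} are in hand, and the only minor bookkeeping point is the seminorm-vs-norm distinction noted above for the H\"older-in-flow component.
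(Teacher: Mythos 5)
Your proposal is correct and follows essentially the same reduction as the paper: verify the tail bound $\mu(\varphi>t)=O(t^{-\beta})$ via $\varphi\le|h|_\infty\tau$ and $d\mu/d\mu_0\in L^\infty$, invoke Proposition~\ref{prop:NUE} to lift $v$ into $\cF_{\theta,\eta}(Y^\varphi)$ and $w$ into $L^{\infty,m}(Y^\varphi)$, then apply Theorem~\ref{thm:slow} together with identity~\eqref{eq:M}. The extra detail you supply (the explicit tail calculation, the commutation of $\partial_t$ with the semiconjugacy) is all consistent with what the paper leaves implicit.
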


\begin{proof}
By Proposition~\ref{prop:NUE}(a), $F_t$ is a Gibbs-Markov semiflow with
$\mu(\varphi>t)=O(t^{-\beta})$.
By Proposition~\ref{prop:NUE}(b),
$\tilde v=v\circ\pi \in \cF_{\theta,\eta}(Y^\varphi)$.
Also, 
$\tilde w=w\circ\pi\in L^{\infty,m}(Y^\varphi)$.
Hence by~\eqref{eq:M} and Theorem~\ref{thm:slow}, 
$|\rho_{v,w}(t)|=|\rho_{\tilde v,\tilde w}(t)|\ll
\|\tilde v\|_{\theta,\eta}|\tilde w|_{\infty,m}\, t^{-(\beta-1)}
\ll ({\|v\|}_{C^\eta}+{|v|}_{C^{0,\eta}})|w|_{\infty,m}\, t^{-(\beta-1)}$ for 
$m$ sufficiently large.  
\end{proof}

If $h$ is the first return to $X$ and $\tau$ is the first return to $Y$, then observables supported on $\bigcup_{t\in[0,\inf h]}T_tY$ lift to observables supported on $Y\times[0,\inf\varphi]$ and we can apply Theorem~\ref{thm:MT} to obtain lower bounds for such observables.

\begin{example}[Intermittent semiflows]
\label{ex:LSVsemiflow}

We consider a class of two-dimensional H\"older
semiflows $T_t:M\to M$, with an intermittent Poincar\'e map of
Pomeau-Manneville type~\cite{PomeauManneville80}.
Let $X\subset M$ be the interval $[0,1]$ and suppose as above that
$h:X\to\R^+$ is H\"older with $\inf h>0$ such that
$f(x)=T_{h(x)}x$ defines a map $f:X\to X$.  We continue to assume condition~\eqref{eq:holder}.

Suppose that 
$f:X\to X$ is given by $f(x)=\begin{cases} x(1+2^\gamma x^\gamma) & x\in[0,\frac12) \\ 2x-1 & x\in[\frac12,1] \end{cases}$
where $\gamma\in(0,1)$ is fixed.
The maps $f$ were studied by~\cite{Hu04,LSV99,Young99} and decay of correlations for H\"older observables is $O(n^{-(\beta-1)})$ where $\beta=1/\gamma$.
This rate is sharp~\cite{Sarig02,Gouezel04a}.

Here we obtain the analogous results for semiflows.
Let $Y=[\frac12,1]$.
The first return map $F:Y\to Y$ satisfies conditions~(1)--(4) and the first return time $\tau:Y\to\Z^+$ satisfies $\mu(\tau>n)\sim c_0n^{-\beta}$ for some $c_0>0$.  Hence we obtain decay of correlations $O\big({\|v\|}_{C^\eta}|w|_{\infty,m}\,t^{-(\beta-1)}\big)$ for the semiflow by Theorem~\ref{thm:NUE}.
If in addition $h$ is the first return to $X$, then 
a standard calculation (see for example~\cite[Theorem~1.3]{Gouezel04}
or~\cite[Proposition~2.17]{MT17})
 shows that
$\varphi(y)\sim h(0)\tau(y)$ as $y\to\frac12^+$, 
$\mu(\varphi>t)\sim c_1 t^{-\beta}$ where $c_1=h(0)^\beta c_0$,
and 
$\int_t^\infty \mu(\varphi>t')\,dt'\sim ct^{-(\beta-1)}$
where $c=c_1/(\beta-1)$.   Hence
for H\"older observables 
supported on $\bigcup_{t\in[0,\inf h]}T_tY$ we obtain the asymptotic
$\rho_{v,w}(t)\sim c\intphi^{-1}\int_M v\,d\mu_M
\int_M w\,d\mu_M\,t^{-(\beta-1)}$.

The Markovian nature of the map $f$ simplified the analysis above but is not necessary.   In the nonMarkov case, the first return map $F$ is certainly not Gibbs-Markov, but there still exists a induced map $F:Y\to Y$ (not the first return map) on a suitable set $Y$ such that $F$ is Gibbs-Markov and the return time $\tau$ has the same tails as the first return time~\cite[Section~7]{Young99}.  Hence we still obtain the upper bound $O(t^{-(\beta-1)})$ for mixing rates of H\"older observables by Theorem~\ref{thm:NUE}.
For sharpness of this bound we refer to forthcoming work of~\cite{BMTprep} which extends~\cite{MT17} to a general functional analytic framework including nonMarkov intermittent (semi)flows.
\end{example}

\subsection{Dynamically H\"older semiflows and observables}
\label{sec:dyn}

It is standard that the assumptions on $T_t$, $h$ and $v$ in Theorem~\ref{thm:NUE}
can be relaxed from H\"older to dynamically H\"older, as we now describe.

We continue to assume that $v\in C^{0,\eta}(M)$ and that $\inf h>0$.  
Condition~\eqref{eq:holder} on the semiflow is removed, as are the assumptions that $h$ is H\"older and $v\in C^\eta(M)$.  
Instead, we require that there are constants $C>0$, $\gamma\in(0,1)$ such that for 
all $y,y'\in Y_j$, $j\ge1$,
\begin{align*}
& |h(f^\ell y)-h(f^\ell y')|\le C\gamma^{s(y,y')}
\quad\text{for $0\le\ell\le \tau(y)-1$,}
\\
& |v(T_uy)-v(T_uy')|\le C\varphi(y)\gamma^{s(y,y')}
\quad\text{for $u\in[0,\varphi(y)]\cap[0,\varphi(y')]$.}
\end{align*}
Here, $s(y,y')$ is the separation time for the Gibbs-Markov map $F$.

It is easily verified that the proof of Proposition~\ref{prop:NUE},
and hence Theorem~\ref{thm:NUE}, goes through under these more relaxed assumptions on $T_t$, $h$ and $v$.

\begin{rmk}  The notion of dynamically H\"older can be relaxed even further, see~\cite[Section~7.3]{BBMsub}.  This turns out to be crucial for Lorentz gas examples~\cite[Remark~9.2]{BBMsub}.
\end{rmk}

\section{Mixing rates for nonuniformly hyperbolic flows}
\label{sec:NUH}

In this section, we review results on rapid mixing and polynomial mixing for
nonuniformly hyperbolic flows.  
In Subsection~\ref{sec:skew}, we define a class of Gibbs-Markov flows with skew product structure (the roof function $\varphi$ is constant along stable leaves) and state our result on rates of mixing for such flows.  
In Subsection~\ref{sec:nonskew}, we discuss several situations where the skew product assumption can be relaxed; these include all the examples in the introduction.

\subsection{Skew product Gibbs-Markov flows}
\label{sec:skew}

Let $(Y,d)$ be a metric space with $\diam Y\le1$,
and let $F:Y\to Y$ be a piecewise continuous map with
ergodic $F$-invariant probability measure~$\mu$.
Let $\cW^s$ be a cover of $Y$ by disjoint measurable subsets called {\em stable leaves}.
For each $y\in Y$, let $W^s(y)$ denote the stable leaf containing~$y$.
We require that $F(W^s(y))\subset W^s(Fy)$ for all $y\in Y$.

Let $\bY$ denote the space obtained from $Y$ after quotienting by $\cW^s$,
with natural projection $\bar\pi:Y\to\bY$.
We assume that the quotient map $\bF:\bY\to\bY$ is a Gibbs-Markov map as in Definition~\ref{def:GM}, with
partition $\{\bY\!_j\}$, separation time $s(y,y')$,
 and ergodic invariant probability measure $\bmu =\bar\pi_*\mu$.

Let $Y_j=\bar\pi^{-1}\bY\!_j$; these form a partition of $Y$ and each $Y_j$ is a union of stable leaves.
The separation time extends to $Y$, setting
$s(y,y')=s(\bar\pi y,\bar\pi y')$ for $y,y'\in Y$.

Next, we require that there is a
measurable subset $\tY\subset Y$ such that
for every $y\in Y$ there is a unique $\tilde y\in\tY\cap
W^s(y)$.  Let $\pi:Y\to\tY$ define the associated projection $\pi y=\tilde y$.  (Note that $\tY$ can be identified with $\bY$, but in general $\pi_*\mu\neq\bmu$.)

We assume that there are constants $C>0$, $\gamma\in(0,1)$ such that
\begin{itemize}
\item[(a)] $d(F^ny,F^ny')\le C\gamma^n$ for all $n\ge0$ and all $y,y'\in Y$ with
$y'\in W^s(y)$.
\item[(b)] $d(F^ny,F^ny')\le C\gamma^{s(y,y')-n}$ for all $n\ge0$ and all $y,y'\in \tY$.
\end{itemize}

Let $\varphi:Y\to\R^+$ be an integrable roof function with $\inf\varphi>0$, and define the suspension flow $F_t:Y^\varphi\to Y^\varphi$ with ergodic invariant probability measure $\mu^\varphi$ (see Section~\ref{sec:susp}).\footnote{Strictly speaking, $F_t$ is not always a flow since $F$ need not be invertible.  However, $F_t$ is used as a model for various flows, and it is then a flow when $\varphi$ is the first return to $Y$, so it is convenient to call it a flow.}

In this subsection, we suppose that $\varphi$ is constant along stable leaves and hence projects to a well-defined roof function $\varphi:\bY\to\R^+$.
It follows that the suspension flow $F_t$ projects to
a suspension semiflow $\bF_t:\bY^\varphi\to\bY^\varphi$.
We assume that $\varphi:\bY\to\R^+$ satisfies
condition~\eqref{eq:inf}, so $\bF_t$ is a Gibbs-Markov semiflow.
We call $F_t$ a {\em skew product Gibbs-Markov flow}, and we say
that $F_t$ has {\em approximate eigenfunctions} if 
$\bF_t$ has approximate eigenfunctions (Definition~\ref{def:approx}).

Fix $\eta\in(0,1]$ and let $\cH_{\gamma,\eta}(Y^\varphi)$ be the space of observables
$v:Y^\varphi\to\R$ with
$\|v\|_{\gamma,\eta}<\infty$, where
\[
|v|_{\gamma,\eta}=\sup_{(y,u),\,(y',u')\in Y^\varphi \atop (y,u)\neq (y',u')}\frac{|v(y,u)-v(y',u')|}{\varphi(y)\{d(y,y')+\gamma^{s(y,y')}\}+|u-u'|^\eta},
\qquad \|v\|_{\gamma,\eta}=|v|_\infty+|v|_{\gamma,\eta}.
\]

We also define $\cH_{\gamma,0,m}(Y^\varphi)$ to consist of
observables that lie in $\cH_\gamma(Y^\varphi)$ and are $m$-times differentiable in the flow direction with derivatives in $\cH_\gamma(Y^\varphi)$,
with norm $\|w\|_{\gamma,0,m}=\sum_{j=0}^m \|\partial_t^jw\|_\gamma$.

In~\cite{BBMsub}, we prove:

\begin{thm} \label{thm:skew} Suppose that $F_t:Y^\varphi\to Y^\varphi$ is a skew product Gibbs-Markov flow
such that $\mu(\varphi>t)=O(t^{-\beta})$ for some $\beta>1$.
Assume absence of approximate eigenfunctions.
Then there exists $m\ge1$ and $C>0$ such that
\[
|\rho_{v,w}(t)|\le C\|v\|_{\gamma,\eta}\|w\|_{\gamma,0,m} \,t^{-(\beta-1)}
\quad\text{for all $v\in \cH_{\gamma,\eta}(Y^\varphi)$, $w\in \cH_{\gamma,0,m}(Y^\varphi)$, $t>1$}.
\]
\end{thm}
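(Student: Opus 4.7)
The strategy is to reduce the correlation on $Y^\varphi$ to a correlation for the Gibbs-Markov semiflow $\bF_t$ on $\bY^\varphi$, to which Theorem~\ref{thm:slow} applies. Because $\varphi$ is constant on stable leaves, $\bF_t$ is well-defined, $\bmu(\varphi>t)=O(t^{-\beta})$ holds on the quotient, and the absence-of-approximate-eigenfunctions hypothesis is inherited by assumption, so Theorem~\ref{thm:slow} yields $|\rho_{\bar v,\tilde w}(s)|\ll s^{-(\beta-1)}\|\bar v\|_{\theta,\eta}|\tilde w|_{\infty,m}$ for suitable $\theta\in(0,1)$ and $m\ge1$.

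The workhorse is the semigroup identity $w\circ F_t=(w\circ F_\tau)\circ F_s$ with $s+\tau=t$, which holds regardless of invertibility of $F_t$. Taking $s=\tau=t/2$, I would approximate $w\circ F_\tau$ by a function $\tilde w$ that is constant on stable leaves: for instance $\tilde w(y,u)=w(F_\tau(\pi y,u))$, using the transversal projection $\pi:Y\to\tY$ (this is well-defined since $\varphi(\pi y)=\varphi(y)$). Because $\varphi$ is leaf-constant, the points $(y,u)$ and $(\pi y,u)$ have $F_\tau$-images on the same stable leaf separated by at most $d(F^ny,F^n\pi y)\le C\gamma^n$ by condition~(a), where $n$ is the common number of $F$-returns in time $\tau$. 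Combined with the $\cH_\gamma$-regularity of $w$ this gives a pointwise error $O(\gamma^n\varphi(F^ny)\|w\|_{\gamma,0,m})$ on $\{n\ge N_0\}$, and at worst $2|w|_\infty$ on the complement, whose $\mu^\varphi$-measure is $O(\tau^{-(\beta-1)})$. Choosing $N_0=C\log\tau$ yields an $L^1$-approximation error of order $t^{-(\beta-1)}$ after pairing with $|v|_\infty$.

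The reduced integral $\int v\cdot\tilde w\circ F_s\,d\mu^\varphi$ then passes to the quotient: since $\tilde w\circ F_s$ is constant on stable leaves, it descends to $\tilde w\circ\bF_s$ on $\bY^\varphi$ and
\[
\int_{Y^\varphi} v\cdot\tilde w\circ F_s\,d\mu^\varphi=\int_{\bY^\varphi}\bar v\cdot\tilde w\circ\bF_s\,d\bmu^\varphi,
\]
where $\bar v$ is the Radon--Nikodym density of the push-forward of $v\,d\mu^\varphi$ under $Y^\varphi\to\bY^\varphi$. The $\cH_{\gamma,\eta}$-regularity of $v$ combined with condition~(b) — which makes $d(\cdot,\cdot)+\gamma^{s(\cdot,\cdot)}$ on the transversal comparable to the symbolic metric $d_\theta$ on $\bY$ for a suitable $\theta\in(0,1)$ — should give $\bar v\in\cF_{\theta,\eta}(\bY^\varphi)$ with norm $\ll\|v\|_{\gamma,\eta}$, while trivially $|\tilde w|_{\infty,m}\le|w|_{\infty,m}\le\|w\|_{\gamma,0,m}$. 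Applying Theorem~\ref{thm:slow} at time $s=t/2$ bounds the main term by $O(t^{-(\beta-1)})$; together with the approximation error of the same order and the parallel match of the constant terms $\int v\,d\mu^\varphi\int w\,d\mu^\varphi$, this yields the theorem. The main obstacle I foresee is the regularity bookkeeping --- establishing $\bar v\in\cF_{\theta,\eta}(\bY^\varphi)$ with norm independent of $s$ --- which requires coupling disintegrations of $\mu$ along stable leaves and showing that the conditional expectation along leaves preserves both the symbolic transverse regularity and the H\"older regularity along the flow direction.
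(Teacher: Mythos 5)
The paper does not actually contain a proof of Theorem~\ref{thm:skew}: the text explicitly writes ``In~\cite{BBMsub}, we prove:'' and the abstract states that results for flows are given without proof, so strictly speaking there is no in-paper argument for me to compare yours against. What I can do is assess whether your outline is plausibly what~\cite{BBMsub} does and where it is incomplete.

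Your overall route --- splitting $t=s+\tau$ with $\tau=t/2$, replacing $w\circ F_\tau$ by the stable-leaf-constant observable $\tilde w(y,u)=w(F_\tau(\pi y,u))$ (legitimate since $\varphi$ is leaf-constant, so $(y,u)$ and $(\pi y,u)$ have the same lap count), controlling the approximation error via condition~(a) with a logarithmic cut-off $N_0\sim C\log\tau$, and then pushing the remaining integral to the quotient semiflow $\bF_t$ where Theorem~\ref{thm:slow} applies --- is exactly the classical Bowen--Sina\u{\i}--Dolgopyat reduction, and it is the natural strategy here. This much is sound. Two places where you are over-summarising, however, deserve flagging. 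First, the bound $\mu^\varphi\{n(\tau,\cdot)<N_0\}=O(\tau^{-(\beta-1)})$ is not immediate: the event is $\{\varphi_{N_0+1}(y)>u+\tau\}$, one must pair the indicator with the factor $\varphi$ from $d\mu^\varphi$, and a naive union bound over the $N_0$ summands plus H\"older requires $\varphi\in L^p$ for $p>\beta$, which fails. It does go through --- split $\varphi_{N_0+1}=\varphi+\varphi_{N_0}\circ F$, use Proposition~\ref{prop:varphieta} on the first term and boundedness of $R\varphi$ on the second --- but the estimate should not be stated as obvious.

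The more serious gap is the one you yourself call the ``main obstacle'', and it is not mere bookkeeping. Writing $\int_{Y^\varphi}v\,(\tilde w\circ F_s)\,d\mu^\varphi=\int_{\bY^\varphi}\bar v\,(\tilde w\circ\bF_s)\,d\bmu^\varphi$ with $\bar v$ the fiberwise conditional expectation of $v$ is correct set-theoretically, but to invoke Theorem~\ref{thm:slow} you need $\bar v\in\cF_{\theta,\eta}(\bY^\varphi)$ with $\|\bar v\|_{\theta,\eta}\ll\|v\|_{\gamma,\eta}$, and neither condition~(a) nor~(b) in Section~\ref{sec:skew} says anything about how the conditional measures $\mu_{\bar y}$ on distinct stable leaves are coupled. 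Conditions~(a),(b) are purely metric statements; matching $\mu_{\bar y}$ to $\mu_{\bar y'}$ for nearby $\bar y,\bar y'\in\bY_j$ and controlling the Jacobian of that matching is an additional structural input (essentially an absolute-continuity/distortion bound for the stable holonomy), and this is precisely the kind of hypothesis the introduction warns is delicate even to formulate --- it is the flow-version of the issue resolved in the discrete setting by Gou\"ezel's argument (\cite[Appendix~B]{MT14}). So ``coupling disintegrations of $\mu$ along stable leaves'' is the crux of the proof, not a postscript. A minor further point: as written, the constant terms do not match since $\int\tilde w\,d\bmu^\varphi\ne\int w\,d\mu^\varphi$; this is cleanly sidestepped by normalising $\int_{Y^\varphi}v\,d\mu^\varphi=0$ first (then $\int\bar v\,d\bmu^\varphi=0$ and the subtraction term vanishes), and you should make that reduction explicit.
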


\subsection{General nonuniformly hyperbolic flows}
\label{sec:nonskew}

In this subsection, we drop the assumption that $\varphi$ is constant along stable leaves,
 and mention various classes of nonuniformly hyperbolic flows that do not possess a skew product structure to which our methods apply.
For details we refer to~\cite{BBMsub} who introduce a class of
{\em Gibbs-Markov flows} that are conjugate by a H\"older conjugacy to a skew product Gibbs-Markov flow.  As shown in~\cite{BBMsub}, our results on rapid and polynomial mixing go over to Gibbs-Markov flows and to nonuniformly hyperbolic flows that are modelled by Gibbs-Markov flows.  This includes the following situations.

\vspace{-2ex}
\paragraph{(i) Flows with exponential contraction along stable leaves}
Condition~(a) in Subsection~\ref{sec:skew} asserts exponential contraction along stable leaves for the uniformly hyperbolic map $F:Y\to Y$.  This means that exponential contraction is assumed only on returns to the inducing set $Y$.
Note that such a return occurs at the flow time $\varphi_n=\sum_{j=0}^{n-1}\varphi\circ F^j$, so
an alternative stronger condition is that
$d(F^ny,F^ny')\le C\gamma^{\varphi_n(y)}$ for all $n\ge0$ and all $y,y'\in Y$ with $y'\in W^s(y)$.
This condition was assumed in~\cite{M07,M09} and incorporates all of the rapidly mixing examples in Section~\ref{sec:introrapid}.  Indeed, it is automatic for flows modelled by Young towers with exponential tails.   However for flows modelled by Young towers with polynomial tails, the condition is very restrictive and excludes the slowly mixing billiard examples.

\vspace{-2ex}
\paragraph{(ii) Roof functions with bounded H\"older constants}
Condition~\eqref{eq:inf} reflects the fact that the variation of $\varphi$ on partition elements is likely to be as large as the size of $\varphi$.  The argument in Section~\ref{sec:ambient} indicates that this is the ``correct'' condition in general.  
A stronger condition is that $|\varphi(y)-\varphi(y')|\le C_1 \theta^{s(y,y')}$ along unstable leaves.
This includes~\cite{BBMsub} the slowly mixing Lorentz gas examples in Section~\ref{sec:introslow}.

\vspace{-2ex}
\paragraph{(iii) Flows with H\"older stable foliation}
Under the assumption that $T_t$ has a H\"older stable foliation $\cW^{ss}$ (in a neighborhood of the attractor $\Lambda$) we can reduce to the skew product case by using a cross-section $Y$ comprising leaves in $\cW^{ss}$.
(For flows with a $DT_t$-invariant dominated splitting $T_\Lambda M=E^{ss}\oplus E^{cu}$, results on the existence of a H\"older stable foliation $\cW^{ss}$ can be found in~\cite[Section~4]{AraujoM17} and~\cite[Theorem~6.2]{AraujoMsub}.)

\begin{example}[Intermittent solenoidal flows]
\label{ex:LSVflow}

The classical Smale-Williams solenoid construction can be adapted
(see for example~\cite[Section~5]{AlvesPinheiro08} and~\cite[Example~4.2]{MV16})
 to construct intermittent transformations $f:X\to X$ that are the invertible analogue of the intermittent maps in Example~\ref{ex:LSVsemiflow}.
These have polynomial decay rates $O(n^{-(\beta-1)})$ for any specified $\beta>1$.
Hence we can construct intermittent flows $T_t:M\to M$ with $f:X\to X$ as a Poincar\'e map and H\"older return time function $h:X\to\R^+$ with $\inf h>0$.

The examples in~\cite{AlvesPinheiro08} and some of the examples in~\cite{MV16}
have exponential contraction along stable leaves and hence fall within scenario (i).  However even the examples in~\cite{MV16} with slow contraction along stable leaves are covered by scenario~(iii).  Hence we obtain polynomial mixing $O(t^{-(\beta-1)})$ for general intermittent solenoidal flows.
Again these results are sharp by~\cite{MT17} in the Markovian case and~\cite{BMTprep} in the general case.  Optimal lower bounds, asymptotics and error rates are achieved by observables that are supported away from the neutral periodic orbit and constant along stable leaves.  
\end{example}

\section{Criteria for absence of approximate eigenfunctions}
\label{sec:approx}

The approximate eigenfunction condition in Definition~\ref{def:approx} is somewhat technical.  In this subsection, we discuss three sufficient conditions to rule out the existence of approximate eigenfunctions.
We suppose throughout that $F_t:Y^\varphi\to Y^\varphi$ is a Gibbs-Markov semiflow or skew product Gibbs-Markov flow, but it is immediate from the definitions in~\cite{BBMsub} that 
the conditions apply to the situations mentioned in Subsection~\ref{sec:nonskew}.

In Subsection~\ref{sec:Dio}, we show that a Diophantine condition on the periods of three periodic solutions ensures absence of approximate eigenfunctions.  This condition is satisfied with probability one but is not robust.  In Subsection~\ref{sec:good}, we use good asymptotics of periodic data to give an open and dense condition.  In Subsection~\ref{sec:D}, we define the temporal distortion function and give a condition involving the dimension of its range.

In preparation for Subsections~\ref{sec:Dio} and~\ref{sec:good}, we recall the relationship between periodic data and approximate eigenfunctions.
Define $\varphi_n=\sum_{j=0}^{n-1}\varphi\circ F^j$.
If $y$ is a periodic point of period $p$ for $F$ (that is, $F^py=y$),
then $y$ is periodic of period $\period=\varphi_p(y)$ for $F_t$
(that is, $F_\period y=y$).

\begin{prop} \label{prop:period}
Suppose that there exist approximate eigenfunctions on $Z_0\subset Y$.
Let $\alpha,C, b_k,n_k$ be as in Definition~\ref{def:approx}.
If $y\in Z_0$ is a periodic point with $F^py=y$ and $F_\period y=y$
where $\period=\varphi_p(y)$, then
\begin{align} \label{eq:period}
\dist(b_kn_k\period-p\psi_k,2\pi\Z)\le C(\inf\varphi)^{-1}\period|b_k|^{-\alpha}
\quad\text{for all $k\ge1$}.
\end{align}
\end{prop}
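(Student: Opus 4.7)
The plan is to iterate the approximate eigenfunction relation~\eqref{eq:approx} exactly $p$ times and then exploit periodicity. Using the explicit formula $M_b^n u(y) = e^{i b \varphi_n(y)} u(F^n y)$, two facts make the iteration work: first, $Z_0$ is forward $F$-invariant (by Definition~\ref{def:Z0}, since $y \in Z_0$ means $F^m y \in Z$ for all $m \ge 0$, which also gives $F^{m+1} y \in Z$ for all $m \ge 0$), so the approximate eigenfunction error remains controlled along the orbit; second, $M_{b_k}$ preserves moduli pointwise ($|M_b v(y)| = |v(Fy)|$), so iterating does not amplify errors.

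Combining these, I would prove by induction on $j \ge 1$ that
\[
|M_{b_k}^{j n_k} u_k(y) - e^{i j \psi_k} u_k(y)| \le j C |b_k|^{-\alpha} \quad \text{for all } y \in Z_0,
\]
using the telescoping identity
\[
M_{b_k}^{(j+1) n_k} u_k - e^{i(j+1)\psi_k} u_k = M_{b_k}^{n_k}\bigl(M_{b_k}^{j n_k} u_k - e^{i j \psi_k} u_k\bigr) + e^{i j \psi_k}\bigl(M_{b_k}^{n_k} u_k - e^{i\psi_k} u_k\bigr),
\]
together with the two facts above. The essential point is that the accumulated error grows only linearly in $j$, rather than exponentially.

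Setting $j = p$, the periodicity of $y$ gives $F^{p n_k} y = y$ and $\varphi_{p n_k}(y) = n_k \varphi_p(y) = n_k \period$, hence $M_{b_k}^{p n_k} u_k(y) = e^{i b_k n_k \period} u_k(y)$. Since $|u_k(y)| = 1$, the inductive bound yields $|e^{i b_k n_k \period} - e^{i p \psi_k}| \le p C |b_k|^{-\alpha}$. Passing from this to distance modulo $2\pi$ (using $|e^{i\theta}-1| = 2|\sin(\theta/2)|$ and the elementary inequality $|\sin x| \ge (2/\pi)\dist(x,\pi\Z)$) and invoking the estimate $p \le \period / \inf\varphi$ (which follows immediately from $\period = \varphi_p(y) \ge p \inf\varphi$) produces~\eqref{eq:period}, up to an absolute constant that may be absorbed into $C$. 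No substantive obstacle arises; the only point worth verifying carefully is that iterating~\eqref{eq:approx} yields error $O(p|b_k|^{-\alpha})$, for it is this linear (rather than exponential) dependence on $p$ that makes the final bound usable in the Diophantine arguments of Subsection~\ref{sec:Dio}.
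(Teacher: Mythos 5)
Your argument is essentially identical to the paper's: the paper uses precisely the same telescoping identity to deduce $|M_b^{np}u - e^{ip\psi}u| \le p|M_b^nu - e^{i\psi}u|$ pointwise on $Z_0$, then substitutes the periodic point to obtain $|e^{ib_kn_k\period} - e^{ip\psi_k}| \le Cp|b_k|^{-\alpha}$ and finishes with $\period = \varphi_p(y) \ge p\inf\varphi$. The only step you spell out that the paper leaves implicit is the conversion from $|e^{i\theta}-1|$ to $\dist(\theta,2\pi\Z)$ (which does cost a harmless absolute constant, strictly speaking), so both proofs are the same modulo that cosmetic remark.
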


\begin{proof}
For $n,p\ge1$,
\[
M_b^{np}u-e^{ip\psi}u=M_b^n(M_b^{n(p-1)}u-e^{i(p-1)\psi}u)+e^{i(p-1)\psi}(M_b^nu-e^{i\psi}u),
\]
so $|M_b^{np}u-e^{ip\psi}u|\le |M_b^{n(p-1)}u-e^{i(p-1)\psi}u|+|M_b^nu-e^{i\psi}u|$.
Inductively, 
$|M_b^{np}u-e^{ip\psi}u|\le p|M_b^nu-e^{i\psi}u|$ and hence by~\eqref{eq:approx},
\begin{align} \label{eq:approxp}
|(M_{b_k}^{pn_k}u_k)(y)-e^{ip\psi_k}u_k(y)|\le Cp|b_k|^{-\alpha}
\quad\text{for all $y\in Z_0$, $k\ge1$, $p\ge1$.}
\end{align} 

Next, $(M_b^pv)(y)=e^{ib\varphi_p(y)}v(F^py)=e^{ib\period}v(y)$.
Hence substituting $y$ into~\eqref{eq:approxp},
we obtain $|e^{ib_kn_k\period}-e^{ip\psi_k}|\le Cp|b_k|^{-\alpha}$.
Also $\period=\varphi_p(y)\ge p\inf\varphi$.
\end{proof}

\begin{rmk} \label{rmk:tq}
We now have the notation required to provide the formula for $t_q$ promised in
Section~\ref{sec:stats}.  Recall that $q$ is a periodic point for the flow.  Let $y\in Y$ be a point lying on the periodic orbit through $q$ with $F^py=y$.  By~\cite[Corollary~6.2]{AMV15}, we can choose $t_q=\varphi_p(y)$, so $t_q$ coincides with the period of $q$ for the suspension flow $F_t$.  (If $\varphi$ is the first return time to $Y$ then $t_q$ is also the period of $q$ under $T_t$.)
\end{rmk}

\subsection{Diophantine condition on periods}
\label{sec:Dio}

\begin{prop} \label{prop:tau}
Let $y_1,y_2,y_3\in\bigcup Y_j$ be fixed points for $F$,
and let $\period_i=\varphi(y_i)$, $i=1,2,3$, be the 
corresponding periods for $F_t$.
Let $Z_0$ be the finite subsystem corresponding to the three partition elements containing $y_1,y_2,y_3$.

If $(\period_1-\period_3)/(\period_2-\period_3)$ is Diophantine, then
 there do not exist approximate eigenfunctions on $Z_0$.
\end{prop}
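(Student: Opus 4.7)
The plan is to apply Proposition \ref{prop:period} to each of the three fixed points $y_1, y_2, y_3 \in Z_0$, eliminate the unknown phase $\psi_k$ by pairwise subtraction, and then force a contradiction with the Diophantine hypothesis on $\rho=(\period_1-\period_3)/(\period_2-\period_3)$ by exploiting the freedom to take $\alpha$ arbitrarily large in Definition \ref{def:approx}.

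Fix $\tau\ge 1$ and $K>0$ such that $\dist(q\rho,\Z)\ge K|q|^{-\tau}$ for every nonzero integer $q$; this is the Diophantine property of $\rho$. Suppose for contradiction that $M_b$ has approximate eigenfunctions on $Z_0$. Choose $\alpha_0>\tau$; by Definition~\ref{def:approx} there exist $\alpha,\xi>\tau$, $C>0$, and sequences $b_k,\psi_k,u_k$ with $n_k=[\xi\ln|b_k|]$ as stated there. Since each $y_i$ is a fixed point of $F$ (so $p=1$) with flow period $\period_i=\varphi(y_i)$, Proposition~\ref{prop:period} yields
\[
\dist(b_k n_k\period_i-\psi_k,\ 2\pi\Z)\le C(\inf\varphi)^{-1}\period_i|b_k|^{-\alpha},\qquad i=1,2,3.
\]
Subtracting the $i=3$ estimate from the $i=1$ and $i=2$ estimates cancels $\psi_k$ and produces
\[
\dist(b_k n_k(\period_1-\period_3),\ 2\pi\Z)\ll|b_k|^{-\alpha},\qquad
\dist(b_k n_k(\period_2-\period_3),\ 2\pi\Z)\ll|b_k|^{-\alpha}.
\]

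Writing $A_k=b_k n_k(\period_1-\period_3)/(2\pi)$ and $B_k=b_k n_k(\period_2-\period_3)/(2\pi)$, so that $A_k=\rho B_k$ exactly, let $p_k,q_k$ denote integers nearest to $A_k,B_k$. The two estimates and the triangle inequality give $|p_k-\rho q_k|\ll|b_k|^{-\alpha}$. In the degenerate case $q_k=0$ we would have $|B_k|\ll|b_k|^{-\alpha}$, forcing $n_k\ll|b_k|^{-\alpha-1}\to 0$, contradicting $n_k=[\xi\ln|b_k|]\to\infty$. Hence $q_k\ne 0$, and the Diophantine hypothesis gives $K|q_k|^{-\tau}\le|p_k-\rho q_k|\ll|b_k|^{-\alpha}$, so $|q_k|\gg|b_k|^{\alpha/\tau}$. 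But $n_k\ll\ln|b_k|$ implies $|q_k|\ll|b_k|\ln|b_k|$, and combining the two bounds gives $|b_k|^{\alpha/\tau-1}\ll\ln|b_k|$. Since $\alpha>\tau$, the left-hand side grows as a positive power of $|b_k|$, violating this inequality for $|b_k|$ large.

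The argument is essentially routine once Proposition~\ref{prop:period} is in hand; there is no serious obstacle. The one point meriting a moment of care is the asymmetry between the fixed Diophantine exponent $\tau$ and the freely adjustable $\alpha$---it is crucial that Definition~\ref{def:approx} permits $\alpha$ to be taken larger than any prescribed $\alpha_0$, since this is exactly what allows $|b_k|^{\alpha/\tau-1}$ to overwhelm the logarithmic factor and close the contradiction.
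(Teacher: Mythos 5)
Your proof is correct and follows essentially the same route as the paper: apply Proposition~\ref{prop:period} at the three fixed points, eliminate $\psi_k$ by subtraction, then bound the resulting nearest integer both above (via $n_k=O(\ln|b_k|)$) and below (via the Diophantine condition) to force a contradiction once $\alpha$ is taken large enough. The only cosmetic difference is that you phrase the Diophantine hypothesis as $\dist(q\rho,\Z)\ge K|q|^{-\tau}$ while the paper uses the equivalent "only finitely many rational approximants of order $|m_2|^{-\alpha'}$" formulation.
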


\begin{proof}
We give the proof when $F_t$ is a Gibbs-Markov semiflow.  It is immediate from the definitions that the result is the same for skew product Gibbs-Markov flows.

Define $\period_{i3}=\period_i-\period_3$, $i=1,2$.
For $\period_{13}/\period_{23}$ Diophantine,
there exists $\alpha'>2$ such that
$|\period_{13}/\period_{23}-m_1/m_2|\le C|m_2|^{-\alpha'}$ has only finitely many integer solutions $m_1,m_2$ for each $C>0$.

Arguing as in~\cite[Section~13]{Dolgopyat98b}, 
we choose $\alpha>\alpha'$.
Suppose for contradiction that there exist approximate eigenfunctions on $Z_0$.
Substituting $y=y_i$ in~\eqref{eq:period}, we obtain the estimates
\[
\dist(b_kn_k\period_i-\psi_k,2\pi\Z)=O(|b_k|^{-\alpha}),\quad i=1,2,3,
\]
where $|b_k|\to\infty$ and $n_k=O(\ln|b_k|)$ as $k\to\infty$.
Eliminating $\psi_k$, we obtain
\[
\dist(b_kn_k\period_{i3},2\pi\Z) =O(|b_k|^{-\alpha}), \quad i=1,2.
\]
Hence, for each $k$ we have
integers $m_1,m_2$ such that
\[
b_kn_k\period_{i3}=2\pi m_i + O(|b_k|^{-\alpha}),
\]
where in particular $m_2=O(b_kn_k)=O(b_k\ln|b_k|)$.
Also, $m_1/m_2\sim \period_{13}/\period_{23}$ so $m_1=O(m_2)$.
Hence
\[
b_kn_k\period_{i3}=2\pi m_i + O(|m_2|^{-\alpha'}),\quad i=1,2.
\]
It follows that
$\frac{\period_{13}}{\period_{23}}=\frac{m_1}{m_2}+O(|m_2|^{-\alpha'})$,
which is the desired contradiction.
\end{proof}

\subsection{Good asymptotics}
\label{sec:good}

We recall the following definition from~\cite{FMT07}:

\begin{defn} \label{def:good}
Let $y_0\in Y$ be a fixed point for $F$ with period $\period_0=\varphi(y_0)$ for the flow.
A sequence of periodic points $y_N\in Y$, $N\ge1$,
with $F^Ny_N=y_N$ has {\em good asymptotics} if their periods $\period_N=\varphi_N(y_N)$ for the flow satisfy
\[
\period_N=N\period_0+\kappa+E_N\gamma^N\cos(N\omega+\omega_N)+o(\gamma^N),
\]
where $\kappa\in\R$, $\gamma\in(0,1)$ are constants,
$E_N\in\R$ is a bounded sequence with \mbox{$\liminf_{N\to\infty}|E_N|>0$},
and either (i) $\omega=0$ and $\omega_N\equiv0$, or (ii) 
$\omega\in(0,\pi)$ and $\omega_N\in(\omega_0-\pi/12,\omega_0+\pi/12)$
for some $\omega_0$.   
\end{defn}

\begin{prop}  
If there exists
a sequence of periodic points with good asymptotics in a finite subsystem $Z_0$, then there do not
exist approximate eigenfunctions on $Z_0$.
\end{prop}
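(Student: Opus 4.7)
Suppose for contradiction that approximate eigenfunctions exist on $Z_0$, with constants $\alpha,C$ and sequences $b_k,\psi_k,n_k=[\xi\ln|b_k|]$ as in Definition~\ref{def:approx}. The plan is to apply Proposition~\ref{prop:period} to the fixed point $y_0$ (with $p=1$, period $\period_0$) and to each periodic point $y_N$ (with $p=N$, period $\period_N$), obtaining
\begin{align*}
\dist(b_kn_k\period_0-\psi_k,\,2\pi\Z) &\le C_0|b_k|^{-\alpha},\\
\dist(b_kn_k\period_N-N\psi_k,\,2\pi\Z) &\le C_0\period_N|b_k|^{-\alpha}.
\end{align*}
Multiplying the first by $N$ and subtracting eliminates $\psi_k$. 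Since the good asymptotics forces $\period_N=N\period_0+O(1)$, this yields
$$\dist\bigl(b_kn_k(\period_N-N\period_0),\,2\pi\Z\bigr)\le C_1 N|b_k|^{-\alpha}.$$

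I would next substitute $\period_N-N\period_0=\kappa+E_N\gamma^N\cos(N\omega+\omega_N)+o(\gamma^N)$ and restrict to $N=N_k$ of logarithmic size in $|b_k|$ with $|b_k|n_k\gamma^{N_k}=O(1)$. These choices make the right-hand side $o(1)$ and render the $o(\gamma^N)$ term $o(1)$ after multiplication by $b_kn_k$. Writing $\Lambda_k\equiv b_kn_k\kappa\pmod{2\pi}$ (representative in $(-\pi,\pi]$) and $V_k(N)=b_kn_kE_N\gamma^N\cos(N\omega+\omega_N)$, the bound reads $\dist(\Lambda_k+V_k(N_k),\,2\pi\Z)=o(1)$. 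Taking first $N_k$ so large within the admissible range that $|b_k|n_k\gamma^{N_k}\to0$ sends $V_k(N_k)\to0$ and hence $\Lambda_k\to0$; subtracting gives the clean reduction
$$\dist(V_k(N_k),\,2\pi\Z)=o(1)\quad\text{for every admissible $N_k$.}$$

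The contradiction then comes from exhibiting admissible $N_k$ for which $V_k(N_k)$ is bounded and bounded away from $2\pi\Z$. Set $M=\sup_N|E_N|$ and $\delta=\liminf_N|E_N|>0$. In case (i) ($\cos\equiv1$), I would pick $N_k$ maximal with $|b_k|n_k\gamma^{N_k}\ge r$ for $r>0$ chosen so that $rM/\gamma<2\pi-1$; then $|V_k(N_k)|\in[r\delta,\,rM/\gamma]$, a fixed subinterval of $(0,2\pi)$, contradicting the reduction. In case (ii), $\omega\in(0,\pi)$ and $\omega_N$ varies in an interval of length $\pi/6$; a pigeonhole/equidistribution argument on $\{N\omega\pmod\pi\}$ (using $\omega\notin\{0,\pi\}$) produces a universal $L_0=L_0(\omega)$ such that every block of $L_0$ consecutive integers contains some $N$ with $|\cos(N\omega+\omega_N)|\ge 1/2$. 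Widening the admissible window to $|b_k|n_k\gamma^{N_k}\in[r,r\gamma^{-(L_0-1)}]$ with $r$ chosen correspondingly small then gives such an $N_k$ with $|V_k(N_k)|$ again in a fixed subinterval of $(0,2\pi)$, contradicting the reduction.

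The main obstacle is this quantitative balancing in case (ii): the window for $|b_k|n_k\gamma^{N_k}$ must be wide enough (by a factor $\gamma^{-(L_0-1)}$) to contain a good-cosine $N_k$, yet narrow enough that $|V_k(N_k)|$ stays strictly below $2\pi$, so that closeness to $2\pi\Z$ forces closeness to $0$. Both constraints can be met simultaneously because $L_0$ is determined by $\omega$ and the $\pi/12$-window for $\omega_N$, while $r$ can be chosen correspondingly small in terms of the universal constants $M,\delta,L_0,\gamma$.
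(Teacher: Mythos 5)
Your proof is sound and tracks the paper's skeleton through the key reduction: both arguments eliminate $\psi_k$ and $\period_0$ using the fixed point $y_0$ together with the $y_N$'s, and both isolate $\kappa$ by first taking $N$ of order $\rho\ln|b_k|$ with $\rho$ large so that $|b_k|n_k\gamma^N\to 0$, leaving the clean constraint $\dist(b_kn_k E_N\gamma^N\cos(N\omega+\omega_N)+o(1),\,2\pi\Z)\to 0$ over a logarithmic range of admissible $N$. Where you diverge from the paper is in extracting the final contradiction from this constraint. The paper fixes $M(k)$ with $|b_k|n_kS\gamma^{M(k)}\in[\gamma,1)$, considers $N=M(k)+j$ for each fixed $j\ge0$, passes to the limit $k\to\infty$ to obtain $(M(k)+j)\omega+\omega_{M(k)+j}\to\tfrac{\pi}{2}\bmod\pi$ for every $j$, and then takes differences over $j$ to force $\ell\omega\in[-\pi/6,\pi/6]\bmod\pi$ for all $\ell$, which is impossible. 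You instead exhibit, for each large $k$, a single good $N_k$ inside a window of bounded width for which $|\cos(N_k\omega+\omega_{N_k})|\ge\tfrac12$, placing $|V_k(N_k)|$ in a fixed compact subinterval of $(0,2\pi)$ (using $\liminf|E_N|>0$ and $\sup|E_N|<\infty$, together with a choice of $r$ small enough that $r\gamma^{-(L_0-1)}\sup|E_N|<2\pi$) and hence at positive distance from $2\pi\Z$. Both approaches are valid; yours gives a more direct single-$N_k$ contradiction, while the paper's limit-then-difference argument avoids any pigeonhole bookkeeping.

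The one step you leave unverified is the pigeonhole lemma itself: that $\omega\in(0,\pi)$ and $|\omega_N-\omega_0|<\pi/12$ guarantee an $L_0=L_0(\omega)$ such that every $L_0$ consecutive integers contain an $N$ with $(N\omega+\omega_0)\bmod\pi$ at distance $\ge\pi/12$ from the interval $(\pi/3,2\pi/3)$ where $|\cos|<\tfrac12$, i.e.\ lying in the safe set $[0,\pi/4]\cup[3\pi/4,\pi]$ of measure $\pi/2$. This is correct --- for $\omega/\pi=p/q$ in lowest terms with $q\ge2$ the orbit consists of $q$ points spaced $\pi/q$ apart, which cannot all lie in an open arc of length $\pi/2$, so some point in each period is safe and $L_0=q$ suffices; for $\omega/\pi$ irrational one uses equidistribution --- but since the existence of $L_0$ is carrying the whole case~(ii) contradiction, you should include this short verification rather than assert it.
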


\begin{proof}
We argue as in the proof of~\cite[Theorem~1.6(a)]{FMT07}.
The proof works for any fixed $\alpha>0$.

Note that $\period_N=O(N+1)$.
Suppose for contradiction that there exist approximate eigenfunctions on $Z_0$.
Substituting~$y_N$ into~\eqref{eq:period} we obtain the estimates
\[
\dist(b_kn_k\period_N-N\psi_k,2\pi\Z)= O(N|b_k|^{-\alpha}), \qquad
\dist(b_kn_kN\period_0-N\psi_k,2\pi\Z) = O(N|b_k|^{-\alpha}),
\]
for all $k\ge1$, $N\ge1$.  Hence we can eliminate $\psi_k$ and $\period_0$
simultaneously, yielding
\begin{align} \label{eq:good}
\dist(b_kn_k(\kappa+E_N\gamma^N\cos(N\omega+\omega_N)+o(\gamma^N)),2\pi\Z)= O(N|b_k|^{-\alpha}),
\end{align}
for all $k\ge1$, $N\ge1$.

Momentarily, choose $N=N(k)=[\rho\ln|b_k|]$ in~\eqref{eq:good}.  
For $\rho>0$ large enough but fixed, 
we have $|b_k|n_k\gamma^{N(k)}=O(|b_k|^{-\alpha})$.
It follows that $\dist(b_kn_k\kappa,2\pi\Z)=O(|b_k|^{-\alpha}\ln|b_k|)$ for all $k\ge1$.
Combining this with~\eqref{eq:good} (which still holds for all $N\ge1$),
we obtain
for any $\alpha'<\alpha$ that
\begin{align} \label{eq:alpha}
\dist(b_kn_k(E_N\gamma^N\cos(N\omega+\omega_N)+o(\gamma^N)),2\pi\Z)= O(N|b_k|^{-\alpha'}),
\end{align}
for all $k\ge1$, $N\ge1$.

Let $S=\sup_N|E_N|$ and set $M(k)=[\ln(|b_k|n_kS)/(-\ln \gamma)]+1$.
Then $|b_k|n_kS\gamma^{M(k)}\in[\gamma,1)$.
Taking $N=M(k)+j$ in~\eqref{eq:alpha}, we obtain
\[
\lim_{k\to\infty} b_kn_k E_{M(k)+j}\gamma^{M(k)}\cos((M(k)+j)\omega+\omega_{M(k)+j})=0
\quad\text{for all $j\in\Z$.}
\]
Since $|b_kn_k\gamma^{M(k)}|\ge\gamma/S$ and $\liminf_{N\to\infty}|E_N|>0$,
it follows that
\begin{align} \label{eq:omega}
\lim_{k\to0}\{(M(k)+j)\omega+\omega_{M(k)+j}\}={\SMALL \frac{\pi}{2}}\bmod\pi
\quad\text{for all $j\in\Z$.}
\end{align}
This is clearly impossible when $\omega=\omega_N=0$ for all $N$, so it remains to consider the case $\omega\in(0,\pi)$ and $|\omega_N-\omega_0|<\pi/12$.
Taking differences of~\eqref{eq:omega} for different values of $j$, we obtain that
$\ell\omega\in[-\pi/6,\pi/6] \bmod\pi$ for all $\ell$, which is impossible,
providing the desired contradiction.~
\end{proof}

As shown in~\cite{FMT07}, for any finite subsystem $Z_0$, the existence of periodic points with good asymptotics in $Z_0$ is a $C^2$-open and $C^\infty$-dense condition.   (Although~\cite{FMT07} is set in the uniformly hyperbolic setting, the construction of periodic points with good asymptotics uses only the existence of a transverse homoclinic point $y_0$.)

\subsection{Temporal distance function}
\label{sec:D}

In this subsection, following~\cite[Section~5.3]{M09} and~\cite[Section~3]{AMV15},
we extend an argument of Dolgopyat~\cite[Appendix]{Dolgopyat98b} for Axiom~A flows to the nonuniformly hyperbolic setting.  
We assume throughout that we are in the setup of Section~\ref{sec:skew}.
In particular, $\varphi$ is constant along stable leaves in the 
cover $\cW^s$.

First, we recall the notion of product structure on $Y$, taking only the parts from~\cite{Young98} that are needed here.  
Assume that there is a second cover $\cW^u$ of $Y$ by disjoint measurable subsets (called
{\em unstable leaves}) such that each stable leaf intersects each unstable leaf in precisely one point.
For each $y\in Y$ let $W^u(y)$ denote the unstable leaf containing $y$.
We require that $F(W^u(y)\cap Y_j)\supset W^u(Fy)$ for all $y\in Y_j$, $j\ge1$.

Also we assume that there are constants $C>0$, $\gamma\in(0,1)$ such that
\[
|\varphi(y)-\varphi(y')|\le C \gamma^{s(y,y')}
\quad\text{for all $y,y'\in Y$ with $s(y,y')\ge1$ and $y'\in W^u(y)$}.
\]

Given $y,y'\in Y$, $y'\in W^u(y)$,
we define compatible inverse branches $F^ny$, $F^ny'$ for \mbox{$n\le-1$} as follows.  First, set $z_0=y$, $z_0'=y'$.
By transitivity of $F$, there exists
$z_1\in Y$ such that $Fz_1=z_0$.  Let $Y_{j_1}$ be the partition element containing $z_1$.  Since $F(W^u(z_1)\cap Y_{j_1}))\supset W^u(z_0)$, there exists $z_1'\in W^u(z_1)\cap Y_{j_1}$ such that $Fz_1'=z_0'$.   Inductively, we obtain a sequence of partition elements $Y_{j_n}$
and pairs of points $z_n,z_n'\in Y_{j_n}$ with $z_n'\in W^u(z_n)$ for all $n\ge1$.   
Note that 
$|\varphi(z_n)- \varphi(z_n')|\le C\gamma^{s(z_n,z_n')}\le C\gamma^n$
for all $n\ge1$, 
so we obtain a well-defined function
\[
D_0(y,y')=\sum_{n=1}^\infty (\varphi(z_n)-\varphi(z_n'))
=\sum_{n=-\infty}^{-1} (\varphi(F^ny)-\varphi(F^ny')).
\]

Now, let $y_1,y_4\in Y$ and set
$y_2=W^s(y_1)\cap W^u(y_4)$,
$y_3=W^u(y_1)\cap W^s(y_4)$.
For $n\le-1$ choose compatible inverse branches $F^{n}y_1$ and 
$F^{n}y_3$ as done in the definition of $D_0$, and similarly for
$F^ny_4$ and $F^ny_2$.
Define the {\em temporal distance function} $D:Y\times Y\to\R$,
\begin{align*}
 D(y_1,y_4)  & =\sum_{n=-\infty}^\infty \Big(\varphi(F^ny_1)-\varphi(F^ny_2)-\varphi(F^ny_3)+\varphi(F^ny_4)\Big)
 \\ & =\sum_{n=-\infty}^{-1} \Big(\varphi(F^ny_1)-\varphi(F^ny_2)-\varphi(F^ny_3)+\varphi(F^ny_4)\Big),
\end{align*}
where the last equality follows from the assumption that $\varphi$ is constant along stable leaves.
This coincides with $D_0(y_1,y_3)-D_0(y_4,y_2)$ and hence is well-defined.

\begin{thm} \label{thm:D}
Let $Z_0=\bigcap_{n=0}^\infty F^{-n}Z$ where $Z$ is a union of finitely many elements of the partition $\{Y_j\}$.  Let $\bar Z_0$ denote the corresponding finite subsystem of $\bY$.  If 
the lower box dimension of $D(Z_0\times Z_0)$ is positive,
then there do not exist approximate eigenfunctions on~$\bar Z_0$.
\end{thm}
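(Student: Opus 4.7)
The plan is to argue by contradiction, adapting Dolgopyat's appendix~\cite[Appendix]{Dolgopyat98b} as in~\cite[Section~5.3]{M09} and~\cite[Section~3]{AMV15}. Let $\delta>0$ denote the lower box dimension of $S:=D(Z_0\times Z_0)$ and suppose approximate eigenfunctions exist on $\bar Z_0$. Using the freedom in Definition~\ref{def:approx} to take $\alpha_0$ arbitrarily large, I would fix $\alpha_0$ so that any resulting $\alpha,\xi>\alpha_0$ satisfy $\xi|\log\theta|\gg 1$, $\xi|\log\gamma|\gg 1$, and $1/(1+\alpha')<\delta$ for the effective exponent $\alpha'$ appearing below; let $b_k\to\infty$, $\psi_k$, $u_k$, $n_k=[\xi\log|b_k|]$ be the corresponding data from~\eqref{eq:approx}.

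The central step is to apply~\eqref{eq:approx} at four compatible preimages encoding the rectangle defining $D$. For $(y_1,y_4)\in Z_0\times Z_0$, set $y_2=W^s(y_1)\cap W^u(y_4)$ and $y_3=W^u(y_1)\cap W^s(y_4)$; these lie in $Z_0$ by $F$-invariance and the Markov structure, and satisfy $\bar y_1=\bar y_2$ and $\bar y_3=\bar y_4$ in $\bar Z_0$. I would choose compatible inverse branches giving points $\bar z_i\in\bar Z_0$ with $\bar F^{n_k}\bar z_i=\bar y_i$, with $\bar z_1,\bar z_2$ in the same cylinder of depth $n_k$ and likewise for $\bar z_3,\bar z_4$ (so $s(\bar z_1,\bar z_2),s(\bar z_3,\bar z_4)\ge n_k$), and with lifted Birkhoff sums $\bar\varphi_{n_k}(\bar z_i)=\sum_{n=-n_k}^{-1}\varphi(F^n y_i)$ arising from a single joint inverse-branch choice. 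Writing $A_i=e^{ib_k\bar\varphi_{n_k}(\bar z_i)}u_k(\bar y_i)$ and $B_i=e^{i\psi_k}u_k(\bar z_i)$, the approximate equations read $|A_i-B_i|\le C|b_k|^{-\alpha}$; forming the unimodular products $R_{12}:=A_1\bar A_2=e^{ib_k(\bar\varphi_{n_k}(\bar z_1)-\bar\varphi_{n_k}(\bar z_2))}$ and $R_{34}:=A_3\bar A_4=e^{ib_k(\bar\varphi_{n_k}(\bar z_3)-\bar\varphi_{n_k}(\bar z_4))}$ (using $u_k(\bar y_1)=u_k(\bar y_2)$, $u_k(\bar y_3)=u_k(\bar y_4)$ and $|u_k|=1$), and comparing with $B_1\bar B_2=u_k(\bar z_1)\overline{u_k(\bar z_2)}$, $B_3\bar B_4=u_k(\bar z_3)\overline{u_k(\bar z_4)}$, which are within $C|b_k|^{1-\xi|\log\theta|}$ of $1$ by the H\"older bound $|u_k|_\theta\le C|b_k|$ and the separation-time estimates, one obtains
\begin{equation*}
\bigl|e^{ib_k\Sigma_k}-1\bigr|\le C|b_k|^{-\alpha}+C|b_k|^{1-\xi|\log\theta|},\qquad\Sigma_k=\bar\varphi_{n_k}(\bar z_1)-\bar\varphi_{n_k}(\bar z_2)-\bar\varphi_{n_k}(\bar z_3)+\bar\varphi_{n_k}(\bar z_4).
\end{equation*}

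By telescoping, $\Sigma_k=\sum_{n=-n_k}^{-1}[\varphi(F^n y_1)-\varphi(F^n y_2)-\varphi(F^n y_3)+\varphi(F^n y_4)]$, which converges to $D(y_1,y_4)$ with error $O(\gamma^{n_k})=O(|b_k|^{-\xi|\log\gamma|})$ by the tail bound on $\varphi$ along unstable leaves. Choosing $\xi$ large enough that all three error terms are dominated by $|b_k|^{-\alpha'}$, I obtain $\dist(b_k D(y_1,y_4),2\pi\Z)\le C|b_k|^{-\alpha'}$ uniformly over $(y_1,y_4)\in Z_0\times Z_0$. A covering argument then finishes: $S$ is bounded, and every point of $S$ lies within $O(|b_k|^{-1-\alpha'})$ of the arithmetic progression $(2\pi/b_k)\Z$, so $N(\epsilon_k,S)=O(\epsilon_k^{-1/(1+\alpha')})$ with $\epsilon_k=|b_k|^{-1-\alpha'}\to 0$, forcing the lower box dimension of $S$ to be at most $1/(1+\alpha')<\delta$, a contradiction. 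The hardest part will be the simultaneous choice of the four compatible inverse branches, arranged so that all four preimages lie in $\bar Z_0$, the required pairs have separation time at least $n_k$, and the lifted alternating Birkhoff sum telescopes cleanly to the temporal distance; this bookkeeping draws on the product structure of $Y$ and the Markov/invariance structure of $Z_0$, and is the place where the subtlety of the nonuniformly hyperbolic setting actually enters.
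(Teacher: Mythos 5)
The overall structure you propose is the same as the paper's: express $D(y_1,y_4)$ as the limit of an alternating Birkhoff sum $D_m$, rewrite $\exp(ibD)$ as a ratio of exponentials of Birkhoff sums at compatible $m$-fold preimages, substitute the approximate-eigenfunction relation at those preimages, cancel the forward $u_k$-values using $\bar\pi y_1=\bar\pi y_2$ and $\bar\pi y_3=\bar\pi y_4$, estimate the preimage $u_k$-ratios using the H\"older bound $|u_k|_\theta\ll|b_k|$ and a separation-time estimate, choose $\xi$ large to kill the error terms, and finish with a covering count of $D(Z_0\times Z_0)$. That skeleton is correct and matches the paper's proof.

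However, there is a genuine gap in the step where you set up the separation-time estimate: you pair the preimages along the \emph{stable} pairs $(\bar z_1,\bar z_2)$ and $(\bar z_3,\bar z_4)$, requiring them to be in a common $n_k$-cylinder. But $\bar y_1=\bar y_2$, so $\bar z_1$ and $\bar z_2$ are both preimages of the \emph{same} point of $\bY$ under $\bF^{n_k}$; if they are in the same $n_k$-cylinder then, since $\bF^{n_k}$ restricted to an $n_k$-cylinder is a bijection onto $\bY$, necessarily $\bar z_1=\bar z_2$, and likewise $\bar z_3=\bar z_4$. In that case $\Sigma_k=\bar\varphi_{n_k}(\bar z_1)-\bar\varphi_{n_k}(\bar z_2)-\bar\varphi_{n_k}(\bar z_3)+\bar\varphi_{n_k}(\bar z_4)\equiv 0$, which says nothing about $D$ (and indeed, since $\varphi$ is constant along stable leaves and factors through $\bar\pi$, the alternating sum collapses identically to $0$ whenever the same cylinder sequence is used for all four points). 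Conversely, if you allow the two cylinder sequences to differ (as is necessary for $\Sigma_k$ to approximate a nonzero $D$), then $\bar z_1$ and $\bar z_2$ need not share even a $1$-cylinder and $s(\bar z_1,\bar z_2)$ has no lower bound. The fix --- and what the paper does --- is to pair the preimages along the \emph{unstable} pairs: $(F^{-n_k}y_1,F^{-n_k}y_3)$, which are chosen compatibly in the definition of $D_0(y_1,y_3)$ and hence satisfy $s(\bar\pi F^{-n_k}y_1,\bar\pi F^{-n_k}y_3)\ge n_k$, and similarly $(F^{-n_k}y_2,F^{-n_k}y_4)$. With that pairing one estimates $|u_k\circ\bar\pi(F^{-n_k}y_1)/u_k\circ\bar\pi(F^{-n_k}y_3)-1|\ll|b_k|\theta^{n_k}$ and likewise for $(y_2,y_4)$, and the rest of your argument (error bookkeeping via $\xi$, the covering and box-dimension count) goes through as intended. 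So: the cancellation of the \emph{forward} values uses the stable pairing, while the closeness of the \emph{preimage} values must use the unstable pairing --- swapping these is exactly what breaks your version.
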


\begin{proof}
We suppose that there exist approximate eigenfunctions on $\bar Z_0$ and
show that $\underline{\mathrm{BD}}(D(Z_0\times Z_0))=0$.

First, note that $D(y_1,y_4)=D_m(y_1,y_4)+O(\gamma^m)$ where
\begin{align*}
D_m(y_1,y_4)  
& =\sum_{n=-m}^{-1} \Big(\varphi(F^ny_1)-\varphi(F^ny_2)-\varphi(F^ny_3)+\varphi(F^ny_4)\Big) \\
& = \varphi_m(F^{-m}y_1) - \varphi_m(F^{-m}y_2)
- \varphi_m(F^{-m}y_3) + \varphi_m(F^{-m}y_4).
\end{align*}
Hence
\begin{align} \label{eq:Dm}
\exp\{ibD(y_1,y_4)\}=\frac{
\exp\{ib\varphi_m(F^{-m}y_1)\}
\exp\{ib\varphi_m(F^{-m}y_4)\}
}{
\exp\{ib\varphi_m(F^{-m}y_2)\}
\exp\{ib\varphi_m(F^{-m}y_3)\}
}
+O(|b|\gamma^m).
\end{align}

By Definition~\ref{def:approx}, 
there are sequences such that
\[
|\exp\{ib_k\bphi_{n_k}\}u_k\circ \bF^{n_k}-e^{i\psi_k}u_k|\ll |b_k|^{-\alpha}
\quad\text{on $\bar Z_0$}.
\]
Recall from Definition~\ref{def:approx} that 
$|u_k|\equiv1$, so
\[
\exp\{ib_k\varphi_{n_k}\}=
\exp\{ib_k\bphi_{n_k}\}\circ\bar\pi
=\frac{e^{i\psi_k}u_k\circ\bar\pi}{u_k\circ\bar\pi\circ F^{n_k}}+O(|b_k|^{-\alpha}) \quad\text{on $Z_0$}.
\]
Substituting into~\eqref{eq:Dm} and using that $\bar\pi y_1=\bar\pi y_2$,
$\bar\pi y_3=\bar\pi y_4$, we obtain
\[
\exp\{ib_k D(y_1,y_4)\}
=\frac{u_k\circ\bar\pi(F^{-n_k}y_1)u_k\circ\bar\pi(F^{-n_k}y_4)}
{u_k\circ\bar\pi(F^{-n_k}y_2)u_k\circ\bar\pi(F^{-n_k}y_3)}+O(|b_k|^{-\alpha})
+O(|b_k|\gamma^{n_k}).
\]

Recall from Definition~\ref{def:approx} that 
$|u_k|_\theta\ll |b_k|$.  Also,
$\alpha$ can be fixed arbitrarily large and $n_k=[\xi\ln|b_k|]$ where $\xi$ can be fixed arbitrarily large,  We choose $\xi$ so that 
$\xi\ln\theta+1\le -\alpha$ and
$\xi\ln\gamma+1\le -\alpha$.
Then
\[
|b_k|\theta^{n_k}\le\theta^{-1}|b_k|\theta^{\xi\ln|b_k|}=\theta^{-1}|b_k|^{\xi\ln\theta+1}\le \theta^{-1}|b_k|^{-\alpha},
\]
 and similarly
$|b_k|\gamma^{n_k}\le \theta^{-1}|b_k|^{-\alpha}$.  Hence
\begin{align*}
& \Big|\frac{u_k\circ\bar\pi(F^{-n_k}y_1)} {u_k\circ\bar\pi(F^{-n_k}y_3)}-1\Big|
 =|u_k\circ\bar\pi(F^{-n_k}y_1)-u_k\circ\bar\pi(F^{-n_k}y_3)|
\\ & \le 
|u_k|_\theta d_\theta(\bar\pi(F^{-n_k}y_1),\bar\pi(F^{-n_k}y_3))
\ll |b_k|\theta^{s(\bar\pi(F^{-n_k}y_1),\bar\pi(F^{-n_k}y_3))}
 \le |b_k|\theta^{n_k}
\le\theta^{-1}|b_k|^{-\alpha}.
\end{align*}
The same estimate holds with $y_1$ and $y_3$ replaced by $y_2$ and $y_4$, so
\[
|\exp\{ib_kD(y_1,y_4)\}-1|
=O(|b_k|^{-\alpha})
\;\text{for all $y_1,y_4\in Z_0$, $k\ge1$}.
\]
This means that there is a constant $C>0$ such that
\[
D(Z_0\times Z_0)\subset \bigcup_{j\in\Z}\Big(
\frac{2\pi j}{b_k}-\frac{C}{|b_k|^{\alpha+1}}\,,\, \frac{2\pi j}{b_k}+\frac{C}{|b_k|^{\alpha+1}}\Big)
\quad\text{for all $k\ge1$}.
\]
Hence 
$\underline{\mathrm{BD}}(D(Z_0\times Z_0))\le 1/(\alpha+1)$.
The result follows since $\alpha$ is arbitrarily large.
\end{proof}

\begin{rmk} \label{rmk:contact}
For Axiom~A attractors, $Z_0$ can be taken to be connected and $D$ is continuous, so absence of approximate eigenfunctions is ensured whenever
$D$ is not identically zero~\cite[Section~9]{Dolgopyat98a}.  
For nonuniformly hyperbolic flows, where the partition $\{Y_j\}$ is countably infinite, $Z_0$ is a Cantor set of positive Hausdorff dimension~\cite[Example~5.7]{M09}.  In general it is not clear how to use this property since $D$ is generally at best H\"older.   However for flows with a contact structure, a formula for $D$ in~\cite[Lemma~3.2]{KatokBurns94} can be exploited, see~\cite[Example~5.7]{M09}.
Hence when there is a contact structure (which includes the Lorentz gas flows in the introduction when there is no external forcing) absence of approximate eigenfunctions is automatic.
\end{rmk}

\part{Proof of theorems for semiflows}

In this part of the review article, we give a self-contained proof of the main results on rapid and polynomial mixing for semiflows.
In Section~\ref{sec:strat}, we recall how to use the Laplace transform of the correlation function to deduce rates of mixing.
Theorems~\ref{thm:rapid} (rapid mixing) and Theorem~\ref{thm:slow} (polynomial mixing) are proved in Sections~\ref{sec:rapid} and~\ref{sec:slow} respectively.

 We note that the proof of Theorem~\ref{thm:rapid} plays a crucial role in the proof of Theorem~\ref{thm:slow}, justifying the movement of certain contours of integration to the imaginary axis in Section~\ref{sec:trunc}.
 Theorem~\ref{thm:rapid} also illustrates many of the main ideas while avoiding numerous technical issues needed for Theorem~\ref{thm:slow}.

\section{Strategy for obtaining rates of mixing}
\label{sec:strat}

In this section, we recall how to use smoothness of the Laplace transform $\hat\rho_{v,w}$ to deduce decay rates for $\rho_{v,w}$.
Basic facts about inversion of Laplace transforms and decay rates are recalled in Subsection~\ref{sec:Laplace}.
In Subsection~\ref{sec:poll}, we prove a version of Pollicott's formula~\cite{Pollicott85} for $\hat\rho_{v,w}$.

\subsection{Laplace transforms and inversion formulas}
\label{sec:Laplace}

Define $\H=\{s\in\C:\Re s>0\}$ and $\barH=\{s\in\C:\Re s\ge0\}$.

The Laplace transform $\hat\rho_{v,w}(s)=\int_0^\infty e^{-st}\rho_{v,w}(t)\,dt$ is analytic on $\H$.
We deduce decay of $\rho_{v,w}$ from smoothness of $\hat\rho_{v,w}$.  It is convenient to make a $C^\infty$ modification so that $\rho_{v,w}$ is
unchanged for $t\ge1$ but vanishes near zero.   (Such  a modification does not affect the asymptotics of $\rho_{v,w}$ nor the smoothness of $\hat\rho_{v,w}$.)
Abusing notation, we still write $\rho_{v,w}$ and $\hat\rho_{v,w}$ for these modified functions.

Let $\cA$ be a subspace of $L^\infty(Y^\varphi)$ with norm ${\|\;\|}_\cA\ge|\;|_\infty$.
For $m\ge0$, define 
$\cA_m=\{w\in\cA:{\|w\|}_{\cA,m}<\infty\}$
to be the subspace of observables that are $m$-times differentiable in
the flow direction, 
where
${\|w\|}_{\cA,m}=\sum_{j=0}^m{\|\partial_t^jw\|}_\cA$.
Note that $\cA_m\subset L^{\infty,m}(Y^\varphi)=(L^\infty(Y^\varphi))_m$.

If $v\in L^1(Y^\varphi)$ and $w\in L^{\infty,m}(Y^\varphi)$, then $\rho_{v,w}$ is $m$-times differentiable and
$\rho_{v,w}^{(m)}=\rho_{v,\partial_t^mw}$.
Hence, performing integration by parts $m$ times, we obtain
\begin{align} \label{eq:Taylor}
\hat\rho_{v,w}(s)=s^{-m} \hat\rho_{v,\partial_t^mw}(s)\quad\text{for $s\in\H$}.
\end{align}

\begin{cor} \label{cor:Taylor}
Let $v\in L^1(Y^\varphi)$, $w\in L^{\infty,2}(Y^\varphi)$.
Then
\[
\SMALL \rho_{v,w}(t)=\int_{-\infty}^{\infty} e^{(\eps+ib)t}\hat\rho_{v,w}(\eps+ib)\,db
\quad\text{for any $\eps>0$}.
\]
\end{cor}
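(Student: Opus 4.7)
The plan is to combine the integration-by-parts identity~\eqref{eq:Taylor} with the Fourier inversion theorem applied on the vertical line $\Re s=\eps$.

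First, I would apply~\eqref{eq:Taylor} with $m=2$ to write
\[
\hat\rho_{v,w}(\eps+ib)=(\eps+ib)^{-2}\,\hat\rho_{v,\partial_t^2 w}(\eps+ib).
\]
Since $v\in L^1(Y^\varphi)$ and $\partial_t^2 w\in L^\infty(Y^\varphi)$, the (modified) correlation $\rho_{v,\partial_t^2 w}(t)$ is bounded uniformly in $t$ by $2|v|_1|\partial_t^2 w|_\infty$, and hence $\hat\rho_{v,\partial_t^2 w}(\eps+ib)$ is bounded uniformly in $b\in\R$ by $2\eps^{-1}|v|_1|\partial_t^2 w|_\infty$. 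This yields the key estimate
\[
|\hat\rho_{v,w}(\eps+ib)|\ll (\eps^2+b^2)^{-1},
\]
so in particular $b\mapsto\hat\rho_{v,w}(\eps+ib)$ lies in $L^1(\R)$.

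Next, I would set $g(t)=e^{-\eps t}\rho_{v,w}(t)\mathbf{1}_{[0,\infty)}(t)$. Because the modified $\rho_{v,w}$ is $C^\infty$ on $[0,\infty)$ and vanishes near $0$, the function $g$ is continuous on $\R$, bounded, and lies in $L^1(\R)$ thanks to the factor $e^{-\eps t}$. Its Fourier transform is precisely $\hat g(b)=\hat\rho_{v,w}(\eps+ib)$, which is in $L^1(\R)$ by the previous step. The Fourier inversion theorem therefore applies at every point and gives
\[
g(t)=\tfrac{1}{2\pi}\int_{-\infty}^{\infty}e^{ibt}\hat g(b)\,db;
\]
multiplying through by $e^{\eps t}$ and substituting $\hat g(b)=\hat\rho_{v,w}(\eps+ib)$ produces the claimed identity (up to the normalization $\tfrac{1}{2\pi}$ implicit in the way the corollary is written).

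The only substantive step is the $b^{-2}$ decay of $\hat\rho_{v,w}$ along $\Re s=\eps$, which is exactly what the hypothesis $w\in L^{\infty,2}$ buys us through~\eqref{eq:Taylor}; this is also what forces $m=2$ rather than $m=1$ (one derivative would only give $O(|b|^{-1})$, which is not integrable at infinity). Everything else is routine Fourier analysis applied to a continuous $L^1$ function with integrable Fourier transform, so I do not anticipate further obstacles.
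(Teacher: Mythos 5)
Your proposal is correct and follows essentially the same route as the paper: use~\eqref{eq:Taylor} with $m=2$ to get $O(|b|^{-2})$ decay of $\hat\rho_{v,w}$ on the line $\Re s=\eps$, then invert the Laplace transform via Fourier inversion on that line. The paper packages the two bounds (the raw $a^{-1}$ bound and the~\eqref{eq:Taylor} bound) into a single estimate $|\hat\rho_{v,w}(s)|\le 4a^{-1}(|s|^2+1)^{-1}|v|_1|w|_{\infty,2}$ and then invokes "the classical inverse Laplace transform formula" without spelling it out, whereas you unpack that last step (set $g(t)=e^{-\eps t}\rho_{v,w}(t)\mathbf 1_{[0,\infty)}$, note $g$ is continuous with $g,\hat g\in L^1$, apply Fourier inversion); these are the same argument.
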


\begin{proof}
Note that $|\hat\rho_{v,w}(a+ib)|\le a^{-1}|\rho_{v,w}|_\infty\le
2a^{-1}|v|_1|w|_\infty$ for all $s=a+ib\in\H$.
Also, by~\eqref{eq:Taylor},
$|\hat\rho_{v,w}(a+ib)|=|s|^{-2}|\hat\rho_{v,\partial_t^2w}(a+ib)|\le 2a^{-1}|s|^{-2}|v|_1|\partial_t^2w|_\infty$.
Combining these, 
\[
|\hat\rho_{v,w}(s)|\le 4a^{-1}(|s|^2+1)^{-1}|v|_1|w|_{\infty,2}
\quad\text{for all $s=a+ib\in\H$. }
\]
Hence, we can apply the classical inverse Laplace transform formula.~
\end{proof}

\begin{lemma} \label{lem:strat}
Let $v\in L^1(Y^\varphi)$, $\eps>0$, $q\geq1$.
Suppose that 
\begin{itemize}
\item[(i)]
$s\mapsto \hat\rho_{v,w}(s)$ is continuous on $\{\Re s\in[0,\eps]\}$ and
$b\mapsto \hat\rho_{v,w}(ib)$ is $C^q$ on $\R$
for all $w\in \cA$.
\item[(ii)]
There exist constants
$C,\alpha>0$ such that
\[
|\hat\rho_{v,w}(s)|\le C(|b|+1)^{\alpha}|w|_\infty \quad\text{and}\quad
|\hat\rho_{v,w}^{(j)}(ib)|\le C(|b|+1)^{\alpha}{\|w\|}_\cA,
\]
for all $w\in \cA$, $j\le q$,
and all $s=a+ib\in\C$ with $a\in[0,\eps]$.
\end{itemize}
Let $m=\lceil\alpha\rceil+2$.  Then
there exists a constant $C'>0$ depending only on $C$, $q$, $\alpha$, such that
\[
|\rho_{v,w}(t)|\le C'{\|w\|}_{\cA,m}\, t^{-q}\quad\text{for all $w\in \cA_m$, $t>1$}.
\]
\end{lemma}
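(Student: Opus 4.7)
The plan is to represent $\rho_{v,w}(t)$ as an inverse Laplace integral on the imaginary axis and then integrate by parts $q$ times.

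Starting from Corollary~\ref{cor:Taylor},
\[
\rho_{v,w}(t)=\int_{-\infty}^{\infty} e^{(\eps+ib)t}\hat\rho_{v,w}(\eps+ib)\,db.
\]
The first step is to shift the contour from $\Re s=\eps$ onto $\Re s=0$ via Cauchy's theorem applied to the rectangle $[0,\eps]\times[-R,R]$; hypothesis~(i) supplies continuity on the closed strip and $\hat\rho_{v,w}$ is analytic on $\H$, so this is valid. To kill the horizontal edges as $R\to\infty$, I would combine~\eqref{eq:Taylor} with hypothesis~(ii) applied to $\partial_t^m w$, where $m=\lceil\alpha\rceil+2$, to obtain $|\hat\rho_{v,w}(s)|\le C|s|^{-m}(|b|+1)^{\alpha}|\partial_t^m w|_\infty$ on $\H$. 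On the segment $\Im s=\pm R$ this is $O(R^{\alpha-m})=O(R^{-2})$; with $|e^{st}|\le e^{\eps t}$ and edge length $\eps$, the contribution vanishes, yielding
\[
\rho_{v,w}(t)=\int_{-\infty}^{\infty} e^{ibt}\hat\rho_{v,w}(ib)\,db.
\]

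Next I split this integral using a smooth cutoff $\chi:\R\to[0,1]$ with $\chi\equiv 1$ on $[-1,1]$ and $\supp\chi\subset[-2,2]$, writing $\rho_{v,w}(t)=I_1(t)+I_2(t)$ with $I_1$ carrying the factor $\chi(b)$ and $I_2$ the factor $1-\chi(b)$. For $I_1$, I integrate by parts $q$ times; boundary terms vanish since $\chi$ is compactly supported, and by hypothesis~(i) the integrand is $C^q$ in $b$. Leibniz combined with the derivative bound in~(ii) controls the resulting integrand on $[-2,2]$ by $C\|w\|_\cA$, so $|I_1(t)|\le Ct^{-q}\|w\|_\cA$. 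For $I_2$, since $1-\chi$ vanishes on $[-1,1]$, I may invoke~\eqref{eq:Taylor} to write $\hat\rho_{v,w}(ib)=(ib)^{-m}\hat\rho_{v,\partial_t^m w}(ib)$ throughout $\supp(1-\chi)$. Integration by parts $q$ times and Leibniz applied to the three factors $(1-\chi(b))$, $(ib)^{-m}$, $\hat\rho_{v,\partial_t^m w}(ib)$ yields a sum of terms; using~(ii) on $\partial_t^m w$, each summand is either compactly supported or majorised by $C(|b|+1)^{\alpha-m-k}\|\partial_t^m w\|_\cA=O((|b|+1)^{-2})\|w\|_{\cA,m}$ (for some $0\le k\le q$), hence integrable. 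This gives $|I_2(t)|\le Ct^{-q}\|w\|_{\cA,m}$, and combining with the bound on $I_1$ yields the claim.

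The main obstacle, mild but essential, is that~\eqref{eq:Taylor} introduces a formal pole at $s=0$ even though $\hat\rho_{v,w}$ is itself continuous there, so one cannot apply the $s^{-m}$ smoothing uniformly in $b$. The cutoff splitting is exactly what resolves this: near $b=0$ the $C^q$-regularity of $\hat\rho_{v,w}(ib)$ from hypothesis~(i) is used directly (producing the $\|w\|_\cA$ factor), while for $|b|\ge 1$ one inserts $(ib)^{-m}$ to buy integrable decay at the cost of $m$ flow-direction derivatives (producing $\|w\|_{\cA,m}$). The choice $m=\lceil\alpha\rceil+2$ leaves a safety margin of $2$ so that both the horizontal-edge estimates in the contour shift and the integrability in $I_2$ go through cleanly.
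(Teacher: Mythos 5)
Your proof is correct and takes essentially the same route as the paper: shift the contour to the imaginary axis using the $O(|b|^{-2})$ decay supplied by \eqref{eq:Taylor} applied to $\partial_t^m w$ together with hypothesis~(ii), then conclude by Fourier analysis (repeated integration by parts in $b$, with hypothesis~(ii) giving integrability of the resulting derivatives). Your explicit cutoff between small and large $|b|$ is a concrete organisation of what the paper leaves implicit, and both your argument and the paper's terse appeal to ``standard Fourier analysis'' silently rely on $q\in\N$; for non-integer $q$ the final step requires the H\"older-modulus trick of Proposition~\ref{prop:fourier}, to which the paper points as a forward reference.
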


\begin{proof}
By Corollary~\ref{cor:Taylor}, 
$\rho_{v,w}(t)=\int_{-\infty}^{\infty} e^{(\eps+ib)t}\hat\rho_{v,w}(\eps+ib)\,db$.
By~\eqref{eq:Taylor} and condition~(ii),
$\hat\rho_{v,w}(a+ib)=O(|b|^{-2})$ uniformly in $a\in[0,\eps]$ as $b\to\infty$.
Using this together with the continuity property in~(i) and a standard extension of the Cauchy integral theorem,
we can move the contour of integration to the imaginary axis, so
\[
\SMALL \rho_{v,w}(t)=\int_{-\infty}^\infty e^{ibt}\hat\rho_{v,w}(ib)\,db.
\]

Again, by~\eqref{eq:Taylor} and conditions~(i,ii), $\hat\rho_{v,w}^{(j)}(ib)$ is integrable for all $j\le q$, so the result follows
by standard Fourier analysis.  (See also Proposition~\ref{prop:fourier} below.) 
\end{proof}

\subsection{Formula for $\hat\rho_{v,w}$}
\label{sec:poll}

We use the following variant of Pollicott's formula~\cite{Pollicott85}.
Since $\rho_{v,w}(t)=\int_{Y^\varphi}(v-\int_{Y^\varphi}v\,d\mu)\,w\circ F_t\,d\mu^\varphi$, we may suppose throughout that $\int_{Y^\varphi}v\,d\mu^\varphi=0$.

Given observables $v,w:Y^\varphi\to\R$, define
\begin{align*}
V(t)(y) & = v(y,\varphi(y)-t)=1_{\{0\le t\le \varphi(y)\}}v(y,\varphi(y)-t), \\
w(t)(y) & =w(y,t)=1_{\{0\le t\le \varphi(y)\}}w(y,t).
\end{align*}
The corresponding Laplace transforms
$\hV(s),\,\hw(s):Y\to\C$, $s\in\H$, are given by
\[
\hV(s)(y)=\int_0^{\varphi(y)} e^{-s(\varphi(y)-u)}v(y,u)\,du, \qquad
\hw(s)(y)=\int_0^{\varphi(y)} e^{-su}w(y,u)\,du.
\]
Also, define $v_s(y)=\int_0^{\varphi(y)}e^{su}v(y,u)\,du$.

Let $R:L^1(Y)\to L^1(Y)$ denote the transfer operator corresponding to 
$F:Y\to Y$.
So $\int_Y v\,w\circ F\,d\mu=\int_Y Rv\,w\,d\mu$ for all $v\in L^1(Y)$
and $w\in L^\infty(Y)$.

For $s\in\barH$, define the {\em twisted transfer operators}
\[
\hR(s):L^1(Y)\to L^1(Y),\qquad \hR(s)v=R(e^{-s\varphi}v).
\]

\begin{prop} \label{prop:poll}
Let $v,\,w\in L^\infty(Y^\varphi)$ with $\int_{Y^\varphi}v\,d\mu^\varphi=0$.  Then
$\rho_{v,w}=\sum_{n=0}^\infty J_n$
where
$|J_0(t)|\le |v|_\infty|w|_\infty \intphi^{-1}\int_Y 1_{\{\varphi>t\}}\varphi\,d\mu$ for all $t>0$ and
\begin{align*}
\hJ_n(s) & = 
 \intphi^{-1}{\SMALL\int}_Y e^{-s\varphi_n}v_s \,\hw(s)\circ F^n\,d\mu
\qquad \text{for all $s\in\barH$, $n\ge1$.}
\end{align*}
\end{prop}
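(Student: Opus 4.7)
The plan is to unfold the suspension dynamics according to the number of ``ceiling crossings'' by time~$t$, compute the Laplace transform of each resulting piece by a change of variable in the time integral, and recognise the inner integrals as $v_s$ and $\hw(s)\circ F^n$.

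First I would use the fact that for each $(y,u)\in Y^\varphi$ and $t\ge0$ there is a unique $n=n(y,u,t)\ge0$ with $\varphi_n(y)\le u+t<\varphi_{n+1}(y)$ (with $\varphi_0\equiv 0$), and that then $F_t(y,u)=(F^ny,\,u+t-\varphi_n(y))$. Because these intervals partition $[0,\infty)$, setting
\[
J_n(t)=\intphi^{-1}\int_Y\int_0^{\varphi(y)} v(y,u)\,1_{\{\varphi_n(y)\le u+t<\varphi_{n+1}(y)\}}\,w(F^ny,\,u+t-\varphi_n(y))\,du\,d\mu(y)
\]
gives the pointwise identity $\int_{Y^\varphi}v\;w\circ F_t\,d\mu^\varphi=\sum_{n\ge0}J_n(t)$; the hypothesis $\int_{Y^\varphi}v\,d\mu^\varphi=0$ identifies this with $\rho_{v,w}(t)$, so $\rho_{v,w}=\sum_{n\ge0} J_n$ as required.

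For $n=0$ the indicator reduces to $\{u+t<\varphi(y)\}$, and bounding $|v|$, $|w|$ by their suprema and integrating $u$ over $[0,\varphi(y)-t]$ gives immediately
\[
|J_0(t)|\le |v|_\infty|w|_\infty\intphi^{-1}\int_Y 1_{\{\varphi>t\}}(\varphi-t)\,d\mu\le |v|_\infty|w|_\infty\intphi^{-1}\int_Y 1_{\{\varphi>t\}}\varphi\,d\mu,
\]
which is the claimed bound on $J_0$.

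For $n\ge 1$ and $\Re s>0$, I would take the Laplace transform of $J_n$ and swap the order of integration (justified by the damping factor $e^{-\Re s\cdot t}$ together with boundedness of $v,w$). The essential move is the substitution $a=u+t-\varphi_n(y)$ in the $t$--integral: it converts the indicator into $\{0\le a<\varphi(F^ny)\}$ and the exponential $e^{-st}$ into $e^{-s\varphi_n(y)}e^{su}e^{-sa}$. Crucially, the constraint $t\ge 0$, equivalent to $a\ge u-\varphi_n(y)$, is automatic for $n\ge 1$ because $\varphi_n(y)\ge\varphi(y)\ge u$; this is exactly what makes the formula clean (and forces the $n=0$ case to be handled separately). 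The triple integral then factorises into
\[
\hJ_n(s)=\intphi^{-1}\int_Y e^{-s\varphi_n(y)}\Big(\int_0^{\varphi(y)}e^{su}v(y,u)\,du\Big)\Big(\int_0^{\varphi(F^ny)}e^{-sa}w(F^ny,a)\,da\Big)d\mu(y),
\]
and the two inner integrals are by definition $v_s(y)$ and $\hw(s)(F^ny)$, yielding the claimed formula for $s\in\H$. Extension to $s\in\barH$ is then a continuity argument: on $\{\Re s=0\}$ the integrand $e^{-s\varphi_n}v_s(\hw(s)\circ F^n)$ is pointwise bounded by $|v|_\infty|w|_\infty\,\varphi\,(\varphi\circ F^n)$, which is $\mu$--integrable under the integrability of $\varphi$ assumed in the main theorems, and dominated convergence passes the identity from $\H$ to the boundary.

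The main obstacle is bookkeeping rather than content: I must check that the orbit decomposition is consistent with the identifications $(y,\varphi(y))\sim(Fy,0)$, that the constraint $t\ge0$ disappears exactly when $n\ge 1$ (so that the $n=0$ case is isolated as the $J_0$ error term), and that the Fubini exchanges used in computing $\hJ_n$ are legitimate. None of these is deep, but each must be tracked carefully to avoid spurious terms.
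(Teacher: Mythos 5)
Your proposal is correct and follows essentially the same route as the paper: decompose $\rho_{v,w}(t)$ by the number of ceiling crossings, bound $J_0$ via the inclusion $\{t+u<\varphi(y)\}\subset\{\varphi(y)>t\}$, and for $n\ge1$ compute $\hJ_n(s)$ by Fubini plus the substitution $u'=t+u-\varphi_n(y)$, using $\varphi_n(y)\ge\varphi(y)\ge u$ to discard the constraint $t\ge0$ and factor the integral into $v_s$ and $\hw(s)\circ F^n$. Your extra remark about extending from $\H$ to $\barH$ by dominated convergence is a minor gloss of a point the paper leaves implicit, and your observation that the bound on $J_0$ can be sharpened to $\varphi-t$ is also fine (the paper simply keeps the weaker version that it needs).
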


\begin{proof}
Write
\begin{align*}
\rho(t) & 
 =\int_{Y^\varphi} 1_{\{t+u<\varphi(y)\}}v(y,u)\,w\circ F_t(y,u)\,d\mu^\varphi
\\ & \qquad \qquad +\sum_{n=1}^\infty\int_{Y^\varphi} 1_{\{\varphi_n(y)<t+u<\varphi_{n+1}(y)\}}v(y,u)\,w\circ F_t(y,u)\,d\mu^\varphi
= \sum_{n=0}^\infty J_n(t),
\end{align*}
where
\[
J_n(t)=\begin{cases} \int_{Y^\varphi} 1_{\{t+u<\varphi(y)\}}v(y,u)w(y,t+u)\,d\mu^\varphi, & n=0 \\
\int_{Y^\varphi} 1_{\{\varphi_n(y)<t+u<\varphi_{n+1}(y)\}}v(y,u)w(F^ny,t+u-\varphi_n(y))\,d\mu^\varphi, & n\ge1\end{cases}\,.
\]
In particular,
\[
|J_0(t)|\le |v|_\infty|w|_\infty \int_{Y^\varphi} 1_{\{\varphi>t\}}\,d\mu^\varphi
=|v|_\infty|w|_\infty \intphi^{-1}\int_Y 1_{\{\varphi>t\}}\varphi\,d\mu.
\]

For $n\ge1$,
\begin{align*}
\hJ_n(s) & =\int_0^\infty e^{-st}J_n(t)\,dt
\\ & =\intphi^{-1}\int_Y\int_0^{\varphi(y)}\int_{\varphi_n(y)-u}^{\varphi_{n+1}(y)-u}e^{-st}v(y,u)w(F^ny,t+u-\varphi_n(y))\,dt\,du\,d\mu.
\end{align*}
Making the substitution $u'=t+u-\varphi_n(y)$,
\begin{align*}
\hJ_n(s) & =\intphi^{-1}\int_Y\Bigl(\int_0^{\varphi(y)}e^{su}v(y,u)\,du\Bigr)
\Bigl(\int_0^{\varphi(F^ny)}e^{-su'}w(F^ny,u')\,du'\Bigr)e^{-s\varphi_n(y)}\,d\mu
\\ & = \intphi^{-1}\int_Y e^{-s\varphi_n}v_s \,\hw(s)\circ F^n\,d\mu,
\end{align*}
as required.
\end{proof}

\begin{cor} \label{cor:poll}
Let $v\in L^1(Y^\varphi)$ and $w\in L^\infty(Y^\varphi)$ with $\int_{Y^\varphi}v\,d\mu^\varphi=0$.  Then
\begin{align*}
\SMALL \hat\rho_{v,w}(s)=\hJ_0(s)+\intphi^{-1}\int_Y (I-\hR(s))^{-1}R\hV(s)\, \hw(s)\,d\mu\quad\text{for all $s\in\H$}.
\end{align*}
\end{cor}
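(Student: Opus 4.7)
The plan is to Laplace-transform the pointwise identity $\rho_{v,w}=\sum_{n=0}^\infty J_n$ from Proposition~\ref{prop:poll} and sum the resulting series over $n\ge1$ via a Neumann series for the twisted transfer operator $\hR(s)$.

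First, comparing the definitions of $\hV(s)$ and $v_s$ gives the crucial relation $\hV(s)=e^{-s\varphi}v_s$, so $\hR(s)v_s=R(e^{-s\varphi}v_s)=R\hV(s)$. Iterating the fundamental transfer-operator identity $R(f\cdot g\circ F)=R(f)\,g$ yields $\hR(s)^n v=R^n(e^{-s\varphi_n}v)$ for $n\ge0$, and combined with the duality $\int_Y f\cdot g\circ F^n\,d\mu=\int_Y R^n f\cdot g\,d\mu$ this recasts the formula of Proposition~\ref{prop:poll} as
\[
\hJ_n(s)=\intphi^{-1}\int_Y\hR(s)^n v_s\cdot\hw(s)\,d\mu=\intphi^{-1}\int_Y\hR(s)^{n-1}R\hV(s)\cdot\hw(s)\,d\mu \quad\text{for }n\ge1.
\]

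Next, for $s\in\H$ the positivity of $R$ together with $|e^{-s\varphi}|\le e^{-(\Re s)\inf\varphi}$ (using $\inf\varphi>0$) gives the contractive estimate $\|\hR(s)^n f\|_{L^1(\mu)}\le e^{-n(\Re s)\inf\varphi}\|f\|_{L^1(\mu)}$, so $(I-\hR(s))^{-1}=\sum_{n\ge0}\hR(s)^n$ exists on $L^1(Y)$ and converges geometrically. Noting that $R\hV(s)\in L^1(Y)$ and $\hw(s)\in L^\infty(Y)$ for $\Re s>0$ (both immediate from $\inf\varphi>0$), summation gives
\[
\sum_{n=1}^\infty \hJ_n(s)=\intphi^{-1}\int_Y(I-\hR(s))^{-1}R\hV(s)\cdot\hw(s)\,d\mu.
\]

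Finally, I verify $\hat\rho_{v,w}(s)=\sum_{n=0}^\infty\hJ_n(s)$ by Fubini--Tonelli: the partition structure used in the proof of Proposition~\ref{prop:poll} ensures that for each fixed $t>0$ and $(y,u)\in Y^\varphi$ exactly one index $n$ contributes to $\sum_n J_n(t)$, giving the pointwise bound $\sum_n|J_n(t)|\le 2|v|_\infty|w|_\infty$ (and a routine density argument handles general $v\in L^1$). Adding the $n=0$ term produces the stated formula. The only bookkeeping obstacle is the iterated identity $\hR(s)^n v=R^n(e^{-s\varphi_n}v)$ together with the index shift that rewrites $\hR(s)^n v_s$ as $\hR(s)^{n-1}R\hV(s)$; once this is correctly set up, the Neumann-series convergence guaranteed by $\inf\varphi>0$ makes the rest routine.
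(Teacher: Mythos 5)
Your proof is correct and follows essentially the same route as the paper: rewrite $\hJ_n(s)=\intphi^{-1}\int_Y \hR(s)^{n-1}R\hV(s)\,\hw(s)\,d\mu$ via the transfer-operator duality, then sum the Neumann series. You are somewhat more explicit than the paper about the geometric convergence of $\sum_n\hR(s)^n$ (using $\inf\varphi>0$) and the Fubini justification for termwise Laplace transforming, but these are precisely what the paper's terse ``the result now follows from Proposition~\ref{prop:poll}'' is implicitly invoking.
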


\begin{proof}
Note that $e^{-s\varphi}v_s=\hV(s)$.  Hence
\begin{align*}
R^n(e^{-s\varphi_n}v_s)=R^{n-1}R(e^{-s\varphi_{n-1}\circ F}e^{-s\varphi}v_s)
=R^{n-1}(e^{-s\varphi_{n-1}}R(e^{-s\varphi}v_s))
=\hR(s)^{n-1}R\hV(s),
\end{align*}
and so
\begin{align*}
\hJ_n(s)
& =\intphi^{-1}\int_Y e^{-s\varphi_n}v_s \,\hw(s)\circ F^n\,d\mu
\\ & = \intphi^{-1}\int_Y R^n(e^{-s\varphi_n}v_s) \,\hw(s)\,d\mu
= \intphi^{-1}\int_Y \hR(s)^{n-1}R\hV(s) \,\hw(s)\,d\mu.
\end{align*}
The result now follows from Proposition~\ref{prop:poll}.
\end{proof}

\begin{rmk}   \label{rmk:poll}
In contrast with the formula in~\cite{Pollicott85}, we do not write
$\hat v(s)(y)=\int_0^{\varphi(y)}e^{su}v(y,u)\,du$ since such a function is not analytic on $\H$.
We reserve the hat notation for functions that are analytic on
$\H$, such as $\hw$ and $\hV$.
\end{rmk}

The following estimate is immediate from the definition of $w(t)$.
\begin{prop} \label{prop:w}
$|w(t)|_1\le |w|_\infty\,\mu(\varphi>t)$ for all $w\in L^\infty(Y^\varphi)$,
$t>0$.\qed
\end{prop}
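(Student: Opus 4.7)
The plan is to use the definition of $w(t)$ directly and integrate a pointwise bound. By construction, $w(t)(y)=1_{\{0\le t\le \varphi(y)\}}w(y,t)$, which is a function on $Y$ (for each fixed $t>0$) that vanishes outside $\{\varphi\ge t\}$ and is bounded in absolute value by $|w|_\infty$.

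First I would record the pointwise estimate
\[
|w(t)(y)|\le |w|_\infty\,1_{\{\varphi(y)\ge t\}}\quad\text{for all $y\in Y$, $t>0$.}
\]
Integrating against $\mu$ gives $|w(t)|_1\le |w|_\infty\,\mu(\varphi\ge t)$. To get the version stated with strict inequality $\{\varphi>t\}$, I would observe that the set $\{\varphi=t\}$ is irrelevant from the suspension's point of view: the slice $\{(y,u)\in Y^\varphi:u=\varphi(y)\}$ is identified via $\sim$ with $\{(Fy,0)\}$, so modifying $w(t)$ on $\{\varphi=t\}$ does not change $|w(t)|_1$. Equivalently, replacing $1_{\{0\le t\le \varphi(y)\}}$ by $1_{\{0\le t<\varphi(y)\}}$ gives an a.e.\ equal function on $Y^\varphi$, and the refined bound $|w(t)(y)|\le |w|_\infty\,1_{\{\varphi(y)>t\}}$ integrates to the desired inequality.

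There is essentially no obstacle here; the only minor subtlety is the $\ge$ versus $>$ distinction, handled as above. No use of the Gibbs--Markov structure, the transfer operator $\hR(s)$, or any regularity of $\varphi$ is needed.
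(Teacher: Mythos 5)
Your core argument is correct and is exactly the (trivial) route the paper has in mind: $w(t)$ vanishes on $\{\varphi < t\}$ and is bounded by $|w|_\infty$ elsewhere, so $|w(t)|_1 \le |w|_\infty\,\mu(\varphi\ge t)$. The paper states this without proof precisely because it is immediate from the definition.

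One small correction on the $\ge$ versus $>$ point, though. Your claim that ``modifying $w(t)$ on $\{\varphi=t\}$ does not change $|w(t)|_1$'' is not right: $w(t)$ is a function on $Y$, and its $L^1(\mu)$ norm does see the set $\{y:\varphi(y)=t\}$ if that set has positive $\mu$-measure; the boundary identification in $Y^\varphi$ is a $\mu^\varphi$-null statement, not a $\mu$-null one. So the bound you actually obtain is with $\mu(\varphi\ge t)$, and the statement as printed is only literally correct if one either assumes $\mu(\varphi=t)=0$ (true for Lebesgue-a.e.\ $t$, and for all $t$ whenever $\varphi$ has no atoms) or reads the indicator as $1_{\{t<\varphi\}}$. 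Neither variant causes the slightest trouble in the paper: the proposition is used (in Corollary~\ref{cor:Jw}) only to get $|w(t)|_1=O(|w|_\infty t^{-\beta})$, and $\mu(\varphi\ge t)\le\mu(\varphi>t-1)=O(t^{-\beta})$ just as well. So your first sentence already proves everything that is needed; the second paragraph is both unnecessary and, as stated, incorrect.
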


\section{Proof of rapid mixing for semiflows}
\label{sec:rapid}

In this section, we consider Gibbs-Markov semiflows $F_t:Y^\varphi\to Y^\varphi$ for which 
the roof function $\varphi:Y\to\R^+$ lies in $L^q(Y)$ for all $q\ge1$.
For such semiflows, we prove Theorem~\ref{thm:rapid}, namely that absence of approximate eigenfunctions is a sufficient condition for rapid mixing.

Subsection~\ref{sec:back} contains background material on Gibbs-Markov maps.
The only results we assume without proof are Propositions~\ref{prop:GM} and~\ref{prop:qc} below.
Subsections~\ref{sec:V} and~\ref{sec:R} establish smoothness of the expressions $R\hV$ and $\hR$ that arose in Subsection~\ref{sec:poll}.
The key estimate, Dolgopyat's estimate, is established in Subsection~\ref{sec:Dolg} and is used to establish the smoothness of $\hT=(I-\hR)^{-1}$
in Subsection~\ref{sec:T} and thereby $\hat\rho_{v,w}$ in Subsection~\ref{sec:pf},
completing the proof of Theorem~\ref{thm:rapid}.

Throughout this section, $q\in\N=\{0,1,2\ldots\}$.
Also we take $\cA=L^\infty(Y^\varphi)$ in Lemma~\ref{lem:strat}.
If $\cA_1$ and $\cA_2$ are normed vector spaces, we denote by $\cB(\cA_1,\cA_2)$ the space of bounded linear operators from $\cA_1$ to $\cA_2$,
and we write $\cB(\cA_1)$ instead of $\cB(A_1,\cA_1)$.

\subsection{Background on Gibbs-Markov maps}
\label{sec:back}

Let $F:Y\to Y$ be a (full branch) Gibbs-Markov map as in Section~\ref{sec:GM} with partition $\{Y_j\}$.
For each $n\ge1$, let $\cD_n$ denote the partition of $Y$ into $n$-cylinders
$d=\bigcap_{i=1,\dots,n}F^{-i}Y_{j_i}$, where $j_1,\dots,j_n$ range over positive integers.
Define $p_n=\sum_{i=0}^{n-1}p\circ F^i$.

\begin{prop} \label{prop:GM}  
There is a constant $C_2\ge1$ such that
\begin{align} \label{eq:GM}
C_2^{-1}\mu(d)\le e^{p_n(y)}\le C_2\mu(d), \quad
|e^{p_n}(y)-e^{p_n}(y')|\le C_2\mu(d)d_\theta(F^ny,F^ny'),
\end{align}
for all $y,y'\in d$, $d\in\cD_n$, $n\ge1$.
\end{prop}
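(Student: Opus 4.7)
The plan is to derive both inequalities from a single distortion estimate for $p_n$ on $n$-cylinders, converted into a comparison with $\mu(d)$ via the transfer operator. First, I would establish the bound $|p_n(y)-p_n(y')|\le K\,d_\theta(F^ny,F^ny')$ for all $y,y'\in d\in\cD_n$, where $K=C_p\theta/(1-\theta)$ and $C_p=\sup_j|1_{Y_j}p|_\theta<\infty$ by Definition~\ref{def:GM}. The argument is the standard telescoping: for $y,y'\in d$ and $0\le i\le n-1$, the iterates $F^iy$ and $F^iy'$ lie in a common partition element and $s(F^iy,F^iy')=s(F^ny,F^ny')+(n-i)$, so the uniformly piecewise Lipschitz bound on $p$ yields
\[
|p_n(y)-p_n(y')|\le C_p\sum_{i=0}^{n-1}\theta^{s(F^iy,F^iy')}=C_p\,d_\theta(F^ny,F^ny')\sum_{i=0}^{n-1}\theta^{n-i}\le K\,d_\theta(F^ny,F^ny').
\]
In particular $e^{p_n(y)}/e^{p_n(y')}\in[e^{-K},e^K]$ for all $y,y'\in d$.

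Next, I would relate $e^{p_n}$ to $\mu(d)$ via the transfer operator $R$. By iterating the full-branch hypothesis, $F^n|_d\colon d\to Y$ is a bijection; let $\phi_d\colon Y\to d$ denote its inverse. Iterating the formula $Rv(y)=\sum_j e^{p(\phi_jy)}v(\phi_jy)$ (which encodes $p=\log(d\mu/d\mu\circ F)$), one obtains $(R^n 1_d)(y)=e^{p_n(\phi_d y)}$. Measure-preservation gives $\int_Y R^nv\,d\mu=\int_Y v\,d\mu$ for $v\in L^1(Y)$, so specializing to $v=1_d$ yields
\[
\mu(d)=\int_Y e^{p_n\circ\phi_d}\,d\mu.
\]
Since $\mu$ is a probability measure and $e^{p_n}$ varies by at most the factor $e^K$ across $d$, the integral is sandwiched between $e^{-K}e^{p_n(y)}$ and $e^K e^{p_n(y)}$ for every $y\in d$. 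This proves the first inequality in~\eqref{eq:GM} for any $C_2\ge e^K$.

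Finally, for the Lipschitz estimate on $e^{p_n}$ I would apply the elementary bound $|e^a-e^b|\le e^{\max(a,b)}|a-b|$ and combine it with the two previous steps:
\[
|e^{p_n(y)}-e^{p_n(y')}|\le e^{\max(p_n(y),p_n(y'))}K\,d_\theta(F^ny,F^ny')\le Ke^K\,\mu(d)\,d_\theta(F^ny,F^ny'),
\]
then enlarge $C_2$ to absorb $Ke^K$. There is no serious obstacle: this is a classical Gibbs-Markov distortion estimate. The one step requiring care is the transfer-operator identity $R^n 1_d=e^{p_n}\circ\phi_d$, which relies on the full-branch structure (ensuring $F^n|_d$ is bijective onto $Y$) and on unrolling the definition of $p$ as the logarithmic Radon-Nikodym derivative $\log(d\mu/d\mu\circ F)$.
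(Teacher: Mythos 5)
Your proof is correct. The paper does not give an explicit argument for Proposition~\ref{prop:GM}---it simply records that the estimate is standard and points to Aaronson~\cite{Aaronson} and Aaronson--Denker~\cite{AaronsonDenker01}---and your argument is exactly the standard Gibbs--Markov distortion proof found there: the telescoping bound on $p_n$ over $n$-cylinders from the uniform piecewise Lipschitz property of $p$, the identity $\mu(d)=\int_Y e^{p_n\circ\phi_d}\,d\mu$ coming from $R^n1_d=e^{p_n}\circ\phi_d$ and invariance of $\mu$, and the mean-value bound on $e^{p_n}$.
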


\begin{proof}  This is standard, see for example~\cite{Aaronson,AaronsonDenker01}.
\end{proof}

The transfer operator corresponding to a Gibbs-Markov map $F$ has the pointwise expression
$(Rv)(y)=\sum_{j\ge1}e^{p(y_j)}v(y_j)$,
where $y_j$ is the unique preimage of $y$ in $Y_j$.
More generally, $(R^nv)(y)=\sum_{d\in\cD_n}e^{p_n(y_d)}v(y_d)$, $n\ge1$,
where $y_d$ is the unique $F^n$-preimage of $y$ in $d$.

\begin{cor} \label{cor:GM}
Let $v\in \cF_\theta(Y)$.  
Then 
\[
\SMALL \|R^nv\|_\theta\le C_2\sum_{d\in\cD_n}\mu(d)(2|1_dv|_\infty+\theta^n|1_dv|_\theta) \quad\text{for $n\ge1$}.
\]
   In particular
$\|Rv\|_\theta\le 2C_2\sum_{j\ge1}\mu(Y_j)\|1_{Y_j}v\|_\theta$.
\end{cor}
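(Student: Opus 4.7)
The plan is to attack this directly from the pointwise formula for the iterated transfer operator, namely $(R^nv)(y)=\sum_{d\in\cD_n}e^{p_n(y_d)}v(y_d)$, where $y_d$ denotes the unique $F^n$-preimage of $y$ in the $n$-cylinder $d$. The sup-norm bound is immediate from the first inequality in Proposition~\ref{prop:GM}: $e^{p_n(y_d)}\le C_2\mu(d)$, which gives $|R^nv|_\infty\le C_2\sum_{d\in\cD_n}\mu(d)|1_dv|_\infty$.

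For the Lipschitz seminorm I would fix $y\ne y'$ in $Y$ and, for each cylinder $d$, let $y_d,y_d'$ be the two $F^n$-preimages in $d$. The crucial observation is that $s(y_d,y_d')\ge n+s(y,y')$, so $d_\theta(y_d,y_d')\le\theta^n d_\theta(y,y')$, while $d_\theta(F^ny_d,F^ny_d')=d_\theta(y,y')$. Then I would split
\[
e^{p_n(y_d)}v(y_d)-e^{p_n(y_d')}v(y_d')=e^{p_n(y_d)}\bigl(v(y_d)-v(y_d')\bigr)+\bigl(e^{p_n(y_d)}-e^{p_n(y_d')}\bigr)v(y_d'),
\]
bound the first term using $e^{p_n(y_d)}\le C_2\mu(d)$ and $|v(y_d)-v(y_d')|\le|1_dv|_\theta\theta^nd_\theta(y,y')$, and the second term using the distortion inequality of Proposition~\ref{prop:GM}, which gives $|e^{p_n(y_d)}-e^{p_n(y_d')}|\le C_2\mu(d)d_\theta(y,y')$, together with $|v(y_d')|\le|1_dv|_\infty$. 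Summing over $d$ and dividing by $d_\theta(y,y')$ yields $|R^nv|_\theta\le C_2\sum_{d\in\cD_n}\mu(d)(|1_dv|_\infty+\theta^n|1_dv|_\theta)$. Adding this to the sup bound gives the first displayed inequality.

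For the ``in particular'' statement I would specialise to $n=1$, where $\cD_1=\{Y_j\}$, and use $\theta<1$ together with $2|1_{Y_j}v|_\infty+\theta|1_{Y_j}v|_\theta\le 2(|1_{Y_j}v|_\infty+|1_{Y_j}v|_\theta)=2\|1_{Y_j}v\|_\theta$.

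There is no real obstacle here; the only thing to be careful about is the separation-time identity and the fact that $y_d,y_d'$ lie in the same partition element (so the piecewise Lipschitz seminorm $|1_dv|_\theta$ is the right object to use). Everything else is a direct application of Proposition~\ref{prop:GM}.
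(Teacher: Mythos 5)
Your proposal is correct and follows essentially the same route as the paper: the same pointwise formula for $R^n$, the same two-term decomposition of $e^{p_n(y_d)}v(y_d)-e^{p_n(y_d')}v(y_d')$, and the same appeal to both halves of Proposition~\ref{prop:GM}, together with the separation-time identity $d_\theta(y_d,y_d')=\theta^n d_\theta(y,y')$. The ``in particular'' step you give is also the intended one.
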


\begin{proof}
For $y,y'\in Y$, it follows from~\eqref{eq:GM} that
\begin{align*}
|(R^nv)(y)- & (R^nv)(y')|  \le \sumd |e^{p_n(y_d)}-e^{p_n(y_d')}||v(y_d)|
+\sumd e^{p_n(y_d')}|v(y_d)-v(y_d')| \\
& \le C_2\sumd \mu(d)d_\theta(F^ny_d,F^ny_d') |1_dv|_\infty
+C_2\sumd \mu(d)|1_dv|_\theta d_\theta(y_d,y_d')
\\ & = C_2\sumd \mu(d)d_\theta(y,y') |1_dv|_\infty
+C_2\sumd \mu(d)|1_dv|_\theta\theta^n d_\theta(y,y').
\end{align*}
Hence
$|R^nv|_\theta\le C_2\sumd \mu(d)(|1_dv|_\infty + \theta^n|1_dv|_\theta)$.
A simpler argument shows that
$|R^nv|_\infty\le C_2\sumd \mu(d)|1_dv|_\infty$.
\end{proof}

\begin{rmk}   \label{rmk:GM}
Often we consider operators $S:\cF_\theta(Y)\to \cF_\theta(Y)$ of the form
$Sv=R(gv)$ for some fixed $g\in \cF_\theta(Y)$.  Since $\|gv\|_\theta\le \|g\|_\theta\|v\|_\theta$ it follows from Corollary~\ref{cor:GM} that
$\|S\|_\theta\le 
2C_2\sum_{j\ge1}\mu(Y_j)\|1_{Y_j}g\|_\theta$.
\end{rmk}

\begin{prop} \label{prop:qc}
The operator $R:\cF_\theta(Y)\to \cF_\theta(Y)$ has spectral radius $1$ and essential spectral radius $\theta$.  There is a simple eigenvalue at $1$ with eigenfunction $1$, and no other eigenvalues on the unit circle.
Moreover, there are constants $C_3\ge1$, $\gamma_1\in(0,1)$ such that
\[
\|R^nv-{\SMALL\int_Y}v\,d\mu\|_\theta\le C_3\gamma_1^n\|v\|_\theta 
\quad\text{for all $n\ge1$, $v\in \cF_\theta(Y)$.}
\]
\end{prop}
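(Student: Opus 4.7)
The plan is to derive the proposition from a Lasota--Yorke inequality---essentially already contained in Corollary~\ref{cor:GM}---via Hennion's quasi-compactness theorem, and then to identify the peripheral spectrum.

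First I would upgrade Corollary~\ref{cor:GM} to the standard Lasota--Yorke form
\[
\|R^nv\|_\theta\le A\theta^n\|v\|_\theta+B\,|v|_1
\]
for some constants $A,B>0$ independent of $n\ge1$ and $v\in\cF_\theta(Y)$. The only point requiring work is to bound $\sum_{d\in\cD_n}\mu(d)|1_dv|_\infty$ by $|v|_1+\theta^n|v|_\theta$: since each $d\in\cD_n$ has $d_\theta$-diameter at most $\theta^n$, one has $|1_dv|_\infty\le\mu(d)^{-1}\int_d|v|\,d\mu+\theta^n|v|_\theta$ by the mean-value trick, and summing over $d$ yields the bound. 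Combined with the elementary estimate $|R^nv|_\infty\le|v|_\infty$ (which follows from positivity of $R$ and $R1=1$), this gives $\|R^n\|_\theta$ uniformly bounded in $n$, hence spectral radius at most~$1$; since $R1=1$, the spectral radius equals~$1$.

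Next I would invoke Hennion's theorem. The remaining hypothesis is that the unit ball of $\cF_\theta(Y)$ be relatively compact in $L^1(\mu)$, and this is the main, although still routine, technical point. Because the partition $\{Y_j\}$ is countably infinite, compactness \textit{fails} in $L^\infty$ (consider $\mathbf{1}_{Y_j}$), so one must exploit that $\mu$ is a probability measure: given $\varepsilon>0$, pick $N$ and $n$ with $\mu(Y\setminus\bigcup_{j\le N}Y_j)<\varepsilon$ and $\theta^n<\varepsilon$; on the finitely many $n$-cylinders inside $\bigcup_{j\le N}Y_j$, each $v$ with $\|v\|_\theta\le 1$ is uniformly $\varepsilon$-close to a step function taking values in $[-1,1]$, so a diagonal extraction yields an $L^1(\mu)$-Cauchy subsequence. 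Hennion's theorem then gives that $R:\cF_\theta\to\cF_\theta$ is quasi-compact with essential spectral radius at most~$\theta$.

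Finally I would analyse the peripheral spectrum. If $Rv=\lambda v$ with $|\lambda|=1$ and $v\not\equiv0$, positivity of $R$ gives $|v|\le R|v|$, and the identity $\int_YR|v|\,d\mu=\int_Y|v|\,d\mu$ forces $R|v|=|v|$; thus $|v|\,d\mu$ is an $F$-invariant probability measure absolutely continuous with respect to~$\mu$. Since full-branch Gibbs--Markov maps are ergodic and (indeed exactly) mixing---a standard consequence of the Markov structure and expansion, see~\cite{Aaronson,AaronsonDenker01}---$|v|$ must be constant, $\lambda=1$, and $v$ itself constant, so the peripheral spectrum consists solely of the simple eigenvalue~$1$ with eigenfunction~$\mathbf{1}$. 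Writing $R=P+N$ with $Pv=\int_Yv\,d\mu$ the spectral projection at~$1$ (so that $PN=NP=0$), quasi-compactness yields spectral radius $\gamma_1<1$ for $N$ on $\cF_\theta$. Gelfand's formula then gives $\|N^n\|_\theta\le C_3\gamma_1^n$, and since $R^nv-\int_Yv\,d\mu=N^nv$, the claimed bound follows after enlarging $C_3$ if needed so that $C_3\ge1$.
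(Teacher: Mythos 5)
Your argument is essentially the standard one that the paper itself delegates to via the citations to Aaronson (Section~4.7) and Aaronson--Denker: a Lasota--Yorke inequality (here extracted from Corollary~\ref{cor:GM} plus the mean-value trick on $n$-cylinders), compact embedding of $\cF_\theta(Y)$ into $L^1(\mu)$, Hennion's quasi-compactness theorem, and identification of the peripheral spectrum via positivity and exactness of the Gibbs--Markov map. The steps are correct, and the conclusion follows. Two small remarks: your argument only yields essential spectral radius \emph{at most} $\theta$ rather than exactly $\theta$ as stated in the proposition (the lower bound is a separate, also standard, fact), but only the upper bound and the resulting spectral gap are used anywhere in the paper; and the simplicity of the eigenvalue $1$ (absence of a Jordan block) should be noted explicitly---it follows from the uniform boundedness of $\|R^n\|_\theta$ established by your Lasota--Yorke step, since a nontrivial Jordan block would make $\|R^n\|_\theta$ grow linearly.
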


\begin{proof}  See for example~\cite[Section~4.7]{Aaronson},~\cite{AaronsonDenker01}.  
\end{proof}

\subsection{Smoothness of $R\hV:\cF_\theta(Y^\varphi)\to \cF_\theta(Y)$}
\label{sec:V}

Recall that
$\hV(s)(y)=\int_0^{\varphi(y)}e^{-s(\varphi(y)-u)}v(y,u)\,du$.
In this subsection, we obtain estimates for $R\hV$ as a function of $s$ and the observable $v\in \cF_\theta(Y^\varphi)$.  For convenience of notation, the dependence on $v$ is kept implicit.

\begin{prop} \label{prop:V}
$R\hV:\barH\to\cB(\cF_\theta(Y^\varphi),\cF_\theta(Y))$ is $C^\infty$ 
and \footnote{Throughout, when we estimate derivatives on $\barH$, it is to be understood that the estimates for $q=0$ hold on $\barH$ and the estimates for $q>0$ hold on the imaginary axis as required in Lemma~\ref{lem:strat}.}

\[
\SMALL 
\|R\hV^{(q)}(s)\|_\theta  \le (2C_1)^{q+4}C_2(|s|+q+1)\int_Y \varphi^{q+2}\,d\mu \,\|v\|_\theta,
\]
  for all $q\in\N$, $s\in\barH$, $v\in \cF_\theta(Y^\varphi)$.
\end{prop}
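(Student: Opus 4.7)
The plan is to differentiate $\hV(s)$ pointwise in $s$, bound $\hV^{(q)}(s)$ in the piecewise $\cF_\theta$-sense on each partition element $Y_j$, and then apply Corollary~\ref{cor:GM} (via Remark~\ref{rmk:GM}) to pass to $\cF_\theta(Y)$ after composing with $R$.

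First, since $|(\varphi(y)-u)^q e^{-s(\varphi(y)-u)} v(y,u)| \le \varphi(y)^q |v|_\infty$ on $\barH$ and $\varphi \in L^r(Y)$ for every $r\ge1$ by the hypothesis of Theorem~\ref{thm:rapid}, the dominated convergence theorem justifies differentiating under the integral sign, giving both $C^\infty$-smoothness of $\hV:\barH\to\cB(\cF_\theta(Y^\varphi),\cF_\theta(Y))$ (interpreted as smoothness on $\H$ with continuous derivatives up to the imaginary axis) and the explicit formula
\[
\hV^{(q)}(s)(y) = (-1)^q \int_0^{\varphi(y)} (\varphi(y)-u)^q e^{-s(\varphi(y)-u)} v(y,u)\,du.
\]
The bound $|e^{-s(\varphi-u)}|\le 1$ on $\barH$ immediately gives $|1_{Y_j}\hV^{(q)}(s)|_\infty \le |v|_\infty \supYj\varphi^{q+1}/(q+1) \ll (2C_1)^{q+1}|v|_\infty(\infYj\varphi)^{q+1}$ via \eqref{eq:inf}.

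For the Lipschitz seminorm on $Y_j$, take $y,y'\in Y_j$ with $\varphi(y)\le\varphi(y')$ and split $\hV^{(q)}(s)(y)-\hV^{(q)}(s)(y')$ into a common-range piece over $u\in[0,\varphi(y)]$ plus a tail piece over $u\in[\varphi(y),\varphi(y')]$. The tail contains only the $y'$-integrand, so the bound $|v|_\infty \int_{\varphi(y)}^{\varphi(y')}(\varphi(y')-u)^q\,du = |v|_\infty|\varphi(y)-\varphi(y')|^{q+1}/(q+1)$, combined with \eqref{eq:inf} and $d_\theta^{q+1}\le d_\theta$, contributes $\ll C_1^{q+1}(\infYj\varphi)^{q+1}|v|_\infty d_\theta(y,y')$. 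The crucial step is the following splitting of the common-range integrand difference: writing $K(y,u)=(\varphi(y)-u)^q e^{-s(\varphi(y)-u)}$,
\[
K(y,u)v(y,u) - K(y',u)v(y',u) = [K(y,u)-K(y',u)]v(y,u) + K(y',u)[v(y,u)-v(y',u)],
\]
arranged so that the $v$-difference appears only at a common $u\in[0,\varphi(y)]\subset[0,\varphi(y')]$, which is precisely what $\cF_\theta(Y^\varphi)$ controls: $|v(y,u)-v(y',u)|\le|v|_\theta\varphi(y)d_\theta(y,y')$. The kernel difference $K(y,u)-K(y',u)$ is handled by the mean value theorem applied to $t\mapsto t^q e^{-st}$, whose derivative on $[0,\supYj\varphi]$ is bounded by $q\supYj\varphi^{q-1}+|s|\supYj\varphi^q$ since $|e^{-st}|\le 1$ on $\barH$.

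Combining the three contributions, integrating over $u$, using \eqref{eq:inf} throughout, and absorbing lower powers of $\infYj\varphi$ via $\infYj\varphi\ge\inf\varphi>0$, one obtains $\|1_{Y_j}\hV^{(q)}(s)\|_\theta\ll(2C_1)^{q+3}(|s|+q+1)(\infYj\varphi)^{q+2}\|v\|_\theta$. Remark~\ref{rmk:GM} together with $\sum_j\mu(Y_j)(\infYj\varphi)^{q+2}\le\int_Y\varphi^{q+2}\,d\mu$ then yields the stated bound on $\|R\hV^{(q)}(s)\|_\theta$. The main obstacle is that the $\cF_\theta(Y^\varphi)$-norm controls $v$-variations only at fixed height $u$ and not across different heights, so one must carefully arrange the kernel/function split above to avoid any same-$y$, different-$u$ comparison of $v$; once that is done, the rest is routine bookkeeping of constants.
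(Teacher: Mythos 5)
Your proof takes essentially the same route as the paper's. The paper splits the difference $\hV^{(q)}(s)(y)-\hV^{(q)}(s)(y')$ into four pieces $I_1,\dots,I_4$: a tail integral, an exponential difference, a polynomial difference, and a $v$-difference at fixed height $u$; you collapse the middle two into a single mean-value estimate for the kernel $t\mapsto t^q e^{-st}$, which is a harmless repackaging. Your key observation that the $v$-difference must be arranged at a common $u$ lying in both fibres (so that the $\cF_\theta(Y^\varphi)$-seminorm applies) is exactly what the paper's term $I_4$ encodes, and the remaining steps (tail bound via $d_\theta^{q+1}\le d_\theta$, invocation of~\eqref{eq:inf}, and Remark~\ref{rmk:GM} together with $\sum_j\mu(Y_j)(\infYj\varphi)^{q+2}\le\int_Y\varphi^{q+2}\,d\mu$) match the paper's bookkeeping.
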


\begin{proof}
We have
$\hV^{(q)}(s)(y)=(-1)^q\int_0^{\varphi(y)}e^{-s(\varphi(y)-u)}(\varphi(y)-u)^q v(y,u)\,du$.
In particular, 
$|1_{Y_j}\hV^{(q)}(s)|_\infty\le |1_{Y_j}\varphi|_\infty^{q+1}|v|_\infty$.

Next, let $y,y'\in Y_j$ with $\varphi(y)\ge \varphi(y')$.  Then 
\[
\hV^{(q)}(s)(y)-\hV^{(q)}(s)(y')=(-1)^q(I_1+I_2+I_3+I_4),
\]
where 
\begin{align*}
I_1 & =\int_{\varphi(y')}^{\varphi(y)}e^{-s(\varphi(y)-u)}(\varphi(y)-u)^qv(y,u)\,du, \\
I_2 & =\int_0^{\varphi(y')}\{e^{-s\varphi(y)}-e^{-s\varphi(y')}\}e^{-su}(\varphi(y)-u)^qv(y,u)\,du,  \\
I_3 & =\int_0^{\varphi(y')}e^{-s(\varphi(y')-u)}\{(\varphi(y)-u)^q-(\varphi(y')-u)^q\}v(y,u)\,du,  \\
I_4 & =\int_0^{\varphi(y')}e^{-s(\varphi(y')-u)}(\varphi(y')-u)^q\{v(y,u)-v(y',u)\}\,du. 
\end{align*}
Since $v\in \cF_\theta(Y^\varphi)$, we obtain
\begin{align*}
|I_1| & \le (\varphi(y)-\varphi(y'))|1_{Y_j}\varphi|_\infty^q|v|_\infty, \quad & 
|I_2| & \le |s|(\varphi(y)-\varphi(y'))|1_{Y_j}\varphi|_\infty^{q+1}|v|_\infty, \\
|I_3| & \le q(\varphi(y)-\varphi(y'))|1_{Y_j}\varphi|_\infty^q|v|_\infty, \quad & 
|I_4| & \le |1_{Y_j}\varphi|_\infty^{q+1}{\SMALL\sup_u}|v(y,u)-v(y',u)|.
\end{align*}
Also, we have the estimates $|\varphi(y)-\varphi(y')|\le |1_{Y_j}\varphi|_\theta d_\theta(y,y')
\le C_1|1_{Y_j}\varphi|_\infty d_\theta(y,y')$ and
$|v(y,u)-v(y',u)|\le |1_{Y_j}\varphi|_\infty|v|_\theta\, d_\theta(y,y')$,
 so
\[
|1_{Y_j}\hV^{(q)}(s)|_\theta  
\le C_1(|s|+q+1)|1_{Y_j}\varphi|_\infty^{q+2}\|v\|_\theta.
\]

Hence
\[
\|1_{Y_j}\hV^{(q)}(s)\|_\theta  
\le C_1(|s|+q+2)|1_{Y_j}\varphi|_\infty^{q+2}\|v\|_\theta
\le (2C_1)^{q+3}(|s|+q+1)\infYj\varphi^{q+2}\|v\|_\theta.
\]
The result follows from Corollary~\ref{cor:GM}.
\end{proof}

\subsection{Smoothness of $\hR:\cF_\theta(Y)\to \cF_\theta(Y)$ and spectral properties}
\label{sec:R}

\begin{prop} \label{prop:R}
$\hR:\barH\to\cB(\cF_\theta(Y))$ is $C^\infty$ and
\[
\SMALL \|\hR^{(q)}(s)\|_\theta \le
C_2(2C_1)^{q+3}(|s|+q+1)\int_Y \varphi^{q+1}\,d\mu
\quad\text{for all $q\in\N$, $s\in\barH$.}
\]
\end{prop}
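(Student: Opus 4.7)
My approach would model the argument on that of Proposition~\ref{prop:V}, exploiting that the two operators differ only by the missing integration over $u\in[0,\varphi(y)]$ (so we expect one lower power of $\varphi$ in the bound). Differentiating $\hR(s)v=R(e^{-s\varphi}v)$ formally under the transfer operator gives
\[
\hR^{(q)}(s)v=(-1)^q R\bigl(\varphi^q e^{-s\varphi}v\bigr),\qquad s\in\barH,\ q\in\N,
\]
so $\hR^{(q)}(s)$ is an operator of the form $Sv=R(gv)$ covered by Remark~\ref{rmk:GM} with $g=\varphi^q e^{-s\varphi}$. That remark immediately reduces the task to bounding $\sum_{j\ge1}\mu(Y_j)\bigl\|1_{Y_j}\varphi^q e^{-s\varphi}\bigr\|_\theta$ partitionwise.

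The $L^\infty$ piece is trivial since $\Re s\ge0$: $\bigl|1_{Y_j}\varphi^q e^{-s\varphi}\bigr|_\infty\le|1_{Y_j}\varphi|_\infty^q$. For the Lipschitz seminorm I would write, for $y,y'\in Y_j$,
\[
\varphi(y)^q e^{-s\varphi(y)}-\varphi(y')^q e^{-s\varphi(y')}=\bigl(\varphi(y)^q-\varphi(y')^q\bigr)e^{-s\varphi(y)}+\varphi(y')^q\bigl(e^{-s\varphi(y)}-e^{-s\varphi(y')}\bigr),
\]
and apply the elementary bounds $|a^q-b^q|\le q\max\{a,b\}^{q-1}|a-b|$ and $|e^{-sa}-e^{-sb}|\le|s||a-b|$ (the second valid for $\Re s\ge0$), together with $|\varphi(y)-\varphi(y')|\le|1_{Y_j}\varphi|_\theta d_\theta(y,y')$, to obtain
\[
\bigl|1_{Y_j}\varphi^q e^{-s\varphi}\bigr|_\theta\le\bigl(q+|s|\,|1_{Y_j}\varphi|_\infty\bigr)|1_{Y_j}\varphi|_\infty^{q-1}|1_{Y_j}\varphi|_\theta.
\]
Feeding in the Gibbs--Markov bounds $|1_{Y_j}\varphi|_\theta\le C_1\infYj\varphi$ and $|1_{Y_j}\varphi|_\infty\le 2C_1\infYj\varphi$ converts this into a bound of the shape $(2C_1)^{q+2}(|s|+q+1)(\infYj\varphi)^{q+1}$, and summing with $\sum_j\mu(Y_j)(\infYj\varphi)^{q+1}\le\int_Y\varphi^{q+1}\,d\mu$ produces the advertised estimate. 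The $C^\infty$ regularity of $s\mapsto\hR(s)$ on $\barH$ then follows from the same estimates applied to $h^{-1}\{\hR^{(q)}(s+h)-\hR^{(q)}(s)\}-\hR^{(q+1)}(s)$, using dominated convergence together with $\varphi^{q+2}\in L^1(Y)$ (guaranteed by the standing hypothesis $\varphi\in L^p(Y)$ for all $p\ge1$).

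The only delicate point is constant-tracking rather than substance: the ``low-power'' contribution $q|1_{Y_j}\varphi|_\infty^{q-1}|1_{Y_j}\varphi|_\theta$ naturally yields $(\infYj\varphi)^q$ instead of $(\infYj\varphi)^{q+1}$, so absorbing the missing power of $\infYj\varphi$ via $\infYj\varphi\ge\inf\varphi>0$ is what forces the slightly wasteful prefactor $(2C_1)^{q+3}$ in the final bound. Apart from this bookkeeping the argument is essentially parallel to (indeed strictly simpler than) the proof of Proposition~\ref{prop:V}.
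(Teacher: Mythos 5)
Your proposal is correct and takes essentially the same approach as the paper's own proof: both identify $\hR^{(q)}(s)v=(-1)^qR(\varphi^q e^{-s\varphi}v)$, invoke Remark~\ref{rmk:GM} to reduce to partitionwise $\|\cdot\|_\theta$-bounds, split the increment into the exponential piece and the $\varphi^q$ piece, and then feed~\eqref{eq:inf} into the Gibbs–Markov estimate. The only minor discrepancy is your closing aside: the $(2C_1)^{q+3}$ prefactor arises from converting $|1_{Y_j}\varphi|_\infty$ into $\infYj\varphi$ (each swap costs a factor $2C_1$), not from absorbing a missing power of $\infYj\varphi$ via $\inf\varphi>0$ (which would introduce an $(\inf\varphi)^{-1}$ into the constant that does not appear in the statement); the paper's own proof implicitly absorbs the lower-order term $|1_{Y_j}\varphi|_\infty^q$ in the same way, so this is a shared cosmetic point rather than a substantive difference.
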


\begin{proof}
First note that $\hR^{(q)}(s)v=(-1)^qR(f_q(s)v)$, where
$f_q(s)=e^{-s\varphi}\varphi^q$.

It is immediate that 
$|1_{Y_j}f_q(s)|_\infty\le |1_{Y_j}\varphi|_\infty^q$.
Next, let $y,y'\in Y_j$.  Then 
$f_q(s)(y)-f_q(s)(y')=I_1+I_2$ where
\begin{align*}
I_1 & = \{e^{-s\varphi(y)}-e^{-s\varphi(y')}\}\varphi(y)^q, \qquad
I_2  = e^{-s\varphi(y')}\{\varphi(y)^q-\varphi(y')^q\}.
\end{align*}
These terms contribute
\[
|I_1|\le |s|C_1|1_{Y_j}\varphi|_\infty^{q+1}d_\theta(y,y'), \qquad
|I_2| \le  qC_1|1_{Y_j}\varphi|_\infty^qd_\theta(y,y').
\]
It follows that
$|1_{Y_j}f_q(s)|_\theta\le C_1(|s|+q)|1_{Y_j}\varphi|_\infty^{q+1}$ and 
so 
\[
\|1_{Y_j}f_q(s)\|_\theta\le C_1(|s|+q+1)|1_{Y_j}\varphi|_\infty^{q+1}\le
(2C_1)^{q+2}(|s|+q+1)\infYj\varphi^{q+1}.
\]
By Remark~\ref{rmk:GM},
$\|\hR^{(q)}(s)\|_\theta\le 2C_2\sum\mu(Y_j)\|1_{Y_j}f_q(s)\|_\theta
\le C_2(2C_1)^{q+3}(|s|+q+1) \sum\mu(Y_j)\infYj\varphi^{q+1}
\le C_2(2C_1)^{q+3}(|s|+q+1) \int_Y\varphi^{q+1}\,d\mu$.
\end{proof}

\begin{prop}  \label{prop:basic}
\begin{itemize}
\item[(a)] $|\hR(s)v|_p\le |v|_p$ for all $s\in\barH$, $v\in L^p(Y)$, $1\le p\le\infty$.
\item[(b)] $|\hR(s)^nv|_\theta\le C_4\{(|s|+1)|v|_\infty+\theta^n|v|_\theta\}$ for all $s\in\barH$, $n\ge1$, $v\in \cF_\theta(Y)$.
\end{itemize}
\end{prop}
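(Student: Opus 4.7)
The plan falls into two pieces matching (a) and (b).

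Part~(a) is immediate: on $\barH$ we have $|e^{-s\varphi}|=e^{-\Re s\,\varphi}\le 1$ pointwise, and $R$ has non-negative kernel with $R1=1$ (since $F$ preserves $\mu$), so $|\hR(s)v|=|R(e^{-s\varphi}v)|\le R|v|$ pointwise. Then $R$ is an $L^p$-contraction for every $p\in[1,\infty]$ (Jensen's inequality, or duality with $R1=1$), giving the claim.

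For part~(b) the strategy is a Lasota--Yorke type one-step bound, iterated with the aid of~(a). I will first prove
\[
|\hR(s)w|_\theta\le K(|s|+1)|w|_\infty+\theta|w|_\theta
\qquad\text{for all $w\in\cF_\theta(Y)$, $s\in\barH$,}
\]
for a constant $K$ depending only on $C_1$, $C_2$, $\theta$ and $\intphi$. To get this I expand $(\hR(s)w)(y)-(\hR(s)w)(y')$ as a single sum over inverse branches using $(\hR(s)w)(y)=\sum_j e^{p(y_j)-s\varphi(y_j)}w(y_j)$, and split into three differences: (I)~the $e^p$-increment $\{e^{p(y_j)}-e^{p(y'_j)}\}e^{-s\varphi(y_j)}w(y_j)$ is controlled by~\eqref{eq:GM} with $n=1$ to contribute $\le C_2|w|_\infty$; (III)~the $w$-increment $e^{p(y'_j)-s\varphi(y'_j)}\{w(y_j)-w(y'_j)\}$ gains a factor $\theta$ from $d_\theta(y_j,y'_j)=\theta\,d_\theta(y,y')$ and uses $\sum_j e^{p(y'_j)}=(R1)(y')=1$, contributing $\theta|w|_\theta$; (II)~the $e^{-s\varphi}$-increment $e^{p(y'_j)}\{e^{-s\varphi(y_j)}-e^{-s\varphi(y'_j)}\}w(y_j)$ is bounded via $|e^{-sa}-e^{-sb}|\le|s||a-b|$ (valid on $\barH$) combined with the Gibbs--Markov hypothesis~\eqref{eq:inf} $|1_{Y_j}\varphi|_\theta\le C_1\inf_{Y_j}\varphi$ and $e^{p(y'_j)}\le C_2\mu(Y_j)$ from~\eqref{eq:GM}, giving a contribution bounded by $|s|\,C_1C_2\theta|w|_\infty\sum_j\mu(Y_j)\inf_{Y_j}\varphi\le |s|\,C_1C_2\theta\,\intphi\,|w|_\infty$.

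Granting this one-step bound, I iterate: writing $\hR(s)^n w=\hR(s)(\hR(s)^{n-1}w)$ and applying part~(a) with $p=\infty$ to keep $|\hR(s)^{n-1}w|_\infty\le|w|_\infty$, I obtain
\[
|\hR(s)^n w|_\theta\le K(|s|+1)|w|_\infty+\theta|\hR(s)^{n-1}w|_\theta.
\]
Summing the resulting geometric series in $\theta$ gives $|\hR(s)^n w|_\theta\le \frac{K(|s|+1)}{1-\theta}|w|_\infty+\theta^n|w|_\theta$, which is the desired estimate with $C_4=\max\bigl(K/(1-\theta),\,1\bigr)$.

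The only real subtlety is step~(II): hypothesis~\eqref{eq:inf} is essential both to convert $|\varphi(y_j)-\varphi(y'_j)|$ into something proportional to $\inf_{Y_j}\varphi$ and to keep the resulting series $\sum_j\mu(Y_j)\inf_{Y_j}\varphi$ summable; without it the iteration would yield an unacceptable $(|s|+1)^n$ rather than the linear $(|s|+1)$ factor.
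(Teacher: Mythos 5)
Your proof is correct. The two arguments are essentially equivalent bookkeeping: you establish a one-step Lasota--Yorke inequality
\[
|\hR(s)w|_\theta\le K(|s|+1)|w|_\infty+\theta|w|_\theta
\]
and iterate it, inserting the $L^\infty$-contraction from part~(a) at each stage; the paper instead writes $\hR(s)^n v=R^n(f_n(s)v)$ with $f_n(s)=e^{-s\varphi_n}$ and bounds $|1_d f_n(s)|_\theta$ on each $n$-cylinder $d\in\cD_n$ directly, obtaining the telescoping sum $K=\sum_d \mu(d)\sum_{j=0}^{n-1}\theta^{n-j}\inf_{F^jd}\varphi\le \theta(1-\theta)^{-1}\intphi$ and then invoking Corollary~\ref{cor:GM}. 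Both routes rest on the same two ingredients, condition~\eqref{eq:inf} to convert $|1_{Y_j}\varphi|_\theta$ into $\inf_{Y_j}\varphi$, and the $F$-invariance of $\mu$ to bound $\sum_j\mu(Y_j)\inf_{Y_j}\varphi$ (resp.\ $\sum_d\mu(d)\inf_{F^jd}\varphi$) by $\intphi$, so they produce the same dependence on constants; the one-step formulation is slightly more modular, the $n$-step formulation avoids appealing to part~(a). One small inaccuracy in your closing remark: if~\eqref{eq:inf} failed and $\sum_j\mu(Y_j)|1_{Y_j}\varphi|_\theta$ were infinite, you would not get a weaker bound of the form $(|s|+1)^n$ -- the one-step Lipschitz estimate itself would already be vacuous, so there would be nothing to iterate.
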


\begin{proof}
Part~(a) is immediate.
For part~(b), see~\cite[Corollary~4.3(a)]{BHM05} where it is shown that
$|\hR(ib)^nv|_\theta\le \{C_2^2+|b|C_2\theta(1-\theta)^{-1}\sum_{j\ge1}\mu(Y_j)|1_{Y_j}\varphi|_\theta\}|v|_\infty + C_2\theta^n|v|_\theta$.
The proof given there extends immediately to $s\in\barH$ as follows.

Write $\hR(s)^nv=R^n(f_n(s)v)$ where $f_n(s)=e^{-s\varphi_n}$.
Fix an $n$-cylinder $d\in\cD_n$ and let $y,y'\in d$.   Then
\begin{align*}
|f_n(s)(y)-f_n(s)(y')|
& \le |s|\sum_{j=0}^{n-1}|\varphi(F^jy)-\varphi(F^jy')|
\le |s|\sum_{j=0}^{n-1}|1_{F^jd}\varphi|_{\theta}d_{\theta}(F^jy,F^jy')
\\ &
\le C_1|s|\sum_{j=0}^{n-1}(\infFjd\varphi)\theta^{-j}d_\theta(y,y').
\end{align*}
Hence $|1_df_n(s)|_\infty\le1$ and
$|1_df_n(s)|_\theta\le C_1|s|\sum_{j=0}^{n-1}(\infFjd\varphi)\theta^{-j}$.
It follows that 
$|1_df_n(s)v|_\infty\le |v|_\infty$ and
\[
|1_df_n(s)v|_\theta\le 
|1_df_n(s)|_\theta|v|_\infty+
|1_df_n(s)|_\infty|v|_\theta
\le C_1|s|\sum_{j=0}^{n-1}(\infFjd\varphi)\theta^{-j}|v|_\infty+|v|_\theta.
\]
Hence by Corollary~\ref{cor:GM},
\begin{align*}
\|\hR & (s)^n v\|_\theta  \le C_2\sumd\mu(d)(2|1_df_n(s)v|_\infty+\theta^n|1_df_n(s)v|_\theta)
\\
& \le C_2\sumd\mu(d)(2|v|_\infty+\theta^n|v|_\theta)
+ C_2C_1|s||v|_\infty K
 =(2C_2+C_2C_1|s|K)|v|_\infty+C_2\theta^n|v|_\theta,
\end{align*}
where
\begin{align*}
K & =\sum_{d\in\cD_n}\mu(d)\sum_{j=0}^{n-1}\theta^{n-j}\infFjd\varphi
 =\sum_{d\in\cD_n}\mu(d)\sum_{j=0}^{n-1}\theta^{n-j}\infd\varphi\circ F^j
 \\ & \le\int_Y\sum_{j=0}^{n-1}\theta^{n-j}\varphi\circ F^j\,d\mu
 = \sum_{j=0}^{n-1}\theta^{n-j}\int_Y\varphi\,d\mu
\le \theta(1-\theta)^{-1} \int_Y\varphi\,d\mu.
\end{align*}
This completes the proof of part (b).
\end{proof}

\begin{prop} \label{prop:radius}
\begin{itemize}
\item[(a)] $\hR(ib)$ has spectral radius at most $1$ and essential spectral radius at most $\theta$ for all $b\in\R$.
\item[(b)] The spectral radius of $\hR(s)$ is less than $1$ for all $s\in\H$.
\item[(c)] $\hR(ib)v=\lambda v$ for some $\lambda\in\C$ with $|\lambda|=1$, $v\in L^2(Y)$, $b\in\R$ if and only if $e^{ib\varphi}v\circ F=\bar\lambda v$.
\end{itemize}
\end{prop}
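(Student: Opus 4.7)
The plan is to use Proposition~\ref{prop:basic} as the main input throughout. For part (a), combining parts (a) (with $p=\infty$) and (b) of Proposition~\ref{prop:basic} yields a Doeblin--Fortet inequality
\[
\|\hR(ib)^n v\|_\theta \le K|v|_\infty + C_4\theta^n|v|_\theta, \quad n\ge1,
\]
with $K=K(b)$ independent of $n$. Iterating this bound shows that $\|\hR(ib)^n\|_\theta$ is bounded uniformly in $n$, so the spectral radius of $\hR(ib)$ is at most $1$. The essential spectral radius bound then follows from a standard application of Hennion's theorem to the same inequality, using that the unit ball of $\cF_\theta(Y)$ is relatively compact in $L^\infty(Y)$ for full-branch Gibbs--Markov maps---the same compactness input that underlies Proposition~\ref{prop:qc}.

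For (b), the proof of (a) goes through verbatim for any $s\in\barH$, so $\hR(s)$ has essential spectral radius at most $\theta<1$ for all $s\in\H$. If the spectral radius of $\hR(s)$ equalled $1$ there would exist $v\in\cF_\theta(Y)\setminus\{0\}$ and $\lambda\in\C$ with $|\lambda|=1$ such that $R(e^{-s\varphi}v)=\lambda v$. Setting $a=\Re s>0$ and using the pointwise positivity $|Rf|\le R|f|$, we get $|v|=|\lambda v|\le R(e^{-a\varphi}|v|)$. Integrating and using $F$-invariance of $\mu$,
\[
\int_Y|v|\,d\mu \le \int_Y e^{-a\varphi}|v|\,d\mu \le e^{-a\inf\varphi}\int_Y|v|\,d\mu,
\]
a strict inequality since $\inf\varphi>0$, contradicting $v\neq 0$.

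For (c), the direction ($\Leftarrow$) is a one-line computation: from $e^{ib\varphi}v\circ F=\bar\lambda v$ we obtain $e^{-ib\varphi}v=\lambda(v\circ F)$, and then the transfer-operator identity $R((g\circ F)h)=g\cdot Rh$ together with $R1=1$ yields $\hR(ib)v=\lambda R(v\circ F)=\lambda v$. For ($\Rightarrow$), assume $\hR(ib)v=\lambda v$ with $|\lambda|=1$ and $v\in L^2(Y)$. Positivity gives $|v|\le R|v|$, and integration forces $R|v|=|v|$ a.e.; since $|v|\,d\mu$ is then an $F$-invariant measure absolutely continuous with respect to $\mu$, ergodicity of $\mu$ forces $|v|$ constant, and we normalise to $|v|\equiv 1$. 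The duality $\int Rf\cdot g\,d\mu=\int f\cdot g\circ F\,d\mu$ yields $\int_Y e^{-ib\varphi}v\,\overline{v\circ F}\,d\mu=\int_Y\lambda|v|^2\,d\mu=\lambda$, and expanding
\[
\int_Y|v\circ F-\bar\lambda e^{-ib\varphi}v|^2\,d\mu = 1-\bar\lambda\cdot\lambda-\lambda\cdot\bar\lambda+1 = 0
\]
gives $v\circ F=\bar\lambda e^{-ib\varphi}v$ a.e., equivalently $e^{ib\varphi}v\circ F=\bar\lambda v$.

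The main delicacy is invoking the essential spectral radius bound in (a): the Doeblin--Fortet iteration and Hennion argument are routine, but the compact embedding of $\cF_\theta(Y)$ into a weaker space is standard only for the full-branch Gibbs--Markov setting considered here and deserves explicit acknowledgment.
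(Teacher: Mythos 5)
Your proof is correct and takes essentially the same route as the paper for parts (a) and (b): Doeblin--Fortet from Proposition~\ref{prop:basic} plus Hennion's theorem for (a), and the $L^1$ contraction estimate for (b). (For (b) the paper only uses $\varphi>0$ pointwise rather than $\inf\varphi>0$; either suffices.)

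For (c), the ($\Leftarrow$) direction is the same computation the paper does via the identity $\hR(ib)M_b=I$. For ($\Rightarrow$), however, you take a detour that the paper avoids. You first argue, via positivity of $R$ and ergodicity of $\mu$, that $|v|$ must be constant before normalising $|v|\equiv 1$ and expanding $\int_Y|v\circ F - \bar\lambda e^{-ib\varphi}v|^2\,d\mu$. This detour is in fact unnecessary: if you carry $\|v\|_2^2$ through the expansion instead of normalising, every term picks up the same factor $\|v\|_2^2$ (using $\int|v\circ F|^2\,d\mu=\int|v|^2\,d\mu$ and $\int e^{-ib\varphi}v\,\overline{v\circ F}\,d\mu=\lambda\|v\|_2^2$), so the same cancellation $\|v\|_2^2(1-1-1+1)=0$ goes through without knowing $|v|$ is constant. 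The paper's proof exploits exactly this: it expands $\langle M_bv-\bar\lambda v, M_bv-\bar\lambda v\rangle$ directly using that $M_b$ is an $L^2$ isometry and that $M_b$ and $\hR(ib)$ are $L^2$-adjoints, with no appeal to ergodicity. Your version is correct but imports a hypothesis (ergodicity) that the statement does not need; the more economical argument is worth knowing. Everything else checks out.
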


\begin{proof}
(a) 
Since $\cF_\theta(Y)$ is compactly embedded in $L^\infty(Y)$, 
the estimate on the essential spectral radius follows from Proposition~\ref{prop:basic}(a,b), see for example~\cite{Hennion93}.
Now apply Proposition~\ref{prop:basic}(a).
\\
(b) Again the essential spectral radius is strictly less than $1$
by Proposition~\ref{prop:basic}(a,b).
Also, if $\hR(s)v=\lambda v$ for some $v\in \cF_\theta(Y)$, $\lambda\in\C$,
$|\lambda|=1$,
then $\int_Y|v|\,d\mu=\int_Y |R(e^{-s\varphi}v)|\,d\mu\le \int_Y e^{-a\varphi}|v|\,d\mu$
where $a=\Re s>0$.  But $\varphi>0$ so $v=0$.
\\
(c) Recall that $M_bv=e^{ib\varphi}v\circ F$.  Note that $\hR(ib)M_b=I$ and that $M_b$ and $\hR(ib)$ are $L^2$ adjoints, i.e.\
$\langle \hR(ib)v,w\rangle=\int_Y \hR(ib)v\,\bar w\,d\mu
=\int_Y v\,\overline{M_bw}\,d\mu=\langle v,M_bw\rangle$.
If $M_bv=\bar\lambda v$, then $v=\hR(ib)M_bv=\bar\lambda\hR(ib)v$
so $\hR(ib)v=\lambda v$.
Conversely, if $\hR(ib)v=\lambda v$, then 
\begin{align*}
\langle M_bv-\bar\lambda v,  M_bv-\bar\lambda v\rangle  & =
\langle M_bv,M_bv\rangle
-\langle M_bv,\bar\lambda v\rangle
-\langle \bar\lambda v,M_bv\rangle
+\langle \bar\lambda v,\bar\lambda v\rangle
\\ & =\langle v,v\rangle
-\langle v,v\rangle
-\langle v,v\rangle
+\langle v,v\rangle= 0,
\end{align*}
 so $M_bv=\bar\lambda v$.
\end{proof}

\begin{cor} \label{cor:R}
There exists $\delta>0$ and a $C^\infty$ family of simple eigenvalues
$\lambda:\barH\cap B_\delta(0)\to\C$ such that $\lambda(0)=1$ and $\lambda(s)$ is isolated in $\spec\hR(s):\cF_\theta(Y)\to \cF_\theta(Y)$.
Moreover, $\lambda'(0)=-\intphi$.
The corresponding family of spectral projections $P(s)$ is $C^\infty$
with $P(0)v=\int_Yv\,d\mu$ for all $v\in \cF_\theta(Y)$.
\end{cor}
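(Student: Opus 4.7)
The plan is to reduce the statement to standard analytic (here $C^\infty$) perturbation theory of an isolated simple eigenvalue of a bounded operator, and then to identify the quantities $P(0)$ and $\lambda'(0)$ by direct calculation.

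First I would note that at $s=0$ we have $\hR(0)=R$. By Proposition~\ref{prop:qc}, $R$ acting on $\cF_\theta(Y)$ has $\lambda=1$ as a simple eigenvalue with eigenfunction $\mathbf{1}$, the remainder of the spectrum lies in a disc of radius strictly less than $1$, and the essential spectral radius equals $\theta<1$. In particular, $1$ is an isolated point of $\spec\hR(0)$. Choose a small positively oriented circle $\Gamma\subset\C$ around $1$ that separates $1$ from the rest of $\spec\hR(0)$. By Proposition~\ref{prop:R}, the map $\hR:\barH\to\cB(\cF_\theta(Y))$ is $C^\infty$, so by upper semicontinuity of the spectrum and standard Neumann-series arguments (see e.g.\ Kato's perturbation theory), there exists $\delta>0$ such that for every $s\in\barH\cap B_\delta(0)$ the circle $\Gamma$ is disjoint from $\spec\hR(s)$ and the resolvent $z\mapsto(zI-\hR(s))^{-1}$ is jointly $C^\infty$ in $(s,z)$ on $\barH\cap B_\delta(0)\times\Gamma$.

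Next I would define
\[
P(s)=\frac{1}{2\pi i}\oint_\Gamma(zI-\hR(s))^{-1}\,dz,\qquad
\lambda(s)=\frac{1}{2\pi i}\operatorname{tr}\oint_\Gamma z(zI-\hR(s))^{-1}\,dz,
\]
where the trace in the second formula can be replaced, more elementarily, by using $\hR(s)P(s)=\lambda(s)P(s)$ once one knows $P(s)$ has rank one; equivalently, pick any continuous linear functional $\ell$ with $\ell(\mathbf{1})=1$ and any vector $h_0$ with $P(0)h_0\neq 0$ and use $\lambda(s)=\ell(\hR(s)P(s)h_0)/\ell(P(s)h_0)$ for $s$ near $0$. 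Smoothness of $\hR(s)$ in $s$ yields $C^\infty$ dependence of $P(s)$ and $\lambda(s)$. Rank is locally constant for a continuous family of projections, so $\operatorname{rank}P(s)=\operatorname{rank}P(0)=1$, i.e.\ $\lambda(s)$ is a simple eigenvalue and $\lambda(s)$ is isolated in $\spec\hR(s)$.

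To identify $P(0)$, observe that Proposition~\ref{prop:qc} gives $R^n v\to\int_Y v\,d\mu\cdot\mathbf{1}$ in $\cF_\theta(Y)$ for every $v\in\cF_\theta(Y)$, while the spectral decomposition at $s=0$ gives $R^n=P(0)+(R(I-P(0)))^n$ with the second term tending to $0$ in operator norm. Hence $P(0)v=\int_Y v\,d\mu\cdot\mathbf{1}$, which under the customary identification of the one-dimensional range with scalars is written $P(0)v=\int_Y v\,d\mu$.

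Finally, to compute $\lambda'(0)$, let $h(s)\in\cF_\theta(Y)$ be a $C^\infty$ choice of eigenfunction with $\hR(s)h(s)=\lambda(s)h(s)$ and $\int_Y h(s)\,d\mu=1$ (achievable for $s$ near $0$ since $\int_Y h(0)\,d\mu=\int_Y\mathbf{1}\,d\mu=1$). Differentiating the eigenvalue equation at $s=0$ gives
\[
\hR'(0)\mathbf{1}+R h'(0)=\lambda'(0)\mathbf{1}+h'(0).
\]
Since $\hR(s)v=R(e^{-s\varphi}v)$, we have $\hR'(0)\mathbf{1}=-R\varphi$. Integrating the displayed equation against $\mu$ and using the invariance $\int_Y Rw\,d\mu=\int_Y w\,d\mu$ together with $\int_Y h'(0)\,d\mu=\frac{d}{ds}\big|_{s=0}\int_Y h(s)\,d\mu=0$, the $h'(0)$ terms cancel and one is left with $-\int_Y\varphi\,d\mu=\lambda'(0)$, i.e.\ $\lambda'(0)=-\intphi$.

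The only mildly nontrivial point is the $C^\infty$ dependence of $P(s)$ on $s\in\barH\cap B_\delta(0)$ (as opposed to the easier interior $\H\cap B_\delta(0)$ case), but this is handled uniformly by the contour-integral formula together with the $C^\infty$-in-$s$ statement of Proposition~\ref{prop:R} on all of $\barH$; no genuine obstacle arises.
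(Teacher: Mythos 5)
Your proof is correct and follows essentially the same route as the paper: existence of the $C^\infty$ families via standard analytic perturbation theory (the paper just cites Propositions~\ref{prop:qc} and~\ref{prop:R} for this, where you spell out the contour-integral construction), and identification of $\lambda'(0)$ by reducing to $\lambda'(0)=\int_Y\hR'(0)\mathbf{1}\,d\mu=-\int_Y R\varphi\,d\mu=-\intphi$. The only cosmetic difference is in that last step: the paper differentiates the projection identity $\hR P=\lambda P$ and applies $P$ to both sides, giving $P\hR'P=\lambda'P$, whereas you differentiate the normalized eigenfunction equation $\hR(s)h(s)=\lambda(s)h(s)$ and integrate against $\mu$; both use the same cancellation via $\int_Y Rw\,d\mu=\int_Y w\,d\mu$ and land on the same final expression.
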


\begin{proof}  
The existence of the $C^\infty$ families $\lambda(s)$ and $P(s)$ follows from Propositions~\ref{prop:qc} and~\ref{prop:R}.  
Differentiating $\hR P=\lambda P$
 and applying $P$ to both sides,
\[
P\hR'P+P\hR P'=\lambda P P'+\lambda'P.
\]
Since $P\hR=\hR P=\lambda P$, this reduces to
$P\hR'P=\lambda'P$.
In particular,
$\lambda'(0)=P(0)\hR'(0)1=\int_Y\hR'(0)1\,d\mu=-\int_Y R\varphi\,d\mu=-\intphi$.
\end{proof}

\subsection{Dolgopyat estimate}
\label{sec:Dolg}

In this subsection, we prove the following key estimate.

\begin{thm} \label{thm:Dolg}
Assume absence of approximate eigenfunctions.
Then 
\begin{itemize}

\parskip=-2pt
\item[(a)] The spectral radius of $\hR(s)$ is less than $1$ for all $s\in\barH\setminus\{0\}$.
\item[(b)]
For any $\delta>0$,
there exist $\alpha,C>0$ such that
\[
\|(I-\hR(s))^{-1}\|_\theta \le C|b|^\alpha\quad\text{
for all $s=a+ib\in\C$ with $0\le a\le 1$, $|b|\ge \delta$.}
\]
\end{itemize}
\end{thm}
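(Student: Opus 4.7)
The plan is to prove (a) by reducing to absence of unit-modulus point spectrum and manufacturing approximate eigenfunctions from any such eigenvalue, and to prove (b) by the classical Dolgopyat cancellation machinery applied in the high-frequency range.

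For part (a), Proposition~\ref{prop:radius}(b) already gives spectral radius $<1$ for $s\in\H$, so fix $s=ib$ with $b\neq 0$. Proposition~\ref{prop:radius}(a) gives essential spectral radius at most $\theta<1$, so I only need to rule out eigenvalues $\lambda$ with $|\lambda|=1$. Assume $\hR(ib)v=\lambda v$ with $v\in\cF_\theta(Y)\setminus\{0\}$, $|\lambda|=1$. Taking moduli yields $|v|\le R|v|$, and integrating gives equality $|v|=R|v|$, so by Proposition~\ref{prop:qc} (and the ergodicity that comes with it) $|v|$ is constant; normalize $|v|\equiv 1$. Proposition~\ref{prop:radius}(c) then yields $M_bv=\bar\lambda v$, hence inductively $M_{kb}^n(v^k)=\bar\lambda^{nk}v^k$ for all $k,n\ge 1$. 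Setting $b_k=kb$, $u_k=v^k$, $n_k=[\xi\ln|b_k|]$, and choosing $\psi_k$ so that $e^{i\psi_k}=\bar\lambda^{n_kk}$, condition~\eqref{eq:approx} holds with equality on $Y$ (a fortiori on any finite subsystem $Z_0$); meanwhile $|u_k|\equiv 1$ and $|u_k|_\theta\le k|v|_\theta=(|v|_\theta/|b|)|b_k|$, contradicting absence of approximate eigenfunctions.

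For part (b), I split the strip into a bounded range $\{\delta\le|b|\le B,\;0\le a\le 1\}$ and a high-frequency range $|b|\ge B$. On the bounded range, part (a) together with smoothness of $s\mapsto\hR(s)$ from Proposition~\ref{prop:R} show that $(I-\hR(s))^{-1}$ exists and varies continuously, and compactness delivers a uniform bound. For $|b|\ge B$ large I run the Dolgopyat scheme with $N=N(b)=[\xi\ln|b|]$, combining two ingredients. First, the Lasota--Yorke bound in Proposition~\ref{prop:basic}(b) reads $\|\hR(s)^Nu\|_\theta\le C_4\{(|s|+1)|u|_\infty+\theta^N|u|_\theta\}$, and choosing $\xi(-\ln\theta)$ large makes $\theta^N$ negligible against any fixed polynomial power of $|b|$. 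Second, I need a Dolgopyat $L^2$-contraction
\[
\|\hR(s)^{2N}u\|_{L^2(Y)}\le\rho\,\|u\|_{L^2(Y)},
\]
with $\rho<1$ uniform in $a\in[0,1]$ and $|b|\ge B$. This is the core step: one constructs a Dolgopyat cone of positive dominators $w\ge|u|$ on a finite subsystem $Z_0$ and a finite family of local phase-contraction operators; on each $F^N$-preimage branch, absence of approximate eigenfunctions on $Z_0$ rules out uniform alignment of the phases of $e^{-ib\varphi_N}$, so at least one contraction produces strict pointwise cancellation on $Z_0$, which upgrades to an $L^2$ gain after one more iterate of $\hR(s)^N$. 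Combining the Lasota--Yorke inequality with the $L^2$-contraction and the uniform $L^1$-bound from Proposition~\ref{prop:basic}(a), one obtains $\|\hR(s)^{kN}\|_\theta\le C|b|^{\alpha_0}\rho^{k/2}$, and summing the Neumann series $(I-\hR(s))^{-1}=\sum_{n\ge 0}\hR(s)^n$ in blocks of length $N$ delivers $\|(I-\hR(s))^{-1}\|_\theta\le C|b|^\alpha$ for some $\alpha$.

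The main obstacle is the $L^2$-contraction. It requires the delicate construction of the Dolgopyat cone and of the finite family of local contractions, and a careful use of the Gibbs--Markov preimage structure to convert absence of approximate eigenfunctions on $Z_0$ into strict pointwise cancellation for $\hR(s)^N$ at every point of $Z_0$. Once this core estimate is secured, the remaining pieces---the Lasota--Yorke bookkeeping, the polynomial loss in $|b|$, and the resolvent assembly---are routine consequences of Propositions~\ref{prop:R}--\ref{prop:basic}.
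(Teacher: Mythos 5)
Your argument for part~(a) is correct and is a genuinely different route from the paper's. The paper first proves part~(b) for $|b|\ge b_0$ (via Lemmas~\ref{lem:Dolg1} and~\ref{lem:Dolg2}), then derives part~(a) for all $b\neq 0$ by the power trick $m|b|\ge b_0$, and only then fills in part~(b) on $[\delta,b_0]$. You instead attack (a) directly: from an exact eigenfunction $\hR(ib)v=\lambda v$, $|\lambda|=1$, you deduce $|v|\equiv 1$ via $|v|\le R|v|$ and the simplicity of the eigenvalue $1$ for $R$, identify $M_bv=\bar\lambda v$ by Proposition~\ref{prop:radius}(c), and then set $u_k=v^k$, $b_k=kb$, $e^{i\psi_k}=\bar\lambda^{kn_k}$ to get \emph{exact} eigenfunctions of $M_{b_k}^{n_k}$ with $|u_k|\equiv 1$, $|u_k|_\theta\le k|v|_\theta=(|v|_\theta/|b|)\,|b_k|$, and error $0$ in~\eqref{eq:approx}. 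This is self-contained, does not circularly require part~(b), and covers all $\alpha_0$ simultaneously since the error vanishes. It is a clean and arguably preferable proof of (a).

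Part~(b), however, has a genuine gap in the high-frequency regime, and the mechanism you outline is not actually the right one for this setting. You sketch a ``Dolgopyat cone / $L^2$-contraction'' argument of the flavor of~\cite{Dolgopyat98a,Liverani04}, but (i) you never construct the cone or prove the claimed inequality, and you concede as much; (ii) as stated, the inequality $\|\hR(s)^{2N}u\|_{L^2}\le\rho\|u\|_{L^2}$ for all $u\in L^2$ is false -- it can only hold for $u$ dominated by a cone element, which is exactly where the work is; and (iii) the link you invoke (``absence of approximate eigenfunctions rules out uniform phase alignment of $e^{-ib\varphi_N}$ on preimage branches'') conflates the UNI/nonintegrability mechanism of the exponential-mixing literature with the notion of approximate eigenfunctions from Definition~\ref{def:approx}, which concerns $M_b$, not the oscillatory sums over branches. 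What the paper actually does in Lemmas~\ref{lem:Dolg1} and~\ref{lem:Dolg2} is a pointwise dichotomy tailored to rapid mixing (following~\cite{Dolgopyat98b}): with $n(b)=[\xi\ln\hat b]$, either for every unit-ball $v_0$ there is some $j\le n(b)$ and $y_0\in Z_0$ with $|\hR(s)^jv_0(y_0)|\le 1-\hat b^{-\alpha_1}$ -- whence the Lipschitz bound localizes a strict decrease on a ball of $\mu$-measure $\ge\eps_1\hat b^{-(\alpha_1+1)\alpha_2}$, giving an $L^1$ loss, which the spectral gap of the \emph{untwisted} operator $R$ (Proposition~\ref{prop:qc}) upgrades to an $L^\infty$ and then $\|\cdot\|_b$ loss -- or else there is a $v_0$ with $|\hR(s)^{jn(b)}v_0|\ge 1-\hat b^{-\alpha_1}$ on $Z_0$ for $j=0,1,2$, from which Lemma~\ref{lem:Dolg1} extracts an approximate eigenfunction on $Z_0$, contradicting the hypothesis. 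This dichotomy is the step your proposal is missing; without it you have no lower bound on $\|(I-\hR(s))^{-1}\|_\theta$ in the range $|b|\ge B$.
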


Define the scale of equivalent norms
\[
\|v\|_b=\max\Big\{|v|_\infty,\,\frac{|v|_\theta}{2C_4(|b|+2)}\Big\}, \quad b\in\R.
\]
By Proposition~\ref{prop:basic}(a,b), for all $n\ge1$,
\[
\|\hR(s)^n\|_b \le C_4+{\textstyle\frac12}\quad\text{for all $s=a+ib\in\C$ with $a\in[0,1]$}.
\]

For each $b$, define the unit ball $\cF_\theta(Y)_b=\{v\in \cF_\theta:\|v\|_b\le1\}$.
Let $C_5=2C_4(C_4+\frac12)$.  Then for all $v\in \cF_\theta(Y)_b$, $n\ge1$, 
\[
|\hR(s)^nv|_\infty\le 1\quad\text{and}\quad
|\hR(s)^nv|_\theta\le C_5(|b|+2)
\quad\text{for $s=a+ib\in\C$ with $a\in[0,1]$}.
\]

Let $Z$ denote a fixed subset of $Y$ consisting of a finite union of
partition elements of $Y$, with finite subsystem $Z_0=\bigcap_{j\ge0}F^{-j} Z$.
Note that $p$ is uniformly bounded on $Z_0$ and moreover
$|p_n(y)|\le n|1_{Z_0}p|_\infty$ for all $y\in Z_0$ and $n\ge1$.

\begin{lemma}  \label{lem:Dolg1}
Fix $\alpha_2>0$.    Then there exists $\alpha_1>0$ and there exists $\xi>0$ arbitrarily large 
such that the following is true for each fixed $s=a+ib\in\C$ with $a\in[0,1]$,
setting $n(b)=[\xi\ln(|b|+2)]$:

Suppose that there exists $v_0\in \cF_\theta(Y)_b$ such that
for all $y\in Z_0$ and all $j=0,1,2$,
\[
|(\hR(s)^{jn(b)}v_0)(y)|\ge 1-(|b|+2)^{-\alpha_1}.
\]
Then there exists $w\in \cF_\theta(Y)$ with
$|w(y)|\equiv1$ and $|w|_\theta\le 8C_5|(b|+2)$, and there exists $\psi\in[0,2\pi)$ such that for all $y\in Z_0$,
\[
|(M_b^{n(b)}w)(y)-e^{i\psi}w(y)| \le 8(|b|+2)^{-\alpha_2}.
\]
\end{lemma}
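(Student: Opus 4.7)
My plan is to convert the near-$\hR(s)^{n(b)}$-invariance hypothesis on $v_0$ into an approximate eigenfunction statement for $M_b^{n(b)}$ via a phase-alignment argument. The starting point is the transfer operator expansion
\begin{align*}
(\hR(s)^{n(b)}v_0)(y)=\sum_{d\in\cD_{n(b)}}e^{p_{n(b)}(y_d)-s\varphi_{n(b)}(y_d)}v_0(y_d),
\end{align*}
whose modulus is bounded by $\sum_d e^{p_{n(b)}(y_d)}=1$ since $|v_0|\le 1$ and $\Re s\ge 0$. The $j=1$ hypothesis nearly saturates this triangle inequality on $Z_0$.

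The standard sharp-triangle lemma (if $\sum c_d z_d$ with $c_d\ge 0$, $\sum c_d=1$, $|z_d|\le 1$ has modulus $\ge 1-\delta$, then $\sum c_d|z_d-z^*|^2\le 2\delta$ where $z^*$ is the unit vector in the direction of the sum) applied with $c_d=e^{p_{n(b)}(y_d)}$ and $z_d=e^{-s\varphi_{n(b)}(y_d)}v_0(y_d)$ yields, setting $\psi_0(y):=\arg((\hR(s)^{n(b)}v_0)(y))$, the pointwise alignment
\begin{align*}
\bigl|e^{-s\varphi_{n(b)}(y_d)}v_0(y_d)-e^{i\psi_0(y)}\bigr|\ll (|b|+2)^{-\alpha_1/4}
\end{align*}
for $y\in Z_0$ and each preimage $y_d$ lying in a cylinder of non-negligible weight (thin cylinders with $e^{p_{n(b)}(y_d)}<\sqrt{2}(|b|+2)^{-\alpha_1/4}$ contribute negligibly throughout). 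In particular $|v_0(y_d)|\ge 1/2$ on these preimages once $|b|$ is large, and $v_0(y_d)/|v_0(y_d)|\approx e^{i[b\varphi_{n(b)}(y_d)+\psi_0(y)]}$.

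Next, I define $w$ on $Y$ by $w=v_0/|v_0|$ on $\{|v_0|\ge 1/2\}$ (which contains $Z_0$ by the $j=0$ hypothesis) and extend to a modulus-one function on the remainder via a piecewise-constant choice on each partition element of $\{Y_j\}$. Since $d_\theta(y,y')=1$ whenever $y,y'$ lie in distinct partition elements, the global Lipschitz bound reduces to the bound within each element, where $|w(y)-w(y')|\le 4|v_0(y)-v_0(y')|$ combined with $|v_0|_\theta\le 2C_4(|b|+2)$ yields $|w|_\theta\le 8C_4(|b|+2)\le 8C_5(|b|+2)$. To initiate the approximate eigenfunction property, I use forward-invariance $F(Z_0)\subset Z_0$: for $y\in Z_0$ the points $y$ and $F^{n(b)}y$ are themselves $n(b)$-preimages of $F^{n(b)}y$ and $F^{2n(b)}y$ respectively, so the alignment gives $w(y)\approx e^{i[b\varphi_{n(b)}(y)+\psi_0(F^{n(b)}y)]}$ and $w(F^{n(b)}y)\approx e^{i[b\varphi_{n(b)}(F^{n(b)}y)+\psi_0(F^{2n(b)}y)]}$, so the ratio $M_b^{n(b)}w(y)/w(y)$ reduces to a phase depending on $y$ only through $\psi_0$ at forward iterates of $y$.

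The $j=2$ iterate, applied via the same sharp-triangle analysis with $u:=\hR(s)^{n(b)}v_0$ in place of $v_0$, supplies the consistency relation $\psi_0(z)-b\varphi_{n(b)}(z)\equiv\psi_1(F^{n(b)}z)\pmod{2\pi}$ on $Z_0$, where $\psi_1(y):=\arg((\hR(s)^{2n(b)}v_0)(y))$. Chaining this along the orbit $y\mapsto F^{n(b)}y\mapsto F^{2n(b)}y$ collapses the position-dependent phase to a single constant $e^{i\psi}$ on $Z_0$. Choosing $\alpha_1\ge 4(\alpha_2+1)$ and $\xi$ sufficiently large absorbs all residual errors into the required bound $8(|b|+2)^{-\alpha_2}$. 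The main obstacle is precisely this final collapse: three iterates are essential because the third iterate supplies the independent constraint that forces the would-be $y$-dependent phase difference $\psi_0-\psi_1$ to be approximately constant along the orbit in $Z_0$. Secondary technical issues are the careful treatment of thin cylinders in the alignment step and the global definition of $w$ with $|w|\equiv 1$, both of which exploit the tree structure of $d_\theta$ with respect to the partition $\{Y_j\}$.
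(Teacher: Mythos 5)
The opening phase-alignment argument via a sharp triangle inequality is a valid substitute for the paper's cleaner rearrangement ($w_1^{-1}\hR(s)^n v_0=r_1\ge 1-\hat b^{-\alpha_1}$, then apply $R^n$ and compare real parts pointwise), and your choice of $w=v_0/|v_0|$ rather than the paper's $w=v_1/|v_1|$ (with $v_1=\hR(s)^{n(b)}v_0$) is workable in principle, though it shifts more of the burden onto the final step. But the final step is where the proposal has a genuine gap. You correctly identify that everything boils down to showing the phase difference $\psi_0(\cdot)-\psi_1(\cdot)$ (in your notation) is approximately constant on $Z_0$, and you correctly observe that this is "the main obstacle," but the proposal then simply asserts that "chaining along the orbit collapses the position-dependent phase to a single constant." Chaining the consistency relation $\psi_0-b\varphi_n\equiv\psi_1\circ F^n$ forward along orbits just reproduces the relation at later times; it does not by itself remove the $y$-dependence, since $F^{kn}$ maps $Z_0$ onto $Z_0$.

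The missing ingredient is the paper's shadowing step: fix one reference point $z\in Z_0$ and let $\psi_0,\psi_1$ be its phases; for an arbitrary $y\in Z_0$ replace $y$ by the point $y^*\in Z_0$ whose symbol sequence agrees with $z$ in coordinates $0,\dots,n(b)-1$ and with $y$ thereafter. Then $F^{n(b)}y^*=F^{n(b)}y$ while $d_\theta(y^*,z)\le\theta^{n(b)}$, so the H\"older control $|w_j|_\theta\ll\hat b$ together with the choice of $\xi$ gives $w_j(y^*)\approx w_j(z)=e^{i\psi_j}$. Feeding $y^*$ into both alignment relations, the common factor $e^{-ib\varphi_{n(b)}(y^*)}$ cancels in the ratio, which is precisely what proves $w_1/w_2\approx e^{i\psi}$ uniformly on $F^{n(b)}Z_0=Z_0$ with $\psi=\psi_0-\psi_1$. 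This explicit construction of a shadowing point is what makes two alignment relations (hence three iterates) sufficient; you invoke its conclusion without supplying the mechanism. Secondary issues (thin cylinders, and that the piecewise extension of $w$ must preserve the global H\"older bound even across the interface $\{|v_0|\ge 1/2\}$ inside a single $Y_j$) would also need care, but the shadowing argument is the substantive omission.
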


\begin{proof}
We write $\hat b=|b|+2$ and $n=n(b)$.    
Choose $\xi$ such that
$8C_5\hat b\theta^n=8C_5\hat b\theta^{[\xi\ln\hat b]}\le \hat b^{-\alpha_2}$ for all $b$, and set
\[
\alpha_1=\max\{1,2\alpha_2+\xi|1_{Z_0}p|_\infty\}.
\]

Write $v_j=\hR(s)^{jn}v_0$ and $v_j=r_jw_j$, where $|w_j(y)|\equiv1$
and $1-\hat b^{-\alpha_1}\le r_j(y)\le 1$ for $y\in Z_0$.
Note that $|r_j|_\theta\le|v_j|_\theta\le C_5\hat b$, so 
\[
|w_j|_\theta
=|r_j^{-1}v_j|_\theta\le 2C_5\hat b (1-\hat b^{-\alpha_1})^{-2}\le 8C_5\hat b.
\]

Rearrange $v_1=\hR(s)^nv_0$ to obtain
$w_1^{-1}\hR(s)^n(v_0)=r_1\ge 1-\hat b^{-\alpha_1}$.
Hence $1-w_1^{-1}\hR(s)^n(v_0)\le \hat b^{-\alpha_1}$.
It follows that
\begin{align*}
& R^n(1-\Re\{e^{-ib\varphi_n}w_0\,w_1^{-1}\circ F^n\})
 \le 1-R^n(r_0e^{-a\varphi_n}\Re\{e^{-ib\varphi_n}w_0\,w_1^{-1}\circ F^n\})
\\ & \qquad\qquad\qquad  = 1-\Re R^n(e^{-s\varphi_n}v_0\,w_1^{-1}\circ F^n)
 = \Re\{1-w_1^{-1}\hR^n(s)v_0\}\le \hat b^{-\alpha_1}.
\end{align*}
Hence
\[
e^{p_n(y)}[1-\Re(e^{-ib \varphi_n(y)}w_0(y)w_1^{-1}(F^ny))]\le \hat b^{-\alpha_1}
\]
for all $y\in Y$ with $F^n y\in Z_0$.   It follows that
\[
|e^{-ib\varphi_n(y)}w_0(y)-w_1(F^n y)|\le 2(e^{-p_n(y)}\hat b^{-\alpha_1})^{1/2}.
\]
Similarly, with $w_0$ and $w_1$ replaced by $w_1$ and $w_2$.
Restricting to $y\in Z_0$, we have
$e^{-p_n(y)}\hat b^{-\alpha_1}\le \hat b^{-2\alpha_2}$ and hence
\begin{align} 
|e^{-ib\varphi_n(y)}w_0(y)-w_1(F^n y)| &\le 2\hat b^{-\alpha_2}, \qquad 
|e^{-ib\varphi_n(y)}w_1(y)-w_2(F^n y)| \le 2\hat b^{-\alpha_2},
\label{eq:w}
\end{align}
for all $y\in Z_0$.
Fix $z\in Z_0$ and choose $\psi_0,\psi_1\in\R$ such that $w_j(z)=e^{i\psi_j}$ for $j=0,1$ and such that $\psi=\psi_0-\psi_1\in[0,2\pi)$.
To each $y$, we associate the point $y^*\in Z_0$ with symbol sequence
$y^*=z_0\cdots z_{n-1}y_ny_{n+1}\cdots$ (recall from Definition~\ref{def:Z0} that $F|_{Z_0}$ is a full one-sided shift).
Then $y^*$ is within distance $\theta^n$ of $z$ and $F^n y^*=F^ny$.
We obtain
\begin{align*}
|e^{-ib\varphi_n(y^*)}e^{i\psi_0}-w_1(F^n y)| &\le 2\hat b^{-\alpha_2}+
8C_5\hat b \theta^n\le 3\hat b^{-\alpha_2} \\
|e^{-ib\varphi_n(y^*)}e^{i\psi_1}-w_2(F^n y)| &\le 2\hat b^{-\alpha_2}+
8C_5\hat b \theta^n\le 3\hat b^{-\alpha_2},
\end{align*}
by the choice of $\xi$.  Hence
$|e^{-i\psi}w_1(F^n y)-w_2(F^n y)| \le 6\hat b^{-\alpha_2}$.
Substituting into~\eqref{eq:w} yields the required approximate eigenfunction
$w=w_1$.
\end{proof}

\begin{lemma}   \label{lem:Dolg2}
Let $\alpha_1,\xi_1>0$.
Suppose that for any $v\in \cF_\theta(Y)_b$
there exists $y_0\in Z_0$ and $j\le [\xi_1\ln(|b|+2)]$
such that 
\[
|\hR(s)^j v(y_0)|\le 1-(|b|+2)^{-\alpha_1}
\quad\text{for all $s=a+ib\in\C$ with $a\in[0,1]$.}
\]
Then there exists $\alpha,\xi,\eps>0,\,C>0$ such that
\begin{itemize}
\item[(a)] 
$\|\hR(s)^{[\xi\ln(|b|+2)]}\|_b \le 1-\eps(|b|+2)^{-\alpha}$
for all $s=a+ib\in\C$ with $a\in[0,1]$.
\item[(b)]
$\|(I-\hR(s))^{-1}\|_\theta \le C(|b|+2)^\alpha$
for all $s=a+ib\in\C$ with $a\in[0,1]$.
\end{itemize}
\end{lemma}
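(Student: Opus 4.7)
The plan is to prove (a) by exhibiting, for $n=[\xi\ln(|b|+2)]$ with $\xi$ chosen large, a uniform bound $\|\hR(s)^nv\|_b\le 1-\eps(|b|+2)^{-\alpha}$ over $v\in\cF_\theta(Y)_b$; part (b) then follows from a standard geometric-series argument via the block decomposition $(I-\hR(s))^{-1}=\bigl(\sum_{j=0}^{n-1}\hR(s)^j\bigr)\sum_{k\ge 0}\hR(s)^{kn}$, using that $\|\cdot\|_b$ and $\|\cdot\|_\theta$ are equivalent up to a factor $O(|b|+2)$. Fix $v\in\cF_\theta(Y)_b$, apply the hypothesis to find $j\le[\xi_1\ln(|b|+2)]$ and $y_0\in Z_0$ with $|\hR(s)^jv(y_0)|\le 1-(|b|+2)^{-\alpha_1}$, and set $w=\hR(s)^jv$, so $|w|_\infty\le 1$ and $|w|_\theta\le C_5(|b|+2)$.

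First I would propagate the pointwise cancellation to an $m$-cylinder. Choose $m=O(\ln(|b|+2))$ so that $C_5(|b|+2)\theta^m\le\tfrac12(|b|+2)^{-\alpha_1}$ and let $U$ be the $m$-cylinder of $Y$ containing $y_0$. For $y'\in U$ the separation time satisfies $s(y',y_0)\ge m$, giving $|w(y')|\le 1-\tfrac12(|b|+2)^{-\alpha_1}$. Moreover, since $y_0\in Z_0$, every point of $U$ has its first $m$ symbols in the alphabet of $Z$, so $F^{j'}y'\in Z$ for $0\le j'<m$; consequently $\varphi_m(y')\le m\,|1_Z\varphi|_\infty$ and $|p_m(y')|\le m\,|1_Zp|_\infty$ (both finite, by~\eqref{eq:inf} and $\varphi\in L^1$, and by piecewise Lipschitz-ness of $p$, respectively). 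Then one application of $\hR(s)^m$ spreads the cancellation across $Y$: using $\hR(s)^mw=R^m(e^{-s\varphi_m}w)$, $|e^{-s\varphi_m}|\le 1$ (from $\Re s\ge 0$), and $R^m1=1$,
\[
|\hR(s)^{j+m}v(y)|=|\hR(s)^mw(y)|\le 1-\tfrac12(|b|+2)^{-\alpha_1}\,R^m\bigl(e^{-a\varphi_m}1_U\bigr)(y)
\quad\text{for all }y\in Y.
\]

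The main obstacle is the uniform lower bound on $R^m(e^{-a\varphi_m}1_U)(y)$. Since $F$ is full-branch and $U$ is itself an $m$-cylinder, each $y\in Y$ has a unique $F^m$-preimage $y_d\in U$; the Gibbs estimate~\eqref{eq:GM} combined with the bounds on $p_m$ and $\varphi_m$ above yields
\[
R^m(e^{-a\varphi_m}1_U)(y)=e^{p_m(y_d)}e^{-a\varphi_m(y_d)}\ge C_2^{-1}\mu(U)\,e^{-m|1_Z\varphi|_\infty}\ge C_2^{-2}e^{-m(|1_Zp|_\infty+|1_Z\varphi|_\infty)}\ge c(|b|+2)^{-\alpha_2}
\]
for some $\alpha_2>0$, since $m=O(\ln(|b|+2))$. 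Hence $|\hR(s)^{j+m}v|_\infty\le 1-\eps(|b|+2)^{-\alpha}$ with $\alpha=\alpha_1+\alpha_2$. Now take $n=[\xi\ln(|b|+2)]$ with $\xi$ so large that $n\ge j+m$ for all admissible $j,m$ and $C_4\theta^n\le\tfrac14$; by the $L^\infty$-contraction of $\hR(s)$ (Proposition~\ref{prop:basic}(a)), $|\hR(s)^nv|_\infty\le|\hR(s)^{j+m}v|_\infty\le 1-\eps(|b|+2)^{-\alpha}$, while Proposition~\ref{prop:basic}(b) bounds the Hölder part by $|\hR(s)^nv|_\theta/(2C_4(|b|+2))\le\tfrac12+C_4\theta^n\le\tfrac34$. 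Taking the max (and handling the compact range of small $|b|$ separately using Proposition~\ref{prop:radius}(b,c) and the absence of approximate eigenfunctions) gives $\|\hR(s)^nv\|_b\le 1-\eps(|b|+2)^{-\alpha}$, proving (a).

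For (b), iterating (a) gives $\|\hR(s)^{kn}\|_b\le(1-\eps(|b|+2)^{-\alpha})^k$, so $\bigl\|\sum_{k\ge 0}\hR(s)^{kn}\bigr\|_b\le\eps^{-1}(|b|+2)^\alpha$. Combining with the uniform bound $\|\hR(s)^j\|_b\le C_4+\tfrac12$ and the block decomposition yields $\|(I-\hR(s))^{-1}\|_b=O(n(|b|+2)^\alpha)=O((|b|+2)^\alpha\ln(|b|+2))$. Passing to the $\|\cdot\|_\theta$ norm costs an additional factor $O(|b|+2)$; absorbing the logarithmic and unit-exponent corrections into a slightly larger $\alpha$ yields the desired $\|(I-\hR(s))^{-1}\|_\theta=O((|b|+2)^{\alpha'})$.
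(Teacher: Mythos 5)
Your proof is correct, but you take a genuinely different route from the paper in the key step of converting the pointwise cancellation on a small cylinder $U$ into a global $L^\infty$ bound. The paper splits $|u|_1$ over $U$ and $Y\setminus U$ to get $|u|_1\le 1-\mu(U)/(2\hat b^{\alpha_1})$, and then invokes the spectral gap of the \emph{unperturbed} transfer operator $R$ (Proposition~\ref{prop:qc}): since $|\hR(s)^{n_2}u|\le R^{n_2}|u|$ and $|R^{n_2}|u|-\int_Y|u|\,d\mu|_\infty\le C_3\gamma_1^{n_2}\|u\|_\theta$, taking $n_2(b)=[\xi_2\ln\hat b]$ with $\xi_2$ large converts the $L^1$ defect into an $L^\infty$ defect after $O(\ln\hat b)$ further iterates. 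You instead propagate the cancellation in a single block of $m$ iterates by exploiting the full-branch Gibbs--Markov structure directly: writing $U$ as the $m$-cylinder through $y_0$, each $y\in Y$ has a unique $F^m$-preimage $y_d\in U$, and since $U$ sits inside the $Z_0$-cylinder set, both $p_m(y_d)$ and $\varphi_m(y_d)$ are $O(m)$ there. Combined with $R^m(e^{-a\varphi_m})\le 1$ this gives $R^m(e^{-a\varphi_m}1_U)(y)\ge C_2^{-2}e^{-m(|1_{Z_0}p|_\infty+|1_Z\varphi|_\infty)}\ge c\hat b^{-\alpha_2}$ pointwise, yielding the $L^\infty$ bound immediately. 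Your argument is thus more elementary (it bypasses Proposition~\ref{prop:qc} entirely), at the modest cost of building $|1_Z\varphi|_\infty$ into the exponent; the paper's version uses only $|1_{Z_0}p|_\infty$ at this step. Since $\alpha$ is later increased freely, the difference is immaterial. The remainder of your argument — the $L^\infty$-contraction to absorb the variable $j$, the $\|\cdot\|_b$-ball estimate for the H\"older part via Proposition~\ref{prop:basic}(b), and the block decomposition for part (b) — matches the paper. One small misplacement: the parenthetical remark about handling small $|b|$ "separately using absence of approximate eigenfunctions" doesn't belong inside the proof of this lemma; the hypothesis of the lemma simply fails for $b$ near $0$ (e.g.\ $v\equiv1$, $s=0$), and the reduction to $|b|\ge b_0$ happens one level up in the proof of Theorem~\ref{thm:Dolg}, not here. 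That is a cosmetic slip, not a gap.
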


\begin{proof}
We use the pointwise estimate on iterates
of $\hR(s)$ to obtain estimates on the $L^1$, $L^\infty$ and $\|\;\|_b$ norms.  Set $\hat b=|b|+2$.

Let $\hat u=\hR(s)^jv$ and note that $|\hat u|_\infty\le1$,
$|\hat u|_\theta\le C_5\hat b$.
Hence, $|\hat u(y)|\le 1-1/(2\hat b^{\alpha_1})$ for all $y$
within distance $1/(2C_5\hat b^{\alpha_1+1})$ of $y_0$.   Call this subset $U$.
If $d\in\cD_k$ is a $k$-cylinder, then
$\diam d=\theta^k$,
so provided $\theta^k<1/(2C_5\hat b^{\alpha_1+1})$, the $k$-cylinder containing
$y_0$ lies inside $U$.  It suffices to take
$k\approx (\alpha_1+1)\ln\hat b/(-\ln\theta)$.  By~\eqref{eq:GM},
\[
\mu(U)\ge\mu(d)\ge C_2^{-1} e^{-p_k(y_0)}\ge 
C_2^{-1} e^{-k|1_{Z_0}p|_\infty}\ge \eps_1 \hat b^{-(\alpha_1+1)\alpha_2},
\]
where $\alpha_2=|1_{Z_0}p|_\infty/(-\ln\theta)$.

Next, let $u=\hR(s)^{n_1(b)}v$ where $n_1(b)=[\xi_1\ln\hat b]$.  
Again, $|u|_\infty\le1$, $|u|_\theta\le C_5\hat b$.
Also, $n_1(b)\ge j$ so $|u|_1\le |\hat u|_1$ by Proposition~\ref{prop:basic}(a).
Breaking up $Y$ into $U$ and $Y\setminus U$,
\[
|u|_1\le |\hat u|_1\le 
(1-1/(2\hat b^{\alpha_1}))\mu(U)+1-\mu(U)=1-\mu(U)/(2\hat b^{\alpha_1})\le 
1-\eps_2\hat b^{-\alpha},
\]
where $\alpha=\alpha_1+\alpha_2+\alpha_1\alpha_2$.
By Proposition~\ref{prop:qc}, and using that $\|\,|u|\,\|_\theta\le \|u\|_\theta$,
\begin{align*}
|\hR(s)^n u|_\infty & \le |R^n(|u|)|_\infty \le 
|R^n(|u|)-{\textstyle \int}_Y|u|\,d\mu|_\infty+|u|_1 
 \le C_3\gamma_1^n\|u\|_\theta + |u|_1 \\
& \le (1+C_5\hat b)C_3\gamma_1^n+1-\eps_2\hat b^{-\alpha}.
\end{align*}
Choosing $n=n_2(b)=[\xi_2\ln\hat b]$ where $\xi_2\gg 1$ ensures that
\[
|\hR(s)^{n_1(b)+n_2(b)}v|_\infty =
|\hR(s)^{n_2(b)}u|_\infty \le 1-\eps\hat b^{-\alpha}.
\]
Setting $n(b)=[\xi\ln\hat b]$ where $\xi=\xi_1+\xi_2$,
\[
|\hR(s)^{n(b)}v|_\infty \le 1-\eps\hat b^{-\alpha}.
\]
By Proposition~\ref{prop:basic}(a,b), $|\hR(s)^{n(b)+n}v|_\infty \le 
1-\eps\hat b^{-\alpha}$ for all $n\ge0$, and
\[
|\hR(s)^{n(b)+n}v|_\theta/(2C_4\hat b) \le 
C_4\{\hat b+\theta^n C_5\hat b)/(2C_4\hat b)\le
{\textstyle\frac 12}+{\textstyle\frac 12}C_5\theta^n \le {\textstyle\frac 34},
\]
for $n$ sufficiently large (independent of $b$).  Increasing $\xi$
slightly, $\|\hR(s)^{n(b)}v\|_b \le 1-\eps\hat b^{-\alpha}$
proving part~(a).

It follows that $\|(I-\hR(s)^{n(b)})^{-1}\|_b \le \eps^{-1}\hat b^{\alpha}$.
Using the identity $(I-A)^{-1}=(I+A+\dots+A^{m-1})(I-A^m)^{-1}$,
\[
\|(I-\hR(s))^{-1}\|_b 
\le \sum_{j=0}^{n(b)-1}\|\hR(s)^j\|_b \|(I-\hR(s)^{n(b)})^{-1}\|_b
\le \xi\ln\hat b\, (C_4+\textstyle{\frac12})  \eps^{-1}\hat b^\alpha= O(\hat b^{\alpha+1}).
\]
Hence $\|(I-\hR(s))^{-1}\|_\theta=O(\hat b^{\alpha+2})$.
Increasing $\alpha$, we obtain part~(b).
\end{proof}

\begin{pfof}{Theorem~\ref{thm:Dolg}}  By Lemmas~\ref{lem:Dolg1} and~\ref{lem:Dolg2}(b), there exists $b_0>0$ such that that part~(b) of the theorem holds for all $|b|\ge b_0$.  Moreover, by Proposition~\ref{prop:radius}(b) and Lemma~\ref{lem:Dolg2}(a), the spectral radius of $\hR(s)$ is less than one for all $s\in\H$ and all $s=ib$ with $|b|\ge b_0$.

Suppose that the spectral radius of $\hR(ib)$ is $1$ for some $b\in\R\setminus\{0\}$.
Then $\hR(ib)v=\lambda v$ 
for some $\lambda\in\C$ with $|\lambda|=1$ and some nonzero $v\in \cF_\theta(Y)$.
By Proposition~\ref{prop:radius}(c), $e^{ib\varphi}v\circ F=\bar\lambda v$, so $e^{imb\varphi}v^m\circ F=\bar\lambda^m v^m$ for all $m\ge1$.
Again by Proposition~\ref{prop:radius}(c), $\hR(imb)v^m=\lambda^m v$ so the spectral radius of $\hR(imb)$ is equal to $1$ for all $m\ge1$.  
Choosing $m$ so that $m|b|\ge b_0$, we obtain a contradiction, hence proving part~(a).

Finally, by part~(a) and continuity of $\hR$ (Proposition~\ref{prop:R}),
part~(b) holds for $|b|\in[\delta,b_0]$.~
\end{pfof}

\subsection{Smoothness of $\hT=(I-\hR)^{-1}:\cF_\theta(Y)\to \cF_\theta(Y)$}
\label{sec:T}

\begin{prop} \label{prop:MT}
Assume absence of approximate eigenfunctions.
Then $\hT:\barH\setminus\{0\}\to\cB(\cF_\theta(Y))$ is $C^\infty$.
Moreover, for each $q\in\N$, $\delta>0$, there exists $\alpha,C>0$ such that
$\|\hT^{(q)}(s)\|_\theta\le C|b|^\alpha$ for all $s=a+ib\in\barH$
with $0\le a\le1$, $|b|\ge \delta$.
\end{prop}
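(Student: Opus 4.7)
The plan is to combine the Dolgopyat estimate (Theorem~\ref{thm:Dolg}) for $\|\hT(s)\|_\theta$ with the $C^\infty$ polynomial bounds for $\hR$ from Proposition~\ref{prop:R}, via the resolvent identity.

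First, for the smoothness assertion, I would use that inversion is a (real-)analytic map on the open set of invertible elements of the Banach algebra $\cB(\cF_\theta(Y))$ (via the Neumann series at each invertible base point). Since $\hR:\barH\to\cB(\cF_\theta(Y))$ is $C^\infty$ by Proposition~\ref{prop:R}, it remains only to verify that $I-\hR(s)$ is invertible for every $s\in\barH\setminus\{0\}$. For $s\in\H$ this follows from Proposition~\ref{prop:radius}(b), and for $s=ib$ with $b\neq 0$ from Theorem~\ref{thm:Dolg}(a). Composing, $\hT$ is $C^\infty$ on $\barH\setminus\{0\}$.

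For the quantitative bound, I would differentiate the identity $(I-\hR(s))\hT(s)=I$ by the Leibniz rule to obtain, for $q\ge1$,
\begin{align*}
\hT^{(q)}(s)=\hT(s)\sum_{j=1}^{q}\binom{q}{j}\hR^{(j)}(s)\,\hT^{(q-j)}(s),
\end{align*}
expressing $\hT^{(q)}$ in terms of $\hT$, the derivatives $\hR^{(j)}$ for $1\le j\le q$, and the lower-order derivatives $\hT^{(k)}$ for $k<q$. Then I would induct on $q$. The base case $q=0$ is exactly Theorem~\ref{thm:Dolg}(b): $\|\hT(s)\|_\theta\le C|b|^{\alpha_0}$ throughout the strip $0\le a\le 1$, $|b|\ge\delta$. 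Proposition~\ref{prop:R} yields $\|\hR^{(j)}(s)\|_\theta\ll |s|+j+1\ll |b|$ uniformly in $a\in[0,1]$ (the hypothesis $\varphi\in L^p$ for all $p\ge1$ makes the $L^{j+1}$-norms of $\varphi$ finite). Plugging these bounds together with the inductive hypothesis $\|\hT^{(k)}(s)\|_\theta\le C_k|b|^{\alpha_k}$ for $k<q$ into the recursive identity produces a polynomial estimate $\|\hT^{(q)}(s)\|_\theta\le C_q|b|^{\alpha_q}$ with, say, $\alpha_q = \alpha_{q-1}+\alpha_0+1$.

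The real work has already been done in proving Theorem~\ref{thm:Dolg}; the present proposition is essentially bookkeeping around the resolvent identity, with the only thing to track being how the exponents $\alpha_q$ accumulate under the induction. Since the statement only requires \emph{some} polynomial bound at each order $q$, the loss in exponents at each step is harmless.
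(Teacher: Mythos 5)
Your proof is correct and follows essentially the same route as the paper: invertibility of $I-\hR(s)$ on $\barH\setminus\{0\}$ from Proposition~\ref{prop:radius}(b) and Theorem~\ref{thm:Dolg}(a), smoothness of $\hT$ from smoothness of $\hR$ and analyticity of inversion, and then the inductive observation that $\hT^{(q)}$ is a combination of products of $\hT$ and $\hR^{(j)}$, each polynomially bounded in $|b|$. Your explicit Leibniz recursion is simply a spelled-out version of the paper's one-line induction statement.
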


\begin{proof} 
By Proposition~\ref{prop:radius}(b) and
Theorem~\ref{thm:Dolg}(a),
$1\not\in\spec\hR(s)$ for $s\in\barH\setminus\{0\}$.
Hence $\hT(s)$ is a bounded operator on $\cF_\theta(Y)$ for all $s\in\barH\setminus\{0\}$.
By Proposition~\ref{prop:R}, $s\mapsto \hR(s)$ is $C^\infty$ on $\barH$ so
$s\mapsto \hT(s)$ is $C^\infty$ on $\barH\setminus\{0\}$.

By induction, $\hT^{(q)}$ is a finite linear combination of finite products of factors of the form $\hT$ and $\hR^{(j)}$, $1\le j\le q$.
Each of these is $O(|b|^\alpha)$ for some $\alpha>0$ by
Proposition~\ref{prop:R} and Theorem~\ref{thm:Dolg}.
\end{proof}

Let $\cF_\theta(Y^\varphi)^0=\{v\in \cF_\theta(Y^\varphi): \int_{Y^\varphi}v\,d\mu^\varphi=0\}$.  As shown in the next result, $\hT R\hV$ extends smoothly from
$\barH\setminus\{0\}$ to $\barH$ when restricted to  $\cF_\theta(Y^\varphi)^0$.

\begin{cor} \label{cor:Dolg}
Assume absence of approximate eigenfunctions.  Then
$\hT R\hV:\barH\to\cB(\cF_\theta(Y^\varphi)^0,\cF_\theta(Y))$ is $C^\infty$.
\end{cor}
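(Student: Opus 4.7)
The plan is to localize the analysis of $\hT R\hV$ near $s=0$ (the only candidate singular point) using the spectral decomposition from Corollary~\ref{cor:R}, and then exploit a cancellation on the zero-mean subspace that absorbs the pole.

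Away from $s=0$, smoothness is immediate: Proposition~\ref{prop:MT} gives $\hT\in C^\infty(\barH\setminus\{0\},\cB(\cF_\theta(Y)))$ and Proposition~\ref{prop:V} gives $R\hV\in C^\infty(\barH,\cB(\cF_\theta(Y^\varphi),\cF_\theta(Y)))$, so the composition $\hT R\hV$ is already $C^\infty$ on $\barH\setminus\{0\}$. It thus suffices to build a $C^\infty$ extension on a small neighborhood $\barH\cap B_\delta(0)$. On such a neighborhood, Corollary~\ref{cor:R} supplies the decomposition
\[
\hT(s)=(1-\lambda(s))^{-1}P(s)+\hT_r(s),\qquad \hT_r(s)=(I-\hR(s))^{-1}(I-P(s)).
\]
Since the spectrum of $\hR(s)(I-P(s))$ stays uniformly bounded away from $1$ near $0$ and $\hR(s)$ is $C^\infty$ on $\barH$ by Proposition~\ref{prop:R}, $\hT_r$ is $C^\infty$ on $\barH\cap B_\delta(0)$. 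So the only potentially singular piece is the rank-one term $(1-\lambda(s))^{-1}P(s)R\hV(s)$.

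The crucial cancellation is the following computation. Using $P(0)w=\int_Y w\,d\mu$ (from Corollary~\ref{cor:R}) together with the fact that $R$ preserves $\int_Y\cdot\,d\mu$, we obtain, for $v\in\cF_\theta(Y^\varphi)^0$,
\[
P(0)R\hV(0)v=\int_Y \hV(0)v\,d\mu=\int_Y\!\int_0^{\varphi(y)}v(y,u)\,du\,d\mu(y)=\intphi\int_{Y^\varphi}v\,d\mu^\varphi=0.
\]
Hence $s\mapsto P(s)R\hV(s)$, viewed as an element of $C^\infty(\barH\cap B_\delta(0),\cB(\cF_\theta(Y^\varphi)^0,\cF_\theta(Y)))$, vanishes at $s=0$. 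Meanwhile, Corollary~\ref{cor:R} gives $\lambda'(0)=-\intphi\neq 0$, so $1-\lambda$ has a simple zero at $0$. Factoring $1-\lambda(s)=s\,m(s)$ with $m$ smooth and $m(0)=\intphi\neq 0$, and $P(s)R\hV(s)=s\,\Psi(s)$ with $\Psi$ a $C^\infty$ operator-valued function on $\barH\cap B_\delta(0)$, the singular contribution collapses to $m(s)^{-1}\Psi(s)$, which is $C^\infty$ on $\barH\cap B_\delta(0)$. Patching with the $C^\infty$ expression on $\barH\setminus\{0\}$ furnishes a global $C^\infty$ extension, as required.

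The main delicate step is the Hadamard factorization in the closed-half-plane setting: if $\Phi$ is analytic on $\H$, $C^\infty$ up to the boundary, and $\Phi(0)=0$, then $\Phi(s)/s$ has the same regularity. This is handled via the integral representation $\Phi(s)/s=\int_0^1\Phi'(ts)\,dt$ along the segment from $0$ to $s$, which yields a continuous extension to $\barH$ and inherits $C^\infty$ boundary regularity from $\Phi'$; the same device applies to the scalar factor $1-\lambda(s)$. Once this factorization is in hand, the rest of the argument is the purely algebraic cancellation sketched above.
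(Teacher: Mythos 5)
Your proposal is correct and is essentially the paper's own proof: isolate the rank-one spectral piece $(1-\lambda(s))^{-1}P(s)R\hV(s)$ near $s=0$, use $P(0)R\hV(0)\equiv 0$ on the zero-mean subspace, and absorb the simple pole of $(1-\lambda(s))^{-1}$ via the factorizations $1-\lambda(s)=s\,m(s)$ and $P(s)R\hV(s)=s\,\Psi(s)$. The paper writes the factorizations $P(s)=P(0)+s\widetilde P(s)$, $\hV(s)=\hV(0)+s\tilde V(s)$, $(1-\lambda(s))^{-1}=s^{-1}\intphi^{-1}(1+s\tilde\lambda(s))$ separately before recombining, whereas you factor the combined product, and you make explicit the integral-representation justification for these factorizations on $\barH\cap B_\delta(0)$; this is a cosmetic reorganization of the same argument.
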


\begin{proof}
By Propositions~\ref{prop:V} and~\ref{prop:MT},
it suffices to work on $\barH\cap B_\delta(0)$ for some $\delta>0$.
By Corollary~\ref{cor:R}, we can choose $\delta>0$ so that
\[
\hT R\hV=(1-\lambda)^{-1} PR\hV+Q_1,
\]
where $s\mapsto Q_1(s):\cF_\theta(Y^\varphi)\to \cF_\theta(Y)$ is $C^\infty$.  
Also $(1-\lambda(s))^{-1}=s^{-1}\intphi^{-1}(1+s\tilde\lambda(s))$,
$P(s)=P(0)+s\widetilde P(s)$, $\hV(s)=\hV(0)+s\tilde V(s)$
where $\tilde\lambda\in\R$, $\widetilde P:\cF_\theta(Y)\to \cF_\theta(Y)$,
$R\tilde V:\cF_\theta(Y^\varphi)\to \cF_\theta(Y)$ are $C^\infty$ functions of $s$.
Hence
\[
\hT(s) R\hV(s)=s^{-1}\intphi^{-1}P(0)R\hV(0)+Q_2,
\]
where $s\mapsto Q_2(s):\cF_\theta(Y^\varphi)\to \cF_\theta(Y)$ is $C^\infty$.  

Finally, restricting to $\cF_\theta(Y^\varphi)^0$,
\begin{align*}
\SMALL P(0)R\hV(0)  =\int_Y R\hV(0)\,d\mu=\int_Y \hV(0)\,d\mu
 =\int_Y\int_0^{\varphi(y)}v(y,u)\,du\,d\mu
=\intphi\int_{Y^\varphi}v\,d\mu^\varphi=0.
\end{align*}
This completes the proof.
\end{proof}

\subsection{Proof of Theorem~\ref{thm:rapid}}
\label{sec:pf}

By Corollary~\ref{cor:poll},
$\hat\rho_{v,w}=\hJ_0+\intphi^{-1}\int_Y\hT R\hV\,\hw\,d\mu$.

Since $\varphi\in L^p(Y)$ for all $p\ge1$, it follows from the Cauchy-Schwarz and Markov inequalities that
$\int_Y 1_{\{\varphi>t\}}\varphi\,d\mu\le
(\mu(\varphi>t))^{1/2}|\varphi|_2
\le (|\varphi|_p\,t^{-p})^{1/2}|\varphi|_2\ll t^{-p/2}$ for all~$p\ge1$.
By Propositions~\ref{prop:poll} and~\ref{prop:w}, for each $q\in\N$ there exists $C>0$ such that 
\[
|\hJ_0^{(q)}(s)|\le C|v|_\infty|w|_\infty 
\le C\|v\|_\theta|w|_\infty 
\quad\text{and}\quad
|\hw^{(q)}(s)|_1\le C |w|_\infty,
\]
for all $v\in \cF_\theta(Y^\varphi)$, $w\in L^\infty(Y^\varphi)$, $s\in\barH$. 

By Propositions~\ref{prop:V} and~\ref{prop:MT} and Corollary~\ref{cor:Dolg},
for each $q\in\N$, there exist $C,\alpha>0$ such that
\[
|(\hT R \hV)^{(q)}(s)|_\infty\le 
\|(\hT R \hV)^{(q)}(s)\|_\theta\le 
C(|b|+1)^\alpha \|v\|_\theta,
\]
and so
\[
|\hat\rho_{v,w}^{(q)}(s)|\le C(|b|+1)^\alpha \|v\|_\theta|w|_\infty,
\]
for all 
$v\in \cF_\theta(Y^\varphi)^0$, $w\in L^\infty(Y^\varphi)$, $s=a+ib\in\C$ with $a\in[0,1]$.
Hence we have verified the hypotheses of Lemma~\ref{lem:strat}.
Consequently $|\rho_{v,w}(t)|=O(\|v\|_\theta|w|_{\infty,m}\, t^{-q})$
for all $q\in\N$, completing the proof of Theorem~\ref{thm:rapid}.

\section{Proof of polynomial mixing for semiflows}
\label{sec:slow}

In this section, we consider Gibbs-Markov semiflows $F_t:Y^\varphi\to Y^\varphi$ for which 
the roof function $\varphi:Y\to\R^+$ 
satisfies $\mu(\varphi>t)=O(t^{-\beta})$ for some $\beta>1$.
For such semiflows, we prove Theorem~\ref{thm:slow}, namely that absence of approximate eigenfunctions is a sufficient condition to obtain the mixing rate
$O(t^{-(\beta-1)})$.

The assumption on $\varphi$ implies that $\varphi\in L^q(Y)$ for all $q<\beta$ but in general $\varphi\not\in L^\beta(Y)$.  
Nonintegrability of $\rho_{v,w}$ ({\em a priori} for $\beta>2$ and even {\em a posteriori} for $\beta\in(1,2]$) makes inversion of the Laplace transform problematic.

To circumvent this, we use a truncation idea from~\cite{M09}.
The truncated semiflows are rapid mixing by Section~\ref{sec:rapid} and all 
components of the Laplace transform are $C^\infty$.  The approach in~\cite{M09} allows for control of the errors that come from truncation.

In Subsection~\ref{sec:some}, we introduce some notation and recall some elementary properties of Fourier transforms and convolutions.
Subsections~\ref{sec:refine} and~\ref{sec:further} contain various refinements of the estimates in Section~\ref{sec:rapid}.
In Subsection~\ref{sec:trunc}, we reduce to truncated semiflows.
The Laplace transform for truncated semiflows is studied for small $b$ and large $b$ in Subsections~\ref{sec:smallb} and~\ref{sec:largeb} respectively.

\subsection{Some conventions}
\label{sec:some}

From now on, we allow $q$ to take noninteger values.  
(Eventually, we require $q\in(1,\beta)$ with $q\ge\beta-1$.  Hence simplified proofs are available for $\beta>2$, though certain estimates for intermediate results become less sharp.)

As usual, a function $f:\R\to\R$ is said to be $C^q$ if $f$ is $C^{[q]}$ and $f^{([q])}$ is $(q-[q])$-H\"older.
Moreover, we write $|f^{(q)}|\le g$ for some function $g:\R\to[0,\infty)$ if 
for all $b,b'\in\R$,
\[
|f^{(k)}(b)|\le g(b), \;k=0,1,\dots,[q],\;\text{and}\;
|f^{([q])}(b)- f^{([q])}(b')|\le (g(b)+g(b'))|b-b'|^{q-[q]}.
\]

\begin{defn} 
Let $f:\R\to\R$ be integrable.
We write $f\in\cR(a(t))$
if the inverse Fourier transform of $f$ is $O(a(t))$.
We use the same notation for Banach space valued functions.
\end{defn}

\begin{prop} \label{prop:fourier}
Let $g:\R\to\R$ be an integrable function such that $g(b)\to0$ as $b\to\pm\infty$.  If $|f^{(q)}|\le g$, then $f\in\cR(t^{-q})$.
\end{prop}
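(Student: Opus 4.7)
The plan is a standard Fourier argument. Let $\check f(t) = \int_\R e^{ibt} f(b)\,db$. For $t$ bounded away from infinity there is nothing to prove: the hypothesis $|f| = |f^{(0)}|\le g$ with $g\in L^1$ gives $|\check f(t)|\le |g|_1$. So I would focus on $t$ large, say $t>1$.

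First I would integrate by parts $[q]$ times. The hypothesis supplies $|f^{(k)}|\le g$ for each $k=0,\dots,[q]$, and since $g\in L^1$ tends to $0$ at $\pm\infty$, every $f^{(k)}$ with $k<[q]$ (being the integral of $f^{(k+1)}$) also tends to $0$ at $\pm\infty$, so all boundary terms vanish. This yields
\[
\check f(t) \;=\; (-1)^{[q]}(it)^{-[q]} \int_\R e^{ibt}\, f^{([q])}(b)\,db.
\]
In the integer case $q=[q]$, the remaining integral is bounded by $|g|_1$ and we are done. For the noninteger case, set $\alpha=q-[q]\in(0,1)$ and invoke the classical translation trick: substitute $b\mapsto b-\pi/t$ in the integral, use $e^{i(b-\pi/t)t}=-e^{ibt}$, and average with the original expression to get
\[
\int_\R e^{ibt} f^{([q])}(b)\,db \;=\; \tfrac12 \int_\R e^{ibt}\bigl[f^{([q])}(b) - f^{([q])}(b-\pi/t)\bigr]\,db.
\]
The Hölder part of the hypothesis gives $|f^{([q])}(b)-f^{([q])}(b-\pi/t)|\le (g(b)+g(b-\pi/t))(\pi/t)^\alpha$, so integrating in $b$ bounds the modulus by $(\pi/t)^\alpha |g|_1$. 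Combined with the prefactor $t^{-[q]}$ this produces $|\check f(t)|\ll t^{-q}$, as required.

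The argument transfers verbatim to Banach-space valued $f$ by replacing the absolute value with the Banach norm throughout; nothing else changes. The only step that needs any care is verifying that the boundary terms in the integration by parts vanish, which is exactly why $g\to 0$ at infinity was included in the hypothesis rather than just $g\in L^1$; beyond this there is no real obstacle, since the translation trick for Hölder functions is standard and here the $b$-dependent Hölder constant $g(b)+g(b')$ is precisely compatible with integrating in $b$ using $|g|_1$.
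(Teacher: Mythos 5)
Your argument is essentially identical to the paper's: the same integration by parts down to $f^{([q])}$, followed by the same averaging/translation-by-$\pi/t$ trick and the Hölder bound $|f^{([q])}(b)-f^{([q])}(b')|\le (g(b)+g(b'))|b-b'|^{q-[q]}$, giving $|\check f(t)|\ll t^{-q}\int g$. The only small inefficiency in your write-up is the remark that $f^{(k)}$ tends to $0$ because it is an integral of the $L^1$ function $f^{(k+1)}$ --- the hypothesis $|f^{(k)}|\le g$ with $g(b)\to 0$ already gives this directly --- but that does not affect correctness.
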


\begin{proof}
Let $S$ denote the inverse Fourier transform of $f$.  Write $q=k+r$ where $k=[q]$ and $r\in[0,1)$.
Up to a multiplicative constant, $S(t)=\int_{-\infty}^\infty e^{ibt}f(b)\,db$.
Integrating by parts,
\[
S(t)=(it)^{-1}e^{ibt}f(b)\Big|_{b=-\infty}^{b=\infty}-
(it)^{-1}\int_{-\infty}^\infty e^{ibt}f'(b)\,db=
(-it)^{-1}\int_{-\infty}^\infty e^{ibt}f'(b)\,db.
\]
Inductively, $S(t)=(-it)^{-k}\int_{-\infty}^\infty e^{ibt}f^{[k]}(b)\,db$.
In particular, $|S(t)|\le t^{-k}\int g\ll t^{-k}$ proving the result when $q=k$ is an integer.

Next, $S(t)=(-it)^{-k}\int_{-\infty}^\infty -e^{ibt}f^{[k]}(b+\pi/t)\,db$ so
$S(t)=\frac12 (-it)^{-k}\int_{-\infty}^\infty e^{ibt}\{f^{[k]}(b)-f^{[k]}(b+\pi/t)\}\,db$.  Hence
\[
|S(t)|\le \frac12 t^{-k}\int_{-\infty}^\infty  \Big(g(b)+g\big(b+\frac{\pi}{t}\big)\Big)\Big(\frac{\pi}{t}\Big)^r\,db \le \pi^r t^{-q}\int g\ll t^{-q},
\]
as required.
\end{proof}

\begin{defn}  Let $f,g:[0,\infty)\to\R$ be integrable.  The {\em convolution}
$f\star g$ is defined to be $(f\star g)(t)=\int_0^t f(x)g(t-x)\,dx$.
\end{defn}

\begin{prop}  \label{prop:conv}
Fix $b>a>0$ with $b>1$.
Suppose that $f,g:[0,\infty)\to\R$ are integrable and there exists a constant
$C>0$ such that $|f(t)|\le C(1+t)^{-a}$ and $|g(t)|\le C(1+t)^{-b}$ for $t\ge0$.
Then there exists a constant $K>0$ depending only on $a$ and $b$ such that
\[
|(f\star g)(t)|\le C^2K(1+t)^{-a} \quad\text{for $t\ge0$.}
\]
\end{prop}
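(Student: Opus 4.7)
The plan is to split the convolution integral at the midpoint and handle each piece separately, exploiting in each piece the fact that one of the two factors is evaluated at a point of size at least $t/2$.

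Specifically, write
\[
(f\star g)(t)=\int_0^{t/2}f(x)g(t-x)\,dx+\int_{t/2}^t f(x)g(t-x)\,dx=I_1(t)+I_2(t).
\]
For $I_1$, on $[0,t/2]$ we have $t-x\ge t/2$, so $|g(t-x)|\le C(1+t/2)^{-b}$, and
\[
|I_1(t)|\le C^2(1+t/2)^{-b}\int_0^{t/2}(1+x)^{-a}\,dx.
\]
The remaining integral is $O((1+t)^{1-a})$ if $a<1$, $O(\log(1+t))$ if $a=1$, and $O(1)$ if $a>1$. In every case, combining with the factor $(1+t/2)^{-b}$ and using $b>1$ and $b>a$ yields $|I_1(t)|\ll C^2(1+t)^{-a}$, since the exponent $-b+\max\{1-a,0\}$ (with a possible logarithmic factor when $a=1$) is strictly less than $-a$.

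For $I_2$, on $[t/2,t]$ we have $x\ge t/2$, so $|f(x)|\le C(1+t/2)^{-a}$, and substituting $u=t-x$,
\[
|I_2(t)|\le C^2(1+t/2)^{-a}\int_0^{t/2}(1+u)^{-b}\,du\le \frac{C^2}{b-1}(1+t/2)^{-a},
\]
where we used $b>1$ to guarantee convergence of the tail integral. Since $(1+t/2)^{-a}\le 2^a(1+t)^{-a}$, this is $\ll C^2(1+t)^{-a}$.

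Adding the two bounds gives the required estimate with a constant $K$ depending only on $a$ and $b$. The only mildly subtle point is the case analysis for $I_1$ according to whether $a$ is less than, equal to, or greater than $1$, but in each subcase the hypothesis $b>\max\{a,1\}$ supplies enough extra decay from $(1+t/2)^{-b}$ to compensate. No genuine obstacle arises; this is a routine split-the-integral argument.
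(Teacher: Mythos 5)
Your proof is correct and follows essentially the same strategy as the paper: split the convolution at $t/2$ and on each half use the bound on the factor evaluated near $t$. The only cosmetic difference is that you carry out an explicit case analysis for $\int_0^{t/2}(1+x)^{-a}\,dx$ according to $a<1$, $a=1$, $a>1$, whereas the paper compresses this into the single inequality $\int_0^t(1+x)^{-a}\,dx\ll 1+t^{1-a}$ — which is in fact not literally true at $a=1$ (where the integral grows like $\log t$ rather than staying bounded), though the paper's final bound $t^{-b}(1+t^{1-a})\le 2t^{-a}$ still holds for $t\ge1$. So your version is, if anything, a little more careful at the boundary case.
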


\begin{proof}
Write the convolution as a sum of two integrals
\[
\SMALL I_1(t)=\int_0^{t/2} f(x)g(t-x)\,dx, \qquad
I_2(t)=\int_{t/2}^t f(x)g(t-x)\,dx.
\]
Note that $|I_1(t)|\le C^2(1+t/2)^{-b}\int_0^t (1+x)^{-a}\,dx$
and 
$|I_2(t)|\le C^2(1+t/2)^{-a}\int_0^t (1+x)^{-b}\,dx$.
Clearly we can restrict attention to $t\ge1$.
Since $b>1$, it follows that $|I_2(t)|\ll C^2t^{-a}$ and
$|I_1(t)|\ll C^2 t^{-b}(1+t^{1-a}) \le 2C^2 t^{-a}$.
\end{proof}

\subsection{Refined estimates for  $J_0$, $w_0$, $R\hV$ and $\hR$.}
\label{sec:refine}

Throughout the remainder of this section, we fix 
\[
\max\{1,\beta-1\}<q<\beta.
\]
Let $\eta\in(0,1]$ be as in the statement of Theorem~\ref{thm:slow}.
Shrinking $\eta$ if needed, we may suppose without loss that
\[
q+2\eta<\beta.
\]
Let $\theta_1=\theta^{\eta}$.  Since $\theta_1\ge\theta$,
 condition~\eqref{eq:inf} holds also with $\theta$ replaced by $\theta_1$.
Hence results from Section~\ref{sec:rapid}, for example Theorem~\ref{thm:Dolg}, hold also in $\cF_{\theta_1}(Y)$.

We begin by giving improved estimates for 
$J_0(t)$ and $w(t)$.

\begin{prop} \label{prop:varphieta}
Let $\xi\in(0,\beta)$.
There exists a constant $C>0$ such that
$\int_Y 1_{\{\varphi>t\}}\varphi^\xi\,d\mu\le Ct^{-(\beta-\xi)}$
for all $t\ge0$.
\end{prop}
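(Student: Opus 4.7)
The plan is to reduce the claim to a short computation with the tail function $F(t)=\mu(\varphi>t)$, which by hypothesis satisfies $F(t)\le C_0 t^{-\beta}$ for $t\ge 1$ (and trivially $F\le 1$ everywhere). The key identity, obtained by Fubini (layer cake) or equivalently Riemann--Stieltjes integration by parts, is
\begin{equation*}
\int_Y 1_{\{\varphi>t\}}\varphi^\xi\,d\mu \;=\; -\int_t^\infty s^\xi\,dF(s) \;=\; t^\xi F(t)+\xi\int_t^\infty s^{\xi-1}F(s)\,ds,
\end{equation*}
valid for every $t\ge0$ (the boundary term at $\infty$ vanishes because $s^\xi F(s)\to 0$ as $s\to\infty$, using $F(s)\ll s^{-\beta}$ and $\xi<\beta$).

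For $t\ge 1$ I would simply plug in the tail bound $F(s)\le C_0 s^{-\beta}$ on both terms. The first yields $C_0 t^{\xi-\beta}$, and the second yields
\begin{equation*}
\xi C_0\int_t^\infty s^{\xi-1-\beta}\,ds=\frac{\xi C_0}{\beta-\xi}\,t^{\xi-\beta},
\end{equation*}
where convergence of the integral is guaranteed by the hypothesis $\xi<\beta$. Summing the two pieces gives the desired bound $Ct^{-(\beta-\xi)}$ on $[1,\infty)$.

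For $t\in[0,1]$ the integral is bounded by its value at $t=0$, namely $\int_Y\varphi^\xi\,d\mu$. Applying the same layer-cake identity at $t=0$ and splitting $\int_0^\infty=\int_0^1+\int_1^\infty$, I bound $F$ above by $1$ on $[0,1]$ and by $C_0 s^{-\beta}$ on $[1,\infty)$; both resulting integrals are finite precisely because $\xi\in(0,\beta)$. Since $t^{-(\beta-\xi)}\ge 1$ for $t\in(0,1]$, absorbing this constant into $C$ completes the proof. There is no real obstacle here beyond checking that the exponent $\xi<\beta$ is used to guarantee integrability at $\infty$ and $\xi>0$ at the origin; the hypothesis $\xi\in(0,\beta)$ is exactly what makes the estimate balance.
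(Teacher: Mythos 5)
Your proof is correct and essentially identical to the paper's: the paper sets $G(x)=\mu(\varphi\le x)$, writes $\int_Y 1_{\{\varphi>t\}}\varphi^\xi\,d\mu=\int_t^\infty x^\xi\,dG(x)$, integrates by parts to get the boundary term $t^\xi(1-G(t))$ plus $\xi\int_t^\infty x^{\xi-1}(1-G(x))\,dx$, and bounds both using $1-G(x)=\mu(\varphi>x)\ll x^{-\beta}$. Your version with $F=1-G$ is the same computation, with the additional (harmless) care of treating $t\le1$ separately.
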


\begin{proof}
Let $G(x)=\mu(\varphi\le x)$. Then
\begin{align*}
\SMALL  \int_Y &  1_{\{\varphi>t\}}\varphi^\xi\,d\mu  
\SMALL  =\int_t^\infty x^\xi\,dG(x)
=-\int_t^\infty x^\xi\,d(1-G(x)) \\ &
\SMALL  =-x^\xi(1-G(x))\bigl|_{x=t}^{x=\infty}
+\xi\int_t^\infty x^{\xi-1}(1-G(x))\,dx
\ll t^{-(\beta-\xi)}+ \int_t^\infty x^{-\beta-1+\xi}\,dx
\ll t^{-(\beta-\xi)},
\end{align*}
as required.
\end{proof}

\begin{cor} \label{cor:Jw}
$|J_0(t)|=O(|v|_\infty|w|_\infty\, t^{-(\beta-1)})$ and
$|w(t)|_1=O(|w|_\infty\, t^{-\beta})$ 
for all $v,w\in L^\infty(Y^\varphi)$.
\end{cor}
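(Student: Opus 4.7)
The plan is to combine the bounds already in hand with the tail estimate of Proposition~\ref{prop:varphieta}, so the corollary is essentially a one-line application of previous results.

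For the first estimate, I would start from the bound on $J_0$ proved in Proposition~\ref{prop:poll}, namely
\[
|J_0(t)|\le |v|_\infty|w|_\infty \intphi^{-1}\int_Y 1_{\{\varphi>t\}}\varphi\,d\mu.
\]
Since $\beta>1$, the exponent $\xi=1$ lies in the admissible range $(0,\beta)$ of Proposition~\ref{prop:varphieta}, so applying that proposition with $\xi=1$ gives
\[
\int_Y 1_{\{\varphi>t\}}\varphi\,d\mu\ll t^{-(\beta-1)},
\]
and the claimed bound on $J_0(t)$ follows at once.

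For the second estimate, I would invoke Proposition~\ref{prop:w}, which yields $|w(t)|_1\le |w|_\infty\,\mu(\varphi>t)$, and then use the standing hypothesis of Theorem~\ref{thm:slow} that $\mu(\varphi>t)=O(t^{-\beta})$ to conclude $|w(t)|_1\ll |w|_\infty\, t^{-\beta}$. (Alternatively, one could apply Proposition~\ref{prop:varphieta} in a limiting form, but using the tail hypothesis directly is cleaner since the $\xi=0$ case is excluded there.)

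There is no genuine obstacle here: both estimates are direct consequences of material already assembled in Section~\ref{sec:strat} and earlier in Section~\ref{sec:slow}. The only mild subtlety is making sure the admissibility condition $\xi\in(0,\beta)$ is satisfied when invoking Proposition~\ref{prop:varphieta}; this is why the hypothesis $\beta>1$ in Theorem~\ref{thm:slow} is exactly what is needed to take $\xi=1$.
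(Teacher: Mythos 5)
Your proof is correct and is essentially the same as the paper's, which simply cites Propositions~\ref{prop:poll}, \ref{prop:w} and~\ref{prop:varphieta} and calls the deduction immediate. Your observation that the second bound uses the tail hypothesis $\mu(\varphi>t)=O(t^{-\beta})$ directly (since $\xi=0$ is excluded from Proposition~\ref{prop:varphieta}) is an accurate reading of the logic that the paper leaves implicit.
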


\begin{proof}
This is immediate from Proposition~\ref{prop:varphieta} together with
Propositions~\ref{prop:poll} and~\ref{prop:w}.
\end{proof}

Next, we mention an improvement to Proposition~\ref{prop:R}.

\begin{prop} \label{prop:Rslow}
 $\hR:i\R\to\cB(\cF_{\theta_1}(Y))$ is $C^q$.
Indeed, 
\[
\SMALL \|\hR^{(q)}(ib)\|_{\theta_1} \le
C_2(2C_1)^{q+4}(|b|+q+1)\int_Y \varphi^{q+\eta}\,d\mu
\quad\text{for all $b\in\R$.}
\]
\end{prop}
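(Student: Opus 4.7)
The plan is to adapt the proof of Proposition~\ref{prop:R}, working on the imaginary axis with the coarser $\cF_{\theta_1}(Y)$ norm (where $\theta_1=\theta^\eta$), and to treat non-integer $q=k+r$ via a H\"older modulus of $\hR^{(k)}$. The key mechanism is the elementary interpolation: for $g:Y_j\to\C$ with $|g|_\infty\le M$ and $|g(y)-g(y')|\le L d_\theta(y,y')$, writing
\[
|g(y)-g(y')|\le |g(y)-g(y')|^{1-\eta}|g(y)-g(y')|^{\eta}\le (2M)^{1-\eta}(L d_\theta(y,y'))^{\eta}=(2M)^{1-\eta}L^\eta d_{\theta_1}(y,y')
\]
gives $|1_{Y_j}g|_{\theta_1}\le (2M)^{1-\eta}L^\eta$. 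Combined with $\supYj\varphi\le 2C_1\infYj\varphi$, each such application trades one power of $\infYj\varphi$ for a factor of $d_{\theta_1}$, which is precisely what reduces the $\varphi^{q+1}$ of Proposition~\ref{prop:R} to $\varphi^{q+\eta}$.

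For integer $q=k$, setting $g=e^{-ib\varphi}\varphi^k$, the computation in the proof of Proposition~\ref{prop:R} supplies $M\le(2C_1)^k\infYj\varphi^k$ and $L\le C_1(2C_1)^{k+1}(|b|+k)\infYj\varphi^{k+1}$. The interpolation yields $|1_{Y_j}e^{-ib\varphi}\varphi^k|_{\theta_1}\le C(|b|+k+1)^\eta\infYj\varphi^{k+\eta}$, which together with the sup-norm bound (and the absorption of $\infYj\varphi^k$ into $\infYj\varphi^{k+\eta}$ afforded by $\inf\varphi>0$) gives $\|1_{Y_j}e^{-ib\varphi}\varphi^k\|_{\theta_1}\le C(|b|+k+1)\infYj\varphi^{k+\eta}$. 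Summing via Remark~\ref{rmk:GM} produces the stated bound at each integer order $k=0,1,\ldots,[q]$.

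For non-integer $q=k+r$ with $r\in(0,1)$, it remains to establish the H\"older regularity of $\hR^{(k)}$, where $\hR^{(k)}(ib)v-\hR^{(k)}(ib')v=(-1)^k R(g_{b,b'}v)$ and $g_{b,b'}(y)=(e^{-ib\varphi(y)}-e^{-ib'\varphi(y)})\varphi(y)^k$. The inequality $|e^{-ib\alpha}-e^{-ib'\alpha}|\le 2^{1-r}|b-b'|^r\alpha^r$ furnishes the pointwise estimate $|1_{Y_j}g_{b,b'}|_\infty\le C|b-b'|^r\infYj\varphi^{k+r}$. For the $\theta_1$-variation, split
\[
g_{b,b'}(y)-g_{b,b'}(y')=(\phi_y-\phi_{y'})\varphi(y)^k+\phi_{y'}(\varphi(y)^k-\varphi(y')^k),\qquad \phi_y:=e^{-ib\varphi(y)}-e^{-ib'\varphi(y)}.
\]
The second summand is immediately $\le C|b-b'|^r\infYj\varphi^q d_\theta$. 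For the first, combine the trivial bound $|\phi_y-\phi_{y'}|\le 2^{2-r}|b-b'|^r\supYj\varphi^r$ with the mean-value estimate $|\phi_y-\phi_{y'}|\le C|b-b'|(1+(|b|+|b'|)\supYj\varphi)|\varphi(y)-\varphi(y')|$ (obtained from $\partial_\alpha\partial_\beta e^{-i\beta\alpha}=-(i+\alpha\beta)e^{-i\beta\alpha}$ by integration over the rectangle $[b',b]\times[\varphi(y'),\varphi(y)]$), and apply the interpolation. Treating $|b-b'|\le 1$ via this interpolation and $|b-b'|>1$ via the triangle inequality combined with the integer-order bound at $k$ (using $|b-b'|^r\ge 1$) yields $\|1_{Y_j}g_{b,b'}\|_{\theta_1}\le C(|b|+|b'|+q+1)|b-b'|^r\infYj\varphi^{q+\eta}$, so that summing via Remark~\ref{rmk:GM} completes the proof.

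The hard part is the bookkeeping in the non-integer case: a naive single interpolation of $(\phi_y-\phi_{y'})\varphi(y)^k$ produces the intermediate exponent $q+\eta(2-r)$ on $\infYj\varphi$, which exceeds the desired $q+\eta$. One must split the mean-value bound into its two pieces (one carrying a single power of $\inf\varphi$, the other an extra $(|b|+|b'|)\supYj\varphi$ factor) and interpolate each with the pointwise bound at a judiciously chosen exponent, recombining using the global lower bound $\inf\varphi>0$. The hypothesis $q+2\eta<\beta$ stated at the start of the section is precisely what keeps every intermediate $\varphi$-integral finite and forces $\eta$ to be chosen small enough that the interpolation exponents all lie in the admissible range.
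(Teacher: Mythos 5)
Your treatment of integer orders $k\le[q]$ is essentially the paper's argument, packaged more cleanly as a single interpolation $|1_{Y_j}g|_{\theta_1}\le(2M)^{1-\eta}L^\eta$ (and it yields the slightly sharper prefactor $(|b|+k+1)^\eta$ where the paper settles for $(|b|+k+1)$). Up to the usual bookkeeping absorbing the $|\cdot|_\infty$ part into $\inf_{Y_j}\varphi^{q+\eta}$, this half is fine.

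The H\"older modulus in $b$ is where you have a genuine gap. You estimate the double difference $\phi_y-\phi_{y'}$ directly and correctly obtain the two inputs: the trivial bound $\lesssim|b-b'|^r\sup_{Y_j}\varphi^r$ and the mean-value bound $\lesssim|b-b'|\bigl(1+(|b|+|b'|)\sup_{Y_j}\varphi\bigr)|\varphi(y)-\varphi(y')|$. As you observe, interpolating these at exponent $\eta$ gives the $\varphi$-exponent $q+\eta(2-r)$ (the offending piece $(|b|+|b'|)\sup_{Y_j}\varphi\cdot|\varphi(y)-\varphi(y')|$ carries \emph{two} extra factors of $\varphi$ over the trivial bound, not one). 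Your proposed fix does not close this. If you interpolate that piece at a smaller exponent $\eta'\le\eta/(2-r)$ to control the $\varphi$-power, the output is a $d_{\theta^{\eta'}}$-modulus; but $\theta^{\eta'}>\theta^\eta=\theta_1$, so $d_{\theta^{\eta'}}(y,y')>d_{\theta_1}(y,y')$ whenever $s(y,y')\ge1$, and there is no uniform conversion back to a $d_{\theta_1}$-Lipschitz estimate. If instead you stay at exponent $\eta$ and try to ``recombine using $\inf\varphi>0$'', note that $\inf_{Y_j}\varphi^{q+\eta(2-r)}\ge(\inf\varphi)^{\eta(1-r)}\inf_{Y_j}\varphi^{q+\eta}$, which is the wrong direction. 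What you can legitimately extract is the proposition with $\varphi^{q+2\eta}$ in place of $\varphi^{q+\eta}$ — which, since $q+2\eta<\beta$, would suffice for every downstream use — but it is not the bound you claimed to prove. The paper avoids the problem by \emph{ordering} the two interpolations rather than running them jointly: it first forms the $d_{\theta_1}$-variation $g^{(k)}(b)=1_{Y_j}\{f^{(k)}(b)(y)-f^{(k)}(b)(y')\}$ at integer order, with $\varphi$-power $k+\eta$, and only then applies $\min\{1,x\}\le x^r$ to $g^{(k)}(b+h)-g^{(k)}(b)$ via the MVT with $g^{(k+1)}$; because $g^{(k+1)}$ carries exactly one more factor of $|1_{Y_j}\varphi|_\infty$ than $g^{(k)}$, the second interpolation costs only $\varphi^r$ and the final $\varphi$-power is $(k+\eta)+r=q+\eta$. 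The moral is that the MVT in $b$ must be applied \emph{after} the $\theta\to\theta_1$ smoothing, not to the raw double $b$-difference of the exponentials.
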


\begin{proof}
The proof is a refinement of that for Proposition~\ref{prop:R}.
Write 
$\hR(ib)v=R(f(b)v)$, where $f(b)=e^{-ib\varphi}$.
We claim that 
\begin{align} \label{eq:Rslow}
& |1_{Y_j}f^{(q)}(b)|_\infty  \le 2|1_{Y_j}\varphi|_\infty^q, \qquad
 |1_{Y_j}f^{(q)}(b)|_{\theta_1}  \le 2C_1(2|b|+q+1)|1_{Y_j}\varphi|_\infty^{q+\eta}.
\end{align}
It then follows that
$\|1_{Y_j}f^{(q)}(b)\|_{\theta_1}\le 
2C_1(2|b|+q+2)|1_{Y_j}\varphi|_\infty^{q+\eta}
\le (2C_1)^{q+3}(|b|+q+1)\infYj\varphi^{q+\eta}$.
Now apply Remark~\ref{rmk:GM}.

It remains to prove the claim.
Write $q=k+r$ where $k=[q]$ and $r\in[0,1)$.
Then $|1_{Y_j}f^{(k)}(b)|_\infty\le |1_{Y_j}\varphi|_\infty^k$.
Hence $|1_{Y_j}\{f^{(k)}(b+h) -f^{(k)}(b)\} |_\infty\le 2|1_{Y_j}\varphi|_\infty^k$.
Also $|1_{Y_j}f^{(k+1)}(b)|_\infty\le |1_{Y_j}\varphi|_\infty^{k+1}$ so it follows from the mean value theorem that
$|1_{Y_j}\{f^{(k)}(b+h) -f^{(k)}(b)\} |_\infty\le |1_{Y_j}\varphi|_\infty^{k+1}|h|$.
Combining these two estimates and using the inequality $\min\{1,x\}\le x^r$ which holds for all $x\ge0$, $r\in[0,1]$,
\[
|1_{Y_j}\{f^{(k)}(b+h) -f^{(k)}(b)\} |_\infty\le 
2|1_{Y_j}\varphi|_\infty^k\min\{1,|1_{Y_j}\varphi|_\infty|h|\}
\le 2|1_{Y_j}\varphi|_\infty^{k+r}|h|^r,
\]
yielding the first part of~\eqref{eq:Rslow}.

Next, for $y,y'\in Y_j$, we have
$f^{(k)}(b)(y)-f^{(k)}(b)(y')=(-i)^k(I_1+I_2)$ where
\[
I_1=\{e^{-ib\varphi(y)}-e^{-ib\varphi(y')}\}\varphi(y)^k, \qquad
I_2=e^{-ib\varphi(y')}\{\varphi(y)^k-\varphi(y')^k\}.
\]
We have
$|I_1|\le |b|C_1|1_{Y_j}\varphi|_\infty^{k+1}d_{\theta}(y,y')$ and
$|I_2|\le kC_1|1_{Y_j}\varphi|_\infty^kd_{\theta}(y,y')$.
Also, $|I_1|\le 2|1_{Y_j}\varphi|_\infty^k$,
so
\begin{align*}
|I_1| & \le 2|b|C_1|1_{Y_j}\varphi|_\infty^k\min\{1,|1_{Y_j}\varphi|_\infty d_{\theta}(y,y')\}
\\ & \le 2|b|C_1|1_{Y_j}\varphi|_\infty^{k+\eta} d_{\theta}(y,y')^\eta
=2|b|C_1|1_{Y_j}\varphi|_\infty^{k+\eta} d_{\theta_1}(y,y').
\end{align*}
Hence $g^{(k)}(b)=
1_{Y_j}\{f^{(k)}(b)(y)-f^{(k)}(b)(y')\}$ satisfies
\[
|g^{(k)}(b)|\le C_1(2|b|+k)|1_{Y_j}\varphi|_\infty^{k+\eta} d_{\theta_1}(y,y').
\]
Now we repeat the mean value theorem argument above to obtain
\begin{align*}
|g^{(k)}(b+h)-g^{(k)}(b)| & \le 
2C_1(2|b|+k+1)|1_{Y_j}\varphi|_\infty^{k+\eta} d_{\theta_1}(y,y')\min\{
1,|1_{Y_j}\varphi|_\infty |h|\}
\\ & \le 2C_1(2|b|+k+1)|1_{Y_j}\varphi|_\infty^{k+r+\eta} d_{\theta_1}(y,y')|h|^r.
\end{align*}
Hence 
$|1_{Y_j}\{f^{(q)}(b)(y)-f^{(q)}(b)(y')\}|\le 
2C_1(2|b|+q+1)|1_{Y_j}\varphi|_\infty^{q+\eta} d_{\theta_1}(y,y')$,
yielding the second part of~\eqref{eq:Rslow}.
\end{proof}

\begin{rmk} Clearly, the estimate for $\hR^{(q)}$ holds equally for $\hR^{(q')}$ for all $q'<q$.  We use this observation without comment throughout.
\end{rmk}

\begin{rmk} \label{rmk:MVT}
During this section, we obtain many estimates of the form
$|f^{(k)}(b)|\ll \varphi^{k+\ell}$ for all $k\in\N$.  By the mean value theorem argument used in the proof of Proposition~\ref{prop:Rslow}, it follows that
$|f^{(q)}(b)|\ll \varphi^{q+\ell}$ for all $q\in[0,\infty)$.
From now on, we write ``by the MVT argument'' and omit the details.
\end{rmk}

\begin{cor} \label{cor:MTslow}
Assume absence of approximate eigenfunctions.
Then $\hT=(I-\hR)^{-1}:i\R\setminus\{0\}\to\cB(\cF_{\theta_1}(Y))$ is $C^q$.
Moreover, for all $\delta>0$, there exists $\alpha,C>0$ such that
$\|\hT^{(q)}(ib)\|_{\theta_1}\le C|b|^\alpha$ for all $b\in\R$ with $|b|\ge\delta$.
\end{cor}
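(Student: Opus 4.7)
The plan is to adapt the proof of Proposition~\ref{prop:MT} to the imaginary axis, substituting the $C^q$ regularity of $\hR$ supplied by Proposition~\ref{prop:Rslow} for the $C^\infty$ regularity used there, and working throughout in $\cF_{\theta_1}(Y)$ using the remark at the start of Subsection~\ref{sec:refine} that Section~\ref{sec:rapid}'s results (in particular Theorem~\ref{thm:Dolg}) transfer from $\cF_\theta(Y)$ to $\cF_{\theta_1}(Y)$.

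First I would verify that $\hT(ib)$ is a well-defined bounded operator on $\cF_{\theta_1}(Y)$ for every $b\neq 0$. Proposition~\ref{prop:basic} gives quasi-compactness of $\hR(ib)$ with essential spectral radius at most $\theta_1$, and Theorem~\ref{thm:Dolg}(a) (applied in $\cF_{\theta_1}(Y)$) together with Proposition~\ref{prop:radius} shows that the spectral radius of $\hR(ib)$ is strictly less than one for $b\neq 0$. Hence $1\notin\spec\hR(ib)$ and the resolvent $\hT(ib)=(I-\hR(ib))^{-1}$ exists.

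Next I would prove $C^{[q]}$ regularity inductively on $i\R\setminus\{0\}$. The identity $\hT'=\hT\hR'\hT$ iterated gives, for each integer $k\le[q]$,
\[
\hT^{(k)} = \sum c_{\mathbf n}\,\hT\hR^{(n_1)}\hT\hR^{(n_2)}\cdots\hR^{(n_j)}\hT,
\]
where the finite sum ranges over compositions $n_1+\cdots+n_j=k$ with $n_i\ge1$. Since $\hR$ is $C^q$ into $\cB(\cF_{\theta_1}(Y))$ by Proposition~\ref{prop:Rslow}, each summand is continuous in $b$ for $b\neq 0$, and the induction runs up to $k=[q]$. For the remaining Hölder exponent $r=q-[q]$, I would estimate differences $\hT^{([q])}(ib)-\hT^{([q])}(ib')$ termwise via a telescoping expansion: each factor of the form $\hR^{(n_i)}(ib)-\hR^{(n_i)}(ib')$ is controlled by the $r$-Hölder bound from Proposition~\ref{prop:Rslow} (invoking the MVT argument of Remark~\ref{rmk:MVT} when $n_i<[q]$), and each factor $\hT(ib)-\hT(ib')$ is handled through the identity $\hT(ib)-\hT(ib')=\hT(ib)(\hR(ib)-\hR(ib'))\hT(ib')$, which again reduces to Hölder control of $\hR$.

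Finally, for the polynomial bound, I would apply Theorem~\ref{thm:Dolg}(b) (in $\cF_{\theta_1}(Y)$) to bound each $\hT(ib)$ factor by $C|b|^\alpha$ on $|b|\ge\delta$, and Proposition~\ref{prop:Rslow} to bound each $\hR^{(n_i)}(ib)$ factor by $O(|b|+1)$. Multiplying the bounds across the finitely many terms in the expansion and absorbing constants yields $\|\hT^{(q)}(ib)\|_{\theta_1}\le C|b|^\alpha$ on $|b|\ge\delta$ for a possibly enlarged $\alpha$. The only delicate point, and hence the main obstacle, is the bookkeeping in the fractional-derivative step: one must arrange the telescoped difference so that precisely one factor in each term absorbs the $r$-Hölder cost via Proposition~\ref{prop:Rslow}, while every other factor contributes only a polynomial-in-$b$ bound. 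This is routine but requires care.
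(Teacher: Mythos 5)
Your proposal follows essentially the same route as the paper's proof. The paper likewise transfers the Dolgopyat estimate to $\cF_{\theta_1}(Y)$, expands the integer derivatives $\hT^{(k)}$ via the resolvent identity into products of $\hT$ and $\hR^{(j)}$ factors bounded polynomially in $|b|$ by Theorem~\ref{thm:Dolg} and Proposition~\ref{prop:Rslow}, and then handles the fractional exponent $r=q-[q]$ by the same H\"older estimate $\|\hT(ib)-\hT(ib')\|_{\theta_1}\le\|\hT(ib)\|_{\theta_1}\|\hR(ib)-\hR(ib')\|_{\theta_1}\|\hT(ib')\|_{\theta_1}$ that you use in your telescoping; your spelling out of the telescoping is just a more explicit version of the paper's concluding remark that $\hT^{(q)}$ is a combination of products of $\hT$, $\hT^{(r)}$, and $\hR^{(p)}$.
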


\begin{proof}
The proof of the Dolgopyat estimate in Theorem~\ref{thm:Dolg} is completely unchanged
(Proposition~\ref{prop:basic}(b) used only the integrability of $\varphi$).
Hence $\hT$ is $C^q$ by Proposition~\ref{prop:Rslow}.

Let $k\in\N$ with $k<\beta$.
By induction, $\hT^{(k)}$ is a finite linear combination of finite products of factors of the form $\hT$ and $\hR^{(j)}$, $1\le j\le k$.
Each of these is $O(|b|^\alpha)$ for some $\alpha>0$ by
Theorem~\ref{thm:Dolg} and Proposition~\ref{prop:Rslow}.
Hence there exist constants $C$, $\alpha>0$ such that 
$\|\hT^{(k)}(ib)\|_{\theta_1}\le C|b|^\alpha$ for each $k=0,\dots,[q]$.

Next, write $q=k+r$ where $k=[q]$ and $r\in[0,1)$.  
By the resolvent identity together with Theorem~\ref{thm:Dolg} and Proposition~\ref{prop:Rslow},
\[
\|\hT(ib)-\hT(ib')\|_{\theta_1}\le \|\hT(ib)\|_{\theta_1}\|\hR(ib)-\hR(ib')\|_{\theta_1}\|\hT(ib')\|_{\theta_1}\le C^3|b|^{3\alpha}|b-b'|^r,
\]
so $\|\hT^{(r)}(ib)\|_{\theta_1}\le C^3|b|^{3\alpha}$.

Finally, $\hT^{(q)}$ is a finite linear combination of finite products of factors of the form $\hT$, $\hT^{(r)}$ and $\hR^{(p)}$, $p\le q$, each of which is now covered.
\end{proof}

In the last part of this subsection, we refine the estimate for $R\hV$.
First, we 
recall a basic calculus estimate from~\cite{MT17}.

\begin{prop} \label{prop:calculus}
Let $g(x)=(e^{ix}-1)/x$.
For any $k\ge0$, there exists a constant $C>0$ such that
$|g^{(k)}(x)|\le C$ and
$|g^{(k)}(x)|\le C/|x|$ for all $x\in\R$.
\end{prop}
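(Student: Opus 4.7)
The plan is to prove both bounds simultaneously from a single integral representation. Starting from
\[
g(x) = \frac{e^{ix}-1}{x} = i\int_0^1 e^{isx}\,ds,
\]
which extends $g$ smoothly through $x=0$, I would differentiate under the integral to obtain
\[
g^{(k)}(x) = i^{k+1}\int_0^1 s^k e^{isx}\,ds, \qquad k\ge 0.
\]
This single formula makes both inequalities essentially transparent.

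The uniform bound is immediate: the integrand has modulus $s^k\le 1$, so $|g^{(k)}(x)|\le \int_0^1 s^k\,ds=1/(k+1)\le 1$ for every $x\in\R$.

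For the decay bound $|g^{(k)}(x)|\le C/|x|$, I treat $k=0$ directly from $|e^{ix}-1|\le 2$, giving $|g(x)|\le 2/|x|$ for $x\ne 0$. For $k\ge 1$ and $x\ne 0$, one integration by parts in the formula above gives
\[
\int_0^1 s^k e^{isx}\,ds = \frac{e^{ix}}{ix} - \frac{k}{ix}\int_0^1 s^{k-1}e^{isx}\,ds,
\]
whence $|g^{(k)}(x)|\le 1/|x|+(k/|x|)\cdot(1/k) = 2/|x|$. Thus $C=2$ works for every $k$ in both estimates simultaneously.

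There is essentially no obstacle here: the only point to handle carefully is the removable singularity at $x=0$, which is dealt with once and for all by switching to the integral representation so that differentiation under the integral sign is legitimate; after that the two bounds are a one-line $L^\infty$ estimate and a one-line integration-by-parts estimate respectively.
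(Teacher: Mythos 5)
Your argument is correct, and it is genuinely different from --- and noticeably more economical than --- the one in the paper. The paper works in the complex plane: it sets $q_k(z)=e^z-\sum_{m=0}^{k-1}z^m/m!$, $r_k(z)=q_k(z)/z^k$, uses Taylor's theorem with remainder in two ways to get $|q_k(ix)|\le|x|^k/k!$ and $|q_k(ix)|\le|x|^{k-1}/(k-1)!$, and then runs an induction and an analyticity argument to express $r_1^{(k)}$ as a combination $\sum_j a_j q_j(z)/z^j$ before substituting the two Taylor estimates. You instead start from the Duhamel-type representation $g(x)=i\int_0^1 e^{isx}\,ds$, which immediately removes the singularity at $0$ and lets you differentiate under the integral to get $g^{(k)}(x)=i^{k+1}\int_0^1 s^k e^{isx}\,ds$; the uniform bound is then a trivial $L^\infty$ estimate on $[0,1]$, and the $O(1/|x|)$ bound is one integration by parts. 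Your route buys a shorter proof and an explicit, $k$-uniform constant ($C=2$), which the paper's construction does not give for free; the paper's route is arguably more in the spirit of the surrounding spectral arguments but does extra work here. Both are correct; yours is the cleaner proof of this particular proposition.
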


\begin{proof}
This is~\cite[Proposition~13.2]{MT17}.
We give the proof for completeness.

Define the analytic functions $q_k,r_k:\C\to\C$ for $k\ge1$,
\[
q_k(z)=e^z-\sum_{m=0}^{k-1}\frac{z^m}{m!}, \quad r_k(z)=\frac{q_k(z)}{z^k}.
\]
By Taylor's theorem, there exists $\xi$ between $0$ and $z$ such that
\[
q_k(z)
=\sum_{m=0}^{k-1}q_k^{(m)}(0)z^m/m! + q_k^{(k)}(\xi)z^k/k!
= e^{\xi}z^k/k!,
\]
so that $|q_k(ix)|\le |x|^k/k!\,$
Similarly,
\[
q_k(z)
=\sum_{m=0}^{k-2}q_k^{(m)}(0)z^m/m! + q_k^{(k-1)}(\xi)z^{k-1}/(k-1)!
= (e^{\xi}-1)z^{k-1}/(k-1)!,
\]
so that $|q_k(ix)|\le |x|^{k-1}/(k-1)!$

Next, note by induction that
$r_1^{(k)}\in \R\{e^z/z,\,e^z/z^2,\dots,e^z/z^k,\,(e^z-1)/z^{k+1}\}$.
But $e^z/z^j-r_j\in\R\{1/z,\dots,1/z^j\}$.
Hence there exist constants $a_1,\dots,a_{k+1}$ and a polynomial~$p$ of degree at most $k$ such that
\[
\SMALL r_1^{(k)}(z)=\sum_{j=1}^{k+1}a_jr_j(z)+p(z)/z^{k+1}.
\]
Since all terms in this identity are analytic with the possible exception of the last one, we deduce that $p\equiv0$.
Hence
\[
\SMALL r_1^{(k)}(z)=\sum_{j=1}^{k+1}a_jr_j(z)
=\sum_{j=1}^{k+1}a_jq_j(z)/z^j.
\]

Since $g(x)=ir_1(ix)$,
the result follows by substituting in the estimates for $q_j$.
\end{proof}

\begin{prop} \label{prop:v}
Let $v\in \cF_{\theta}(Y^\varphi)$.  Then 
$|v(y,u)-v(y',u)|\le 4C_1\|v\|_{\theta}\,\infYj\varphi^\eta d_{\theta_1}(y,y')$
for all 
$(y,u),\,(y',u)\in Y^\varphi$ with $y,y'\in Y_j$, $j\ge1$.
\end{prop}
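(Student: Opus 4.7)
The plan is to prove this interpolation-type inequality by taking the geometric mean of two elementary a priori bounds: the defining $d_\theta$-Lipschitz bound and the trivial sup-norm bound. First, straight from the definition of the seminorm $|v|_\theta$ on $\cF_\theta(Y^\varphi)$, I have
\[
|v(y,u)-v(y',u)| \le |v|_\theta\,\varphi(y)\,d_\theta(y,y').
\]
Combining this with the consequence $\supYj\varphi \le 2C_1 \infYj\varphi$ of condition~\eqref{eq:inf} (recorded immediately after Definition~\ref{def:inf}), I get
\[
|v(y,u)-v(y',u)| \le 2C_1\,\|v\|_\theta\, \infYj\varphi \cdot d_\theta(y,y').
\]
At the same time, the trivial estimate $|v(y,u)-v(y',u)| \le 2|v|_\infty \le 2\|v\|_\theta$ is available.

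Taking the minimum of the two bounds and factoring out $2\|v\|_\theta$ gives
\[
|v(y,u)-v(y',u)| \le 2\|v\|_\theta\,\min\bigl\{1,\ C_1\infYj\varphi\cdot d_\theta(y,y')\bigr\}.
\]
Now I apply the elementary inequality $\min\{1,x\}\le x^\eta$ (valid for all $x\ge 0$ and $\eta\in[0,1]$) with $x = C_1\infYj\varphi\cdot d_\theta(y,y')$. Using $d_\theta(y,y')^\eta = d_{\theta_1}(y,y')$ by the definition $\theta_1=\theta^\eta$, together with $C_1^\eta\le C_1$ (since $C_1\ge 1$ and $\eta\le 1$), this yields the claimed bound; in fact with constant $2C_1$ in place of $4C_1$, which is slightly sharper than stated.

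The whole proof is essentially a one-line interpolation, so there is no substantive obstacle. The only ingredient beyond standard inequalities is the uniform comparison $\supYj\varphi\le 2C_1\infYj\varphi$, which has already been recorded as a direct consequence of the structural hypothesis~\eqref{eq:inf} on the roof function of a Gibbs-Markov semiflow.
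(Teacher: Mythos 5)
Your proposal is correct and follows essentially the same interpolation argument as the paper: both bound the difference by the minimum of the trivial sup-norm estimate and the $d_\theta$-Lipschitz estimate, then apply $\min\{1,x\}\le x^\eta$ together with $\supYj\varphi\le 2C_1\infYj\varphi$ and $d_\theta^\eta=d_{\theta_1}$. The only cosmetic difference is that you apply the $2C_1$-comparison before interpolating rather than after, which gives the marginally sharper constant $2C_1$ in place of the paper's $4C_1$.
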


\begin{proof}
We have
$|v(y)-v(y')|\le\min\{2|v|_\infty,|v|_{\theta}\,\varphi(y)d_{\theta}(y,y')\}
\le 2\|v\|_{\theta}|1_{Y_j}\varphi|_\infty^\eta d_{\theta}(y,y')^\eta
\le 4C_1\|v\|_{\theta}\,\infYj\varphi^\eta d_{\theta_1}(y,y')$.
\end{proof}

It is convenient to split $v$ into a part independent of $u$ and a part that vanishes at $u=0$.

\begin{prop} \label{prop:Vslow0}
$R\hV^{(q)}:i\R\to\cB(\cF_\theta(Y^\varphi),\cF_{\theta_1}(Y))$ is $C^q$
for $v$ independent of $u$.
Moreover, there exists $C>0$ such that
\[
\SMALL \|R\hV^{(q)}(ib)\|_{\theta_1}\le C\|v\|_{\theta} \int_Y\varphi^{q+2\eta}\,d\mu\, |b|^{-(1-\eta)},
\]
for all $b\in\R\setminus\{0\}$ and $v\in \cF_{\theta}(Y^\varphi)$ such that $v$ is independent of $u$.  
\end{prop}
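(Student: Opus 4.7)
The plan is to exploit that $v$ independent of $u$ reduces $\hV$ to a scalar integral: one writes $\hV(ib)(y)=v(y)H(b,\varphi(y))$ with $H(b,t)=(1-e^{-ibt})/(ib)=\int_0^t e^{-ibs}\,ds$, so differentiating under the integral gives $\partial_b^k\hV(ib)(y)=v(y)K_k(b,\varphi(y))$ where $K_k(b,t)=(-i)^k\int_0^t s^k e^{-ibs}\,ds$ for integer $k$. The regularity of $H$ in $b$ is governed by $G(x)=(1-e^{-ix})/(ix)$, which is related via $G(x)=-ig(-x)$ to the function $g$ of Proposition~\ref{prop:calculus}; hence $|G^{(k)}(x)|\le C_k\min\{1,1/|x|\}$ for every $k\ge 0$.

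The decisive step will be the interpolation that produces the $|b|^{-(1-\eta)}$ factor. Each $K_k$ satisfies the crude bound $|K_k(b,t)|\le t^{k+1}/(k+1)$ and, after one integration by parts, $|K_k(b,t)|\ll t^k/|b|$. Taking the geometric mean with weights $\eta$ and $1-\eta$ yields $|K_k(b,t)|\le C t^{k+\eta}/|b|^{1-\eta}$. The same scheme applied to the $t$-difference $K_k(b,t)-K_k(b,t')=(-i)^k\int_{t'}^t s^k e^{-ibs}\,ds$, whose two natural bounds are $|t-t'|\max(t,t')^k$ and $4\max(t,t')^k/|b|$, yields Lipschitz control in $t$ with the same $|b|^{-(1-\eta)}$ gain together with a factor $|t-t'|^\eta$.

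I would then assemble the Lipschitz bound for $\partial_b^k\hV(ib)$ on $Y_j$ via the splitting
\[
\partial_b^k\hV(ib)(y)-\partial_b^k\hV(ib)(y')=v(y)[K_k(b,\varphi(y))-K_k(b,\varphi(y'))]+[v(y)-v(y')]K_k(b,\varphi(y')),
\]
controlling the first piece by the interpolated $t$-Lipschitz estimate for $K_k$ together with $|\varphi(y)-\varphi(y')|\le C_1|1_{Y_j}\varphi|_\infty d_\theta(y,y')$, and the second by Proposition~\ref{prop:v}, which gives $|v(y)-v(y')|\le 4C_1\|v\|_\theta\infYj\varphi^{\eta} d_{\theta_1}(y,y')$, combined with the pointwise bound on $K_k$. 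After absorbing spare $\varphi^\eta$ factors using $\inf\varphi>0$, both contributions come out at scale $\|v\|_\theta|1_{Y_j}\varphi|_\infty^{k+2\eta}/|b|^{1-\eta}$, and Corollary~\ref{cor:GM} (valid on $\cF_{\theta_1}$ by the remark at the start of Subsection~\ref{sec:refine}) then yields $\|R\partial_b^k\hV(ib)\|_{\theta_1}\ll\|v\|_\theta|b|^{-(1-\eta)}\int_Y\varphi^{k+2\eta}\,d\mu$ for each integer $k\le[q]$, the integral being finite because $k+2\eta\le q+2\eta<\beta$. To pass from integer to noninteger derivatives I would invoke the mean value theorem argument of Remark~\ref{rmk:MVT}, exactly as in Proposition~\ref{prop:Rslow}, interpolating between orders $[q]$ and $[q]+1$. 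The hard part will be the bookkeeping in the interpolation: the weight $\eta$ must simultaneously produce the $|b|^{-(1-\eta)}$ decay and leave only a $\varphi^{2\eta}$ overhead in the integrand, accommodating both the intrinsic $b$-dependence of $K_k$ and the fact that $v\in\cF_\theta(Y^\varphi)$ is only H\"older-like in the $y$ direction as quantified by Proposition~\ref{prop:v}.
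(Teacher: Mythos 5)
Your proposal is correct and follows essentially the same route as the paper's proof: reduce to the scalar kernel $f(b)=\varphi\,g(b\varphi)$ (your $K_k$ is $(-1)^k$ times the paper's $f^{(k)}(b)$ up to a factor $i$), interpolate the two crude bounds with weights $\eta$ and $1-\eta$ to extract $|b|^{-(1-\eta)}$ at the cost of a $\varphi^\eta$, split the Lipschitz piece using Proposition~\ref{prop:v}, absorb the spare $\varphi^\eta$ via $\inf\varphi>0$, apply Corollary~\ref{cor:GM}, and pass to noninteger $q$ by the MVT argument. The only cosmetic divergence is at the $t$-Lipschitz estimate for the kernel: the paper exploits the algebraic identity $xg(bx)-x'g(bx')=e^{ibx'}(x-x')g(b(x-x'))$ so as to quote Proposition~\ref{prop:calculus} a second time, whereas you integrate by parts on $\int_{t'}^{t}s^{k}e^{-ibs}\,ds$ and interpolate afresh; both yield the same bound $\ll |t-t'|^{\eta}\max(t,t')^{k}|b|^{-(1-\eta)}$.
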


\begin{proof}  
Write $R\hV(ib)=iR(f(b)v)$ where 
\[
f(b)=
b^{-1}(e^{-ib\varphi}-1)
=\varphi g(b\varphi),
\qquad g(x)=x^{-1}(e^{-ix}-1).
\]
By Proposition~\ref{prop:calculus}, for any $k\in\N$, there exists a constant $C>0$ such that
for all $x\in\R$,
\begin{align} \label{eq:MT17}
|g^{(k)}(x)|\le C\min\{1,|x|^{-1}\}=C|x|^{-1}\min\{1,|x|\}\le C|x|^{-(1-\eta)}.
\end{align}
Hence
\[
|1_{Y_j}f^{(k)}(b)|_\infty \ll 
|1_{Y_j}\varphi|_\infty^{k+1}(|b||1_{Y_j}\varphi|_\infty)^{-(1-\eta)}
=|1_{Y_j}\varphi|_\infty^{k+\eta}|b|^{-(1-\eta)}.
\]
By the MVT argument
\[
|1_{Y_j}f^{(q)}(b)|_\infty \ll 
|1_{Y_j}\varphi|_\infty^{q+\eta}|b|^{-(1-\eta)} \le
(2C_1)^{q+1}\infYj\varphi^{q+\eta}|b|^{-(1-\eta)}.
\]

Next, let $y,y'\in Y_j$.  Using the identity
$xg(bx)-x'g(bx')=e^{ibx'}(x-x')g(b(x-x'))$,
\[
f(b)(y)-f(b)(y')=e^{ib\varphi(y')}(\varphi(y)-\varphi(y'))\,g\big(b(\varphi(y)-\varphi(y'))\big).
\]
By~\eqref{eq:MT17},
\begin{align*}
|f^{(k)}(b)(y)-f^{(k)}(b)(y')| &  \ll
\sum_{j=0}^k \varphi(y')^{k-j}|\varphi(y)-\varphi(y')|^{1+j}
|g^{(j)}\big(b(\varphi(y)-\varphi(y'))\big) \\
& \ll 
\sum_{j=0}^k \varphi(y')^{k-j}|\varphi(y)-\varphi(y')|^{j+\eta}
|b|^{-(1-\eta)} 
\\ & \le
(k+1)(2C_1)^{k+1} \infYj\varphi^{k+\eta}  d_{\theta_1}(y,y')
|b|^{-(1-\eta)}.
\end{align*}
By the MVT argument
\begin{align*}
|f^{(q)}(b)(y)-f^{(q)} & (b)(y')|  \ll
\infYj\varphi^{q+\eta}  d_{\theta_1}(y,y')
|b|^{-(1-\eta)}.
\end{align*}
It follows that $\|1_{Y_j}f^{(q)}(b)\|_{\theta_1}\ll \infYj\varphi^{q+\eta} |b|^{-(1-\eta)}$.
By Proposition~\ref{prop:v},
$\|1_{Y_j}f^{(q)}(b)v\|_{\theta_1} \ll
\|v\|_\theta\, \infYj\varphi^{q+2\eta} |b|^{-(1-\eta)}$.
Now apply Corollary~\ref{cor:GM}.
\end{proof}

In the remainder of this subsection, we work with the function spaces
$\cF_{\theta,\eta}(Y^\varphi)$ with norm $\|\cdot\|_{\theta,\eta}=|\cdot|_\theta+|\cdot|_{\infty,\eta}$
as defined in Section~\ref{sec:obs}.

\begin{prop} \label{prop:Vslow1}
There is a constant $C>0$ such that
$\|RV(t)\|_{\theta_1}\le C\|v\|_{\theta,\eta}\,t^{-q}$ 
for all $v\in \cF_{\theta,\eta}(Y^\varphi)$ with $v(y,0)\equiv0$, and all $t>1$.
\end{prop}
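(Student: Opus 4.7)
The plan is to bound $\|1_{Y_j}V(t)\|_{\theta_1}$ on each partition element and then apply Corollary~\ref{cor:GM} to $RV(t)$. The hypothesis $v(y,0)\equiv0$ combined with the bound $|v(y,u)-v(y,u')|\le|v|_{\infty,\eta}|u-u'|^\eta$ yields $|v(y,u)|\le|v|_{\infty,\eta}u^\eta$, so $|V(t)(y)|\le|v|_{\infty,\eta}(\varphi(y)-t)^\eta 1_{\{\varphi(y)\ge t\}}$. This is the key ingredient that replaces the naive bound $|V(t)|_\infty\le|v|_\infty$ and generates the extra decay.

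First I would establish $\|1_{Y_j}V(t)\|_{\theta_1}\le C\|v\|_{\theta,\eta}\,\infYj\varphi^\eta\,1_{\{\supYj\varphi\ge t\}}$. The $L^\infty$ part is immediate from the pointwise estimate together with~\eqref{eq:inf}. For the $d_{\theta_1}$-Lipschitz part, take $y,y'\in Y_j$ with $\varphi(y)\ge\varphi(y')$. In the main case $\varphi(y')\ge t$, use the decomposition
\[
V(t)(y)-V(t)(y')=\{v(y,\varphi(y)-t)-v(y,\varphi(y')-t)\}+\{v(y,\varphi(y')-t)-v(y',\varphi(y')-t)\},
\]
which is legitimate because $\varphi(y')-t\le\varphi(y')\le\varphi(y)$. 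The first bracket is bounded by $|v|_{\infty,\eta}|\varphi(y)-\varphi(y')|^\eta$ together with~\eqref{eq:inf}, giving $C_1^\eta|v|_{\infty,\eta}\infYj\varphi^\eta d_{\theta_1}(y,y')$; the second by Proposition~\ref{prop:v}, giving $4C_1\|v\|_\theta\infYj\varphi^\eta d_{\theta_1}(y,y')$. In the boundary case $\varphi(y)\ge t>\varphi(y')$ we have $V(t)(y')=0$ and need only observe that $\varphi(y)-t\le\varphi(y)-\varphi(y')$, so the pointwise bound on $V(t)(y)$ combined with~\eqref{eq:inf} again gives the required estimate.

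Having controlled $\|1_{Y_j}V(t)\|_{\theta_1}$, Corollary~\ref{cor:GM} with $n=1$ gives
\[
\|RV(t)\|_{\theta_1}\le 2C_2\sum_j\mu(Y_j)\|1_{Y_j}V(t)\|_{\theta_1}\le C\|v\|_{\theta,\eta}\int_{\{\varphi\ge t/(2C_1)\}}\varphi^\eta\,d\mu,
\]
where the restriction on the integral comes from the indicator $1_{\{\supYj\varphi\ge t\}}$ together with $\supYj\varphi\le 2C_1\infYj\varphi$. Since $q+2\eta<\beta$, Proposition~\ref{prop:varphieta} with $\xi=\eta$ bounds the integral by $Ct^{-(\beta-\eta)}$, and $\beta-\eta>q+\eta>q$ then gives $t^{-(\beta-\eta)}\le t^{-q}$ for $t>1$, completing the proof.

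The main obstacle is handling the discontinuity of $V(t)$ across the set $\{\varphi=t\}$, which at first threatens the $d_{\theta_1}$-Lipschitz control. The resolution is the elementary but crucial inequality $\varphi(y)-t\le\varphi(y)-\varphi(y')$ available whenever $\varphi(y')<t\le\varphi(y)$: this converts the size of the jump in $V(t)$ into a controlled multiple of $|\varphi(y)-\varphi(y')|$, which is in turn dominated by $\infYj\varphi\,d_\theta(y,y')$ via~\eqref{eq:inf}. Without the assumption $v(y,0)\equiv0$, the jump would be of order $|v|_\infty$ and no such argument would be available.
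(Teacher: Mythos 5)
Your proof is correct and follows essentially the same route as the paper: bound $\|1_{Y_j}V(t)\|_{\theta_1}$ via a case analysis on how $\varphi(y),\varphi(y')$ straddle $t$, then apply Corollary~\ref{cor:GM} and Proposition~\ref{prop:varphieta}. Your choice of intermediate point $(y,\varphi(y')-t)$ in the main case (rather than the paper's $(y',\varphi(y)-t)$) is in fact the more careful one, since $\varphi(y')-t\le\varphi(y')\le\varphi(y)$ guarantees both evaluation points lie in $Y^\varphi$; otherwise the two arguments are the same.
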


\begin{proof}
Recall that $V(t)(y)=1_{\{\varphi(y)>t\}}v(y,\varphi(y)-t)$.
By~\eqref{eq:inf},
\begin{align*}
|1_{Y_j}V(t)|_\infty & \le |v|_\infty 1_{\{|1_{Y_j}\varphi|_\infty>t\}}
\le |v|_\infty 1_{\{\infYj \varphi>t/(2C_1)\}}.
\end{align*}
Also, for $y,y'\in Y_j$, $j\ge1$, with $\varphi(y)\ge\varphi(y')$,
\[
V(t)(y)-V(t)(y')=\begin{cases} 
v(y,\varphi(y)-t)- v(y',\varphi(y')-t), & \varphi(y')>t \\
v(y,\varphi(y)-t), & \varphi(y)>t\ge \varphi(y') \\ 
 0, & \varphi(y)\le t
\end{cases}.
\]
If $\varphi(y')>t$, then using Proposition~\ref{prop:v},
\begin{align*}
& |V(t)(y)-V(t)(y')|  \le
1_{\{|1_{Y_j}\varphi|_\infty>t\}}\{|v(y,\varphi(y)-t)-v(y',\varphi(y)-t)|
\\ & 
\qquad \qquad 
\qquad \qquad 
\qquad \qquad 
\qquad \qquad 
+ |v(y',\varphi(y)-t)-v(y',\varphi(y')-t)|\}
\\ & \qquad  \le 1_{\{|1_{Y_j}\varphi|_\infty>t\}}\{4C_1\infYj\varphi^\eta|v|_{\theta}\, d_{\theta_1}(y,y')+ 
|v|_{\infty,\eta} |\varphi(y)-\varphi(y')|^\eta \}
\\ & \qquad  \le 5C_1 1_{\{|1_{Y_j}\varphi|_\infty>t\}}\|v\|_{\theta,\eta} 
\infYj\varphi^\eta \,d_{\theta_1}(y,y').
\end{align*}
If $\varphi(y)>t\ge \varphi(y')$, then
\begin{align*}
 & |V(t)(y)-V(t)(y')|    =
1_{\{\varphi(y)>t\ge \varphi(y') \}}|v(y,\varphi(y)-t)|
\\ & \qquad  =1_{\{\varphi(y)>t\ge \varphi(y') \}}|v(y,\varphi(y)-t)-v(y,0)|
\le 1_{\{\varphi(y)>t\ge \varphi(y') \}}|v|_{\infty,\eta}|\varphi(y)-t|^\eta
\\ & \qquad \le 1_{\{\varphi(y)>t\}}|v|_{\infty,\eta}|\varphi(y)-\varphi(y')|^\eta
\le C_11_{|1_{Y_j}\varphi|_\infty>t\}}|v|_{\infty,\eta}\infYj\varphi^\eta \,d_{\theta_1}(y,y').
\end{align*}
Hence in all cases,
\[
 |V(t)(y)-V(t)(y')|\le 5C_1 1_{\{|1_{Y_j}\varphi|_\infty>t\}}\|v\|_{\theta,\eta}\infYj\varphi^\eta \,d_{\theta_1}(y,y').
\]
By~\eqref{eq:inf},
\[
\|1_{Y_j}V(t)\|_{\theta_1}\le 
5C_1 \|v\|_{\theta,\eta}1_{\{\infYj \varphi>t/(2C_1)\}}\infYj\varphi^\eta.
\]
Hence by Corollary~\ref{cor:GM},
\begin{align*}
\|RV(t)\|_{\theta_1} & \le 10C_1C_2\|v\|_{\theta,\eta}\sum\mu(Y_j)1_{\{\infYj \varphi>t/(2C_1)\}}\infYj\varphi^\eta
\\ & \SMALL \le 10C_1C_2\|v\|_{\theta,\eta}\int_Y 1_{\{\varphi>t/(2C_1)\}}\varphi^\eta\,d\mu.
\end{align*}
Now apply Proposition~\ref{prop:varphieta}.
\end{proof}

\begin{cor} \label{cor:Vslow}
Let $\kappa:\R\to\R$ be $\C^\infty$ with $|\kappa^{(k)}(b)|=O((b^2+1)^{-1})$ for all $k\in\N$.
Then 
$\kappa R\hV\in\cR(\|v\|_{\theta,\eta}\, t^{-q})$
in $\cF_{\theta_1}(Y)$ for all $v\in \cF_{\theta,\eta}(Y^\varphi)$.
\end{cor}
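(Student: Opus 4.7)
My plan is to decompose $v=v_0+v_1$, where $v_0(y,u):=v(y,0)$ is independent of $u$ and $v_1:=v-v_0$ vanishes at $u=0$, and then prove the corollary separately for each piece. Note that $|v_0(y,u)-v_0(y',u)|=|v(y,0)-v(y',0)|\le |v|_\theta\,\varphi(y)\,d_\theta(y,y')$, so $\|v_0\|_\theta\le\|v\|_{\theta,\eta}$; and since $v_0$ is constant in $u$, it contributes nothing to $|\,\cdot\,|_{\infty,\eta}$, so $\|v_1\|_{\theta,\eta}\le 2\|v\|_{\theta,\eta}$.

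For the $v_0$ piece I would combine Proposition~\ref{prop:Vslow0} with the Leibniz rule and then invoke Proposition~\ref{prop:fourier}. Applied for each order $k\le q$, Proposition~\ref{prop:Vslow0} gives $\|R\hV^{(k)}(ib)\|_{\theta_1}\ll \|v\|_{\theta,\eta}\,|b|^{-(1-\eta)}$ (the implicit constant involves $\int_Y\varphi^{q+2\eta}\,d\mu$, which is finite since $q+2\eta<\beta$). The hypothesis $|\kappa^{(j)}(b)|\ll (b^2+1)^{-1}$ together with the Leibniz rule (extended to the $(q-[q])$-H\"older part by the MVT argument of Remark~\ref{rmk:MVT}) then yields
\[
|(\kappa R\hV)^{(q)}(ib)|_{\theta_1}\le g_0(b),\qquad g_0(b):=C\|v\|_{\theta,\eta}(b^2+1)^{-1}|b|^{-(1-\eta)}.
\]
Since $\eta\in(0,1]$, $g_0$ is integrable on $\R$ (near $0$ because $1-\eta<1$; at infinity because $g_0\ll |b|^{-3+\eta}$) and $g_0(b)\to 0$ as $|b|\to\infty$, so Proposition~\ref{prop:fourier} delivers the required $\cR(\|v\|_{\theta,\eta}\,t^{-q})$ estimate for this piece.

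For the $v_1$ piece I would use the convolution structure. Since $\kappa^{(k)}\in L^1(\R)$ for every $k$, repeated integration by parts shows that the inverse Fourier transform $K$ of $\kappa$ satisfies $|K(t)|\ll_N (1+|t|)^{-N}$ for every $N$. The identity $\int_0^\infty e^{-ibt}RV(t)\,dt=R\hV(ib)$ together with standard Fourier theory identifies the inverse Fourier transform of $\kappa R\hV|_{i\R}$ with $K\star RV$, where $RV$ is extended by zero to $t<0$; this is legitimate because for $v_1$ one has $RV\in L^1([0,\infty);\cF_{\theta_1}(Y))$ (Proposition~\ref{prop:Vslow1} gives $\|RV(t)\|_{\theta_1}\ll\|v\|_{\theta,\eta}t^{-q}$ for $t>1$, and the proof of that proposition combined with Proposition~\ref{prop:varphieta} applied with $\xi=\eta<\beta$ bounds $\|RV(t)\|_{\theta_1}$ uniformly for $t\ge 0$ by $\|v\|_{\theta,\eta}\int_Y\varphi^\eta\,d\mu$, so altogether $\|RV(t)\|_{\theta_1}\ll \|v\|_{\theta,\eta}(1+t)^{-q}$ on $[0,\infty)$). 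The main technical nuisance is then that $K$ is supported on all of $\R$, so Proposition~\ref{prop:conv} does not apply verbatim; however, splitting the convolution integral $\int_0^\infty K(t-s)RV(s)\,ds$ into $s\in[0,t/2]\cup[t/2,3t/2]\cup[3t/2,\infty)$ and using the arbitrarily fast polynomial decay of $K$ to dominate the tails (both in $s$ large and in $t-s<0$) yields $\|(K\star RV)(t)\|_{\theta_1}\ll \|v\|_{\theta,\eta}\,t^{-q}$ for $t>1$, completing the proof.
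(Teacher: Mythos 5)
Your proposal is correct and follows the same route as the paper's (very terse) proof: split $v=v_0+v_1$ with $v_0$ independent of $u$ and $v_1(\cdot,0)\equiv0$, use Proposition~\ref{prop:Vslow0} together with Proposition~\ref{prop:fourier} for the $v_0$ piece, and use Proposition~\ref{prop:Vslow1} for the $v_1$ piece. The explicit convolution argument you give for $v_1$ --- including the three-way split needed because the inverse Fourier transform $K$ of $\kappa$ is supported on all of $\R$, so Proposition~\ref{prop:conv} does not apply verbatim --- is exactly the detail the paper's one-line proof leaves implicit, and your handling of it is correct.
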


\begin{proof}
Write $v(y,u)=v_0(y)+v_1(y,u)$ where $v_0(y)=v(y,0)$.
The result follows by combining Propositions~\ref{prop:Vslow0} and~\ref{prop:Vslow1} and applying Proposition~\ref{prop:fourier}.
\end{proof}

\subsection{Further estimates for dealing with the singularity at zero}
\label{sec:further}

By Propositions~\ref{prop:qc} and~\ref{prop:Rslow},
there exists $\delta>0$ such that $\hR(ib)$ has a $C^q$ family of simple eigenvalues $\lambda(b)$, $|b|<\delta$, with $\lambda(0)=1$
and $\lambda'(0)=-i\intphi$.
Let $P(b)$, $|b|<\delta$, denote the corresponding $C^q$ family of spectral projections, with $P(0)v=\int_Y v\,d\mu$ for $v\in L^1(Y)$.

\begin{prop} \label{prop:v0}
$b^{-1}P(0)R\hV(ib)\in\cR(|v|_\infty\, t^{-(\beta-1)})$ for all 
$v\in L^\infty(Y^\varphi)$ with \mbox{$\int_{Y^\varphi} v\,d\mu^\varphi=0$}.
\end{prop}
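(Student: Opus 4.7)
The plan is to identify $P(0)R\hV(ib)$ explicitly with the Fourier transform of a concrete scalar function $\Phi$, to exploit the mean-zero hypothesis on $v$ to write $b^{-1}\hat\Phi(ib)$ as (a constant times) $\hat\Psi(ib)$ via integration by parts, and then to bound $\Psi$ directly using Proposition~\ref{prop:varphieta}.

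First I would rewrite $P(0)R\hV(ib)$ concretely. Since the transfer operator preserves $\mu$-integrals and $P(0)w=\int_Yw\,d\mu$, we have
\[
P(0)R\hV(ib)=\int_Y\hV(ib)\,d\mu=\int_Y\int_0^{\varphi(y)}e^{-ib(\varphi(y)-u)}v(y,u)\,du\,d\mu(y).
\]
Substituting $t=\varphi(y)-u$ and recognising the definition of $V(t)$, this becomes $\hat\Phi(ib)$ where $\Phi(t)=\int_YV(t)\,d\mu$. By Corollary~\ref{cor:Jw}, $|\Phi(t)|\le|v|_\infty\mu(\varphi>t)=O(|v|_\infty t^{-\beta})$, so $\Phi\in L^1(0,\infty)$; and the mean-zero hypothesis together with Fubini yields
\[
\int_0^\infty\Phi(t)\,dt=\int_Y\int_0^{\varphi(y)}v(y,u)\,du\,d\mu=\intphi\int_{Y^\varphi}v\,d\mu^\varphi=0.
\]

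Second, I would set $\Psi(t)=\int_t^\infty\Phi(s)\,ds$, so $\Psi(0)=0$, $\Psi(t)\to0$ as $t\to\infty$, and $\Psi'=-\Phi$. An integration by parts gives
\[
\hat\Phi(ib)=-\int_0^\infty e^{-ibt}\Psi'(t)\,dt=-ib\,\hat\Psi(ib),
\]
with both boundary terms vanishing. Hence $b^{-1}P(0)R\hV(ib)=-i\hat\Psi(ib)$. A direct estimate then yields
\[
|\Psi(t)|\le|v|_\infty\int_Y(\varphi-t)^+\,d\mu\le|v|_\infty\int_Y 1_{\{\varphi>t\}}\varphi\,d\mu,
\]
and Proposition~\ref{prop:varphieta} (with $\xi=1$) bounds this by $O(|v|_\infty t^{-(\beta-1)})$. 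Since the inverse Fourier transform of $-i\hat\Psi(ib)$ is (up to the standard normalising constant) $-i\Psi(t)$, this gives the claimed membership of $\cR(|v|_\infty t^{-(\beta-1)})$.

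The main technical point to watch is that $b\mapsto b^{-1}\hat\Phi(ib)$ itself is not in $L^1(\R)$ (it is merely bounded at the origin, by virtue of $\hat\Phi(0)=0$, and is only $O(1/b)$ at infinity), so one cannot invoke Proposition~\ref{prop:fourier} directly on the function $b^{-1}\hat\Phi(ib)$. The whole point of introducing $\Psi$ is precisely to bypass this: the identity $b^{-1}\hat\Phi(ib)=-i\hat\Psi(ib)$ supplies an explicit inverse Fourier transform $-i\Psi(t)$, and the required polynomial decay is then a bare-hands estimate on $\Psi$, with the mean-zero hypothesis entering exactly at the step $\Psi(0)=0$ that legitimises the integration by parts.
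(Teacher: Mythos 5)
Your proof is correct and is essentially the paper's argument: both exploit $\int_{Y^\varphi}v\,d\mu^\varphi=0$ to cancel the pole at $s=0$ and identify $b^{-1}P(0)R\hV(ib)$ as (a constant times) the Fourier transform of the explicit function $\Psi(t)=\int_Y 1_{\{\varphi>t\}}\int_0^{\varphi-t}v(\cdot,u)\,du\,d\mu$, then bound $|\Psi(t)|\le|v|_\infty\int_Y 1_{\{\varphi>t\}}\varphi\,d\mu$ and conclude with Proposition~\ref{prop:varphieta}. The paper carries out the cancellation by inserting the ``$-1$'' directly into the kernel $s^{-1}(e^{-s(\varphi-u)}-1)$ and reading off the inverse Laplace transform, while you phrase the same manipulation as integration by parts on $\Phi(t)=\int_Y V(t)\,d\mu$, so $\Psi$ coincides with the paper's $-g$.
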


\begin{proof}
Let $\hat g(s)=s^{-1}P(0)R\hV(s)=s^{-1}\int_Y\int_0^{\varphi(y)}e^{-s(\varphi(y)-u)}v(y,u)\,du\,d\mu$.
Since $v$ has mean zero,
\[
\hat g(s)=\int_Y\int_0^{\varphi(y)}s^{-1}(e^{-s(\varphi(y)-u)}-1)v(y,u)\,du\,d\mu.
\]
Hence
\[
g(t)=-\int_Y \int_0^{\varphi(y)}1_{\{\varphi(y)>t+u\}}v(y,u)\,du\,d\mu.
\]
By Proposition~\ref{prop:varphieta},
\[
|g(t)|\le |v|_\infty \int_Y \int_0^\varphi 1_{\{\varphi>t\}}\,du\,d\mu
= |v|_\infty \int_Y \varphi 1_{\{\varphi>t\}}\,d\mu
\ll |v|_\infty \,t^{-(\beta-1)},
\]
as required.
\end{proof}

Define $\widetilde R(s)=s^{-1}(\hR(s)-\hR(0))$, $s\in\H$.

\begin{prop} \label{prop:tildeR}
Let $q_1>0$.
There exists a constant $C>0$ such that
\[
\|\widetilde R^{(q_1)}(ib)\|_{\theta_1} \le \begin{cases}
C|b|^{-(1-\eta)} & q_1<\beta-2\eta \\
C & q_1<\beta-1 \end{cases}\quad\text{for all $b\in\R\setminus\{0\}$.}
\]
\end{prop}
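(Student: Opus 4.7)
The plan is to write $\widetilde R(ib)v = R(f(b)v)$ with
\[
f(b) = (ib)^{-1}(e^{-ib\varphi}-1) = -\int_0^\varphi e^{-ibu}\,du,
\]
so that $f^{(k)}(b) = -(-i)^k \int_0^\varphi u^k e^{-ibu}\,du$. The proof then follows the pattern of Propositions~\ref{prop:Rslow} and~\ref{prop:Vslow0}: establish pointwise and $d_{\theta_1}$-H\"older estimates on $f^{(k)}(b)$ inside each partition element $Y_j$, extend to non-integer $q_1$ by the MVT argument of Remark~\ref{rmk:MVT}, and conclude via Corollary~\ref{cor:GM} and Proposition~\ref{prop:varphieta}.

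For the second case ($q_1 < \beta - 1$, uniform bound), the integral representation of $f^{(k)}$ yields directly $|1_{Y_j}f^{(k)}(b)|_\infty \le |1_{Y_j}\varphi|_\infty^{k+1}$ and, via
\[
f^{(k)}(b)(y)-f^{(k)}(b)(y') = -(-i)^k \int_{\varphi(y')}^{\varphi(y)} u^k e^{-ibu}\,du
\]
(assuming $\varphi(y)\ge \varphi(y')$) combined with condition~\eqref{eq:inf}, also $|1_{Y_j}f^{(k)}(b)|_\theta \le C_1 |1_{Y_j}\varphi|_\infty^{k+1}$. Since $\theta\le \theta_1$ gives $|\cdot|_{\theta_1}\le |\cdot|_\theta$, these H\"older bounds transfer without loss to $|\cdot|_{\theta_1}$. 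The MVT argument then yields $\|1_{Y_j}f^{(q_1)}(b)\|_{\theta_1} \ll \infYj\varphi^{q_1+1}$, and Corollary~\ref{cor:GM} gives $\|R(f^{(q_1)}(b)v)\|_{\theta_1} \ll \|v\|_{\theta_1} \int_Y \varphi^{q_1+1}\,d\mu$, which is finite by Proposition~\ref{prop:varphieta} when $q_1+1<\beta$.

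For the first case ($q_1 < \beta - 2\eta$, bound $C|b|^{-(1-\eta)}$), I write $f(b)=\varphi\, g_0(b\varphi)$ where $g_0(x)=(1-e^{-ix})/(ix)$ is, up to sign and substitution $x\mapsto -x$, the function $g$ of Proposition~\ref{prop:calculus}, so $|g_0^{(k)}(x)|\le C\min\{1,|x|^{-1}\}\le C|x|^{-(1-\eta)}$. This immediately gives $|1_{Y_j}f^{(k)}(b)|_\infty \ll \infYj\varphi^{k+\eta}|b|^{-(1-\eta)}$. For the $d_{\theta_1}$-H\"older estimate I proceed exactly as in Proposition~\ref{prop:Vslow0}, using the addition identity
\[
xg_0(bx) - x'g_0(bx') = e^{-ibx'}(x-x')\,g_0\!\bigl(b(x-x')\bigr)
\]
together with~\eqref{eq:inf} and the derivative bounds on $g_0$, to obtain after the MVT argument that $\|1_{Y_j}f^{(q_1)}(b)\|_{\theta_1} \ll \infYj\varphi^{q_1+\eta}|b|^{-(1-\eta)}$. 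Corollary~\ref{cor:GM} and Proposition~\ref{prop:varphieta} then deliver the stated bound (the computation actually goes through under the weaker hypothesis $q_1+\eta<\beta$; stating $q_1+2\eta<\beta$ merely keeps matters consistent with the global choice $q+2\eta<\beta$ fixed at the start of Subsection~\ref{sec:refine}).

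The main technical point is the H\"older-in-$y$ estimate of case one: matching the $|b|^{-(1-\eta)}$ decay with an appropriate $\varphi$-exponent so that the sum over partition elements remains convergent. This computation is essentially identical to, and slightly simpler than, that in Proposition~\ref{prop:Vslow0}, since here $v$ already lies in $\cF_{\theta_1}(Y)$ and no conversion of the type in Proposition~\ref{prop:v} is required.
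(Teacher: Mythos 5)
Your proof is correct and follows the same route as the paper's, which simply refers to the function $f(b)=\int_0^\varphi e^{-ibt}\,dt$ being ``the same as in the proof of Proposition~\ref{prop:Vslow0}'' for the $|b|^{-(1-\eta)}$ case and says the uniform case ``is much simpler''; you have accurately filled in the details the paper omits. Your parenthetical observation that the first case in fact requires only $q_1+\eta<\beta$ is also correct: unlike in Proposition~\ref{prop:Vslow0}, the observable $v$ here already lies in $\cF_{\theta_1}(Y)$, so the extra $\varphi^\eta$ factor supplied by Proposition~\ref{prop:v} is not incurred and one loses only a single power of $\eta$.
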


\begin{proof}
Write
$\widetilde R(ib)v= R(f(b)v)$ where $f(b)=\int_0^\varphi e^{-ibt}\,dt$.
This is the same function as in the proof of Proposition~\ref{prop:Vslow0}, leading to the same conclusion for $q_1<\beta-2\eta$.

The argument for $q_1<\beta-1$ is much simpler since
$f^{(k)}(b)=\int_0^\varphi e^{-ibt}(-it)^k\,dt$
and $\varphi^{q_1+1}$ is integrable.  We omit the details.
\end{proof}

For $b\in(-\delta,\delta)$, define
\[
\widetilde P(b)=b^{-1}(P(b)-P(0)), \quad
\tilde\lambda(b)=(ib\intphi)^{-1}(1-\lambda(b)).
\]

\begin{prop} \label{prop:tildeP}
The conclusion of Proposition~\ref{prop:tildeR}
holds with $\|\widetilde R^{(q_1)}(ib)\|_{\theta_1}$ replaced by
$\|\widetilde P^{(q_1)}(b)\|_{\theta_1}$ 
or $|\tilde \lambda^{(q_1)}(b)|$ for all $|b|<\delta$. 
\end{prop}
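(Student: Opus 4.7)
My plan is to reduce both bounds to Proposition~\ref{prop:tildeR} by writing $\widetilde P(b)$ and $\tilde\lambda(b)$ as expressions linear in $\widetilde R(ib)$ with remaining factors that depend smoothly and boundedly on $b$. Shrinking $\delta$ once at the outset, I may assume there is a fixed small circle $\Gamma\subset\C$ around $1$ which encloses $\lambda(b)$ and no other point of $\spec \hR(ib)$ for all $|b|<\delta$.

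For the projection, I would start from the Dunford--Riesz representation
\[
P(b)=\frac{1}{2\pi i}\oint_\Gamma (zI-\hR(ib))^{-1}\,dz
\]
and apply the resolvent identity, which gives the factor of $b$ explicitly:
\[
\widetilde P(b)=\frac{1}{2\pi i}\oint_\Gamma (zI-\hR(ib))^{-1}\widetilde R(ib)(zI-\hR(0))^{-1}\,dz.
\]
By Proposition~\ref{prop:Rslow}, $b\mapsto \hR(ib)$ is $C^q$ into $\cB(\cF_{\theta_1}(Y))$ with $q>q_1$, so for $z\in\Gamma$ the resolvent $(zI-\hR(ib))^{-1}$ is $C^q$ in $b$ with all derivatives up to order $q_1$ bounded uniformly in $(z,b)\in\Gamma\times\{|b|<\delta\}$. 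Differentiating under the integral by Leibniz, every term in $\widetilde P^{(q_1)}(b)$ contains exactly one factor $\widetilde R^{(j)}(ib)$ for some $0\le j\le q_1$ multiplied by uniformly bounded operators, and Proposition~\ref{prop:tildeR} applied to those factors delivers the claimed bound in both regimes (noting that the regime $q_1<\beta-2\eta$ is automatically more restrictive than $q_1<\beta-1$).

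For the eigenvalue, I would set $h(b)=P(b)1/\int_Y P(b)1\,d\mu$, so that $h(0)\equiv 1$, $\int_Y h(b)\,d\mu=1$, and $\hR(ib)h(b)=\lambda(b)h(b)$; by the previous paragraph, $b\mapsto h(b)$ inherits the required $C^q$ smoothness with uniformly bounded derivatives. Using $\int_Y \hR(0)v\,d\mu=\int_Y v\,d\mu$, integrating the eigenfunction relation yields
\[
1-\lambda(b)=\int_Y(\hR(0)-\hR(ib))h(b)\,d\mu=-ib\int_Y \widetilde R(ib)h(b)\,d\mu,
\]
so $\tilde\lambda(b)=-\intphi^{-1}\int_Y \widetilde R(ib)h(b)\,d\mu$. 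A final application of Leibniz expresses $\tilde\lambda^{(q_1)}(b)$ as a sum of integrals each containing a single factor $\widetilde R^{(j)}(ib)$ ($0\le j\le q_1$) against uniformly bounded factors $h^{(q_1-j)}(b)$, and Proposition~\ref{prop:tildeR} concludes.

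The only real obstacle is the uniform control of the resolvents and of $h(b)$ in $b$; once this is established (routinely, from Propositions~\ref{prop:qc} and~\ref{prop:Rslow} and the fact that $q>q_1$), the rest is bookkeeping with Leibniz's rule and no new analytic input beyond Proposition~\ref{prop:tildeR} is needed.
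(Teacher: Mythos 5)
Your argument for $\widetilde P$ is essentially identical to the paper's: same Dunford--Riesz representation, same application of the resolvent identity to pull out a single factor of $\widetilde R(ib)$, same appeal to $C^{q}$-regularity of the resolvent uniformly over $\Gamma$ (via Proposition~\ref{prop:Rslow}) and Leibniz. For $\tilde\lambda$, however, you take a genuinely shorter route. The paper first derives the operator identity $\tilde\lambda(b)P(b)=\{-\widetilde R(ib)P(b)+(I-\hR(0))\widetilde P(b)\}/(i\intphi)$, bounds $\tilde\lambda P$ from it, and then recovers $\tilde\lambda$ by pairing against the normalised eigenfunction $f(b)$. You instead observe directly that $\int_Y \hR(0)h(b)\,d\mu=\int_Y h(b)\,d\mu=1$ (transfer operator) and $\int_Y\hR(ib)h(b)\,d\mu=\lambda(b)$ (eigenvalue equation), which together give the clean scalar identity $\tilde\lambda(b)=-\intphi^{-1}\int_Y \widetilde R(ib)h(b)\,d\mu$; one Leibniz expansion against the $C^q$-bounded family $h(b)$ finishes. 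This bypasses the intermediate operator bound on $\tilde\lambda P$ entirely, and I would say it is the cleaner of the two arguments.

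One small inaccuracy: your parenthetical claim that the regime $q_1<\beta-2\eta$ is ``automatically more restrictive'' than $q_1<\beta-1$ holds only when $\eta\ge 1/2$, and for general $\eta\in(0,1]$ the two thresholds are not ordered. It is harmless because the Leibniz argument delivers each of the two bounds independently (using the matching case of Proposition~\ref{prop:tildeR} for every factor $\widetilde R^{(j)}$ with $j\le q_1$), but the remark as stated is misleading and should be dropped.
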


\begin{proof}
Let $\Gamma\subset\C$ be a sufficiently small circle centered at $1$.
Then $P(b)=(2\pi i)^{-1}\int_\Gamma (\xi-\hR(ib))^{-1}\,db$ for all
$|b|<\delta$.  Hence
\[
\SMALL \widetilde P(b)=(2\pi i)^{-1}\int_\Gamma (\xi-\hR(ib))^{-1}\widetilde R(ib) (\xi-\hR(0))^{-1}\,d\xi.
\]
By Proposition~\ref{prop:Rslow}, 
$(\xi-\hR)^{-1}$ is $C^{q_1}$ uniformly in $\xi\in\Gamma$.   Hence
the estimates for $\widetilde P^{(q_1)}$ follow from Proposition~\ref{prop:tildeR}.

Next, write 
\begin{align} \label{eq:lambdaP}
\tilde\lambda(b)P(b)=
\{-\widetilde R(ib)P(b)+(I-\hR(0))\widetilde P(b)\}/(i\intphi).
\end{align}
Since $P$ is $C^{q_1}$, it follows from Proposition~\ref{prop:tildeR}
and the estimates for $\widetilde P^{(q_1)}$ that $\|(\tilde\lambda P)^{(q_1)}(b)\|_{\theta_1}$ also satisfies these estimates.
Let $f(b)\in \cF_{\theta_1}(Y)$ denote the eigenfunction corresponding to $\lambda(b)$ normalised so that $\int_Y f(b)\,d\mu=1$.  
(If necessary, shrink $\delta$ so that $f(b)>0$ ensuring that the normalisation exists.) Then $b\mapsto f(b)$ is $C^{q_1}$ and 
\[
\SMALL \tilde\lambda(b)=\int_Y \tilde\lambda(b)f(b)\,d\mu
=\int_Y\tilde\lambda(b)P(b)\lambda(b)^{-1}f(b)\,d\mu,
\]
yielding the estimates for $\tilde\lambda^{(q_1)}$.
\end{proof}

\subsection{Truncation}
\label{sec:trunc}

Given $N\ge1$, we replace $\varphi$ by $\varphi\wedge N=
\min\{\varphi,N\}$.
Consider the suspension semiflows $F_t$ and $F_{N,t}$ on $Y^\varphi$ and $Y^{\varphi\wedge N}$ respectively.
Let $\rho_{v,w}$ and $\rho^{\rm trunc}_{v,w}$ denote the respective correlation functions.
In particular, 
$\rho^{\rm trunc}_{v,w}(t)=
\int_{Y^{\varphi\wedge N}}v\,w\circ F_{N,t}\,d\mu^{\varphi\wedge N}-
\int_{Y^{\varphi\wedge N}}v\,\,d\mu^{\varphi\wedge N}
\int_{Y^{\varphi\wedge N}}w\,\,d\mu^{\varphi\wedge N}$
where the observables 
$v,w:Y^{\varphi\wedge N}\to\R$ are the restrictions of $v,w:Y^\varphi\to\R$ to
$Y^{\varphi\wedge N}$.

\begin{prop} 
\label{prop:trunc}
There are constants $C,\,t_0>0$, $N_0\ge1$ such that
\[
|\rho_{v,w}(t)-\rho^{\rm trunc}_{v,w}(t)|\le C|v|_\infty|w|_\infty
(tN^{-\beta}+N^{-(\beta-1)}),
\]
for all
$v,w\in L^\infty(Y^\varphi)$, $N\ge N_0$, $t> t_0$. \qed
\end{prop}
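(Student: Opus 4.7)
The plan is to expand $\rho_{v,w}(t)-\rho_{v,w}^{\rm trunc}(t)$ into differences of integrals and of products of integrals, and bound each piece using the tail estimate $\int_{\{\varphi>N\}}\varphi\,d\mu = O(N^{-(\beta-1)})$ from Proposition~\ref{prop:varphieta}. Write $A=|\varphi|_1$, $A_N=|\varphi\wedge N|_1$, and $\lambda=\mu\times{\rm Leb}$, so that $\mu^\varphi=A^{-1}\lambda|_{Y^\varphi}$ and $\mu^{\varphi\wedge N}=A_N^{-1}\lambda|_{Y^{\varphi\wedge N}}$. For the ``product of means'' pieces, both $|A-A_N|$ and $|\int_{Y^\varphi}v\,d\lambda-\int_{Y^{\varphi\wedge N}}v\,d\lambda|$ (and similarly for $w$) are bounded by a constant multiple of $(|v|_\infty+1)\int_{\{\varphi>N\}}\varphi\,d\mu$, and using that $A_N\ge A/2$ for $N\ge N_0$, the product-of-means contribution is $O(|v|_\infty|w|_\infty N^{-(\beta-1)})$.

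For the integral part, I would write
\[
A^{-1}\!\int_{Y^\varphi}\!v\,w\!\circ\! F_t\,d\lambda - A_N^{-1}\!\int_{Y^{\varphi\wedge N}}\!v\,w\!\circ\! F_{N,t}\,d\lambda
\]
as the sum of (a) a domain correction $A^{-1}\int_{Y^\varphi\setminus Y^{\varphi\wedge N}}v\,w\circ F_t\,d\lambda$; (b) a normalisation correction $(A^{-1}-A_N^{-1})\int_{Y^{\varphi\wedge N}}v\,w\circ F_{N,t}\,d\lambda$; and (c) the main piece $A^{-1}\int_{Y^{\varphi\wedge N}}v\,(w\circ F_t-w\circ F_{N,t})\,d\lambda$. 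Terms (a) and (b) are again $O(|v|_\infty|w|_\infty N^{-(\beta-1)})$, using that $\lambda(Y^\varphi\setminus Y^{\varphi\wedge N})=\int_{\{\varphi>N\}}(\varphi-N)\,d\mu$. For (c), I would bound the integrand crudely by $2|v|_\infty|w|_\infty\mathbf{1}_{E_t}$, where $E_t\subset Y^{\varphi\wedge N}$ is the set of $(y,u)$ at which the two semiflows disagree at some $s\in[0,t]$.

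The crux is thus to estimate $\lambda(E_t)$. Unpacking the suspension dynamics, the $F$- and $F_N$-orbits of $(y,u)$ agree until the $F_N$-orbit spends time $N$ inside a ``tall'' fibre $F^j y\in B_N:=\{\varphi>N\}$, at which point they diverge. Accordingly $E_t\subset E_t^{(0)}\cup\bigcup_{j\ge 1}E_t^{(j)}$, where $E_t^{(0)}=\{(y,u)\in Y^{\varphi\wedge N}:\varphi(y)>N\}$ and $E_t^{(j)}=\{(y,u):\varphi(F^j y)>N,\;\tau_j^N(y,u)\le t\}$ with $\tau_j^N(y,u)=\varphi_j^N(y)-u$ denoting the $j$-th cross-section hitting time under $F_N$. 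Directly, $\lambda(E_t^{(0)})\le N\mu(\varphi>N)=O(N^{-(\beta-1)})$. For the $j\ge1$ pieces, the key is a Kac-type identity following from the $F_N$-invariance of $\lambda$: approximating the cross-section $B_N\times\{0\}$ by the thin slabs $h_\epsilon=\epsilon^{-1}\mathbf{1}_{B_N\times[0,\epsilon]}$, applying Fubini together with flow-invariance, and letting $\epsilon\to 0$, one finds that the integral over $Y^{\varphi\wedge N}$ of the number of visits of the $F_N$-orbit to $B_N\times\{0\}$ during any time window of length $t$ equals $t\mu(B_N)=O(tN^{-\beta})$. This dominates $\sum_{j\ge 1}\lambda(E_t^{(j)})$, yielding $\lambda(E_t)=O(N^{-(\beta-1)}+tN^{-\beta})$ and hence the claimed bound. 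The main obstacle is this Kac-type step, together with correctly isolating the initial-fibre contribution $E_t^{(0)}$ that is \emph{not} captured by the flow-invariance argument (and is precisely what produces the $N^{-(\beta-1)}$ piece of the final estimate).
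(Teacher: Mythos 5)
Your proof is correct and uses the same overall decomposition as the paper: a domain correction over $T_N=Y^\varphi\setminus Y^{\varphi\wedge N}$, a normalisation correction from $A^{-1}-A_N^{-1}$, the product-of-means terms, and the main piece governed by the measure of the disagreement set $E_t$. The only genuine divergence is in how you bound $\lambda(E_t)$. The paper also splits off the initial tall-fibre piece (your $E_t^{(0)}$, the paper's $T_N$) and then bounds the rest by noting that any disagreeing orbit must enter the one-unit-thick belt $\partial T_N'=\{(y,u):\varphi(y)>N,\,N-1\le u\le N\}$ at some \emph{integer} time $j\in\{0,\dots,[t+1]\}$. Since $\mu^\varphi$ is invariant under the time-$j$ maps $F_j$ of the (untruncated) suspension flow, each $\mu^\varphi(F_j^{-1}\partial T_N')=\mu^\varphi(\partial T_N')\ll N^{-\beta}$, giving the $O(tN^{-\beta})$ bound with no limiting argument at all. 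You instead work in the truncated tower with the $F_N$-flow, define $E_t^{(j)}$ via the $j$-th hitting time, and recover the same $O(tN^{-\beta})$ bound by a Kac-type crossing count using thin slabs $B_N\times[0,\epsilon)$ and letting $\epsilon\to 0$. This is equally valid (and your inclusion $E_t\subset E_t^{(0)}\cup\bigcup_{j\ge1}E_t^{(j)}$ is correct, since the first divergence of the two flows occurs at the moment the common orbit reaches height $N$ in a tall fibre, and until then $\varphi_j^N=\varphi_j$), but two small points deserve care. First, the claimed ``equals $t\mu(B_N)$'' is really the upper bound $\int n_t\,d\lambda\le t\mu(B_N)$, which follows because consecutive section hits are separated by at least $\inf\varphi>\epsilon$, so $n_t\le\epsilon^{-1}\int_0^{t+\epsilon}\mathbf 1_{\text{slab}}\circ F_{N,s}\,ds$, and then one integrates and lets $\epsilon\to0$. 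Second, it is worth being explicit that $\lambda=\mu\times{\rm Leb}$ is $F_{N,t}$-invariant on $Y^{\varphi\wedge N}$; the paper's variant (sampling the untruncated flow at integer times against $\mu^\varphi$) sidesteps both of these minor issues, which is why it opts for the thickened belt rather than a limiting slab.
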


\begin{proof}
This proof follows~\cite[Section~3]{M09} and~\cite[Appendix~A]{MT14}.
We begin with some definitions and preliminary estimates.

Let $F_{N,t}:Y^{\varphi\wedge N}\to Y^{\varphi\wedge N}$ denote the truncated
flow.
Define the top of the tower
$T_N=Y^\varphi\setminus Y^{\varphi\wedge N}$
and the borderline region
$\partial T_N=\{(y,N)\in Y^\varphi: \varphi(y)>N\}$.
Also, we define the thickened borderline region
\[
\partial T_N'=\{(y,u)\in Y^\varphi: \varphi(y)>N,\,N-1\le u\le N\}.
\]

Now,
$\mu^\varphi(\partial T_N') =\intphi^{-1}\int_Y 1_{\{\varphi>N\}}\,d\mu \ll N^{-\beta}$ and
by Proposition~\ref{prop:varphieta},
\[
\mu^\varphi(T_N)
=\intphi^{-1}\int_Y 1_{\{\varphi>N\}}(\varphi-N)\,d\mu
\le \intphi^{-1}\int_Y 1_{\{\varphi>N\}}\varphi\,d\mu\ll N^{-(\beta-1)}.
\]
We note the related estimate 
\[
\intphi-|\varphi\wedge N|_1=\int_Y(\varphi-(\varphi\wedge N))\,d\mu
=\int_Y 1_{\{\varphi>N\}}(\varphi-N)\,d\mu\ll N^{-(\beta-1)}.
\]
Choosing $N$ sufficiently large, we can suppose
that $|\varphi\wedge N|_1\ge\frac12\intphi$ and hence that
$|\intphi^{-1}-|\varphi\wedge N|_1^{-1}|\ll N^{-(\beta-1)}$.

Fix $t\ge1$.  
Note that if $F_s(y,u)\in T_N$ for some $s\in[0,t]$, then either $(y,u)\in T_N$
or $F_s(y,u)\in \partial T_N$ for some $s\in[0,t]$.
Hence
\begin{align*}
 \{(y,u)\in  Y^{\varphi\wedge N}: F_t(y,u)\neq F_{N,t}(y,u)\}
   & \subset\{(y,u)\in Y^{\varphi\wedge N}: F_s(y,u)\in T_N\;\text{for some $s\in[0,t]$}\}
\\ & \subset T_N\cup \bigcup_{s\in[0,t+1]}F_s^{-1}\partial T_N
 = T_N\cup \bigcup_{j=0,\dots,[t+1]}F_j^{-1}\partial T_N'.
\end{align*}
It follows that
\begin{align*}
\mu^\varphi & \{(y,u)\in Y^{\varphi\wedge N}: F_t(y,u)\neq F_{N,t}(y,u)\}
\\ & \le \mu^\varphi(T_N)+ \sum_{j=0}^{[t+1]}\mu^\varphi(F_j^{-1}\partial T_N')
=\mu^\varphi(T_N)+ [t+1]\mu^\varphi(\partial T_N')\ll N^{-(\beta-1)}+tN^{-\beta}.
\end{align*}

Now,
\begin{align*}
\int_{Y^\varphi}v\,w\circ F_t\,d\mu^\varphi-
\int_{Y^{\varphi\wedge N}}v\,w\circ F_{N,t}\,d\mu^{\varphi\wedge N}=I_1+I_2+I_3,
\end{align*}
where
\begin{align*}
I_1 & = \int_{T_N}v\,w\circ F_t\,d\mu^\varphi, \qquad
I_2  = \int_{Y^{\varphi\wedge N}}v\,(w\circ F_t-w\circ F_{N,t})\,d\mu^\varphi, \\
I_3 & = (\intphi^{-1}-|\varphi\wedge N|^{-1})\int_Y\int_0^{\varphi\wedge N}v\,w\circ F_{N,t}\,du\, d\mu^{\varphi\wedge N}.
\end{align*}
It follows readily from the preceding calculations that
\begin{align*}
|I_1| & \le |v|_\infty |w|_\infty\, \mu^\varphi(T_N)
\ll |v|_\infty |w|_\infty\, N^{-(\beta-1)},  \qquad
|I_3|  \ll |v|_\infty |w|_\infty\, N^{-(\beta-1)},
\\
|I_2| & \le 2|v|_\infty|w|_\infty 
\mu^\varphi \{F_t(y,u)\neq F_{N,t}(y,u)\}
\ll |v|_\infty |w|_\infty\, (N^{-(\beta-1)}+tN^{-\beta}).
\end{align*}
Hence $|\int_{Y^\varphi}v\,w\circ F_t\,d\mu^\varphi-
\int_{Y^{\varphi\wedge N}}v\,w\circ F_{N,t}\,d\mu^{\varphi\wedge N}|\ll
|v|_\infty |w|_\infty\, (N^{-(\beta-1)}+tN^{-\beta})$.

A simpler calculation shows that $|
\int_{Y^\varphi}v\,d\mu^\varphi-
\int_{Y^{\varphi\wedge N}}v\,d\mu^{\varphi\wedge N}|\ll |v|_\infty\,N^{-(\beta-1)}$
and hence that 
$|\int_{Y^\varphi}v\,d\mu^\varphi
\int_{Y^\varphi}w\,d\mu^\varphi-
\int_{Y^{\varphi\wedge N}}v\,d\mu^{\varphi\wedge N}
\int_{Y^{\varphi\wedge N}}w\,d\mu^{\varphi\wedge N}|
\ll |v|_\infty|w|_\infty\,N^{-(\beta-1)}$.
This completes the proof.
\end{proof}

Below we prove:
\begin{lemma}  \label{lem:trunc}
Assume absence of approximate eigenfunctions.
There are constants $C>0$, $m\ge1$, $N_1\ge1$ such that
\[
|\rho^{\rm trunc}_{v,w}(t)|\le C\|v\|_{\theta,\eta}|w|_{\infty,m}\,
t^{-(\beta-1)},
\]
for all
$v\in \cF_{\theta,\eta}(Y^{\varphi\wedge N})$,
$w\in L^{\infty,m}(Y^{\varphi\wedge N})$, $N\ge N_1$, $t>1$. 
\end{lemma}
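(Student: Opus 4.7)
The plan is to apply Lemma~\ref{lem:strat} to $\rho^{\rm trunc}_{v,w}$ with $\cA=L^\infty(Y^{\varphi\wedge N})$ and $q\in(\beta-1,\beta)$ as fixed in Subsection~\ref{sec:refine}, verifying its hypotheses with constants uniform in $N\ge N_1$. Since $q>\beta-1$, the resulting bound $|\rho^{\rm trunc}_{v,w}(t)|\ll t^{-q}$ will yield the stated $t^{-(\beta-1)}$ decay. The first ingredient is to transfer absence of approximate eigenfunctions from $F_t$ to $F_{N,t}$ uniformly in $N$: the finite subsystem $Z_0$ lies in a union $Z$ of finitely many partition elements on which $\varphi$ is bounded, so for $N$ larger than this bound $\varphi\wedge N\equiv\varphi$ on $Z_0$; the twisted operators $M_b$ and their truncated analogues, and all their iterates, then coincide on $Z_0$, so Corollary~\ref{cor:MTslow} applies uniformly to $\hT_N=(I-\hR_N)^{-1}$.

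By Corollary~\ref{cor:poll}, after centering $v$ by its $\mu^{\varphi\wedge N}$-mean,
\[
\hat\rho^{\rm trunc}_{v,w}(s) - \hJ_0^{\rm trunc}(s) = |\varphi\wedge N|_1^{-1}\int_Y \hT_N(s)\,R\hV_N(s)\,\hw_N(s)\,d\mu,
\]
with $\hJ_0^{\rm trunc}(t)$ bounded directly by $O(t^{-(\beta-1)})$ uniformly via the truncated analogue of Corollary~\ref{cor:Jw}. For $|b|\ge 1$, uniform bounds $\|\hT_N^{(j)}(ib)\|_{\theta_1},\|R\hV_N^{(j)}(ib)\|_{\theta_1}\ll|b|^\alpha$ for $j\le q$ follow from Corollary~\ref{cor:MTslow} and Propositions~\ref{prop:Rslow}, \ref{prop:Vslow0}, \ref{prop:Vslow1} together with Corollary~\ref{cor:Vslow}, since all relevant integrals are dominated by $\int\varphi^{q+2\eta}\,d\mu<\infty$. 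The factor $\hw_N$ is controlled by iterated integration by parts in the flow variable,
\[
\hw_N(s) = \sum_{k=1}^{m_0} s^{-k}\bigl[\partial_t^{k-1}w(y,0) - e^{-s(\varphi\wedge N)(y)}\partial_t^{k-1}w(y,(\varphi\wedge N)(y))\bigr] + s^{-m_0}\widehat{\partial_t^{m_0}w}_N(s),
\]
which replaces the non-uniform moment $\int(\varphi\wedge N)^{j+1}\,d\mu$ (problematic once $j+1\ge\beta$) by uniform moments $\int(\varphi\wedge N)^a\,d\mu$ with $a\le q<\beta$, plus a residual absorbed by the $s^{-m_0}$ prefactor once $m_0$ is sufficiently large.

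For $|b|<1$, integration by parts produces spurious singularities at $s=0$, so one uses instead the spectral decomposition $\hR_N(s)=\lambda_N(s)P_N(s)+Q_N(s)$ with $\lambda_N'(0)=-i|\varphi\wedge N|_1$, which is uniformly bounded away from $0$ once $N\ge N_1$. The would-be $s^{-1}$ singularity in $(1-\lambda_N(s))^{-1}P_N(s)R\hV_N(s)\hw_N(s)$ cancels because $P_N(0)R\hV_N(0)=\int v\,d\mu^{\varphi\wedge N}=0$ by centering, exactly as in Corollary~\ref{cor:Dolg}; the regular remainder is $C^q$ uniformly in $N$ by Propositions~\ref{prop:tildeR} and~\ref{prop:tildeP}, and the pointwise bound $|\hw_N(s)(y)|\le|w|_\infty\varphi(y)$ supplies the needed $L^1$ control. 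The main obstacle is patching these two regimes so that all polynomial-in-$|b|$ prefactors combine into a single exponent $\alpha$ independent of $N$, which ultimately determines the required value $m=m_0+\lceil\alpha\rceil+2$ in Lemma~\ref{lem:strat}.
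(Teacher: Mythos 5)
Your plan is to verify the hypotheses of Lemma~\ref{lem:strat}, uniformly in $N$, with $q\in(\beta-1,\beta)$, and conclude $|\rho^{\rm trunc}_{v,w}(t)|\ll t^{-q}$. This cannot work, and the conclusion you are aiming for is false. The sharp uniform-in-$N$ decay rate for $\rho^{\rm trunc}_{v,w}$ is $t^{-(\beta-1)}$ (compare Theorem~\ref{thm:MT}; $\rho^{\rm trunc}_{v,w}\to\rho_{v,w}$ as $N\to\infty$), and a bound $t^{-q}$ with $q>\beta-1$ uniform in $N$ would contradict this. The root of the problem is near $b=0$. Condition~(ii) of Lemma~\ref{lem:strat} requires $|\hat\rho^{\rm trunc}_{v,w}(ib)|\le C(|b|+1)^\alpha|w|_\infty$ with $C$ independent of $N$, but already the zeroth derivative fails: when $\beta\in(1,2)$ the correlation function $\rho_{v,w}$ is not integrable, and since $\hat\rho^{\rm trunc}_{v,w}(0)=\int_0^\infty\rho^{\rm trunc}_{v,w}(t)\,dt$, this quantity diverges as $N\to\infty$. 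The cancellation $P_N(0)R\hV_N(0)=0$ that you invoke removes the $s^{-1}$ pole but does not restore bounded derivatives: the pieces $\tilde\lambda^{-1}$, $\widetilde P$, $\widetilde R$ satisfy only the estimates of Propositions~\ref{prop:tildeR} and~\ref{prop:tildeP}, which are of the integrable-but-unbounded form $O(|b|^{-(1-\eta)})$ near the origin; and $b^{-1}P(0)R\hV(ib)$ (Proposition~\ref{prop:v0}) is likewise not controlled by a bounded $C^q$ norm when $\beta<2$. So your statement that ``the regular remainder is $C^q$ uniformly in $N$'' misreads those propositions.

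The paper's route differs at exactly this point: it does not attempt sup-norm bounds on $\hat\rho^{\rm trunc}$ and its derivatives. Instead it works with decay of inverse Fourier transforms (the $\cR(\cdot)$ notation, Proposition~\ref{prop:fourier}), which tolerates derivative bounds with integrable singularities at $b=0$, and then concatenates contributions by convolution (Proposition~\ref{prop:conv}). Concretely, it splits $\hat\rho^{\rm trunc}_{v,w}(ib)=\psi(b)\hat\rho^{\rm trunc}_{v,w}(ib)+\kappa_m(b)\hat\rho^{\rm trunc}_{v,\partial_t^m w}(ib)$, shows the $\psi$ terms lie in $\cR(t^{-(\beta-1)})$ (Proposition~\ref{prop:J0est} and Lemma~\ref{lem:smallb}), and the $\kappa_m$ terms in $\cR(t^{-q})$ (Lemma~\ref{lem:largeb}); the $t^{-(\beta-1)}$ bottleneck comes from $\hJ_0$ and from the $b^{-1}P(0)R\hV$ piece, both of which are inverted directly (Corollary~\ref{cor:Jw}, Proposition~\ref{prop:v0}) rather than estimated in the Fourier domain. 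Your treatment of the high-frequency regime (the $\kappa_m$ factorisation, integration by parts in $\hw$, and the transfer of absence of approximate eigenfunctions via $N_1\ge|1_Z\varphi|_\infty$) is sound and close in spirit to the paper's Lemma~\ref{lem:largeb}, but at low frequency you need to abandon Lemma~\ref{lem:strat} and switch to the $\cR$/convolution framework as in Lemma~\ref{lem:smallb}.
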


\begin{pfof}{Theorem~\ref{thm:slow}}
In general, $w\in L^{\infty,m}(Y^\varphi)$ need not restrict to 
$w\in L^{\infty,m}(Y^{\varphi\wedge N})$,
but we can choose
$w_N\in L^{\infty,m}(Y^{\varphi\wedge N})$ so that
$|w_N|_{\infty,m}\ll |w|_{\infty,m}$ and 
$w_N\equiv w$ outside the set
$S_N=\{((y,u)\in Y^{\varphi\wedge N}:\varphi(y)>N, \,u\in(N-1,N]\}$.
Then
\begin{align*}
 |\rho^{\rm trunc}_{v,w} & (t)  -  
\rho^{\rm trunc}_{v,w_N}(t)|
 \le |v|_\infty(|w|_\infty+|w_N|_\infty)\mu^{\varphi\wedge N}(F_{N,t}^{-1}S_N)
 \\ & =|v|_\infty(|w|_\infty+|w_N|_\infty)\mu^{\varphi\wedge N}(S_N)
\ll |v|_\infty|w|_\infty\mu(\varphi>N)
\ll |v|_\infty|w|_\infty\, N^{-\beta}.
\end{align*}
Taking $N=[t]$, the result follows directly from
Proposition~\ref{prop:trunc} and Lemma~\ref{lem:trunc}.
\end{pfof}

In the remainder of this subsection, we outline the strategy for proving Lemma~\ref{lem:trunc}.

By assumption, we can fix
a finite union $Z$ of partition elements such that the 
corresponding finite subsystem $Z_0$ does not support approximate eigenfunctions.
Choose $N_1\ge |1_Z \varphi|_\infty$.

For each fixed $N$, the truncated roof function $\varphi\wedge N$ is bounded
and hence the results in Section~\ref{sec:rapid} apply.  In particular,
$\hat\rho_{v,w}^{\rm trunc}$ is $C^\infty$ on $\barH$ and contours of integration can be moved to the imaginary axis.
From now on we suppress the superscript ``${\rm trunc}$'' for sake of readability.
The calculations in Proposition~\ref{prop:poll} and
Corollary~\ref{cor:poll} now proceed on the imaginary axis in identical fashion to the calculation on $\H$.  Hence
\[
\rho_{v,w}(t)=\int_{-\infty}^\infty e^{ibt}\hat\rho_{v,w}(ib)\,db,
\qquad
\hat\rho_{v,w}=\hJ_{0,v,w}+
\intphi^{-1}\int_Y \hT R\hV\,\hw\,d\mu,
\]
where the constituent parts $\hJ_{0,v,w}$, $\hT=(I-\hR)^{-1}$, $\hV$, $\hw$ are $C^\infty$ ``truncated'' versions of the originals.

It follows from rapid mixing for the truncated semiflow that 
$t\mapsto (\rho_{v,w})^{(m)}(t)$ lies in $L^1(\R)$ for 
all $m\ge0$, so we can use integration by parts to show that
\[
\hat\rho_{v,w}(ib)=(ib)^{-m}\hat\rho_{v,\partial_t^mw}(ib)
\quad\text{for all $b\neq0$, $m\ge0$}.
\]
Choose 
$\psi:\R\to[0,1]$ to be $C^\infty$ and compactly supported such that $\psi\equiv1$ on a neighborhood of zero.
Let $\kappa_m(b)=(1-\psi(b))(ib)^{-m}$.
Note that
\begin{align} \label{eq:psikappa}
\psi, \, \kappa_m \in\cR(t^{-p})
\quad\text{for all $p>0$, $m\ge2$}.
\end{align}
We have
\begin{align} \label{eq:list} \nonumber
\hat\rho_{v,w}(ib) & =\psi(b)\hat\rho_{v,w}(ib)+\kappa_m(b)\hat\rho_{v,\partial_t^mw}(ib) \\ \nonumber
& =
 \psi(b)\hJ_{0,v,w}(ib)
+ \intphi^{-1}\psi(b)\int_Y \hT(ib)R\hV(ib)\,\hw(ib)\,d\mu
\\ & \qquad +\kappa_m(b)\hJ_{0,v,\partial_t^mw}(ib) 
+ \intphi^{-1}\kappa_m(b)\int_Y \hT(ib)R\hV(ib)\,\widehat{\partial_t^m w}(ib)\,d\mu.
\end{align}
It remains to estimate the inverse Fourier transform of each term in~\eqref{eq:list}.

\begin{prop} \label{prop:J0est}
After truncation, uniformly in $N\ge1$, 
\[
\psi\hJ_{0,v,w}\in\cR(|v|_\infty|w|_\infty\, t^{-(\beta-1)}), \qquad
\kappa_m\hJ_{0,v,w}\in\cR(|v|_\infty|w|_\infty\, t^{-(\beta-1)}),
\]
for all $m\ge2$, $v\in L^\infty(Y^\varphi)$,
 $w\in L^\infty(Y^\varphi)$.
\end{prop}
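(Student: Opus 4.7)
The plan is to reduce each statement to a time-domain convolution estimate, exploiting the fact that $\hJ_{0,v,w}(ib)$ is simply the Fourier transform of $J_{0,v,w}$ (extended by zero for $t<0$).

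First, I would establish the uniform pointwise bound $|J_{0,v,w}(t)| \le C|v|_\infty|w|_\infty (1+t)^{-(\beta-1)}$ for all $t\ge0$ and $N\ge N_1$. Applying Proposition~\ref{prop:poll} to the truncated semiflow gives
\[
|J_{0,v,w}(t)| \le |v|_\infty|w|_\infty \,|\varphi\wedge N|_1^{-1}\int_Y 1_{\{\varphi\wedge N>t\}}(\varphi\wedge N)\,d\mu,
\]
and the right-hand integral is bounded by $\int_Y 1_{\{\varphi>t\}}\varphi\,d\mu \ll t^{-(\beta-1)}$ via Proposition~\ref{prop:varphieta}. The factor $|\varphi\wedge N|_1^{-1}$ is uniformly bounded by $2\intphi^{-1}$ for large $N$, as recalled in the proof of Proposition~\ref{prop:trunc}. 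This yields the desired $(1+t)^{-(\beta-1)}$ decay uniformly in~$N$.

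Next, I would interpret $\psi(b)\hJ_{0,v,w}(ib)$ as the Fourier transform of the two-sided convolution $\psi^\vee\star J_{0,v,w}$ on $\R$, where $\psi^\vee$ is the inverse Fourier transform of $\psi$ and $J_{0,v,w}$ is extended by zero for $t<0$; similarly $\kappa_m\hJ_{0,v,w}$ corresponds to $\kappa_m^\vee\star J_{0,v,w}$. By~\eqref{eq:psikappa}, both $\psi^\vee$ and $\kappa_m^\vee$ (for $m\ge2$) decay faster than any polynomial, so $|\psi^\vee(t)|+|\kappa_m^\vee(t)| \le C_p(1+|t|)^{-p}$ for every $p>0$.

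The remaining step is a Peetre-type convolution estimate: if $|g(s)| \le (1+s)^{-(\beta-1)}$ for $s\ge0$ (with $g\equiv0$ on $(-\infty,0)$) and $|f(t)| \le C_p(1+|t|)^{-p}$ on $\R$ with $p$ sufficiently large, then $|(f\star g)(t)| \ll (1+t)^{-(\beta-1)}$ for $t\ge0$. To prove it, split the integral at $s=t/2$: on $[t/2,\infty)$ one uses $|g(s)| \ll (1+t)^{-(\beta-1)}$ together with $\|f\|_1<\infty$; on $[0,t/2]$ one uses $|f(t-s)| \ll (1+t)^{-p}$ together with the elementary bound $\int_0^{t/2}(1+s)^{-(\beta-1)}\,ds = O((1+t)^{\max\{0,2-\beta\}} + \log(1+t))$, and takes $p$ large enough (depending on $\beta$) to absorb that growth. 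Applying this with $g=J_{0,v,w}/(C|v|_\infty|w|_\infty)$ and $f\in\{\psi^\vee,\kappa_m^\vee\}$ yields both claims.

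The main obstacle is merely cosmetic: the slight asymmetry between the one-sided $J_{0,v,w}$ and the two-sided kernels $\psi^\vee,\kappa_m^\vee$. This is handled by the standard split above and relies only on the superpolynomial decay provided by~\eqref{eq:psikappa}, with uniformity in $N$ coming directly from the uniform bound on $|\varphi\wedge N|_1^{-1}$.
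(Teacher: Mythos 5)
Your proof is correct and follows essentially the same route as the paper: establish the uniform time-domain bound $|J_0(t)|\ll|v|_\infty|w|_\infty t^{-(\beta-1)}$ (which is Corollary~\ref{cor:Jw}, proved exactly as you outline), invoke the rapid decay of $\psi^\vee$ and $\kappa_m^\vee$ from~\eqref{eq:psikappa}, and conclude via a convolution estimate. The only difference is cosmetic: the paper cites Proposition~\ref{prop:conv} directly (which is stated for one-sided kernels, so there is a slight implicit extension to the two-sided setting), whereas you re-derive the Peetre-type convolution bound and explicitly address the asymmetry between the two-sided kernel $\psi^\vee$ (resp.\ $\kappa_m^\vee$) and the one-sided $J_0$; this makes your version a bit more careful on that point, but the key ideas are identical.
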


\begin{proof}
By Corollary~\ref{cor:Jw},
$|\hJ_{0,v,w}| \in\cR(|v|_\infty|w|_\infty\, t^{-(\beta-1)})$
uniformly in $N$.
Using~\eqref{eq:psikappa} and Proposition~\ref{prop:conv},
$\psi\hJ_{0,v,w}\in \cR(t^{-\beta}\star
(|v|_\infty|w|_\infty\, t^{-(\beta-1)})
=\cR(|v|_\infty|w|_\infty\, t^{-(\beta-1)}))$.
Similarly, for
$\kappa_m\hJ_{0,v,w}$.
\end{proof}

The main two estimates are as follows.  
Recall that $q\in(\beta-1,\beta)$.

\begin{lemma} \label{lem:smallb}
There exists $N_1\ge1$ such that
after truncation, uniformly in $N\ge N_1$, there exist $m\ge2$ and $\psi$ such that
\begin{align*}
\SMALL \psi\int_Y \hT R\hV\,\hw\,d\mu
\in\cR( \|v\|_{\theta,\eta}|w|_\infty \, t^{-(\beta-1)}),
\end{align*}
for all
$v\in \cF_{\theta,\eta}(Y^\varphi)$ with $\int_{Y^\varphi}v\,d\mu^\varphi=0$,
$w\in L^\infty(Y^\varphi)$. 
\end{lemma}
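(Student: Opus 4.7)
The plan is to apply the spectral decomposition $\hT(ib)=(1-\lambda(ib))^{-1}P(b)+Q(b)$ from Corollary~\ref{cor:R} near $b=0$ and then use the mean-zero hypothesis $\int_{Y^\varphi}v\,d\mu^\varphi=0$ to cancel the $(ib)^{-1}$ singularity. Fix $\delta>0$ small enough for the spectral decomposition to hold on $\{|b|<\delta\}$, with $Q(b)$ the resolvent on the complementary spectral subspace (uniformly $C^q$ in $b$); choose $\psi$ smooth with $\operatorname{supp}\psi\subset(-\delta,\delta)$ and $\psi\equiv1$ near $0$. Writing $P(b)v=\pi(b)(v)\phi(b)$ (with $\phi(0)\equiv1$, $\pi(0)(v)=\int_Yv\,d\mu$), the integral factorizes as
\[
{\SMALL\int_Y}\hT R\hV\,\hw\,d\mu=(1-\lambda(ib))^{-1}c(b)d(b)+{\SMALL\int_Y}Q(b)R\hV(ib)\,\hw(ib)\,d\mu,
\]
with $c(b)=\pi(b)(R\hV(ib))$ and $d(b)=\int_Y\phi(b)\hw(ib)\,d\mu$; the hypothesis gives $c(0)=\intphi\int_{Y^\varphi}v\,d\mu^\varphi=0$.

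Decomposing $\pi(b)=\pi(0)+b\tilde\pi(b)$ (where $\tilde\pi$ inherits the regularity of $\widetilde P$ from Proposition~\ref{prop:tildeP}) and writing $(1-\lambda(ib))^{-1}=(ib\intphi\tilde\lambda(b))^{-1}$ with $\tilde\lambda(0)=1$, the singular contribution separates as
\[
\psi(b)\tilde\lambda(b)^{-1}d(b)\intphi^{-1}\bigl\{(ib)^{-1}P(0)R\hV(ib)-i\tilde\pi(b)(R\hV(ib))\bigr\}.
\]
The essential factor $(ib)^{-1}P(0)R\hV(ib)$ lies in $\cR(|v|_\infty t^{-(\beta-1)})$ by Proposition~\ref{prop:v0}. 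The scalar companion $\psi\tilde\lambda^{-1}d\,\intphi^{-1}$ is $C^q$ and compactly supported, and its Fourier inverse is dominated pointwise by $O(|w|_\infty(1+t)^{-\beta})$ (inheriting the tail from the leading contribution $\int_Y\hw(ib)\,d\mu=\hat h(b)$, where $h(t)=\int_Y 1_{\{\varphi>t\}}w(\cdot,t)\,d\mu$ satisfies $|h(t)|\le|w|_\infty\mu(\varphi>t)$ by Proposition~\ref{prop:w}). Proposition~\ref{prop:conv} with exponents $\beta-1<\beta$ and $\beta>1$ then places the singular contribution in $\cR(|v|_\infty|w|_\infty t^{-(\beta-1)})$.

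For the remaining pieces --- the $Q$-term and the $\tilde\pi$-correction --- no singular cancellation is needed; each is a product of $C^q$ factors whose $q$-th derivative is bounded by the integrable function $|b|^{-(1-\eta)}$ uniformly in the truncation parameter $N$. This follows from Propositions~\ref{prop:Vslow0}--\ref{prop:Vslow1}, Corollary~\ref{cor:Vslow}, and Proposition~\ref{prop:tildeP}, together with the condition $q+2\eta<\beta$ fixed at the beginning of Section~\ref{sec:slow}, which makes $\int\varphi^{q+2\eta}\,d\mu$ finite independently of $N$. Viewing these as $\cF_{\theta_1}(Y)$-valued (or scalar) multipliers of $\hw(ib)$, Proposition~\ref{prop:fourier} converts them to time-domain decay $O((1+t)^{-q})$; pairing with $\hw(ib)$ in frequency becomes convolution with $w(t)$ in time, where $|w(t)|_1=O(|w|_\infty(1+t)^{-\beta})$ (Proposition~\ref{prop:w}), and a final application of Proposition~\ref{prop:conv} with exponents $q<\beta$ and $\beta>1$ places these contributions in $\cR(\|v\|_{\theta,\eta}|w|_\infty t^{-q})$ --- strictly better than $t^{-(\beta-1)}$ since $q>\beta-1$. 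The main obstacle is the regularity bookkeeping: verifying that in any Leibniz expansion of the $q$-th derivatives of these products (including the Hölder piece at fractional order), at most one factor at a time carries the $|b|^{-(1-\eta)}$ bound, so that the singularities do not compound --- which follows from $q<\beta$ together with the boundedness of lower-order derivatives for $q_1<\beta-1$ provided by the second estimate in Proposition~\ref{prop:tildeR}.
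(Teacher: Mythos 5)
Your proof follows essentially the same strategy as the paper: decompose $\hT(ib)R\hV(ib)$ near $b=0$ via the spectral projection for the leading eigenvalue (the paper's equation~\eqref{eq:decomp}), use the mean-zero hypothesis so that $P(0)R\hV(0)=0$, handle the resulting $(ib)^{-1}P(0)R\hV(ib)$ term via Proposition~\ref{prop:v0}, and control the non-singular remainders ($Q$-term and $\widetilde P/\tilde\pi$-correction) by $C^q$ estimates and convolutions. The rank-one factorization $P(b)=\phi(b)\otimes\pi(b)$ and early pairing with $\hw$ is a notational variant of the paper's operator-valued version, not a different route.

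Two details deserve correction. First, the claim that the Fourier inverse of $\psi\tilde\lambda^{-1}d$ is $O(|w|_\infty(1+t)^{-\beta})$ cannot hold: since $\psi\tilde\lambda^{-1}$ is only $C^q$ with $q<\beta$, its Fourier inverse has tail $O(t^{-q})$ (this is Proposition~\ref{prop:lambdaP}), and convolving a $t^{-q}$ tail with a $t^{-\beta}$ tail gives $t^{-q}$ by Proposition~\ref{prop:conv}, not $t^{-\beta}$. The conclusion nonetheless survives because $q>\max\{1,\beta-1\}$: Proposition~\ref{prop:conv} with exponents $\beta-1<q$ and $q>1$ still yields the required $t^{-(\beta-1)}$ when paired with the $O(t^{-(\beta-1)})$ tail from Proposition~\ref{prop:v0}. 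Second, the uniformity in $N\ge N_1$ of the eigenvalue and projection families on a fixed interval $(-\delta,\delta)$ (and hence the freedom to choose a single $\psi$ for all $N$) is not automatic; it is exactly what Proposition~\ref{prop:RRN} establishes through the uniform convergence $\hR_N^{(q)}\to\hR^{(q)}$, and that step should be invoked explicitly.
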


\begin{lemma} \label{lem:largeb}
There exists $N_1\ge1$ such that
after truncation, uniformly in $N\ge N_1$, there exist $m\ge2$ such that
\begin{align*}
\SMALL \kappa_m\int_Y \hT R\hV\,\hw\,d\mu
\in\cR(\|v\|_{\theta,\eta}|w|_\infty\, t^{-q}),
\end{align*}
for all
$v\in \cF_{\theta,\eta}(Y^\varphi)$,
$w\in L^\infty(Y^\varphi)$. 
\end{lemma}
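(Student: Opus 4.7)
I would apply Proposition~\ref{prop:fourier} to $f(b)=\kappa_m(b)g(b)$, where $g(b)=\int_Y\hT(ib)R\hV(ib)\hw(ib)\,d\mu$: it suffices to dominate $|f^{(q)}|$ by an integrable function vanishing at infinity, with the bound uniform in $N\ge N_1$. The Leibniz rule applied to $f$ and then internally to $g$ reduces this to estimating expressions of the form
\[
\kappa_m^{(k)}(b)\int_Y\hT^{(k_1)}(ib)\,R\hV^{(k_2)}(ib)\,\hw^{(k_3)}(ib)\,d\mu, \qquad k+k_1+k_2+k_3\le q,
\]
together with a H\"older term for $q\notin\N$. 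Because $|\kappa_m^{(k)}(b)|\ll(|b|+1)^{-(m-k)}$, choosing $m$ large will absorb any polynomial $|b|$-growth of the other factors, provided such growth can be obtained uniformly in~$N$.

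For the operator factors I would proceed as follows. The bound $\|\hT^{(k_1)}(ib)\|_{\theta_1}=O(|b|^\alpha)$ comes from Corollary~\ref{cor:MTslow}, uniformly in~$N$ since the absence-of-approximate-eigenfunctions hypothesis on the finite subsystem $Z_0$ is preserved by truncation: the choice $N_1\ge|1_Z\varphi|_\infty$ guarantees $\varphi\wedge N=\varphi$ on $Z$. For $R\hV^{(k_2)}$ I would split $v=v_0+v_1$ with $v_0(y)=v(y,0)$ independent of $u$ and $v_1=v-v_0$ vanishing at $u=0$. Proposition~\ref{prop:Vslow0} supplies $\|R\hV_0^{(k_2)}(ib)\|_{\theta_1}\ll\|v\|_\theta|b|^{-(1-\eta)}$ uniformly in $N$ throughout $k_2\le q$, because the standing inequality $q+2\eta<\beta$ keeps every moment $\int(\varphi\wedge N)^{k_2+2\eta}\,d\mu$ uniformly finite; Proposition~\ref{prop:Vslow1} together with Fourier transform bounds on $t^{k_2}\|RV_1(t)\|_{\theta_1}$ (integrable uniformly in $N$ for $k_2<q-1$) controls the $v_1$-contribution.

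The main obstacle is the $\hw$-factor. The direct estimate $|\hw^{(k_3)}(ib)|_1\le|w|_\infty\int(\varphi\wedge N)^{k_3+1}\,d\mu/(k_3+1)$ is $N$-uniform only when $k_3+1<\beta$, and otherwise grows like $N^{k_3+1-\beta}$. The key structural observation, which makes uniformity possible, is that problematic multi-indices ($k_3\ge\beta-1$) are forced by $q<\beta$ into the regime $k+k_1+k_2\le q-k_3<q-\beta+1<1$, so in these terms the $\hT$- and $R\hV$-factors are essentially undifferentiated (mild $|b|$-growth) while $\kappa_m$ contributes almost its full decay $(|b|+1)^{-m}$. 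I would then split the $y$-integration across $\{\varphi\le N\}$ and $\{\varphi>N\}$: on the first set $\hw^{(k_3)}(ib)$ coincides with its untruncated counterpart and the partial moment $\int\varphi^{k_3+1}1_{\{\varphi\le N\}}\,d\mu=O(N^{k_3+1-\beta})$ can be absorbed by the $\kappa_m$-decay once $m$ is taken sufficiently large, while on the second set the small measure $\mu(\varphi>N)=O(N^{-\beta})$ combined with a single integration by parts in~$u$ yields a boundary contribution of size $N^{k_3}|w|_\infty|b|^{-1}$, producing the required uniform-in-$N$ integrable bound. The H\"older continuity of $f^{([q])}$ demanded by Proposition~\ref{prop:fourier} for non-integer $q$ is handled by the mean-value-theorem interpolation used systematically throughout Section~\ref{sec:refine}.
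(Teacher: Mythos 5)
Your approach diverges from the paper's in a way that opens a genuine gap.

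The paper factorizes $\kappa_m=\kappa_{m-2}\kappa_2$ and distributes the two pieces over the first two operator factors only, writing
$\kappa_m\int_Y\hT R\hV\,\hw\,d\mu=\int_Y(\kappa_{m-2}\hT)(\kappa_2 R\hV)\,\hw\,d\mu$,
so that $\hw$ is never differentiated in $b$. The first two factors are in $\cR(t^{-q})$ and $\cR(\|v\|_{\theta,\eta}\,t^{-q})$ by Proposition~\ref{prop:Test} and Corollary~\ref{cor:Vslow}, and $\hw$ is handled directly in the time domain via Corollary~\ref{cor:Jw}: $|w(t)|_1\le|w|_\infty\mu(\varphi\wedge N>t)\le|w|_\infty\mu(\varphi>t)=O(|w|_\infty\,t^{-\beta})$, which is $N$-uniform for free. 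The triple convolution estimate in Proposition~\ref{prop:conv} (used twice, with $q\in(1,\beta)$) then finishes the proof. The whole point of the factorization is to differentiate only two of the three factors, precisely in order to sidestep the $N$-dependence of $\hw^{(k_3)}$.

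You correctly identify that differentiating $\hw$ produces moments $\int(\varphi\wedge N)^{k_3+1}\,d\mu$ that grow like $N^{k_3+1-\beta}$ when $k_3\ge\beta-1$, but your proposed resolution does not work. You claim this growth ``can be absorbed by the $\kappa_m$-decay once $m$ is taken sufficiently large,'' but $\kappa_m^{(k)}(b)$ decays in $|b|$, not in $N$, and in the statement of the lemma these are independent parameters: the bound must hold uniformly in $N$ for every fixed $b$. A multiplicative factor $N^{k_3+1-\beta}$ that is unbounded in $N$ cannot be eliminated by any $b$-decaying prefactor. The same objection applies to the integration-by-parts boundary term you invoke on $\{\varphi>N\}$. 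Tracking the $N^\delta$ loss explicitly and setting $N=[t]$ afterwards might in principle recover the borderline rate, but that is not what you argue, it would require reformulating the lemma with weaker uniformity, and it would contaminate the surrounding truncation machinery (Lemma~\ref{lem:trunc}, Proposition~\ref{prop:trunc}) which relies on the bound being genuinely uniform. The correct and much cleaner route is the paper's: never differentiate $\hw$, and exploit its causal time-domain decay via the convolution lemma.
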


\begin{pfof}{Lemma~\ref{lem:trunc}} 
Substituting the results from Proposition~\ref{prop:J0est} and Lemmas~\ref{lem:smallb} and~\ref{lem:largeb}
into~\eqref{eq:list}, we obtain that
$\hat\rho_{v,w}\in\cR(\|v\|_{\theta,\eta}|w|_{\infty,m}\, t^{-(\beta-1)})$
uniformly in $N\ge N_1$ for all 
$v\in \cF_{\theta,\eta}(Y^\varphi)$ with 
$\int_{Y^\varphi}v\,d\mu^\varphi=0$,
$w\in L^{\infty,m}(Y^\varphi)$.
\end{pfof}

The proofs of Lemmas~\ref{lem:smallb} and~\ref{lem:largeb} are presented in 
Subsections~\ref{sec:smallb} and~\ref{sec:largeb} respectively.

\subsection{Small $b$: Proof of Lemma~\ref{lem:smallb}}
\label{sec:smallb}

Let $\delta>0$ be as in Subsection~\ref{sec:further}.
Temporarily, we introduce the notation 
$\hR_N(s)v=R(e^{-s(\varphi\wedge N)}v)$.

\begin{prop} \label{prop:RRN}
$\lim_{N\to\infty}\|\hR_N^{(q)}(ib)-\hR^{(q)}(ib)\|_{\theta_1}=0$
uniformly for $|b|<\delta$.
\end{prop}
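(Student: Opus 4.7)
The plan is to write the operator difference as a twisted transfer operator and exploit the fact that its symbol is supported on the tail of $\varphi$. First, I would observe that
$$(\hR-\hR_N)(ib)v = R(g_N(b)v), \qquad g_N(b) = e^{-ib\varphi}-e^{-ib(\varphi\wedge N)},$$
and that $g_N(b)(y) = 0$ whenever $\varphi(y)\le N$. In particular, if $Y_j$ is a partition element with $\supYj\varphi\le N$, then $1_{Y_j}g_N(b)\equiv 0$. Combining this with $\supYj\varphi\le 2C_1\infYj\varphi$ (a consequence of~\eqref{eq:inf}), only those $Y_j$ with $\infYj\varphi > N/(2C_1)$ contribute.

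Next, I would repeat the bookkeeping of Proposition~\ref{prop:Rslow} for $g_N$ in place of $f(b)=e^{-ib\varphi}$. The point is that for $y,y'\in Y_j$ with $\varphi(y),\varphi(y')>N$, the $N$-dependent terms in
$$g_N^{(k)}(b)(y)-g_N^{(k)}(b)(y') = (-i)^k\bigl(\varphi(y)^k e^{-ib\varphi(y)}-\varphi(y')^k e^{-ib\varphi(y')}\bigr)$$
cancel exactly, so the same estimates as for $f^{(k)}$ itself apply. In the transitional case where $\varphi(y')\le N<\varphi(y)$, one writes $g_N^{(k)}(b)(y) = \int_N^{\varphi(y)}\partial_t(t^k e^{-ibt})\,dt$ and bounds this by $C(|b|+k)|1_{Y_j}\varphi|_\infty^k\,|\varphi(y)-\varphi(y')|$, which via~\eqref{eq:inf} and the $\min\{1,x\}\le x^\eta$ trick gives the same Hölder bound (with a possibly slightly larger constant depending on $|b|$). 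Applying the MVT argument (Remark~\ref{rmk:MVT}) then extends from integer $k$ to the fractional order $q$, yielding
$$\|1_{Y_j}g_N^{(q)}(b)\|_{\theta_1} \le C(|b|+q+1)\,|1_{Y_j}\varphi|_\infty^{q+\eta}\cdot 1_{\{\supYj\varphi>N\}}.$$

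Finally, I would apply Remark~\ref{rmk:GM} (in its $\theta_1$-variant, as used already in Proposition~\ref{prop:Rslow}) to sum over $j$:
$$\|(\hR-\hR_N)^{(q)}(ib)\|_{\theta_1} \le C'(|b|+q+1)\sum_{j:\,\infYj\varphi>N/(2C_1)}\mu(Y_j)\,\infYj\varphi^{q+\eta} \le C''(|b|+q+1)\int_{\{\varphi>N/(2C_1)\}}\varphi^{q+\eta}\,d\mu.$$
Since $q+\eta<q+2\eta<\beta$, Proposition~\ref{prop:varphieta} bounds the integral by $O(N^{-(\beta-q-\eta)})\to 0$. Uniformity in $|b|<\delta$ is automatic because the prefactor satisfies $|b|+q+1\le \delta+q+1$.

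The only substantive point to verify is the transitional Hölder estimate on partition elements straddling the level $\{\varphi=N\}$; everything else is a straightforward adaptation of Proposition~\ref{prop:Rslow} combined with Proposition~\ref{prop:varphieta}.
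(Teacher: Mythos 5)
Your proof is correct and is essentially the paper's argument: express the difference as $R$ applied to a symbol $g_N(b)$ that vanishes on every $Y_j$ with $\supYj\varphi\le N$, bound $\|1_{Y_j}g_N^{(q)}(b)\|_{\theta_1}\ll(|b|+q+1)\infYj\varphi^{q+\eta}\,1_{\{\supYj\varphi>N\}}$, and sum via Remark~\ref{rmk:GM} and $\varphi\in L^{q+\eta}(Y)$. The one place you diverge is the per-cylinder bookkeeping: the paper simply bounds $e^{-ib\varphi}\varphi^k$ and $e^{-ib(\varphi\wedge N)}(\varphi\wedge N)^k$ separately (the second inherits the same Lipschitz estimate since truncation is $1$-Lipschitz), which entirely avoids the transitional case $\varphi(y')\le N<\varphi(y)$ that you flag as the delicate step; your exact-cancellation plus integral-representation argument for that case is a valid alternative, but it is slightly more work for the same conclusion.
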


\begin{proof}
Write
$\hR(ib)v-\hR_N(ib)v=R(f(b)v)$ where
$f(b)=g_1(b)-g_2(b)$, $g_1(b)=e^{-ib\varphi}$, $g_2(b)= e^{-ib(\varphi\wedge N)}$.
Then
$\hR^{(k)}(ib)v-\hR_N^{(k)}(ib)v=R(f^{(k)}(b)v)$ for all $k\in\N$ where
\[
f^{(k)}(b)=g_1^{(k)}(b)-g_2^{(k)}(b), \quad
g_1^{(k)}(b)=(-i)^ke^{-ib\varphi}\varphi^k, \quad
g_2^{(k)}(b)=
(-i)^ke^{-ib(\varphi\wedge N)}(\varphi\wedge N)^k.
\]

If $\supYj\varphi\le N$, then $f^{(k)}(b)\equiv0$ on $Y_j$.

If $\supYj\varphi>N$, then 
$|1_{Y_j}g_i^{(k)}(b)|_\infty \le 
\supYj\varphi^k\le 2C_1
\infYj\varphi^k$ for $i=1,2$.
By the MVT argument
$|1_{Y_j}f^{(q)}(b)|_\infty \le 4C_1 \infYj\varphi^{q}$.

Next, 
for $y_1,y_2\in Y_j$, we have
$g_1(b)(y_1)-g_1(b)(y_2)= I_1+I_2$ where
\[
I_1=\{e^{-ib\varphi(y_1)}-e^{-ib\varphi(y_2)}\}\varphi(y_1)^k, \qquad
I_2=e^{-ib\varphi(y_2)}\{\varphi(y_1)^k-\varphi(y_2)^k\}.
\]
Note that
\begin{align*}
|I_1| & \le 2 |b|^\eta |\varphi(y_1)-\varphi(y_2)|^\eta\varphi(y_1)^k
 \le 2|b|^\eta |1_{Y_j}\varphi|_{\theta}^\eta\supYj\varphi^k d_{\theta}(y_1,y_2)^\eta \ll \infYj\varphi^{k+\eta}d_{\theta_1}(y,y'), \\
|I_2| & \le k|\varphi(y_1)-\varphi(y_2)|\supYj\varphi^{k-1}
 \le k|1_{Y_j}\varphi|_{\theta}\supYj\varphi^{k-1}d_{\theta}(y_1,y_2)
\ll k\,\infYj\varphi\, d_{\theta_1}(y,y'),
\end{align*}
so $|1_{Y_j}g_1^{(k)}(b)|_{\theta_1}\ll k\infYj\varphi^{k+\eta}$.
By the MVT argument, 
$|1_{Y_j}g_1^{(q)}(b)|_{\theta_1}\ll \infYj\varphi^{q+\eta}$.
Similarly $|1_{Y_j}g_2^{(q)}(b)|_{\theta_1}\ll \infYj\varphi^{q+\eta}$.
Hence
$\|1_{Y_j}f^{(k)}(b)\|_{\theta_1}\ll 
1_{\{\supYj\varphi>N\}}\infYj\varphi^{k+\eta}
\le 1_{\{\infYj\varphi>N/(2C_1)\}}\infYj\varphi^{k+\eta}$.

By Remark~\ref{rmk:GM},
\begin{align*}
\|\hR^{(q)}(ib)- & \hR_N^{(q)}(ib)\|_{\theta_1}
 \le 2C_2\sum\mu(Y_j)\|1_{Y_j}f^{(q)}(b)\|_{\theta_1}
\\ & \ll \sum 1_{\{\infYj\varphi>N/(2C_1)\}}
\mu(Y_j)\infYj\varphi^{q+\eta}
 \le {\SMALL \int}_Y 1_{\{\varphi>N/(2C_1)\}}
\varphi^{q+\eta}\,d\mu.
\end{align*}
The result follows since $\varphi\in L^{q+\eta}(Y)$.
\end{proof}

By Proposition~\ref{prop:RRN}, we can fix $\delta>0$ and $N_1\ge1$ such that
for all $N\ge N_1$ there exists a
$C^q$ family of simple eigenvalues $\lambda_N(b)$, $|b|<\delta$, for
$\hR_N(ib)$ 
with $\lambda_N(0)=1$ and $\lambda_N'(0)=1$. 
There is also a corresponding $C^q$ family of spectral projections
$P_N(b)$ with $P_N(0)v=\int_Y v\,d\mu$.

From now on, we write $\hR$, $P$ and $\lambda$ instead of
$\hR_N$, $P_N$ and $\lambda_N$.
Recall that $\hT=(I-\hR)^{-1}$.
Choose the $C^\infty$ function $\psi:\R\to[0,1]$ so that $\supp\psi\in(-\delta,\delta)$, and write
\begin{align} \label{eq:decomp}
\hT(ib)R\hV(ib)=(1-\lambda(b))^{-1}P(b)R\hV(ib)+
\hT(ib)(I-P(b))R\hV(ib).
\end{align}
We begin by dealing with the second term in~\eqref{eq:decomp}.

\begin{lemma} \label{lem:Q}
$\psi\int_Y\hT(I-P)R\hV\,\hw\,d\mu\in\cR(\|v\|_{\theta,\eta}
|w|_\infty\,t^{-q})$ for all 
$v\in \cF_{\theta,\eta}(Y^\varphi)$, $w\in L^\infty(Y^\varphi)$.
\end{lemma}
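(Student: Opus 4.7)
The plan is to first show that the operator-valued function $D(b) := \psi(b)\hT(ib)(I-P(b))$ is $C^q$ on $\R$, compactly supported, with $D^{(k)}:\R\to\cB(\cF_{\theta_1}(Y))$ bounded uniformly in $N$ for each $k\le q$. On $\supp\psi\subset(-\delta,\delta)$, the operator $\hR(ib)$ restricted to $\ker P(b)$ has spectral radius bounded away from $1$ (for $\delta$ small, uniformly in $N\ge N_1$), so the restricted resolvent is $C^q$ by Propositions~\ref{prop:Rslow} and~\ref{prop:tildeP} together with the $C^q$ dependence of $P(b)$. Uniformity in $N$ comes from Proposition~\ref{prop:RRN}: for $N\ge N_1$ the spectral data $\lambda_N,P_N$ of $\hR_N$ near $b=0$ stay $C^q$-close to those of $\hR$.

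Next, I would adapt the proof of Corollary~\ref{cor:Vslow} (with operator-valued $D$ in place of scalar $\kappa$) to show
\[
E(b) := D(b)\,R\hV(ib)\in\cR(\|v\|_{\theta,\eta}\,t^{-q})
\]
in $\cF_{\theta_1}(Y)$. Split $v(y,u)=v_0(y)+v_1(y,u)$ with $v_0(y)=v(y,0)$. For $v_0$, Proposition~\ref{prop:Vslow0} gives $\|R\hV^{(k)}(ib)\|_{\theta_1}\ll\|v\|_\theta|b|^{-(1-\eta)}$ for $k\le q$, so the Leibniz rule combined with the uniformly bounded, compactly supported derivatives of $D$ produces $\|[E^{(v_0)}]^{(q)}(b)\|_{\theta_1}$ dominated by an integrable, compactly supported function of $b$; Proposition~\ref{prop:fourier} then yields $E^{(v_0)}\in\cR(\|v\|_\theta\,t^{-q})$. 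For $v_1$, Proposition~\ref{prop:Vslow1} gives $\|RV^{(v_1)}(t)\|_{\theta_1}\ll\|v\|_{\theta,\eta}\,t^{-q}$, so $R\hV^{(v_1)}\in\cR(\|v\|_{\theta,\eta}\,t^{-q})$; being $C^q$ with compact support, $D$ has inverse Fourier transform $d(t)=O(t^{-q})$ in $\cB(\cF_{\theta_1}(Y))$ by Proposition~\ref{prop:fourier}; and the convolution $d\star RV^{(v_1)}$ is $O(\|v\|_{\theta,\eta}\,t^{-q})$ because the estimates in the proof of Proposition~\ref{prop:conv}, applied with $a=b=q>1$, still yield exponent $-q$ thanks to $\int_0^\infty(1+x)^{-q}\,dx<\infty$.

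Finally, writing the scalar function $h(b)=\psi(b)\int_Y\hT(ib)(I-P(b))R\hV(ib)\,\hw(ib)\,d\mu=\int_Y E(b)\,\hw(ib)\,d\mu$, Fubini gives the inverse Fourier transform
\[
h^{\vee}(t)=\int_0^t\int_Y e(s)(y)\,w(t-s)(y)\,d\mu(y)\,ds,
\]
where $e$ is the inverse FT of $E$ and $w$ the inverse FT of $\hw$. This is bounded above by $\int_0^t|e(s)|_{\cF_{\theta_1}(Y)}|w(t-s)|_1\,ds$; using $|e(s)|_{\cF_{\theta_1}(Y)}\ll\|v\|_{\theta,\eta}\,s^{-q}$ from the previous step and $|w(s)|_1\ll|w|_\infty\,s^{-\beta}$ from Corollary~\ref{cor:Jw}, Proposition~\ref{prop:conv} with $a=q$ and $b=\beta$ (so that $\beta>q>1$) yields the desired conclusion $h\in\cR(\|v\|_{\theta,\eta}|w|_\infty\,t^{-q})$.

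The main obstacle I anticipate is maintaining $N$-uniformity throughout. Although $\hT(I-P)$ is $C^\infty$ in $b$ after truncation, $N$-uniform control of the derivatives is available only up to order $q$, limited by the standing constraint $q+2\eta<\beta$ that keeps $\varphi^{q+2\eta}$ integrable; this is precisely what Propositions~\ref{prop:RRN} and~\ref{prop:Rslow} deliver. The secondary delicate point is the $v_1$ convolution argument where both factors decay only as $t^{-q}$, which survives because the standing choice $\max\{1,\beta-1\}<q<\beta$ forces $q>1$.
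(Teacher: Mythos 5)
Your proposal is correct and follows essentially the same route as the paper: control $\psi\hT(I-P)$ via $C^q$-smoothness on $\cF_{\theta_1}(Y)$ (uniformly in $N$) and Proposition~\ref{prop:fourier}, then convolve its inverse Fourier transform with those of $R\hV$ and $\hw$, applying Proposition~\ref{prop:conv} twice (with $q>1$, and with $\beta>q$). The one cosmetic difference is that you re-derive the content of Corollary~\ref{cor:Vslow} with the operator-valued multiplier $D=\psi\hT(I-P)$ in place of a scalar cutoff (splitting $v=v_0+v_1$ and handling each piece via Leibniz or convolution), whereas the paper simply inserts a compactly supported bump $\psi_1\equiv1$ on $\supp\psi$, applies Corollary~\ref{cor:Vslow} to $\psi_1 R\hV$ as a black box, and writes $\psi\hT(I-P)R\hV=\{\psi\hT(I-P)\}\{\psi_1 R\hV\}$ before invoking the product-to-convolution identity.
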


\begin{proof}
By definition of $P$ and Proposition~\ref{prop:Rslow},
$\hT(ib)(I-P(b))$ is $C^q$ on $\cF_{\theta_1}(Y)$ uniformly in $N$.
By Proposition~\ref{prop:fourier},
$\psi\hT(I-P)\in\cR(t^{-q})$.

Choose $\psi_1$ to be $C^\infty$ with compact support such that $\psi_1\equiv1$ on $\supp\psi$.
By Corollary~\ref{cor:Vslow}, 
\begin{align*}
\psi\hT(I-  P)  R\hV & = \{\psi\hT(I-  P)\}\{\psi_1  R\hV\} \in
\cR(t^{-q}\star \|v\|_{\theta,\eta}\, t^{-q}),
\end{align*}
in $\cF_{\theta_1}(Y)$ and hence in $L^\infty(Y)$.
Since $q>1$, it follows from Proposition~\ref{prop:conv}
that $\psi\hT(I-  P)  R\hV\in \cR(\|v\|_{\theta,\eta}\, t^{-q})$
in $L^\infty(Y)$.

By Corollary~\ref{cor:Jw}, $\hw\in\cR(|w|_\infty\,t^{-\beta})$ in $L^1(Y)$,
so the result follows from Proposition~\ref{prop:conv}.~
\end{proof}

Still working with the truncated roof function $\varphi\wedge N$, define
\begin{align*}
\widetilde R(s) & =s^{-1}(\hR(s)-\hR(0)), \quad
\widetilde P(b)=b^{-1}(P(b)-P(0)),  \quad
\tilde\lambda(b)  =(ib|\varphi\wedge N|_1)^{-1}(1-\lambda(b)).
\end{align*}
The estimates in Propositions~\ref{prop:tildeR} and~\ref{prop:tildeP} remain valid uniformly in $N$.

\begin{prop} \label{prop:lambdaP}
$\psi\tilde\lambda^{-1}\in\cR(t^{-q})$.
\end{prop}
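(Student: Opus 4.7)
The plan is to show that $\tilde\lambda^{-1}$ is well-defined, bounded, and $C^q$ on the support of $\psi$, with a controlled singularity of its $q$-th derivative at $0$, and then invoke Proposition~\ref{prop:fourier}.

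First, I would pin down the value $\tilde\lambda(0)$. Applying Corollary~\ref{cor:R} to the truncated semiflow (i.e.\ replacing $\varphi$ by $\varphi\wedge N$) gives $\lambda_N'(0)=-|\varphi\wedge N|_1$ in the $s$-variable, which translates to $\frac{d}{db}\lambda_N(b)|_{b=0}=-i|\varphi\wedge N|_1$. Thus
\[
1-\lambda_N(b)=ib|\varphi\wedge N|_1+O(b^2),\qquad \tilde\lambda_N(b)=1+O(b),
\]
and in particular $\tilde\lambda_N(0)=1$, uniformly in $N\ge N_1$. Since by Proposition~\ref{prop:tildeP} the family $\tilde\lambda_N$ is $C^q$ with uniform-in-$N$ control of derivatives of positive order (of at worst $|b|^{-(1-\eta)}$ size), the $\tilde\lambda_N$ are equicontinuous at $0$. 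After possibly shrinking $\delta$ (and correspondingly shrinking $\supp\psi$), we may therefore arrange that $|\tilde\lambda_N(b)|\ge\tfrac12$ for all $b\in\supp\psi$, $N\ge N_1$.

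Next, I would bound the derivatives of $\psi\tilde\lambda^{-1}$. On $\supp\psi$, $\tilde\lambda^{-1}$ inherits $C^q$ regularity from $\tilde\lambda$: by the quotient rule and Faà di Bruno (for the integer part) together with an MVT argument of the type used repeatedly in Section~\ref{sec:refine} (for the H\"older part), the derivatives $(\tilde\lambda^{-1})^{(k)}$ for $k\le q$ are polynomials in $\tilde\lambda^{-1}$ and $\tilde\lambda^{(j)}$, $j\le k$. Using that $q+2\eta<\beta$, Proposition~\ref{prop:tildeP} with $q_1=q$ gives $|\tilde\lambda^{(q)}(b)|\le C|b|^{-(1-\eta)}$ uniformly in $N$, and lower-order derivatives are no worse. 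Combined with the boundedness of $\psi^{(k)}$, the Leibniz rule yields
\[
|(\psi\tilde\lambda^{-1})^{(q)}|\le C\,g,\qquad g(b)=|b|^{-(1-\eta)}\mathbf{1}_{\supp\psi}(b),
\]
uniformly in $N\ge N_1$.

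The function $g$ is integrable (since $1-\eta<1$ makes the singularity at $0$ integrable and $g$ has compact support) and trivially vanishes at infinity. Proposition~\ref{prop:fourier} therefore delivers $\psi\tilde\lambda^{-1}\in\cR(t^{-q})$, which is the claim. The only real subtlety is the bookkeeping of uniformity in $N$: both the lower bound $|\tilde\lambda_N|\ge\tfrac12$ on $\supp\psi$ and the derivative estimate must be uniform in $N\ge N_1$, and the former forces the possible shrinking of $\delta$ and $\supp\psi$ alluded to above. Everything else is a routine application of the results already assembled in Subsections~\ref{sec:refine}--\ref{sec:further}.
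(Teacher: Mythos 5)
Your general plan (show $\tilde\lambda^{-1}$ is $C^q$ on $\supp\psi$ with a singularity at worst $|b|^{-(1-\eta)}$, then apply Proposition~\ref{prop:fourier}) matches the paper, but the step that actually controls the singularity is glossed over and, as written, is incorrect. By Fa\`a di Bruno/quotient rule, $(\tilde\lambda^{-1})^{(q)}$ is a sum of terms of the form $\tilde\lambda^{-j}\cdot\tilde\lambda^{(q_1)}\cdots\tilde\lambda^{(q_k)}$ with $q_1+\dots+q_k=q$ and $k\le[q]$. If you apply the bound $|\tilde\lambda^{(q_i)}(b)|\ll|b|^{-(1-\eta)}$ to \emph{every} factor, the product contributes a singularity $|b|^{-k(1-\eta)}$, which for $k\ge2$ is strictly worse than $|b|^{-(1-\eta)}$ and can fail to be integrable when $\eta$ is small (and recall $\eta$ is \emph{shrunk} in Subsection~\ref{sec:refine} to arrange $q+2\eta<\beta$, so you cannot count on $\eta$ being close to $1$). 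So the assertion ``and lower-order derivatives are no worse'' does not yield $|(\tilde\lambda^{-1})^{(q)}|\ll|b|^{-(1-\eta)}$.

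What is actually needed is the sharper dichotomy stated in Proposition~\ref{prop:tildeP} and inherited from Proposition~\ref{prop:tildeR}: derivatives of order $q_1<\beta-1$ are in fact \emph{uniformly bounded} (no singularity at all), while only derivatives of order $q_1\ge\beta-1$ carry the $|b|^{-(1-\eta)}$ blow-up. When $\beta\ge2$, one then runs a counting argument: if two indices $q_{i_1},q_{i_2}\ge\beta-1$, then $q\ge q_{i_1}+q_{i_2}\ge2(\beta-1)\ge\beta$, contradicting $q<\beta$; hence at most one factor is singular, the rest are $O(1)$, and the whole product is $O(|b|^{-(1-\eta)})$. When $\beta\in(1,2)$ this counting breaks down (since $2(\beta-1)<\beta$), but then $q\in(1,2)$ forces $k=[q]=1$, so $(\tilde\lambda^{-1})'=-\tilde\lambda^{-2}\tilde\lambda'$ involves a single derivative factor, and the H\"older estimate for $\tilde\lambda'$ from Proposition~\ref{prop:tildeP} gives the bound directly. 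You need to make both of these observations (the boundedness of low-order derivatives, and the case split in $\beta$) explicit to close the proof; the rest of your argument (the value $\tilde\lambda(0)=1$, the lower bound $|\tilde\lambda|\ge\frac12$ after shrinking $\delta$, uniformity in $N$, and the invocation of Proposition~\ref{prop:fourier}) is fine and matches the paper.
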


\begin{proof} 
By Proposition~\ref{prop:fourier}, it suffices to show that
$|(\tilde\lambda^{-1})^{(q)}(b)|\ll |b|^{-(1-\eta)}$.

By Proposition~\ref{prop:tildeP},
$|\tilde\lambda^{(p)}(b)|\ll |b|^{-(1-\eta)}$ for $p<\beta-2\eta$ and
$|\tilde\lambda^{(p)}(b)|\ll 1$ for $p<\beta-1$.
In particular, $\tilde\lambda$ is H\"older uniformly in $N$ and~$b$. 
Recall that $\tilde\lambda(0)=1$.  Shrinking $\delta$, we obtain
$|\tilde\lambda(b)|\ge{\SMALL\frac12}$ for all $|b|<\delta$, $N\ge N_1$.

\noindent{\bf The case $\beta\ge2$.}
Note that $(\tilde\lambda^{-1})^{(q)}$ is a finite linear combination of finite products with factors $\tilde\lambda^{-1}$ (each bounded by $2$) and $\tilde\lambda^{(q_1)}\cdots \tilde\lambda^{(q_k)}$ where $q_1+\dots +q_k=q$.
If $q_{i_j}\ge\beta-1$ for distinct values of $i_j$, then
$\beta\le 2\beta-2\le q_{i_1}+q_{i_2}\le q$
contradicting the assumption that $q<\beta$.
Hence there exists at most one $i$ such that $q_i\ge\beta-1$ and
$|\tilde\lambda^{(q_1)}\cdots \tilde\lambda^{(q_k)}|\ll |\tilde\lambda^{(q_i)}|\ll |b|^{-(1-\eta)}$.   It follows that $|(\tilde\lambda^{-1})^{(q)}(b)|\ll |b|^{-(1-\eta)}$.

\noindent{\bf The case $\beta\in(1,2)$.}
Let $q=1+r<\beta$.  Then $(\tilde\lambda^{-1})'=-\tilde\lambda^{-2}\tilde\lambda'$ and 
$|(\tilde\lambda^{-1})'(b)
-(\tilde\lambda^{-1})'(b')|\ll |\tilde\lambda'(b)-\tilde\lambda'(b')|\ll 
|b|^{-(1-\eta)}|b-b'|^r$ for $|b|\le |b'|$.  Hence
$|(\tilde\lambda^{-1})^{(q)}(b)|\ll |b|^{-(1-\eta)}$.
\end{proof}

\begin{pfof}{Lemma~\ref{lem:smallb}}
By Lemma~\ref{lem:Q}, it remains to handle the first term
in~\eqref{eq:decomp}, namely 
\begin{align*}
& (1-\lambda(b))^{-1}P(b)R\hV(ib)  =(ib|\varphi\wedge N|)^{-1}\tilde\lambda(b)^{-1}P(b)R\hV(ib)
\\ & \qquad =(i|\varphi\wedge N|)^{-1}\tilde\lambda(b)^{-1}b^{-1}P(0)R\hV(ib)
+(i|\varphi\wedge N|)^{-1}\tilde\lambda(b)^{-1}\widetilde P(b)R\hV(ib).
\end{align*}

Choose $q\in(1,\beta)$, $q\ge \beta-1$.
By Propositions~\ref{prop:fourier},~\ref{prop:tildeP} and~\ref{prop:lambdaP}, 
$\psi\widetilde P,\,\psi\tilde \lambda^{-1}\in\cR(t^{-q})$.
By Proposition~\ref{prop:v0},
\begin{align*}
\psi(b) \tilde\lambda(b)^{-1} b^{-1}P(0)R\hV(ib)
& \in\cR(t^{-q}\star |v|_\infty\, t^{-(\beta-1)}).
\end{align*}

Choose $\psi_1$ to be $C^\infty$ with compact support such that $\psi_1\equiv1$ on $\supp\psi$.
Using also Corollary~\ref{cor:Vslow}, 
\begin{align*}
\psi\tilde\lambda^{-1}\widetilde P R\hV
 & =\{\psi\tilde\lambda^{-1}\}\{\psi_1\widetilde P\}\{\psi_1 R\hV\}
 \in
\cR(t^{-q}\star t^{-q}\star \|v\|_{\theta,\eta}\,t^{-q})\quad\text{in $L^\infty(Y)$}.
\end{align*}
Hence by Proposition~\ref{prop:conv},
$\psi(1-\lambda)^{-1}PR\hV
\in\cR(\|v\|_{\theta,\eta}\,t^{-(\beta-1)})$ in $L^\infty(Y)$.
Combining with Corollary~\ref{cor:Jw}, we obtain
$\psi\int_Y (1-\lambda)^{-1}PR\hV\,\hw\,d\mu
\in\cR(\|v\|_{\theta,\eta}|w|_\infty\,t^{-(\beta-1)})$ as required.
\end{pfof}

\subsection{Large $b$: Proof of Lemma~\ref{lem:largeb}}
\label{sec:largeb}

Define $\kappa_m(b)=(1-\psi(b))(ib)^{-m}$ where $\psi$ is chosen as in Lemma~\ref{lem:smallb}.

\begin{prop} \label{prop:Test}
There exists $C>0$ and $m\ge2$ such that
\begin{align*}
 \kappa_m\, \hT \in\cR( t^{-q})
\;\text{in $\cB(\cF_{\theta_1}(Y))\;$ for all $N\ge N_1$, $t>1$.}
\end{align*}
\end{prop}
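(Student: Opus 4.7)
The plan is to invoke Proposition~\ref{prop:fourier} in the Banach space $\cF_{\theta_1}(Y)$ after establishing, uniformly in $N\ge N_1$, a pointwise estimate of the form $\|(\kappa_m\hT)^{(q)}(ib)\|_{\theta_1}\le g(b)$ with $g$ integrable and $g(b)\to0$ as $|b|\to\infty$. The whole content of the proof is therefore verifying $N$-uniform versions of the bounds in Corollary~\ref{cor:MTslow} for the truncated semiflow, and then combining them with the fast decay of $\kappa_m$.

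First I would observe that truncation does not destroy absence of approximate eigenfunctions on the chosen finite subsystem $Z_0$. Because $N_1\ge|1_Z\varphi|_\infty$, we have $\varphi\wedge N=\varphi$ on $Z_0$ for every $N\ge N_1$, so the operators $M_b$ built from $\varphi$ and from $\varphi\wedge N$ agree when iterated on $Z_0$; hence the hypothesis of Theorem~\ref{thm:Dolg} carries over to the truncated semiflow with constants independent of $N$. Next I would check that the constants in Theorem~\ref{thm:Dolg} themselves are $N$-uniform: inspection of Lemmas~\ref{lem:Dolg1} and~\ref{lem:Dolg2} shows that they depend only on the Gibbs--Markov data of $F$ (via $C_1,C_2,C_4,C_5,\theta$), on $|1_{Z_0}p|_\infty$, and on $\int_Y\varphi\,d\mu$, none of which are affected by truncation at $N\ge N_1$. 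This yields $\|(I-\hR(ib))^{-1}\|_{\theta_1}\ll |b|^\alpha$ for $|b|\ge\delta$, uniformly in $N$.

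The next step is to propagate this to derivatives. By Proposition~\ref{prop:Rslow} applied to $\varphi\wedge N$, $\|\hR^{(q)}(ib)\|_{\theta_1}\ll (|b|+1)\int_Y(\varphi\wedge N)^{q+\eta}\,d\mu$, and the choice $q+2\eta<\beta$ ensures $\int_Y(\varphi\wedge N)^{q+\eta}\,d\mu$ is bounded uniformly in $N$ (the contribution of $\{\varphi>N\}$ is $\ll N^{q+\eta-\beta}\to0$). Running the induction/resolvent-identity argument of Corollary~\ref{cor:MTslow} verbatim, but with these $N$-uniform estimates, produces constants $\alpha',C$ independent of $N$ such that $\|\hT^{(q')}(ib)\|_{\theta_1}\le C(1+|b|)^{\alpha'}$ for every $q'\le q$ on $\{|b|\ge\delta\}$, with the $(q-[q])$-Hölder bound on $\hT^{([q])}$ treated as in the proof of Corollary~\ref{cor:MTslow}.

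Finally, since $\kappa_m$ vanishes near $0$ and decays like $(1+|b|)^{-m}$ with all derivatives satisfying $|\kappa_m^{(j)}(b)|\ll(1+|b|)^{-m}$, the Leibniz rule gives $\|(\kappa_m\hT)^{(q)}(ib)\|_{\theta_1}\ll(1+|b|)^{\alpha'-m}$. Choosing $m\ge\alpha'+2$ makes this bound integrable and vanishing at infinity, and Proposition~\ref{prop:fourier} (applied to $\cF_{\theta_1}(Y)$-valued functions) yields $\kappa_m\hT\in\cR(t^{-q})$ uniformly in $N\ge N_1$. The main obstacle here is not a new analytic difficulty but purely bookkeeping: one must check that every constant entering the Dolgopyat estimate and the derivative bounds on $\hR$ can be made independent of the truncation parameter $N$, which is precisely why $Z_0$ was chosen inside $\{\varphi\le N_1\}$ and $q+2\eta<\beta$ was imposed.
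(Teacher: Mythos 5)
Your proposal is correct and follows essentially the same route as the paper: use the choice $N_1\ge|1_Z\varphi|_\infty$ to ensure absence of approximate eigenfunctions persists under truncation, invoke Corollary~\ref{cor:MTslow} with the observation that all constants ($C_1,\dots,C_5$, $|1_{Z_0}p|_\infty$, $\int_Y\varphi\,d\mu$, $\int_Y\varphi^{q+\eta}\,d\mu$) are unaffected by truncation at $N\ge N_1$, and then combine with the decay of $\kappa_m$ and Proposition~\ref{prop:fourier}. Your discussion is more detailed than the paper's terse proof (e.g.\ you verify explicitly that $M_b$ built from $\varphi$ and from $\varphi\wedge N$ agree on $Z_0$), but the argument is the same one; the only minor inefficiency is your remark about the contribution of $\{\varphi>N\}$, which is unnecessary since $\int_Y(\varphi\wedge N)^{q+\eta}\,d\mu\le\int_Y\varphi^{q+\eta}\,d\mu$ trivially.
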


\begin{proof}
By the choice of $N\ge N_1$, absence of approximate eigenfunctions passes over to the truncated semiflow.
By Corollary~\ref{cor:MTslow}, 
$\|\hT^{q}(ib)\|_{\theta_1}\le C|b|^\alpha$ for $b\in\supp\kappa_m$.
Note that all constants entering into $C$ and $\alpha$ are universal ($C_2$, $C_3$, etc) or depend only on the values of $\varphi$ on the finite subsystem $Z_0$.  In particular, $C$ and $\alpha$ are independent of $N$ for all $N\ge N_1$.  

Hence 
$\|(\kappa_m \hT)^{(q)}(b)\|_{\theta_1}\ll(|b|+1)^{\alpha-m}$.
Choosing $m\ge\alpha+2$ ensures integrability and the result follows
from Proposition~\ref{prop:fourier}.
\end{proof}

\begin{pfof}{Lemma~\ref{lem:largeb}}
Write $\kappa_m=\kappa_{m-2}\kappa_2$, where
$\kappa_i$ is $C^\infty$, vanishes in a neighborhood of zero, and is $O(|b|^{-i})$, for $i=2$ and  $i=m-2$.

By Corollaries~\ref{cor:Jw} and~\ref{cor:Vslow}, and Proposition~\ref{prop:Test}, 
we obtain (for sufficiently large~$m$)
\begin{align*}
\kappa_m{\textstyle\int_Y} \hT R\hV\,\hw \,d\mu
& ={\textstyle\int_Y} (\kappa_{m-2}\hT)(\kappa_2 R\hV)\,\hw \,d\mu
 \in\cR\big(t^{-q}\star
\|v\|_{\theta,\eta} t^{-q}\star |w|_\infty\, t^{-\beta}\big).
\end{align*}
Since $q\in(1,\beta)$, the result follows from Proposition~\ref{prop:conv}.
\end{pfof}

\section{Some open questions}

We conclude this review article with some open questions.  

\begin{itemize}[leftmargin=0.2in]
\item Improve the Diophantine criterion for absence of approximate eigenfunctions by reducing the number of periods from three to two in Proposition~\ref{prop:tau}.  It would suffice to show that we can take $\psi_k=1$ in~\eqref{eq:approx} as is the case for uniformly hyperbolic systems~\cite{Dolgopyat98b}.

(In particular, the proof of rapid mixing for two falling balls 
in~\cite{BalintNV16} seems to rely on such an improvement. Alternatively, one could try to verify the good asymptotics condition~\cite{FMT07} described in Section~\ref{sec:good}; this would also lead to a stronger conclusion (robust rapid mixing rather than almost sure) in~\cite{BalintNV16}.)
\item  For flows with polynomial decay of correlations, the decay rates in this article are optimal but the class of observables is not.  An open problem is to remove the requirement that observables are smooth in the flow direction.  
In general, this is a currently intractable problem even in the superpolynomial case.  However in situations where there is additional structure in which exponential decay methods have proven successful (smooth stable foliation or contact structure) there is the possibility of combining these methods with the truncation method in Section~\ref{sec:trunc}.  A key example is the infinite horizon planar periodic Lorentz gas where the optimal decay rate $O(t^{-1})$ is obtained in~\cite{BBMsub} but for a restricted class of observables.
\item The statistical properties for time-one maps of rapid mixing flows in Section~\ref{sec:stats} are restricted to observables that are sufficiently regular in the flow direction, and this is currently the best available result even when the flow is exponentially mixing.  To fix ideas, consider the finite horizon planar periodic Lorentz gas.  The time-one map is exponentially mixing for H\"older observables by~\cite{BaladiDemersLiverani18}, but currently this does not lead to any statistical properties.
On the other hand, the proof of superpolynomial decay for sufficiently regular observables does lead to the statistical properties listed in Subsection~\ref{sec:stats}.  Hence a natural question is to to investigate how to use the result or method in~\cite{BaladiDemersLiverani18} to extract statistical properties.  This is currently the topic of work in progress with Mark Demers and Matthew Nicol.

\item As mentioned in Remark~\ref{rmk:contact}, Theorem~\ref{thm:D} gives a simple criterion for absence of approximate eigenfunctions when $Z_0\subset Y$ is connected, namely that the temporal distance function $D$ is not identically zero. Such a nonintegrability property is not immediately of use in general: $Z_0$ is a Cantor set of positive Hausdorff dimension and $Y$ is often connected, but
$D$ is only H\"older.   On the other hand, $Z_0$ can be constructed using any finite subcollection of the partition elements $Y_j$, and so in some sense exhausts $Y$.  The question is whether $D$ has sufficient structure beyond being H\"older (which on its own is clearly insufficient) to imply that the lower box dimension of $D(Z_0\times Z_0)$ is close to that of $D(Y\times Y)$ for suitable chosen $Z_0$.  This would rule out approximate eigenfunctions when $Y$ is connected and $D$ is not identically zero.
\item 
Methods for suspension semiflows and flows can be adapted to toral extensions of maps, replacing roof functions $\varphi:Y\to\R^+$ by
cocycles $\varphi:Y\to\R^d$.  Rapid mixing for toral extensions (and general compact group extensions) of uniformly expanding/hyperbolic maps is analysed in depth in~\cite{Dolgopyat02}.  Results on rapid and slow mixing for toral extensions of nonuniformly expanding maps are obtained in~\cite{MTtoralsub}.
The results for skew product flows in Section~\ref{sec:skew} should also go over to toral extensions of nonuniformly hyperbolic transformations with $\varphi$ constant along stable leaves.  

For toral extensions that are not skew products, we expect that the methods described in~\cite{BBMsub} apply when there is exponential contraction along stable leaves (Section~\ref{sec:nonskew}(i)), or when $\varphi$ has bounded H\"older constants (Section~\ref{sec:nonskew}(ii)), but there is no analogue of situation~(iii) from Section~\ref{sec:nonskew}.  An open problem is to understand more fully toral extensions of nonuniformly hyperbolic transformations (and hence understand more fully nonuniformly hyperbolic flows).
\item  Continuing the previous question, although the results in~\cite{BBMsub} described here deal with many important classes of flows, the situation is still not as satisfactory as for semiflows.  For example, Chernov \& Zhang~\cite{ChernovZhang05b} consider a family of periodic dispersing billiards where the nonvanishing curvature hypothesis on scatterers is violated.   The associated billiard maps exhibit polynomial decay rates $O(n^{-b})$ for any prescribed $b\in(1,\infty)$.   However the flows do not seem to be covered by the methods in~\cite{BBMsub} even though the results in this article yield decay rates $O(t^{-b})$ at the semiflow level.
\end{itemize}

  \paragraph{Acknowledgements}
 This work was supported in part by a
 European Advanced Grant {\em StochExtHomog} (ERC AdG 320977).
We are grateful to P\'eter B\'alint and Oliver Butterley for very helpful discussions, and to the referees for making numerous suggestions that greatly improved the readability of the paper.

{\small

\def\polhk#1{\setbox0=\hbox{#1}{\ooalign{\hidewidth
  \lower1.5ex\hbox{`}\hidewidth\crcr\unhbox0}}}

}

\end{document}